\newtheorem{prop}{Proposition}[section]
\newtheorem{thm}{Theorem}[section]
\newtheorem{cor}{Corollary}[section]
\newtheorem{lem}[thm]{Lemma}
\theoremstyle{definition}
\newtheorem{dfn}{Definition}[section]
\newtheorem{ex}{Example}
\theoremstyle{remark}
\newtheorem{rmk}{Remark}[section]
\theoremstyle{remark}
\theoremstyle{remark}
\newcommand{\set}[1]{\left\{#1\right\}}
\def\R{\mathscr{R}}
\newcommand{\G}         {\mathcal{G}}
\newcommand{\h}         {\mathcal{H}}
\newcommand{\K}         {\mathcal{K}}
\newcommand{\Ll}         {\mathcal{L}}
\newcommand{\B}         {\mathcal{B}}
\newcommand{\E}         {\mathcal{E}}
\newcommand{\F}         {\mathcal{F}}
\def\b{\mathscr B}
\def\C{\mathscr C}
\def\X{\mathscr X}
\def\Y{\mathscr Y}
\def\Z{\mathscr Z}
\def\W{\mathscr W}
\def\S{\mathscr S}
\def\g{\mathscr G}
\newcommand{\Top} {\mathfrak{Top}}
\newcommand{\Et} {\mathfrak{Et}}
\newcommand{\Eft} {\mathfrak{EffEt}}
\def\Ha{\mathscr{H}\!}
\def\RR{\mathbb{R}}
\def\xm{X\mbox{-}\mathbf{Mfd}}
\def\Mfd{\mathbf{Mfd}}
\DeclareMathOperator{\Lan}{Lan}
\def\HH{\mathbb{H}\!\mspace{1.5mu}}
\newcommand{\T} {\mathbb{T}\mathbf{op}}
\def\Sts{\St\left(S\right)}
\newcommand{\hocolim}{\operatorname{hocolim}}
\newcommand{\hc}        {\underset{{-\!\!\!-\!\!\!-\!\!\!-\!\!\!-\!\!\!-\!\!\!-\!\!\!-\!\!\!-\!\!\!\longrightarrow}} \hocolim  \:}
\newcommand{\Hom}{\operatorname{Hom}}
\DeclareMathOperator{\St}{St}
\DeclareMathOperator{\Set}{Set}
\DeclareMathOperator{\Gpd}{Gpd}
\DeclareMathOperator{\Sh}{Sh}
\DeclareMathOperator{\sit}{Site}
\newcommand{\Ef}{\operatorname{Eff}}
\DeclareMathAlphabet{\scr}{OT1}{pzc}%
                                 {m}{it}
\newcommand{\rt} { \rotatebox[origin=C]{90}{$\perp$} }
\def\rrrarrow{\hspace{.05cm}\mbox{\,\put(0,-3){$\rightarrow$}\put(0,1){$\rightarrow$}\put(0,5){$\rightarrow$}\hspace{.45cm}}}
\def\rrarrow{  \hspace{.05cm}\mbox{\,\put(0,-2){$\rightarrow$}\put(0,2){$\rightarrow$}\hspace{.45cm}}}
\def\acts{\hspace{.1cm}{\setlength{\unitlength}{.30mm}\linethickness{.09mm}
                        \begin{picture}(8,8)(0,0)\qbezier(7,6)(4.5,8.3)(2,7)\qbezier(2,7)(-1.5,4)(2,1)\qbezier(2,1)(4.5,-.3)(7,2)
                                                 \qbezier(7,6)(6.1,7.5)(6.8,9)\qbezier(7,6)(5,6.1)(4.2,4.4)
                        \end{picture}\hspace{.1cm}}}
\def\longlongrightarrow{-\!\!\!-\!\!\!-\!\!\!-\!\!\!-\!\!\!-\!\!\!\longrightarrow}
\begin{document}

\title{\'Etale Stacks as Prolongations}
\author{David Carchedi}
\address{D. Carchedi \hspace{16pt}\mbox{Max Planck Institute for Mathematics, Bonn, Germany. }
}
\begin{abstract}
In this article, we derive many properties of \'etale stacks in various contexts, and prove that \'etale stacks may be characterized categorically as those stacks that arise as prolongations of stacks on a site of spaces and local homeomorphisms. Moreover, we show that the bicategory of \'etale differentiable stacks and local diffeomorphisms is equivalent to the $2$-topos of stacks on the site of smooth manifolds and local diffeomorphisms. An analogous statement holds for other flavors of manifolds (topological, $C^k,$ complex, super...), and topological spaces locally homeomorphic to a given space $X.$ A slight modification of this result also holds in an even more general context, including all \'etale topological stacks, and Zariski \'etale stacks, and we also sketch a proof of an analogous characterization of Deligne-Mumford algebraic stacks. We go on to characterize effective \'etale stacks as precisely those stacks arising as the prolongations of sheaves. It follows that \'etale stacks (and in particular orbifolds) induce a small gerbe over their effective part, and all gerbes over effective \'etale stacks arise in this way. As an application, we show that well known Lie groupoids arising in foliation theory give presentations for certain moduli stacks. For example, there exists a classifying stack for Riemannian metrics, presented by Haefliger's groupoid $R\Gamma$ \cite{Haefliger} and submersions into this stack classify Riemannian foliations, and similarly for symplectic structures, with the role of $R\Gamma$ replaced with $\Gamma^{Sp}.$ We also prove some unexpected results, for example: the category of smooth $n$-manifolds and local diffeomorphisms has binary products.

%our theory reproduces well known groupoids arising in foliation theory as presentations for certain moduli stacks, e.g. there exists a classifying stack for Riemannian metrics, and submersions into this stack classify Riemannian foliations, and similarly for symplectic structures.
\end{abstract}

\maketitle
\markboth{David Carchedi}{\'Etale Stacks as Prolongations}

%%%%%%%%%%%%%%%%%%%%%%%%%%%%%%%%%%%%%%%%%%%%%%%%%%%%%
%%%%% End set up title, authors, abstract %%%%%%%%%%%
%%%%%%%%%%%%%%%%%%%%%%%%%%%%%%%%%%%%%%%%%%%%%%%%%%%%%

\tableofcontents

\section{Introduction}
In this article, we derive many properties of \'etale stacks in various contexts, and prove that \'etale stacks may be characterized categorically as those stacks that arise as prolongations of stacks on a site of spaces and local homeomorphisms.

Geometrically, \'etale stacks model quotients of spaces by certain symmetries, and their points can possess intrinsic (discrete) automorphism groups. A widely studied class of such stacks in the differentiable setting is orbifolds, which arose initially out of foliation theory, but currently enjoy a wide variety of other uses. \'Etale differentiable stacks are an important class of stacks as they include not only all orbifolds, but more generally, all stacky leaf spaces of foliated manifolds. They have been studied by various authors \cite{Ie,Dorette,stacklie,hepworth,morsifold,Wockel,Giorgio,gerbes,etalspme}.

In this paper, we shall work over a suitable category $S$ whose objects we shall call ``spaces.'' (See Section \ref{sec:conventions} and Appendix \ref{sec:catspaces2} for more details.) An \emph{\'etale stack} over a category of spaces $S,$ is a stack which assigns to a space the groupoid of $\G$-torsors over that space, for an \'etale groupoid object $\G$ in spaces. To be concrete, let us take the category of spaces in question to be the category of smooth manifolds. An \'etale groupoid object is then the same thing as a Lie groupoid $\G$ whose source and target maps are local diffeomorphisms, and an \'etale stack then is an \'etale \emph{differentiable stack}, i.e. a stack which assigns to a manifold $M$ the groupoid of principal $\G$-bundles over $M,$ for such an \'etale Lie groupoid $\G$. If $$j:\Mfd^{et} \to \Mfd$$ denotes the inclusion of the category of smooth manifolds and local diffeomorphisms into the category of smooth manifolds and \emph{all} smooth maps, there is an evident restriction functor between their associated bicategories of stacks $$j^*:\St\left(\Mfd\right) \to \St\left(\Mfd^{et}\right),$$ which has a left adjoint $j_!$ called the \emph{prolongation} functor. One of the main results of this paper is that the essential image of $j_!$ is precisely \'etale differentiable stacks. This result holds for various other categories of spaces, and may be thought of as a representability criterion for \'etale stacks.

For certain categories of spaces, e.g. any flavor of manifolds (smooth, $C^k,$ complex, super...), and topological spaces locally homeomorphic to a given space $X,$ a stronger result is true: the bicategory of \'etale stacks and local homeomorphisms is equivalent to the $2$-topos of stacks on the site whose objects are the spaces in question, and whose arrows are local homeomorphisms between them, where following Section \ref{sec:conventions}, we use the term ``local homeomorphism'' to mean ``the appropriate notion of local homeomorphism'' for the category of spaces of interest. For example, when the category of spaces is the category of smooth manifolds, this result says that the bicategory of \'etale differentiable stacks and local diffeomorphisms is equivalent to $\St\left(\Mfd^{et}\right).$ In more detail, there is an equivalence of bicategories
$$\xymatrix@C=2cm{\St\left(S^{et}\right) \ar@<-0.65ex>[r]_-{\Theta} & \Et\left(S\right) \ar@<-0.65ex>[l]_-{\tilde y^{et}}},$$
such that for any stack $\Z$ on the site $S^{et}$ of spaces and local homeomorphisms between them, which we can view as a (generalized) \emph{moduli problem} which is functorial with respect to local homeomorphisms,
there exists a unique \'etale stack $\Theta \Z,$  such that for a given space $Y,$ the groupoid $\Z\left(Y\right)$ is equivalent to the groupoid of local homeomorphisms from $Y$ to $\Theta \Z:$
$$\Z\left(Y\right)\simeq \left\{f:Y \to \Theta \Z\mspace{3mu}|\mspace{3mu} f\mbox{ a local homeo.}\right\}.$$
Conversely, given any \'etale stack $\X$, it determines a stack on spaces and local homeomorphisms $\tilde y^{et}\left(\X\right)$ by assigning to a space $Y$ the groupoid
$$y^{et}\left(\X\right)\left(Y\right)=\left\{f:Y \to \X\mspace{3mu}|\mspace{3mu} f\mbox{ a local homeo.}\right\}$$
of local homeomorphisms from $Y$ to $\X,$ and these operations are inverse to each other.

Besides being of theoretical interest, these results also allow one to construct classifying stacks for a wide range of geometric structures which can be defined locally, but which are not functorial with respect to arbitrary smooth maps, e.g., Riemannian metrics and symplectic structures. The existence of these moduli stacks also give a clear way of defining what it means to have geometric structures on other \'etale stacks (for instance orbifolds), which we show to agree with the existing definitions in the case of Riemannian metrics and symplectic structures. We also show that many of these moduli stacks have presentations in terms of certain Lie groupoids that are well known in the foliation theory literature.

Among \'etale stacks, there is a certain nice class of \'etale stacks, called \emph{effective} \'etale stacks. After picking an \'etale atlas $$X \to \X$$ of an \'etale stack (with $X$ a space), one gets a bijection between points of $X$ and geometric points of $\X.$ The automorphism group $Aut\left(x\right)$ of any geometric point $x$ of an \'etale stack acts on the germ of $x$ in $X$. Geometrically, effective \'etale stacks are those \'etale stacks such that each of these actions are faithful. Effective \'etale stacks include all stacky leaf spaces of foliated manifolds, and in particular effective orbifolds (sometimes called reduced orbifolds). In terms of groupoids, \'etale stacks are precisely those stacks arising from effective \'etale groupoids, which in turn, are those \'etale groupoids arising from pseudogroups of homeomorphisms, in the sense of Cartan in \cite{cartan}.

In this paper, we give a categorical characterization of effectivity: Effective \'etale stacks are precisely those stacks arising as the prolongations of sheaves on the site of spaces and local homeomorphisms. Put another way, effective \'etale stacks are those \'etale stacks $\X$ such that the stack of local homeomorphisms into $\X$ is a sheaf of sets, rather than a stack of groupoids. This gives a much more simple and conceptual proof of the fact that \'etale stacks (and in particular orbifolds) induce a small gerbe over their effective part, and all gerbes over effective \'etale stacks arise in this fashion, a result proven in \cite{gerbes}.

The results of this paper rely heavily on the results of \cite{etalspme}, and may be thought of as a natural continuation of that paper. Furthermore, several results of this paper also appear in the author's previous preprint \cite{gerbes}, however, with the notable exception of Sections \ref{sec:effective} which are exactly the same as in Op. cit., the proofs in this article are quite different.

\subsection{Small gerbes and ineffective isotropy data}
This paper also offers new insights into the meaning of \emph{ineffective data} of \'etale stacks. We will now provide a geometric interpretation of these results. Suppose that $G$ is a finite group acting on a manifold $M.$ The stacky-quotient $M//G$ is an \'etale differentiable stack, and in particular, an orbifold. Points of this stacky-quotient are the same as points of the naive quotient, that is, orbits of the action. These are precisely images of points of $M$ under the quotient map $M \to M//G.$ For a particular point $x \in M$, if $\left[x\right]$ denotes the point in $M//G$ which is its image, then $$Aut\left(\left[x\right]\right)\cong G_x.$$
If this action is not faithful, then  there exists a non-trivial kernel $K$ of the homomorphism
\begin{equation}\label{eq:actionmap}
\rho:G \to \mathit{Diff}\left(M\right).
\end{equation}
In this case, any element $k$ of $K$ acts trivially and is tagged-along as extra data in the automorphism group $$Aut\left(\left[x\right]\right)\cong G_x$$ of each point $\left[x\right]$ of the stack $M//G$. In fact, $$\bigcap\limits_{x \in M} G_x=Ker\left(\rho\right)=K.$$ In particular, $\rho$ restricted to $Aut\left(\left[x\right]\right)$ becomes a homomorphism
\begin{equation}\label{eq:locact}
\rho_x:Aut\left(\left[x\right]\right) \to \mathit{Diff}\left(M\right)_x
\end{equation}
to the group of diffeomorphisms of $M$ which fix $x$. This homomorphism is injective for all $x$ if and only if the kernel of $\rho$ is trivial. The kernel of each of these homomorphisms can be thought of as an ``inflated'' part of each automorphism group, and is called the \emph{ineffective isotropy group} of $\left[x\right]$. Up to the identification $$Aut\left(\left[x\right]\right)\cong G_x,$$ each of these ineffective isotropy groups is $K.$ This extra information is stripped away when considering the stacky-quotient $$M//\left(G/K\right),$$ that is to say, $M//\left(G/K\right)$ is the \emph{effective part} of $M//G.$

Hence, having a kernel of the action (\ref{eq:actionmap}) means that a point's automorphism group can contain automorphisms that do not play an essential role in the geometric structure of the stack around that point.  As an extreme example, suppose the action $\rho$ is trivial, and consider the stacky quotient $M//G.$ It is the same thing as $M$ except each point $x$, has the group $G,$ rather the trivial group, as an automorphism group. These automorphisms are somehow artificial, since the action $\rho$ sees nothing of $G.$ In this case, the entire automorphism group of each point is its ineffective isotropy group, and this is an example of a purely ineffective orbifold. Since these arguments are local, the situation when $\X$ is an \'etale stack formed by gluing together stacks of the form $M_\alpha//G_\alpha$ for actions of finite groups, i.e. a general orbifold, is completely analogous.

For a more general \'etale stack $\X$, for example a stack of the form $M//G$ where $G \acts M$ is a smooth action of a Lie group with discrete (but not finite) stabilizers, there is in general no such local action of the automorphisms groups as in (\ref{eq:locact}), but the situation can be mimicked at the level of germs. There exists a manifold $V$ and a (representable) local diffeomorphism $$V \to \X$$ such that for every point $$x:* \to \X,$$
\begin{itemize}
\item[i)] the point $x$ factors (up to isomorphism) as $* \stackrel{\tilde x}{\longrightarrow} V\stackrel{p}{\longrightarrow} \X,$ and
\item[ii)] there is a canonical homomorphism $\tilde\rho_x: Aut\left(x\right) \to \mathit{Diff}_{\tilde x}\left(V\right),$
\end{itemize}
where $\mathit{Diff}_{\tilde x}\left(V\right)$ is the group of germs of locally defined diffeomorphisms of $V$ that fix $\tilde x.$ The kernel of each of these maps can again be considered as the ``inflated'' part of the  automorphism group. In the case where $\X$ is of the form $M//G$ for a finite group $G$ (or more generally, when $\X$ is an orbifold) the kernel of $\tilde \rho_x$ is the same as the kernel of (\ref{eq:locact}), for each $x$. In general, each $Ker\left(\tilde \rho_x\right)$ is called an \emph{ineffective isotropy group}. Unlike in the case of a global quotient $M//G,$ these groups need not be isomorphic for different points of the stack. However, these kernels may be killed off to obtain the so-called \emph{effective part} of the \'etale stack.

There is another way of artificially adding automorphisms to a stack which play no geometric role, and this is through gerbes. In this paper, we show that having ineffective isotropy data is the same as the presence of a gerbe. As a starting example, if $M$ is a manifold, a gerbe over $M$ is a stack $\g$ over $M$ such that over each point $x$ of $M,$ the stalk $\g_x$ is equivalent to a group (regarded as a one-object groupoid). From such a gerbe, one can construct an \'etale stack which looks just like $M$ except each point $x,$ now instead of having a trivial automorphism group, has (a group equivalent to) $\g_x$ as its automorphism group. However, these automorphism groups are entirely ineffective. This construction was alluded to in \cite{pres2}. One can use this construction to show that \'etale stacks whose effective parts are manifolds are the same thing as manifolds equipped with a gerbe. In this paper, we show that this result extends to general \'etale stacks, namely that any \'etale stack $\X$ encodes a small gerbe over its effective part $\Ef\left(\X\right)$, and moreover, every small gerbe over an effective \'etale stack $\Y$ arises uniquely from some \'etale stack $\Z$ whose effective part is equivalent to $\Y$. The construction of an \'etale stack  $\Z$ out of an effective \'etale stack $\Y$ equipped with a small gerbe $\g,$ is precisely the \'etal\'e realization of the gerbe $\g$, in the sense of \cite{etalspme}, i.e. the ``\'etal\'e space'' of the gerbe. In such a situation, there  is a natural bijection between the points of $\Z$ and the points of $\Y$, the only difference being that points of $\Z$ have more automorphisms. For $x$ a point $\Z$, its ineffective isotropy group, i.e. the kernel of $$Aut\left(x\right) \to \mathit{Diff}_{\tilde x}\left(V\right),$$ is equivalent to the stalk $\g_x.$

\subsection{Organization and main results}

In section \ref{sec:effective}, we introduce the concept of an effective \'etale stack and show how to associate to every \'etale stack $\X$ an effective \'etale stack $\Ef\left(\X\right)$, which we call its effective part. Although this construction is not functorial with respect to all maps, we show that it is functorial with respect to any category of open maps which is \'etale invariant (see Definition \ref{dfn:local}). Examples of open \'etale invariant classes of maps include open maps, local homeomorphisms, and submersions. We also recall the concept of a Haefliger groupoid, originally introduced in \cite{Haefliger}. Haefliger groupoids play a pivotal role in the rest of the paper.

Section \ref{sec:amazing} derives many of the surprising properties of the Haefliger groupoid construction (Definition \ref{dfn:haf}). The results in this section play a fundamental role in the paper, but first we need some terminology:

For a given space $X,$ one can consider the category $\xm$ of $X$-manifolds, which is the full subcategory on all spaces which can be covered by open subspaces of $X.$ E.g., when spaces means smooth manifolds, and $X=\RR^n,$ $\xm$ is the category of smooth $n$-manifolds, and when $X=\coprod\limits_{n=0}^\infty \RR^n,$ $\xm$ is all smooth manifolds. By an \'etale stack on $X$-manifolds, or an \'etale $X$-manifold stack, we mean a stack on the site of $X$-manifolds arising from an \'etale groupoid object $\G$ in spaces, such that its object space and arrow space are $X$-manifolds.

The two concepts, that of an $X$-manifold, and that of the Haefliger groupoid of a space $X,$ are intimately related, as the following results shows:

\begin{thm}
For any space $X,$ there is a canonical equivalence of topoi $$\Sh\left(\HH\left(X\right)\right) \simeq \Sh\left(\xm^{et}\right),$$ between sheaves on the Haefliger stack $\HH\left(X\right)-$ which is the \'etale stack associated to the Haefliger groupoid of $X-$ and sheaves on $\xm^{et}-$ the subcategory of $\xm$ spanned by local homeomorphisms, and sheaves are taken with respect to the induced open-cover topology.
\end{thm}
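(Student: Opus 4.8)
The plan is to realize both topoi as sheaves on one common site — the small \'etale site of the Haefliger stack, whose objects are local homeomorphisms $U \to \HH\left(X\right)$ out of spaces $U$ — and to identify this site with $\xm^{et}$ so that the Comparison Lemma applies. First I would invoke the general fact that for an \'etale stack $\X$ presented by an \'etale groupoid $\G=\left(\G_1 \rightrightarrows \G_0\right)$, the topos $\Sh\left(\X\right)$ is equivalent to $\G$-equivariant sheaves on $\G_0$, equivalently to sheaves on the small \'etale site of $\X$ (objects: local homeomorphisms $U\to\X$ from spaces; morphisms: local homeomorphisms over $\X$; covers: jointly surjective families). Applied to the Haefliger groupoid of $X$ — object space $X$, arrow space the space of germs of locally defined homeomorphisms of $X$ (Definition \ref{dfn:haf}) — this reduces the theorem to comparing the small \'etale site of $\HH\left(X\right)$ with $\xm^{et}$.

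The geometric heart of the argument is a functor $\Phi$ from $\xm^{et}$ to this small \'etale site, assigning to each $X$-manifold $M$ a canonical local homeomorphism $M \to \HH\left(X\right)$. Its construction rests on the defining feature of the Haefliger stack: the atlas $X \to \HH\left(X\right)$ is a local homeomorphism, so each open $W \subseteq X$ acquires a local homeomorphism $W \hookrightarrow X \to \HH\left(X\right)$, and \emph{every} local homeomorphism between opens of $X$ becomes canonically invertible after composing to $\HH\left(X\right)$ — this is precisely the statement that germs of local homeomorphisms are the arrows of the Haefliger groupoid. Since an $X$-manifold $M$ is by definition glued from opens of $X$, the resulting chart maps $W_i \to \HH\left(X\right)$ agree on overlaps up to canonical $2$-isomorphism and glue to a local homeomorphism $M \to \HH\left(X\right)$; functoriality in local homeomorphisms $M \to N$ is then checked chartwise.

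I would then verify the hypotheses of the Comparison Lemma. For essential surjectivity up to covers: given any local homeomorphism $U \to \HH\left(X\right)$, pulling back along the atlas and using that both maps are \'etale shows $U$ is covered by opens each homeomorphic to an open of $X$, so $U$ is an $X$-manifold and $U \to \HH\left(X\right)$ is covered by objects in the image of $\Phi$. For full faithfulness: a morphism in the small \'etale site is a local homeomorphism equipped with a $2$-isomorphism over $\HH\left(X\right)$, and one checks chartwise that this data amounts to an honest local homeomorphism $M \to N$ of $X$-manifolds — the rigidity of germs (a germ determines its arrow in the Haefliger groupoid) is what pins down the $2$-isomorphism uniquely. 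Finally, the two topologies agree, since open embeddings of $X$-manifolds correspond to open sub-local-homeomorphisms over $\HH\left(X\right)$ and these generate the covers on each side. The Comparison Lemma then yields $\Sh\left(\xm^{et}\right) \simeq \Sh\left(\HH\left(X\right)\right)$.

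The main obstacle I anticipate is the coherent construction and functoriality of $\Phi$: gluing the chart maps into a single local homeomorphism $M \to \HH\left(X\right)$ requires controlling the $2$-isomorphisms on triple overlaps and checking the cocycle condition, while the fullness step requires showing that every $2$-isomorphism over $\HH\left(X\right)$ between glued maps descends to a genuine map of $X$-manifolds. Both hinge on the precise sense in which the Haefliger groupoid \emph{is} the germs of local homeomorphisms of $X$, so I expect to lean heavily on the structural results of Section \ref{sec:amazing} and of \cite{etalspme}.
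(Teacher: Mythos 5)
Your proposal is correct, but it runs in the opposite direction from the paper's proof. The paper never constructs maps $M \to \HH\left(X\right)$ for general $X$-manifolds $M$: it works entirely on the $\Sh\left(\xm^{et}\right)$ side, first observing that $y\left(X\right) \to 1$ is an effective epimorphism (every $X$-manifold is covered by opens of $X$), so that the \'etale geometric morphism $p:\Sh\left(X\right)\simeq \Sh\left(\xm^{et}\right)/y\left(X\right) \to \Sh\left(\xm^{et}\right)$ exhibits $\Sh\left(\xm^{et}\right)$ as an \'etendue; the presenting \'etale groupoid is then read off from the pullback topos $\Sh\left(X\right)\times_{\Sh\left(\xm^{et}\right)}\Sh\left(X\right)$, whose arrow space is the \'etal\'e space of $p^*y\left(X\right)$ --- the sheaf of local homeomorphisms $U \to X$ --- and is therefore the Haefliger groupoid by inspection. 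Your route instead identifies the small \'etale site of $\HH\left(X\right)$ with $\xm^{et}$ via a gluing functor $\Phi$ and invokes the Comparison Lemma; this is viable, but the coherence of the gluing and the fullness step both rest on the contractibility of the groupoid of local homeomorphisms $V \to \HH\left(X\right)$ for $V \subseteq X$ open, which is exactly Lemma \ref{lem:contrc} (proved independently of this theorem, so there is no circularity --- but be aware you are essentially re-deriving Theorem \ref{thm:terminal} en route). The trade-off: the paper's argument is shorter, avoids all cocycle bookkeeping, and is reused verbatim for the groupoid presentations of Theorem \ref{thm:presen1}; yours makes the geometric content (that spaces \'etale over $\HH\left(X\right)$ \emph{are} $X$-manifolds) explicit and delivers the terminal-object theorem as a byproduct rather than a separate step.
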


We also prove another, equally central result:

\begin{thm}
For any space $X,$ the Haefliger stack $\HH\left(X\right)$ is a terminal object in the bicategory consisting of \'etale stacks on $X$-manifolds and local homeomorphisms between them.
\end{thm}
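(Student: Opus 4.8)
The plan is to show that for every \'etale $X$-manifold stack $\Y$, the groupoid of local homeomorphisms $\Y \to \HH\left(X\right)$ is equivalent to the terminal groupoid. This breaks into three tasks: (i) produce at least one local homeomorphism $\Y \to \HH\left(X\right)$; (ii) show any two such are isomorphic; and (iii) show each such morphism has no nontrivial $2$-automorphisms, so that the hom-groupoid is not merely connected but contractible.

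For existence, I would reduce to presenting data: choose an \'etale groupoid $\G$ representing $\Y$ whose object and arrow spaces $G_0, G_1$ are $X$-manifolds. Covering $G_0$ by opens $V_\alpha$ equipped with homeomorphisms onto opens of $X$, every arrow of $\G$ and every transition between the $V_\alpha$ induces, via the source and target maps being local homeomorphisms, a \emph{germ} of a local homeomorphism of $X$, i.e.\ an arrow of the Haefliger groupoid. The essential point is that $\HH\left(X\right)$ contains \emph{all} germs of local homeomorphisms of $X$, so these germs automatically satisfy the relevant functoriality and cocycle conditions on the nose; assembling them yields a canonical generalized morphism (anafunctor) $\G \to \HH\left(X\right)$, which is a local homeomorphism because the source/target maps and the charts are. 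This produces the desired map $p_\Y : \Y \to \HH\left(X\right)$.

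For (ii), the same ``all germs are present'' principle also gives uniqueness of the $1$-morphism. Given two local homeomorphisms $f,g : \Y \to \HH\left(X\right)$, I would pull back along an atlas $Y \to \Y$ and along the atlas $X \to \HH\left(X\right)$ so that both are represented locally by honest local homeomorphisms $U \to X$; any two such charts differ by the germ of their transition, which is again an arrow of $\HH\left(X\right)$, and these germs supply a comparison $2$-cell locally. Because $\HH\left(X\right)$ is \emph{effective} (being a germ groupoid, its isotropy acts faithfully on germs), this local comparison is the unique one, so the local $2$-cells agree on overlaps and glue to a global isomorphism $f \cong g$. Task (iii) is effectivity once more: a $2$-automorphism of $f$ is an automorphism of the underlying bundle acting trivially on all germs into $X$, and faithfulness of the germ-action forces it to be the identity, so $\operatorname{Aut}\left(f\right)$ is trivial. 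Together, (i)--(iii) identify the hom-groupoid with the point.

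I expect the main obstacle to lie in the careful construction and gluing of (i) and (ii): making the local germ-data into a well-defined generalized morphism of \'etale stacks, and gluing the locally-defined comparison $2$-cells into a single global isomorphism. Both rest on the structural feature that distinguishes the Haefliger groupoid, namely that it records \emph{every} germ of a local homeomorphism of $X$, which is exactly what trivializes the cocycle obstruction for $1$-morphisms and the coboundary obstruction for $2$-morphisms, together with effectivity, which rigidifies the $2$-cells. The sheaf-theoretic identification $\Sh\left(\HH\left(X\right)\right)\simeq\Sh\left(\xm^{et}\right)$ of the preceding theorem can be used to streamline these descent arguments, since it encodes precisely that the small \'etale site of $\HH\left(X\right)$ is $\xm^{et}$.
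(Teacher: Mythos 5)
Your proposal is correct and follows essentially the same route as the paper: the paper's Lemma \ref{lem:contrc} carries out exactly your germ/cocycle analysis (a comparison $2$-cell always exists because $\Ha\left(X\right)$ contains \emph{every} germ of a local homeomorphism, and $2$-cells are rigid because a germ acting as the identity on the image of an open map must be the identity germ), establishing contractibility of $\Hom\left(V,\HH\left(X\right)\right)$ for $V\subseteq X$ open. The only difference is organizational: where you glue locally defined comparison $2$-cells by descent over an atlas, the paper writes an arbitrary \'etale $X$-manifold stack as a homotopy colimit of open subsets of $X$, so that the hom-groupoid into $\HH\left(X\right)$ becomes a homotopy limit of contractible groupoids.
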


As a consequence, we prove several of the main results of this paper:

\begin{thm}\label{thm:nn}
There is an equivalence of bicategories $$\St\left(\xm^{et}\right) \stackrel{\sim}{\longrightarrow} \Et\left(\xm\right)^{et}$$ between stacks on the site of $X$-manifolds and local homeomorphisms, and the bicategory of \'etale stacks on $X$-manifolds, and local homeomorphisms between them.
\end{thm}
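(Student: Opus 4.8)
The plan is to realize the claimed equivalence as a composite of two equivalences furnished by the two preceding theorems, bridged by the \'etal\'e realization for \'etale stacks. Concretely, I would factor the desired equivalence through the bicategory of \'etale stacks on $X$-manifolds equipped with a local homeomorphism to the Haefliger stack $\HH(X)$, and through the $2$-topos $\St(\HH(X))$ of stacks on the small site of $\HH(X)$:
$$\St(\xm^{et}) \;\simeq\; \St(\HH(X)) \;\simeq\; \Et(\xm)^{et}.$$
The candidate functors are $\tilde y^{et}$, sending an \'etale stack $\X$ to the stack $Y \mapsto \{f : Y \to \X \mid f \text{ a local homeo.}\}$ on $\xm^{et}$, and its would-be inverse $\Theta$, the \'etal\'e realization, which turns a stack on $\xm^{et}$ into an \'etale stack admitting a local homeomorphism to $\HH(X)$.

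First I would use the terminality theorem. Since $\HH(X)$ is a terminal object of $\Et(\xm)^{et}$, the groupoid of local homeomorphisms from any \'etale stack $\E$ into $\HH(X)$ is contractible; hence the forgetful $2$-functor from the slice bicategory of local homeomorphisms over $\HH(X)$ to $\Et(\xm)^{et}$ is an equivalence. Put differently, every object of $\Et(\xm)^{et}$ carries an essentially unique local homeomorphism to $\HH(X)$ and every local homeomorphism between such objects is automatically a morphism over $\HH(X)$, so the two bicategories agree. This reduces the theorem to identifying the bicategory of local homeomorphisms into $\HH(X)$.

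Next I would invoke the \'etal\'e realization correspondence for \'etale stacks developed in \cite{etalspme}: for any \'etale stack $\X$ there is an equivalence between $\St(\X)$, the $2$-topos of stacks on the small site of $\X$, and the bicategory of \'etale stacks equipped with a local homeomorphism to $\X$, with the representable such local homeomorphisms recovering $\Sh(\X)$. Taking $\X = \HH(X)$ identifies the bicategory of local homeomorphisms into $\HH(X)$ with $\St(\HH(X))$. Finally, the first theorem provides an equivalence of topoi $\Sh(\HH(X)) \simeq \Sh(\xm^{et})$; since stacks of groupoids on a site depend only on the associated topos, this lifts to an equivalence $\St(\HH(X)) \simeq \St(\xm^{et})$. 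Composing the three identifications yields the asserted equivalence, and unwinding the composite exhibits it as $\tilde y^{et}$ with inverse $\Theta$.

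The hard part is the \'etal\'e realization step: one must verify that the correspondence between $\St(\X)$ and local homeomorphisms into $\X$ is a genuine biequivalence, matching not only objects but $1$-morphisms—local homeomorphisms must correspond to morphisms of stacks—and $2$-morphisms, and in particular that the non-representable local homeomorphisms, those whose fibres are stacky, are exactly accounted for by groupoid-valued rather than set-valued descent data. A secondary technical point is the passage from the sheaf-level equivalence of the first theorem to the stack-level equivalence; this is formal once one knows that stacks are determined by the topos, but it must be phrased so that the small site of $\HH(X)$ and the site $\xm^{et}$ genuinely induce the same $2$-topos of stacks, and so that the resulting equivalence is compatible with the \'etal\'e realization used above.
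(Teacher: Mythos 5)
Your proposal is correct and follows essentially the same route as the paper: the paper's proof is exactly the composite $\St\left(\xm^{et}\right)\simeq\St\left(\HH\left(X\right)\right)\simeq\Et\left(\xm\right)^{et}/\HH\left(X\right)\simeq\Et\left(\xm\right)^{et}$, using the Comparison Lemma for stacks, the \'etal\'e realization equivalence of Theorem \ref{thm:etalsp}, and the terminality of $\HH\left(X\right)$ from Theorem \ref{thm:terminal}, with the identification of the composite as $\Theta$ (inverse $\tilde y^{et}$) recorded separately in Theorem \ref{thm:yet}. The ``hard part'' you flag is precisely what the paper outsources to \cite{etalspme}, so no further work is needed there.
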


\begin{cor}
There is an equivalence of bicategories $$\St\left(\Mfd^{et}\right) \stackrel{\sim}{\longrightarrow} \Et\left(\Mfd\right)^{et}$$ between \'etale differentiable stacks and local diffeomorphisms, and stacks on the site of manifolds and local diffeomorphisms.
\end{cor}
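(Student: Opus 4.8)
The plan is to derive this corollary as an immediate specialization of Theorem \ref{thm:nn}, obtained by choosing the space $X$ appropriately. Working in the category $S = \Mfd$ of smooth manifolds, I would take $X = \coprod_{n=0}^\infty \RR^n$, as suggested by the discussion preceding Theorem \ref{thm:nn}. The entire argument then reduces to checking that, for this particular $X$, each of the four bicategories appearing in Theorem \ref{thm:nn} coincides with its counterpart in the corollary, after which the equivalence is literally the one already established.

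First I would verify the identification on the source side. By definition $\xm$ is the full subcategory of $\Mfd$ spanned by manifolds that can be covered by open subspaces of $X$. Since every smooth manifold is locally diffeomorphic to an open subset of some $\RR^n$, and $X$ contains a copy of each $\RR^n$ as a connected component, every smooth manifold admits such a cover; conversely every $X$-manifold is a smooth manifold. Hence $\xm = \Mfd$ on the nose. Under the conventions of Section \ref{sec:conventions}, a local homeomorphism of smooth manifolds is precisely a local diffeomorphism, so the induced subcategories agree, $\xm^{et} = \Mfd^{et}$, and therefore $\St\left(\xm^{et}\right) = \St\left(\Mfd^{et}\right)$ together with their open-cover topologies.

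Next I would treat the target side. An \'etale differentiable stack is, by definition, the stack associated to an \'etale Lie groupoid $\G$, whose object and arrow spaces are smooth manifolds and hence $X$-manifolds for the chosen $X$; conversely an \'etale $X$-manifold stack is presented by an \'etale groupoid object in spaces whose object and arrow spaces are $X$-manifolds, i.e. arbitrary manifolds. Thus the two notions of stack coincide, and since in both bicategories the $1$-morphisms are taken to be local diffeomorphisms, we obtain $\Et\left(\xm\right)^{et} = \Et\left(\Mfd\right)^{et}$. Feeding these four identifications into Theorem \ref{thm:nn} yields the desired equivalence $\St\left(\Mfd^{et}\right) \stackrel{\sim}{\longrightarrow} \Et\left(\Mfd\right)^{et}$.

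The only point requiring genuine care — and hence the main obstacle — is the identification on the target side: one must confirm that every smooth manifold arising as the object or arrow space of an \emph{arbitrary} \'etale Lie groupoid really is an $X$-manifold for this specific $X$, so that no \'etale differentiable stack is lost upon restricting to \'etale $X$-manifold stacks. This is precisely where the choice $X = \coprod_{n=0}^\infty \RR^n$ is essential, rather than a single $\RR^n$: the latter would capture only $n$-manifolds and thus only \'etale stacks of fixed dimension, whereas the disjoint union over all $n$ accommodates manifolds of every dimension (and mixed-dimension objects such as disjoint unions), exactly matching the full generality of \'etale differentiable stacks.
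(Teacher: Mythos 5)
Your proposal is correct and is essentially the paper's own (implicit) argument: the paper obtains this corollary by specializing Theorem \ref{thm:whatetmeans1} to $S=\Mfd$ with $X=\coprod_{n=0}^\infty \RR^n$, having already recorded in the remarks of Section \ref{sec:amazing} that for this choice every smooth manifold is an $X$-manifold, so that $\xm=\Mfd$, $\xm^{et}=\Mfd^{et}$, and $\Et\left(\xm\right)^{et}=\Et\left(\Mfd\right)^{et}$. Your added care about why a single $\RR^n$ would not suffice matches the paper's own discussion of fixed- versus arbitrary-dimensional manifolds.
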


Given a space $X,$ one can consider the inclusion $$j:\xm^{et} \to \xm$$ from the category of $X$-manifolds and local homeomorphisms into the category of $X$-manifolds and all maps. There is an evident restriction functor $$j^*:\St\left(\xm\right) \to \St\left(\xm^{et}\right)$$ which has a left adjoint $j_!$ called the prolongation functor.

\begin{thm}\label{thm:stillok}
$\X \in \St\left(\xm\right)$ is an \'etale stack on $X$-manifolds if and only if it is in the essential image of the prolongation functor $j_!.$
\end{thm}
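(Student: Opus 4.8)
The plan is to deduce this from Theorem \ref{thm:nn} by showing that the essential image of the prolongation functor $j_!$ coincides, on objects, with the essential image of the equivalence $\Theta\colon \St(\xm^{et}) \stackrel{\sim}{\longrightarrow} \Et(\xm)^{et}$ of that theorem, which is by construction exactly the class of \'etale stacks on $X$-manifolds. Two formal facts about $j_!$ drive the comparison. Being a left adjoint (to $j^*$), it preserves all bicategorical colimits; and it carries representables to representables, for the adjunction gives, for $Y \in \xm^{et}$ and every $\W \in \St(\xm)$,
\[ \Hom_{\St(\xm)}(j_! y^{et}(Y), \W) \simeq \Hom_{\St(\xm^{et})}(y^{et}(Y), j^*\W) \simeq (j^*\W)(Y) = \W(Y), \]
so by the bicategorical Yoneda lemma $j_! y^{et}(Y) \simeq y(Y)$, the representable stack of $Y$ viewed as a (trivially \'etale) object of $\St(\xm)$.

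For the direction ``\'etale $\Rightarrow$ in the image of $j_!$'', let $\X$ be an \'etale stack on $X$-manifolds. Choosing an \'etale atlas, let $\G$ be the resulting \'etale groupoid, so that $\X \simeq \operatorname{colim}_{[n]} \G_n$ is the colimit in $\St(\xm)$ of the nerve $N\G$; here each $\G_n$ is an $X$-manifold, and every face and degeneracy map is a local homeomorphism (the source and target of $\G$ are local homeos and the unit is an open embedding), so $N\G$ lifts to a simplicial object in $\xm^{et}$. Set $\Z := \operatorname{colim}_{[n]} y^{et}(\G_n) \in \St(\xm^{et})$. Since $j_!$ preserves colimits and sends $y^{et}(\G_n)$ to $y(\G_n) \simeq \G_n$, we obtain $j_!(\Z) \simeq \operatorname{colim}_{[n]} \G_n \simeq \X$, exhibiting $\X$ in the essential image of $j_!$.

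Conversely, for ``in the image of $j_!$ $\Rightarrow$ \'etale'', every $\Z \in \St(\xm^{et})$ is the colimit of its representables, $\Z \simeq \operatorname{colim}_{(Y \to \Z)} y^{et}(Y)$, the colimit being over the category of elements of $\Z$, whose transition maps are morphisms of $\xm^{et}$, i.e.\ local homeomorphisms. Applying $j_!$ gives $j_!(\Z) \simeq \operatorname{colim}_{(Y\to\Z)} Y$, a colimit in $\St(\xm)$ of a diagram of $X$-manifolds all of whose legs are local homeomorphisms. What remains is to show that any such colimit is an \'etale stack; equivalently, that it agrees with the colimit computed in $\Et(\xm)^{et}$, so that $j_!(\Z) \simeq \Theta(\Z)$.

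This last identification is the crux and the main obstacle. Although $\Theta$ preserves colimits (being an equivalence), the inclusion $\Et(\xm)^{et} \hookrightarrow \St(\xm)$ is \emph{not} full, since its morphisms are only local homeomorphisms, so there is no formal reason for a colimit computed in $\Et(\xm)^{et}$ to remain a colimit in $\St(\xm)$. To bridge this gap I would argue that $\operatorname{colim}_{(Y\to\Z)} Y$ is presented by an \'etale groupoid: the local-homeomorphism transition maps let one assemble the diagram (via a Grothendieck-type construction on the category of elements) into an \'etale groupoid whose quotient stack computes the colimit in $\St(\xm)$ by the usual descent for \'etale atlases, and which is \'etale by construction. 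Matching this presentation against the one underlying $\Theta(\Z)$, using $\Z \simeq \tilde y^{et}(\Theta\Z)$ from Theorem \ref{thm:nn}, then yields $j_!(\Z) \simeq \Theta(\Z)$. Granting this, the essential image of $j_!$ is exactly $\{\Theta(\Z) \mid \Z \in \St(\xm^{et})\}$, which by Theorem \ref{thm:nn} is precisely the class of \'etale stacks on $X$-manifolds, settling both directions.
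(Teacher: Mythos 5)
Your overall strategy is sound and your first two observations ($j_!$ preserves weak colimits and sends $y^{et}\left(Y\right)$ to $y\left(Y\right)$) are exactly the right formal inputs; indeed the paper's proof of Theorem \ref{thm:prol} runs on precisely these two facts. You have also correctly located the crux: since the inclusion $k:\Et\left(\xm\right)^{et} \to \St\left(\xm\right)$ is not full, there is no formal reason that a weak colimit of $X$-manifolds along local homeomorphisms, computed in $\St\left(\xm\right)$, should be an \'etale stack. But your proposed bridge --- assembling the (large) category of elements of $\Z$ into an \'etale groupoid ``by a Grothendieck-type construction'' and invoking ``the usual descent for \'etale atlases'' --- is not a proof; it is a restatement of the hard part. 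To carry it out you would have to (i) cut the category of elements down to a small diagram (the paper does this with a basis $B$ of $X$ in the construction of $\G\left(\Z\right)_B$ preceding Theorem \ref{thm:presen1}), (ii) build the arrow space as an \'etal\'e space of germs, and (iii) prove a cofinality statement identifying the colimit over the full category of elements with the colimit of the $2$-truncated nerve of that groupoid \emph{in} $\St\left(\xm\right)$. None of these steps is supplied, and (iii) in particular is exactly the point at issue.

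The paper closes this gap by a different and more economical device: Proposition \ref{prop:incri} shows that the non-full inclusion $k:\Et\left(\xm\right)^{et} \to \St\left(\xm\right)$ admits a right adjoint (constructed from the terminality of the Haefliger stack, the equivalence $\St\left(\xm\right)/\HH\left(X\right) \simeq \St\left(\xm/\HH\left(X\right)\right)$, and the \'etal\'e realization functor $L$), whence Corollary \ref{cor:42}: $k$ preserves and reflects all weak colimits. Granting this, $k \circ \Theta$ is weak colimit preserving and agrees with $y \circ j_X$ on representables, so it coincides with the weak left Kan extension $\Lan_{y^{et}}\left(y \circ j_X\right) = \left(j_X\right)_!$, and the essential image claim follows from Theorem \ref{thm:whatetmeans1}. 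So the missing ingredient in your argument is exactly Corollary \ref{cor:42} (equivalently, the existence of a right adjoint to $k$); without it, or without a completed version of your groupoid-assembly argument, the converse direction does not go through.
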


\begin{cor}
A stack $\X \in \St\left(\Mfd\right)$ is an \'etale differentiable stack if and only if it is in the essential image of the prolongation functor $j_!.$
\end{cor}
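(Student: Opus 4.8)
The plan is to deduce this corollary as a direct specialization of Theorem \ref{thm:stillok}, taking the underlying category of spaces $S$ to be the category $\Mfd$ of smooth manifolds and choosing the space $X$ to be the disjoint union $\coprod_{n=0}^\infty \RR^n$. Under this choice every term appearing in Theorem \ref{thm:stillok} specializes to the corresponding term in the statement of the corollary, so the entire content of the proof is the bookkeeping needed to match the two statements. In particular, following the convention fixed in Section \ref{sec:conventions}, the appropriate notion of local homeomorphism for $S=\Mfd$ is a local diffeomorphism, so that $\xm^{et}$ becomes the site of manifolds and local diffeomorphisms and the prolongation functor $j_!$ of the corollary is literally the functor $j_!$ of Theorem \ref{thm:stillok}.

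First I would verify the identification $\xm = \Mfd$. By definition $\xm$ is the full subcategory on all spaces that can be covered by open subspaces of $X$. Each $\RR^n$ is a connected component, hence an open subspace, of $X=\coprod_{n=0}^\infty \RR^n$, so every open subset of $\RR^n$ is an open subspace of $X$. Since every smooth manifold admits an open cover by chart domains, each diffeomorphic to an open subset of some $\RR^n$, every smooth manifold is covered by open subspaces of $X$, and conversely every such space is a smooth manifold. This is exactly the identification recorded in the discussion preceding Theorem \ref{thm:nn}. Next I would check that, for this $X$, an \'etale stack on $X$-manifolds is precisely an \'etale differentiable stack: by definition it is the stack of torsors of an \'etale groupoid object $\G$ in $\Mfd$ whose object and arrow spaces are $X$-manifolds, and since all manifolds are $X$-manifolds this is exactly an \'etale Lie groupoid, whose associated stack is an \'etale differentiable stack.

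With these identifications in place, the corollary is an immediate instance of Theorem \ref{thm:stillok}: a stack $\X \in \St\left(\Mfd\right)=\St\left(\xm\right)$ is an \'etale differentiable stack, i.e. an \'etale stack on $X$-manifolds, if and only if it lies in the essential image of $j_!$. The only point requiring genuine (though modest) care — and hence the main obstacle — is the first verification, namely that $\coprod_{n=0}^\infty \RR^n$ is a legitimate object of $S$ and really does generate all of $\Mfd$ under covers by open subspaces. This is precisely where the conventions on the category of spaces adopted in Section \ref{sec:conventions} and Appendix \ref{sec:catspaces2} are invoked, in particular the stipulation that $S$ be closed under the relevant (countable) coproducts so that $X$ exists and that the ambient notion of ``manifold'' permit components of differing dimension.
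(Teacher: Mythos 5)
Your proposal is correct and matches the paper's intended argument: the corollary is stated without proof precisely because it is the specialization of Theorem \ref{thm:stillok} to $X=\coprod_{n=0}^\infty \RR^n$, using the identification $\xm=\Mfd$ recorded in the remarks of Section \ref{sec:amazing} (and reiterated in Section \ref{sec:large}). Your bookkeeping of that identification is exactly the content the paper leaves implicit.
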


In Section \ref{sec:examples}, we turn our attention to some examples. First, we show how to compute binary products in $\Et\left(\xm\right)^{et}-$ which exist in light of Theorem \ref{thm:nn} $-$ and arrive at a rather odd corollary:

\begin{cor}
The category of $n$-manifolds and local diffeomorphisms admits binary products.
\end{cor}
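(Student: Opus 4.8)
The plan is to combine Theorem \ref{thm:nn} with the fact that the Haefliger stack $\HH\left(X\right)$ is a terminal object of $\Et\left(\xm\right)^{et}$. Taking $X=\RR^n$, the category of $n$-manifolds and local diffeomorphisms is exactly $\xm^{et}$, and its $2$-Yoneda embedding identifies it with the full sub-bicategory of $\St\left(\xm^{et}\right)$ on representables. Since $\St\left(\xm^{et}\right)$ is a $2$-topos it has all finite products, so it suffices to show that the product of two representable stacks is again representable; by full faithfulness of Yoneda this product then serves as the binary product in the $1$-category $\xm^{et}$ itself, and the hom-groupoids between $n$-manifolds being discrete, the universal property descends without trouble to the $1$-categorical level.

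Transporting along the equivalence of Theorem \ref{thm:nn}, a representable stack $y^{et}\left(M\right)$ corresponds to the $n$-manifold $M$ regarded as an \'etale stack, and the product of $M$ and $N$ is computed in $\Et\left(\xm\right)^{et}$. Because $\HH\left(\RR^n\right)$ is terminal there, this product is the $2$-fibered product $M\times_{\HH\left(\RR^n\right)}N$ formed over the canonical local diffeomorphisms $M\to\HH\left(\RR^n\right)$ and $N\to\HH\left(\RR^n\right)$ supplied by the universal property of the terminal object. I would then compute this fibered product explicitly: unwinding the definition, its points are triples $\left(x,y,\varphi\right)$ where $x\in M$, $y\in N$, and $\varphi$ is a germ of a local diffeomorphism $\left(M,x\right)\to\left(N,y\right)$, i.e. it is the space $\mathrm{Germ}\left(M,N\right)$ of germs of local diffeomorphisms from $M$ to $N$, equipped with its germ (\'etal\'e) topology.

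The crucial point is that $\mathrm{Germ}\left(M,N\right)$ is again an $n$-manifold: a representative diffeomorphism $\phi:U\xrightarrow{\sim}V$ of a germ, with $U\subseteq M$ and $V\subseteq N$ open, yields a chart $U\to\mathrm{Germ}\left(M,N\right)$, $u\mapsto\left(u,\phi\left(u\right),\text{germ of }\phi\text{ at }u\right)$, exhibiting the germ space as locally diffeomorphic to $U\subseteq M$. This is exactly the feature that makes the arrow space of the Haefliger groupoid $n$-dimensional rather than infinite-dimensional, and it is what makes the corollary true (and surprising, since the ordinary product $M\times N$ is $2n$-dimensional). To finish, I would verify the universal property at the level of functors of points: the two projections $\mathrm{Germ}\left(M,N\right)\to M$ and $\mathrm{Germ}\left(M,N\right)\to N$ are local diffeomorphisms, and given local diffeomorphisms $f:Y\to M$ and $g:Y\to N$, the assignment $y\mapsto\text{germ of }g\circ f^{-1}\text{ at }f\left(y\right)$ (well defined since $f$ is a local diffeomorphism) is the unique local diffeomorphism $Y\to\mathrm{Germ}\left(M,N\right)$ inducing $f$ and $g$.

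I expect the main obstacle to be the honest identification of the $2$-fibered product over $\HH\left(\RR^n\right)$ with the germ space $\mathrm{Germ}\left(M,N\right)$, in particular checking that the bicategorical fibered product of the manifolds $M$ and $N$ (which carry only trivial automorphisms) over the Haefliger stack is representable and carries no stacky structure, so that the product computed in $\St\left(\xm^{et}\right)$ lands in the essential image of the Yoneda embedding. Once representability is secured, everything else is a formal consequence of full faithfulness of Yoneda and the discreteness of the relevant hom-groupoids.
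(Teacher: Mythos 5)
Your argument is correct, and it reaches the corollary by a genuinely different route than the paper. The paper's proof goes through Proposition \ref{prop:prods1} and Corollary \ref{cor:effprod1}: the projection $pr_1:\X\times^{et}\F\to\X$ is a local homeomorphism whose stack of sections is $\left(\tau_\X\right)^*\tilde y^{et}\left(\F\right)$, so when $\tilde y^{et}\left(\F\right)$ is a sheaf (as it is for a representable $\F=N$) the product is the \'etal\'e space of a sheaf over $M$ --- namely the sheaf $U\mapsto\left\{\text{local diffeomorphisms }U\to N\right\}$ --- and hence a manifold. You instead use terminality of $\HH\left(\RR^n\right)$ to write the product as the $2$-fibered product $M\times_{\HH\left(\RR^n\right)}N$ and identify it concretely with the germ space $\mathrm{Germ}\left(M,N\right)$; the paper relegates exactly this description to the example following the corollary (where it is phrased as the total space of the $\Ha\left(M\right)$--$\Ha\left(N\right)$ Morita bibundle) rather than using it as the proof. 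The two models coincide: the \'etal\'e space of the sheaf of local diffeomorphisms into $N$ \emph{is} your germ space. What the paper's route buys is generality (it handles $\X\times^{et}\F$ for any effective \'etale $\F$, not just representables) and it never descends to points; what your route buys is that the final universal-property check in the last part of your third paragraph is self-contained --- once $\mathrm{Germ}\left(M,N\right)$ and its two projections are in hand, that verification alone proves the corollary inside $\xm^{et}$, so the "main obstacle" you worry about (rigorously matching the bicategorical fibered product with the germ space) is not actually needed for the statement at issue. The only step you should spell out is uniqueness in that verification: a continuous lift $h:Y\to\mathrm{Germ}\left(M,N\right)$ over $f$ is, locally where $f$ is invertible, a section of the pulled-back sheaf, i.e.\ the germ of some local diffeomorphism $\psi$ defined near $f\left(y\right)$, and the condition $pr_2\circ h=g$ forces $\psi=g\circ f^{-1}$ pointwise on an open set, hence germwise; this is the same continuity argument that makes the \'etal\'e-space description work.
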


We then show how to construct groupoid presentations of moduli stacks in a functorial way, and work out some examples. For the moduli stacks of Riemannian metrics and symplectic forms respectively, the Lie groupoids that come out of our construction turn out to be widely studied Lie groupoids arising in foliation theory, namely $R\Gamma$ and $\Gamma^{Sp}.$ Their classifying spaces classify homotopy classes of foliations with transverse Riemannian and symplectic structures respectively, on open manifolds (see \cite{Haefliger}). We show later, in Section \ref{sec:gerbeff}, that the associated \'etale stacks are finer objects than the classifying spaces $BR\Gamma$ and $B\Gamma^{Sp},$ as they are capable of classifying such foliations up to isomorphism, on all manifolds.

In section \ref{sec:large}, we study the case of large categories of spaces (e.g. the category of all topological spaces, or all schemes) and show a minor modification of Theorem \ref{thm:stillok} still holds in this setting. We also sketch a proof of an analogous statement for Deligne-Mumford algebraic stacks.

Section \ref{sec:gerbeff} begins with a modern treatment of the theory of gerbes in terms of the internal homotopy theory of the $2$-topos of stacks on a site. We then go on to give a complete characterization of effective \'etale stacks:

\begin{thm}
An \'etale stack $\X$ is effective if and only it arises as the prolongation of some sheaf $F$ on the category of $X$-manifolds and local homeomorphisms, for some space $X.$
\end{thm}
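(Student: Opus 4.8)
The plan is to extract the statement from the equivalence of bicategories of Theorem~\ref{thm:nn} together with a single geometric bridge identifying effectivity with a truncation condition. Recall that the equivalence $\Theta\colon \St\left(\xm^{et}\right)\stackrel{\sim}{\longrightarrow}\Et\left(\xm\right)^{et}$ has quasi-inverse $\tilde y^{et}$, which sends an \'etale stack $\X$ to the stack $\tilde y^{et}\left(\X\right)\colon Y\mapsto \left\{f\colon Y\to\X\mid f\text{ a local homeo.}\right\}$, and that under this equivalence $\Theta$ is the prolongation functor $j_!$. Thus $\X\simeq j_!\left(\tilde y^{et}\left(\X\right)\right)$ for every \'etale $\X$, and $j_!\left(\Z\right)$ is \'etale for every $\Z$ by Theorem~\ref{thm:stillok}. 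Since a stack on $\xm^{et}$ is a sheaf of sets precisely when every object of each groupoid $\Z\left(Y\right)$ has trivial automorphism group, the theorem follows once I prove the \emph{bridge}: an \'etale stack $\X$ is effective if and only if $\tilde y^{et}\left(\X\right)$ is $0$-truncated. Granting this, if $\X$ is effective then $F:=\tilde y^{et}\left(\X\right)$ is a sheaf and $\X\simeq\Theta\left(F\right)=j_!\left(F\right)$ exhibits $\X$ as the prolongation of a sheaf; conversely if $\X\simeq j_!\left(F\right)$ with $F$ a sheaf, then $\X$ is \'etale by Theorem~\ref{thm:stillok} and $\tilde y^{et}\left(\X\right)\simeq F$ is a sheaf, whence $\X$ is effective.

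To prove the bridge in the direction ``effective $\Rightarrow$ $0$-truncated,'' I would fix an \'etale atlas $p\colon V\to\X$ and a local homeomorphism $f\colon Y\to\X$, and show that any $2$-automorphism $\alpha\colon f\Rightarrow f$ is the identity. Working locally, I would restrict $\alpha$ along a point $y\colon *\to Y$ to obtain an element $\alpha_y\in Aut\left(x\right)$, where $x:=f\circ y$. The key observation is that, because $f$ is itself a local homeomorphism, it lifts through $p$ on a neighborhood of $y$, and the condition that $\alpha$ be an automorphism of the \emph{map} $f$ (i.e.\ over $\mathrm{id}_Y$) forces $\alpha_y$ to fix the germ of this lift; in the language of Section~\ref{sec:effective} this says exactly that $\tilde\rho_x\left(\alpha_y\right)=\mathrm{id}$, so $\alpha_y\in Ker\left(\tilde\rho_x\right)$. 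Effectivity means each $\tilde\rho_x$ is injective, so $\alpha_y=\mathrm{id}$ for all $y$; since $\X$ is \'etale, a $2$-morphism that is trivial on all germs is trivial, giving $\alpha=\mathrm{id}$.

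For the converse, ``not effective $\Rightarrow$ not $0$-truncated,'' suppose $Ker\left(\tilde\rho_x\right)\neq 1$ for some geometric point $x$, and pick a nontrivial $g\in Ker\left(\tilde\rho_x\right)$. Since $g$ acts as the identity germ on the atlas at $\tilde x$, I would shrink to a sufficiently small \'etale chart $U\to\X$ through which $x$ factors and reverse the construction above: the germ-triviality of $g$ is precisely what allows it to be integrated to an honest nontrivial $2$-automorphism of the local homeomorphism $U\to\X$. This exhibits a nonidentity automorphism in $\tilde y^{et}\left(\X\right)\left(U\right)$, so $\tilde y^{et}\left(\X\right)$ is not a sheaf of sets.

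The main obstacle is the bridge, and within it the precise dictionary between $2$-automorphisms of \'etale maps into $\X$ and the ineffective isotropy $Ker\left(\tilde\rho_x\right)$: one must show that ``$\alpha_y$ fixes the germ of the lift of $f$'' is equivalent to $\tilde\rho_x\left(\alpha_y\right)=\mathrm{id}$, and that every germ-trivial isotropy element integrates, over a small enough chart, to a genuine $2$-automorphism. This forces one to combine the effective-atlas constructions of Section~\ref{sec:effective} with the local structure of \'etale stacks, and I expect the bookkeeping to be most delicate in the integration step of the converse, where one passes from germ-level triviality of $g$ back to a globally defined automorphism of a local homeomorphism. Alternatively, one can route this through the equivalence $\Sh\left(\HH\left(X\right)\right)\simeq\Sh\left(\xm^{et}\right)$ and the interpretation of $Ker\left(\tilde\rho_x\right)$ as the stalk of the gerbe of $\X$ over $\Ef\left(\X\right)$, so that $0$-truncation of $\tilde y^{et}\left(\X\right)$ corresponds to triviality of all these stalks.
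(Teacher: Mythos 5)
Your proposal is correct, and it reaches the statement by a genuinely different route for the central lemma. The paper deduces the corollary from its Theorem \ref{thm:whateffmeans}, whose key step is to show that effectivity of $\X\simeq\left[\G\right]$ is equivalent to \emph{representability} of the unique local homeomorphism $\X\to\HH\left(\G_0\right)$ to the Haefliger stack: it forms the weak pullback groupoid $P=\G_0\times_{\Ha\left(\G_0\right)}\G$, computes that the isotropy group of an object $\left(x,y,\varphi\right)$ is exactly the ineffective isotropy group $\ker\left(Aut\left(y\right)\to\mathit{Diff}_y\left(\G_0\right)\right)$, and then passes to the sheaf condition on $\tilde y^{et}\left(\X\right)$ via the representable-iff-truncated criterion and the identification $\Gamma\left(!_\X\right)\simeq\tilde y^{et}\left(\X\right)$ coming from terminality of $\HH\left(\G_0\right)$; the corollary then follows by combining with Theorem \ref{thm:prol3}. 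You instead prove the equivalence ``effective $\Leftrightarrow$ $\tilde y^{et}\left(\X\right)$ is $0$-truncated'' directly, by identifying $2$-automorphisms of a local homeomorphism $f:Y\to\X$ pointwise with germ-trivial isotropy elements. This works: representing $f$ by $\psi:Y_{\mathcal{U}}\to\h$ with $\psi_0$ a local homeomorphism, a $2$-automorphism is a section $\sigma:U\to\h_1$ with $s\sigma=t\sigma=\psi_0$, and openness of $\psi_0$ forces $t\circ s^{-1}|_{U_{\sigma\left(u\right)}}=\mathrm{id}$ near $\psi_0\left(u\right)$, i.e.\ $\sigma\left(u\right)\in\ker\tilde\rho$; conversely your ``integration'' step is just the observation that if $\left[g\right]=1$ one may shrink $U$ until $t\circ s^{-1}|_{U_g}=\mathrm{id}_U$, whereupon $s^{-1}|_{U_g}$ is a nonidentity $2$-automorphism of $U\to\X$. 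Your computation is the paper's isotropy computation for $P$ in disguise, but your framing avoids the Haefliger stack and the representability/truncation machinery entirely, yielding a more elementary and self-contained argument; what the paper's route buys is the additional equivalent characterizations (representability of $\X\to\HH\left(X\right)$), which it reuses later, e.g.\ for the classification of Riemannian foliations. One small bookkeeping point: the prolongation in the statement is the relative prolongation $\left(\tau_X\right)_!=\left(i_X\right)_!\circ\left(j_X\right)_!$ landing in $\St\left(S\right)$, so to match the statement exactly you should insert the full and faithful embedding $\left(i_X\right)_!$ of Remark \ref{rem:xmff}, as in the proof of Theorem \ref{thm:prol3}.
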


\begin{thm}
For any space $X,$ the equivalence
$$\Et\left(\xm\right)^{et}\stackrel{\sim}{\longrightarrow}\St\left(\xm^{et}\right),$$
between \'etale $X$-manifold stacks and local homeomorphisms and the bicategory of stacks on the site of $X$-manifolds and local homeomorphisms, restricts to an equivalence
$$\Eft\left(\xm\right)^{et}\stackrel{\sim}{\longrightarrow}\Sh\left(\xm^{et}\right),$$
between effective \'etale $X$-manifold stacks and local homeomorphisms, and sheaves on the site of $X$-manifolds and local homeomorphisms. (In particular $\Eft\left(\xm\right)^{et}$ is equivalent to a $1$-category.)
\end{thm}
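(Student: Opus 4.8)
The plan is to deduce the statement from the equivalence of Theorem~\ref{thm:nn} together with the characterization theorem just proved (that an \'etale stack is effective if and only if it is the prolongation of a sheaf), using only formal properties of full sub-bicategories. Write $\Theta \colon \St(\xm^{et}) \to \Et(\xm)^{et}$ and $\tilde y^{et} \colon \Et(\xm)^{et} \to \St(\xm^{et})$ for the two mutually inverse functors realizing that equivalence, and recall that on objects $\Theta$ agrees with the prolongation functor $j_!$ appearing in Theorem~\ref{thm:stillok}. Both $\Sh(\xm^{et}) \hookrightarrow \St(\xm^{et})$ and $\Eft(\xm)^{et} \hookrightarrow \Et(\xm)^{et}$ are replete full sub-bicategories (sheaves being the $0$-truncated, i.e.\ discrete, objects among stacks, and effective \'etale stacks being a full subclass of \'etale stacks). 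Consequently, to show that the equivalence restricts it suffices to verify that $\Theta$ and $\tilde y^{et}$ match these two classes of objects; the unit and counit isomorphisms then restrict verbatim, and fullness and faithfulness on hom-categories are inherited from the ambient equivalence.

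First I would check that $\Theta$ sends sheaves into effective \'etale stacks. If $F \in \Sh(\xm^{et})$, then $\Theta(F) \simeq j_!(F)$ is by definition the prolongation of a sheaf, hence effective by the ``if'' direction of the characterization theorem; thus $\Theta$ restricts to a functor $\Sh(\xm^{et}) \to \Eft(\xm)^{et}$. Conversely, I would check that $\tilde y^{et}$ sends effective \'etale stacks into sheaves. If $\X \in \Eft(\xm)^{et}$, the ``only if'' direction produces a sheaf $F$ with $\X \simeq \Theta(F)$; applying the inverse equivalence yields $\tilde y^{et}(\X) \simeq \tilde y^{et}(\Theta(F)) \simeq F$, so $\tilde y^{et}(\X)$ is equivalent to a sheaf. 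Since being $0$-truncated is invariant under equivalence of stacks and $\Sh(\xm^{et})$ is replete, $\tilde y^{et}(\X)$ is itself a sheaf, and $\tilde y^{et}$ restricts to $\Eft(\xm)^{et} \to \Sh(\xm^{et})$. Equivalently, these two inclusions say that the essential image of $\Eft(\xm)^{et}$ under $\tilde y^{et}$ is exactly $\Sh(\xm^{et})$.

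With both restrictions in hand, the restricted functors are again mutually inverse, since the coherence data of the original equivalence lie in the relevant full sub-bicategories, so they exhibit the desired equivalence $\Eft(\xm)^{et} \simeq \Sh(\xm^{et})$. The final parenthetical follows immediately: $\Sh(\xm^{et})$ is the $1$-category of set-valued sheaves, whose hom-groupoids are discrete, so the equivalent bicategory $\Eft(\xm)^{et}$ is also equivalent to a $1$-category.

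I expect the only genuine subtlety to be confirming that the object-level correspondence ``effective $\leftrightarrow$ sheaf'' is literally implemented by the same pair $(\Theta, \tilde y^{et})$ as in Theorem~\ref{thm:nn} --- that is, that the prolongation appearing in the characterization theorem is the corestriction of $\Theta$ to objects, and that ``sheaf'' (a set-valued, $0$-truncated object) is precisely what $\tilde y^{et}$ returns on effective inputs. Once this identification is pinned down, the remainder is routine $2$-categorical bookkeeping about restricting an equivalence to replete full sub-bicategories, and no geometric input beyond the characterization theorem is required.
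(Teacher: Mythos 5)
Your overall strategy coincides with the paper's: the paper obtains this statement as an immediate corollary of its Theorem \ref{thm:whateffmeans}, which characterizes effectivity of an \'etale stack $\X$ precisely by the condition that $\tilde y^{et}\left(\X\right)$ is a sheaf, and the formal bookkeeping about restricting an equivalence to replete full sub-bicategories is exactly as you describe. The one point you must tighten is the direction ``effective $\Rightarrow$ $\tilde y^{et}\left(\X\right)$ is a sheaf'' for the \emph{given} $X$. The prolongation characterization you invoke is existentially quantified: an effective $\X$ is the prolongation of a sheaf on $Y\mbox{-}\mathbf{Mfd}^{et}$ for \emph{some} space $Y$ (in the paper one takes $Y=\G_0$ for a presenting groupoid $\G$), and $Y\mbox{-}\mathbf{Mfd}$ need not equal $\xm$ for the fixed $X$ of the statement (e.g.\ $\G_0=\RR^2$ while $X=\coprod_n\RR^n$). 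So you cannot directly write $\X\simeq\Theta\left(F\right)$ with $F\in\Sh\left(\xm^{et}\right)$ and conclude $\tilde y^{et}\left(\X\right)\simeq F$. The missing step is the implication $3)\Rightarrow 5)$ of Theorem \ref{thm:whateffmeans}: if $\tilde y^{et}\left(\X\right)$ is a sheaf relative to $\G_0$, then it is a sheaf relative to any admissible $X$. This is proved by factoring $\X\to\HH\left(\G_0\right)\to\HH\left(X\right)$ and using that $\HH\left(\G_0\right)$ is subterminal in $\Et\left(\xm\right)^{et}$ (Remark \ref{rmk:subterminal}), so the second map is $\left(-1\right)$-truncated and the composite remains truncated, hence representable. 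With that implication supplied (or by citing condition $5)$ of Theorem \ref{thm:whateffmeans} directly instead of the prolongation corollary), your argument is complete and agrees with the paper's.
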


\begin{cor}
A stack $\X$ on smooth manifolds is an effective \'etale differentiable stack if and only if it arises as the prolongation of a sheaf on the site of manifolds and local diffeomorphisms. Moreover, the equivalence
$$\Et\left(\Mfd\right)^{et}\stackrel{\sim}{\longrightarrow}\St\left(\Mfd^{et}\right),$$
between \'etale differentiable stacks and local diffeomorphisms and stacks on the site of manifolds and local diffeomorphisms, restricts to an equivalence
$$\Eft\left(\Mfd\right)^{et}\stackrel{\sim}{\longrightarrow}\Sh\left(\Mfd^{et}\right),$$
between effective \'etale differentiable stacks and local diffeomorphisms, and sheaves on the site of manifolds and local diffeomorphisms.
\end{cor}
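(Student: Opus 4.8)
The plan is to obtain this corollary as the special case $X=\coprod_{n=0}^\infty \RR^n$ of the two immediately preceding theorems, so the only genuine work is to check that each category appearing there specializes to the corresponding category of smooth manifolds. First I would verify the identification $\xm=\Mfd$ for this choice of $X$. Every smooth manifold admits an atlas whose charts are diffeomorphic to open subsets of some $\RR^n$, hence to open subspaces of $X=\coprod_{n=0}^\infty \RR^n$; so every manifold, including those of mixed dimension, is covered by open subspaces of $X$ and is therefore an $X$-manifold. Conversely every open subspace of $X$ is a manifold. Thus the full subcategory of $X$-manifolds is exactly $\Mfd$, and consequently $\xm^{et}=\Mfd^{et}$ once one recalls that in the smooth setting the appropriate notion of local homeomorphism is precisely a local diffeomorphism.

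Next I would match up the stacky notions. An \'etale $X$-manifold stack is by definition one presented by an \'etale groupoid object whose object and arrow spaces are $X$-manifolds; for $X=\coprod_{n=0}^\infty \RR^n$ these are exactly the \'etale Lie groupoids, so $\Et(\xm)^{et}=\Et(\Mfd)^{et}$ as bicategories, and their effective full subbicategories likewise coincide, $\Eft(\xm)^{et}=\Eft(\Mfd)^{et}$. Since the two sites agree, so do the associated bicategories of stacks and categories of sheaves, $\St(\xm^{et})=\St(\Mfd^{et})$ and $\Sh(\xm^{et})=\Sh(\Mfd^{et})$. With these identifications in place, the ``Moreover'' clause of the corollary is literally the immediately preceding theorem evaluated at $X=\coprod_{n=0}^\infty \RR^n$: the equivalence $\Et(\Mfd)^{et}\stackrel{\sim}{\longrightarrow}\St(\Mfd^{et})$ restricts to $\Eft(\Mfd)^{et}\stackrel{\sim}{\longrightarrow}\Sh(\Mfd^{et})$.

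For the opening biconditional I would use that the equivalence $\St(\Mfd^{et})\to\Et(\Mfd)^{et}$ is implemented by the prolongation functor $j_!$ along $j:\Mfd^{et}\to\Mfd$; this is exactly the content of Theorem \ref{thm:stillok} and its corollary, which identify the essential image of $j_!$ with the \'etale differentiable stacks. Under this equivalence the restricted equivalence of the previous paragraph matches the sheaves in $\Sh(\Mfd^{et})$ with precisely the effective \'etale differentiable stacks. Hence $\X$ is an effective \'etale differentiable stack if and only if $\X\simeq j_!F$ for some sheaf $F$ on $\Mfd^{et}$, which is the asserted characterization as a prolongation of a sheaf. Equivalently, one may read the biconditional directly off the general characterization of effectivity proved above, taking the existentially quantified space there to be the single universal choice $X=\coprod_{n=0}^\infty \RR^n$, since every \'etale differentiable stack is an \'etale $X$-manifold stack for this $X$.

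The hard part will be purely bookkeeping rather than mathematics: confirming that ``$X$-manifold'' applied to the disconnected space $X=\coprod_{n=0}^\infty \RR^n$ really recovers all smooth manifolds, and that the abstract notion of local homeomorphism built into the axioms for the category of spaces $S$ unwinds, in the smooth case, to a local diffeomorphism, so that $\xm^{et}$ and $\Mfd^{et}$ have the same arrows. Once these identifications are pinned down no further computation is required, since every substantive assertion is already contained in the two preceding theorems together with Theorem \ref{thm:stillok} and its corollary.
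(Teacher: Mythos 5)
Your proposal is correct and follows essentially the same route the paper intends: the corollary is obtained by specializing the general $X$-manifold statements at $X=\coprod_{n=0}^\infty \RR^n$, using the paper's own observation that every smooth manifold is an $X$-manifold for this choice and that $k\circ\Theta\simeq (j_X)_!$ identifies the prolongation with the equivalence $\Theta$. The bookkeeping you flag (that $\xm=\Mfd$, $\xm^{et}=\Mfd^{et}$, and $\tau_X=j$ here) is exactly what the paper treats as immediate.
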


We then go on to relate the theory of effective \'etale stacks to gerbes:

\begin{thm}
Let $f:\Z \to \X$ be a local homeomorphism of \'etale stacks, with $\X$ effective. Then its stack of sections $$\Gamma\left(f\right) \in \St\left(\X\right)$$ is a gerbe if and only if the induced map $\Ef\left(\Z\right) \to \X$ from the effective part of $\Z$ to $\X$ is an equivalence.
\end{thm}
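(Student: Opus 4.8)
The plan is to argue entirely inside the $2$-topos $\St\left(\X\right)$. First I would invoke the canonical equivalence between $\St\left(\X\right)$ and the $2$-category of stacks \emph{over} $\X$, under which a morphism $p:E\to\X$ corresponds to its stack of sections $U\mapsto\left\{s:U\to E\mid p\circ s\simeq\iota_U\right\}$; in particular $f:\Z\to\X$ corresponds to $\Gamma\left(f\right)$, and the terminal object $1\in\St\left(\X\right)$ corresponds to $\mathrm{id}_\X$. As recalled at the beginning of this section, a stack $\mathcal{G}\in\St\left(\X\right)$ is a gerbe precisely when it is a connected ($0$-connected) object, i.e. when its $0$-truncation $\tau_{\leq 0}\mathcal{G}$ is terminal; local non-emptiness is then automatic, since the terminal sheaf is inhabited. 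Thus the entire statement reduces to computing $\tau_{\leq 0}\Gamma\left(f\right)$ and comparing it with $1$.

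The heart of the argument is the identification
$$\tau_{\leq 0}\Gamma\left(f\right)\simeq\Gamma\left(\Ef\left(f\right)\right)\quad\text{in }\St\left(\X\right),$$
where $\Ef\left(f\right):\Ef\left(\Z\right)\to\X=\Ef\left(\X\right)$ is induced on effective parts by functoriality of $\Ef$ along local homeomorphisms (Section \ref{sec:effective}), and $f$ factors as $\Z\xrightarrow{e}\Ef\left(\Z\right)\xrightarrow{\Ef\left(f\right)}\X$ through the effectification $e$. To prove this I would establish two points. First, that $\Ef\left(\Z\right)\to\X$ is a $0$-truncated object of $\St\left(\X\right)$, equivalently that $\Ef\left(f\right)$ is faithful: for a point $x$ of $\X$ and a point $\bar x$ of $\Ef\left(\Z\right)$ above it, both $Aut\left(\bar x\right)$ and $Aut\left(x\right)$ inject into germs of diffeomorphisms (effectivity of $\Ef\left(\Z\right)$ and of $\X$), and since $\Ef\left(f\right)$ is a local homeomorphism it is a germ-isomorphism, forcing $Aut\left(\bar x\right)\hookrightarrow Aut\left(x\right)$; hence $\Gamma\left(\Ef\left(f\right)\right)$ is in fact a sheaf of sets. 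Second, that $e:\Z\to\Ef\left(\Z\right)$ is the universal map from $\Z$ to a $0$-truncated object of $\St\left(\X\right)$. Here effectivity of $\X$ and the local-homeomorphism hypothesis are essential: relative $0$-truncation over $\X$ must kill exactly the kernel of $Aut\left(\tilde x\right)\to Aut\left(x\right)$, whereas $e$ kills the kernel of $\tilde\rho_{\tilde x}:Aut\left(\tilde x\right)\to\mathit{Diff}_{\tilde x}$. Because $f$ is a local homeomorphism, the germ data of $\Z$ is identified with that of $\X$, so $\tilde\rho_{\tilde x}$ factors as $Aut\left(\tilde x\right)\to Aut\left(x\right)\hookrightarrow\mathit{Diff}_{\tilde x}$ with the second arrow injective by effectivity of $\X$; the two kernels therefore coincide and $e$ realizes the relative $0$-truncation.

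Granting this, the theorem is immediate:
$$\Gamma\left(f\right)\text{ is a gerbe}\iff\tau_{\leq 0}\Gamma\left(f\right)\simeq 1\iff\Gamma\left(\Ef\left(f\right)\right)\simeq 1.$$
Since $\Gamma\left(\Ef\left(f\right)\right)$ is the object of $\St\left(\X\right)$ corresponding to $\Ef\left(f\right)$, it is terminal if and only if $\Ef\left(f\right)$ is an equivalence. This gives the forward direction directly. For the converse, if $\Ef\left(f\right)$ is an equivalence then $f\simeq e$ up to that equivalence, so $\Gamma\left(f\right)\simeq\Gamma\left(e\right)$; applying the same lemma to $e$ (with the idempotence $\Ef\left(e\right)\simeq\mathrm{id}$) gives $\tau_{\leq 0}\Gamma\left(e\right)\simeq\Gamma\left(\Ef\left(e\right)\right)\simeq 1$, so $\Gamma\left(e\right)$, and hence $\Gamma\left(f\right)$, is a gerbe.

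The main obstacle is precisely the kernel comparison of the second point above, namely showing that \emph{relative} $0$-truncation over $\X$ agrees with the \emph{absolute} effective part $\Ef$. In general these differ, and their coincidence is exactly what the hypotheses supply: effectivity of $\X$ makes $Aut\left(x\right)$ act faithfully on germs, and $f$ being a local homeomorphism identifies the germ data upstairs and downstairs, so that ``acting trivially on the germ of $\Z$'' and ``becoming trivial in the fibre over $\X$'' become the same condition. Making this precise at the level of the small étale sites of $\Z$ and $\X$ — so that the $2$-topos-theoretic $\tau_{\leq 0}$ is literally computed by $\Ef$ — is the one genuinely technical step; the remainder is formal manipulation in $\St\left(\X\right)$.
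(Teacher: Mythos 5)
Your argument is correct and its logical skeleton---gerbe $\Leftrightarrow$ connected $\Leftrightarrow$ $0$-truncation terminal $\Leftrightarrow$ $\Ef\left(f\right)$ an equivalence---is the same as the paper's, but you reach the pivotal identification by a genuinely different route. The paper's proof is three lines: it observes that $\Gamma\left(f\right)$ is a gerbe iff $f$ is connected, invokes Theorem \ref{thm:whateffmeans} to say that under the global equivalence $\tilde y^{et}$ the effective stack $\X$ corresponds to a \emph{sheaf}, and then concludes by Remark \ref{rmk:sheafconn} (connectedness over a sheaf is detected by $\pi_0$ being an isomorphism) together with Corollary \ref{cor:effispizero} (under $\tilde y^{et}$, the functor $\Ef_{et}$ \emph{is} $\pi_0$, by uniqueness of left adjoints). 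In other words, the paper gets your key lemma $\tau_{\leq 0}\Gamma\left(f\right)\simeq\Gamma\left(\Ef\left(f\right)\right)$ entirely for free from Proposition \ref{prop:quu} (relative truncation over a sheaf agrees with absolute truncation) and the formal identification $\pi_0=\Ef$. You instead work in the slice $\St\left(\X\right)$ and re-derive that identification by hand, comparing $\ker\left(Aut\left(\tilde x\right)\to Aut\left(x\right)\right)$ with $\ker\left(Aut\left(\tilde x\right)\to\mathit{Diff}_{\tilde x}\right)$ germ by germ; your use of the hypotheses is exactly right (effectivity of $\X$ makes $Aut\left(x\right)\to\mathit{Diff}_x$ injective, and the local-homeomorphism hypothesis identifies germ data upstairs and downstairs, so the two kernels coincide). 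What the paper's route buys is that the one step you flag as ``genuinely technical''---promoting the stalkwise kernel computation to an equivalence of stacks on the small site---never has to be confronted, because it is absorbed into the already-established global statements; what your route buys is a self-contained geometric explanation of \emph{why} the theorem is true that does not pass through the equivalence $\tilde y^{et}$ at all. To make your version airtight you would still need to justify that a morphism of étale stacks inducing isomorphisms on all stalks of the relevant sheaves of isomorphism classes is an equivalence, which is essentially re-proving a relative form of Corollary \ref{cor:effispizero}; citing that corollary together with Proposition \ref{prop:quu} would close the gap immediately.
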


We also give a characterization of gerbes over a general \'etale stack, which is a subtle correction to the characterization in \cite{gerbes}. See Theorem \ref{thm:localfullgerb} for the precise statement.

Furthermore, as an example of this machinery, we prove in this section that submersions into the classifying stack for Riemannian metrics (introduced in Section \ref{sec:amazing}) classify Riemannian foliations.

In Appendix \ref{sec:catspaces2}, we formalize exactly what properties are needed of a category of spaces for the results of this paper to apply to it. Finally, in Appendix \ref{sec:prelim}, we give a brief review of the theory of \'etale stacks, and of the results of \cite{etalspme}, which are crucial for this paper. We also give a brief discussion about stalks of stacks.

\vspace{0.2in}\noindent{\bf Acknowledgment:}
I would like to thank Andr\'e Henriques, Ieke Moerdijk, and Urs Schreiber for useful conversations. I would also like to thank Christian Blohmann, Marius Crainic, and Ieke Moerdijk for giving me the opportunity to speak about these results at the ``Higher Geometric Structures Along the Lower Rhine'' workshop in January, 2012. Finally, I am grateful to the Max Planck Institute for Mathematics for providing me with the stimulating environment in which I conducted much of this research.
 \newpage

\subsection{Conventions}\label{sec:conventions}
Throughout this article, $S$ shall denote a fixed category whose objects we shall call ``spaces,'' equipped with an appropriate class of morphisms, which we will refer to simply as local homeomorphisms. We follow nearly the same conventions as in \cite{etalspme}, with the notable exception that we exclude locales. It is possible to include locales, however the technical annoyance of making all proofs ``point-free'' outweighs the benefit. We will give a more broad treatment in \cite{higherme}, which will also cover the case of locales.

Here is a list of possibilities for $S.$
\begin{itemize}
\item[I)] Sober topological spaces and local homeomorphisms.
\item[II)] Any type of manifold (e.g. smooth manifolds, $C^k$ manifolds, analytic manifolds, complex manifolds, super manifolds...) with the appropriate version of local diffeomorphism, provided we remove all separation conditions. For example, manifolds will neither be assumed paracompact nor Hausdorff.
\item[III)] Schemes over any fixed base and Zariski local homeomorphisms. When viewed as maps of locally ringed spaces, Zariski local homeomorphisms are those maps $$\left(f,\varphi\right): \left(X,\mathcal{O}_X\right) \to \left(Y,\mathcal{O}_Y\right)$$ such that $f$ is a local homeomorphism and $\varphi: f^*\left(\mathcal{O}_Y\right) \stackrel{\sim}{\longrightarrow} \mathcal{O}_X$ is an isomorphism. Again, we do not impose any separation conditions.
\end{itemize}
This list need not be exhaustive. See Appendix \ref{sec:catspaces2} for a more systematic treatment. A morphism in $S$ will simply be called ``continuous.'' For example, if $S$ is taken to be the category of smooth manifolds, the phrase ``continuous map'' will mean a smooth map, and ``local homeomorphism'' will mean local diffeomorphism. Similarly for the other examples above. The reason for not imposing any separation conditions is to consider the \'etal\'e space (espace \'etal\'e) of a sheaf over a manifold, scheme etc., as a manifold or scheme itself.

We review the basics of \'etale stacks in this setting, and establish notational conventions and terminology used throughout this article in Appendix \ref{sec:prelim}. In the same appendix, we summarize many of the results of \cite{etalspme} which are used in an essential way in this paper.

\section{Effective Stacks}\label{sec:effective}
\subsection{Basic definitions}
We begin by recalling a special class of \'etale stacks, called effective \'etale stacks. These pop up in various guises, particularly in the study of foliation theory. Effective \'etale stacks include all stacky leaf spaces of foliated manifolds. They are precisely those \'etale stacks arising from pseudogroups of homeomorphisms, in the sense of Cartan in \cite{cartan}, or equivalently, from \'etale groupoids of germs. We start with a summary of results well known in the groupoid literature, expressed in a more stack-oriented language. We will begin first by defining effectiveness for orbifolds, as this definition lends more to geometric intuition. This will also make the general definition for an arbitrary \'etale stack more intuitive.

\begin{dfn}
A morphism $f:\X \to \Y$ between \'etale stacks is \textbf{proper} if the induced morphism $$\Sh\left(\X\right) \to \Sh\left(\Y\right)$$ is a proper geometric morphism of topoi in the sense of \cite{proper}, where the topoi above are the topoi of small sheaves in the sense of Definition \ref{dfn:smallsheaves}.
\end{dfn}

\begin{dfn}
An \'etale stack $\X$ is called an \textbf{orbifold} if the diagonal map $$\Delta:\X \to \X \times \X$$ is proper.
\end{dfn}

\begin{dfn}
An $S$-groupoid is an \textbf{orbifold groupoid} if it is \'etale and \textbf{proper}, i.e. the map $$\left(s,t\right):\h_1 \to \h_0 \times \h_0$$ is proper.
\end{dfn}

\begin{prop}
$\X$ is an orbifold if and only if there exists an orbifold groupoid $\h$ such that $\X \simeq \left[\h\right]$.
\end{prop}

\begin{proof}
For any \'etale $\h$ such that $\left[\h\right] \simeq \X$,
$$\xymatrix{\h_1 \ar[d]_{\left(s,t\right)} \ar[r] & \X \ar[d]^{\Delta}\\
\h_0 \times \h_0 \ar[r]^{a \times a} & \X \times \X}$$
is a weak pullback diagram, where $a:\h_0 \to \X$ is the atlas associated to $\h$. If $\X$ is an orbifold, it is now immediate that $\h$ is an orbifold groupoid. The converse follows from \cite{proper}, Proposition 2.7, and the equivalence of bicategories of Theorem \ref{thm:etendue}.
\end{proof}

Recall the following definition:
\begin{dfn}
If $G$ is a $S$-group acting on a space $X$, the action is \textbf{effective} (or faithful), if $\bigcap\limits_{x \in X} G_x=e$, i.e., for all non-identity elements $g \in G$, there exists a point $x \in X$ such that $g \cdot x \neq x.$ Equivalently, the induced homomorphism $$\rho:G \to \mathit{Diff}\left(X\right),$$ where $\mathit{Diff}\left(X\right)$ is the group of homeomorphisms of $X,$ is a monomorphism. (Recall that when $S$ is the category of smooth manifolds, the word homeomorphisms means diffeomorphisms.) These two definitions are equivalence since $$Ker\left(\rho\right)=\bigcap\limits_{x \in X} G_x=e.$$
\end{dfn}
If $\rho$ above has a non-trivial kernel $K,$ then there is an inclusion of $K$ into each isotropy group of the action, or equivalently into each automorphism group of the quotient stack (the stack associated to the action groupoid). In this case $K$ is ``tagged-along'' as extra data in each automorphism group. Each of these copies of $K$ is the kernel of the induced homomorphism $$\left(\rho\right)_x:Aut\left(\left[x\right]\right) \to \mathit{Diff}\left(M\right)_x,$$ where $\mathit{Diff}\left(M\right)_x$ is the group of homeomorphisms of $M$ which fix $x$. In the \emph{differentiable} setting, when $G$ is \emph{finite}, these kernel are called the \emph{ineffective isotropy groups} of the associated \'etale stack. In this case, the \emph{effective part} of this stack is the stacky-quotient of $X$ by the induced action of $G/K$. This latter stack has only trivial ineffective isotropy groups.

\begin{rmk}
If $G$ is not finite, this notion of ineffective isotropy group may not agree with Definition \ref{dfn:ineffgp}, since non-identity elements can induce the germ of the identity around a point. It the topological setting, this problem can occur even when $G$ is finite.
\end{rmk}

\begin{prop}\label{prop:orbeff}
Suppose $\X$ is an orbifold and $x:* \to \X$ is a point. Then there exists a representable local homeomorphism $p:V_x \to \X$ from a space $V_x$ such that:
\begin{itemize}
\item[i)] the point $x$ factors (up to isomorphism) as $* \stackrel{\tilde x}{\longrightarrow} V_x \stackrel{p}{\longrightarrow} \X$
\item[ii)] the automorphism group $Aut\left(x\right)$ acts on $V_x$.
\end{itemize}
\end{prop}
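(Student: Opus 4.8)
The plan is to present $\X$ by a groupoid, identify $Aut\left(x\right)$ with an isotropy group, and build $V_x$ as a suitably shrunk neighborhood in the object space carrying an honest action reconstructed from the isotropy arrows. First I would invoke the equivalence of Theorem \ref{thm:etendue} to write $\X \simeq \left[\h\right]$ for an \'etale groupoid $\h$ with atlas $a:\h_0 \to \X$; since $\X$ is an orbifold, the preceding proposition lets me take $\h$ proper. As $a$ is an effective epimorphism, the point $x$ lifts through it: the fibre $*\times_\X \h_0$ is a nonempty \'etale space over $*$, hence a nonempty discrete space, and choosing a point in it gives $\tilde x:*\to\h_0$ with $a\circ\tilde x \cong x$. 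Using the weak pullback square relating $\h_1$ to the diagonal of $\X$ (from the proof of the preceding proposition), I identify $Aut\left(x\right)$ with the isotropy group $\h_{\tilde x}=\left\{g\in\h_1 \,:\, s\left(g\right)=t\left(g\right)=\tilde x\right\}$; properness of $\left(s,t\right)$ makes its fibre over $\left(\tilde x,\tilde x\right)$ compact, and \'etaleness makes it discrete, so $G:=Aut\left(x\right)$ is finite. This finiteness is what the whole argument hinges on.

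Next I would turn each isotropy arrow into a germ of a self-homeomorphism of $\h_0$ fixing $\tilde x$. For $g\in\h_{\tilde x}$, \'etaleness of $s$ gives a local section $\sigma_g$ of $s$ through $g$, and I set $\phi_g := t\circ\sigma_g$, a local homeomorphism of $\h_0$ defined near $\tilde x$ with $\phi_g\left(\tilde x\right)=t\left(g\right)=\tilde x$. Because $\h$ is \'etale, the germ of $\phi_g$ at $\tilde x$ is independent of the chosen section, and comparing with the product section $y\mapsto \sigma_g\left(\phi_h\left(y\right)\right)\cdot\sigma_h\left(y\right)$ through $g\cdot h$ shows that $g\mapsto \phi_g$ is a homomorphism from $G$ into the group of germs at $\tilde x$ of local homeomorphisms fixing $\tilde x$ (this is the map $\tilde\rho_x$ appearing in the introduction).

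The main step, and the only real obstacle, is promoting these germs to a genuine $G$-action on an open neighborhood. Since $G$ is finite, choosing representatives I may pick an open $N\ni\tilde x$ on which every $\phi_g$ is defined and on which the finitely many relations $\phi_g\circ\phi_h=\phi_{gh}$ and $\phi_e=\operatorname{id}$ hold exactly (they hold as germs, hence on a small enough common neighborhood). I then set $$V_x:=\left\{y\in N \,:\, \phi_g\left(y\right)\in N \text{ for all } g\in G\right\},$$ an open neighborhood of $\tilde x$ as a finite intersection of preimages. A direct check using $\phi_g\circ\phi_h=\phi_{gh}$ on $N$ shows that $V_x$ is $G$-invariant and that the $\phi_g|_{V_x}$ assemble into an honest action of $G$ by homeomorphisms; because $\h_0$ is a space and $V_x$ is open, $V_x$ is again a space.

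Finally I would set $p:=a|_{V_x}:V_x\to\X$. As $\h$ is \'etale, the atlas $a$ is a representable local homeomorphism, and its restriction to the open subspace $V_x$ remains a representable local homeomorphism; since $\tilde x\in V_x$ and $p\left(\tilde x\right)=a\left(\tilde x\right)\cong x$, the point $x$ factors as $*\xrightarrow{\tilde x}V_x\xrightarrow{p}\X$, giving (i), while the action just constructed gives (ii). The place I expect to require the most care is checking that the germ-level identities can be realized simultaneously and exactly on a single neighborhood and that the resulting $V_x$ is genuinely invariant — both of which are clean precisely because $G$ is finite; without properness one would obtain only germ-level data, exactly as the introduction's discussion of general \'etale stacks anticipates.
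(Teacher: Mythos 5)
Your proposal is correct and follows essentially the same route as the paper's proof: present $\X$ by a proper \'etale groupoid, identify $Aut\left(x\right)$ with the (finite, by properness plus \'etaleness) isotropy group $\h_{\tilde x}$, realize each isotropy arrow as a locally defined homeomorphism $t\circ s^{-1}|_{U_g}$, shrink a neighborhood of $\tilde x$ until the finitely many cocycle identities hold exactly, and cut down to an invariant open set on which these assemble into an honest action. The only (harmless) cosmetic difference is that you define $V_x$ as an intersection of preimages $\bigcap_g \phi_g^{-1}\left(N\right)$ rather than the paper's intersection of images.
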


\begin{proof}
The crux of this proof comes from \cite{orbcon}. For a point $x$ of an \'etale stack $\X$, $Aut\left(x\right)$ fits into the $2$-Cartesian diagram \cite{NoohiF}

$$\xymatrix{Aut\left(x\right) \ar[d] \ar[r] & \mathrm{*} \ar[d]^-{x}\\
\mathrm{*} \ar[r]^-{x} & \X,}$$
and is a group object in spaces. If $\X \simeq \left[\h\right]$ for an $S$-groupoid $\h$, there is a point $\tilde x \in \h_0$ such that $x \cong a \circ \tilde x,$ where $a:\h_0 \to \X$ is the atlas associated to the groupoid $\h,$ and moreover, $\h_{\tilde x} \cong Aut\left(\tilde x\right)$, where $\h_{\tilde x}=s^{-1}\left(\tilde x\right) \cap t^{-1}\left(\tilde x\right)$ is the $S$-group of automorphisms of $\tilde x$. (In particular, this implies that if $\X$ is \'etale, then $Aut\left(x\right)$ is discrete for all $x$.) Suppose now that $\X$ and $\h$ are \'etale. Then for each $h \in \h_{\tilde x},$ there exists an open neighborhood $U_h$ such that the two maps $$s:U_h \to s\left(U_h\right)$$ $$t:U_h \to t\left(U_h\right)$$ are homeomorphisms. Now, suppose that $\X$ is in fact an orbifold (so that $\h$ is an orbifold groupoid). Then, it follows that $\h_{\tilde x}$ is finite. Given $f$ and $g$ in $\h_{\tilde x}$, we can find a small enough neighborhood $W$ of $\tilde x$ in $\h_0$ such that for all $z$ in $W$, $$t \circ s^{-1}|_{U_g}\left(z\right) \in s\left(U_f\right),$$ $$t \circ s^{-1}|_{U_g} \left(t \circ s^{-1}|_{U_f}\left(z\right) \right) \in W,$$ and

\begin{equation}\label{eq:391}
s^{-1}|_{U_g}\left(z\right) \cdot s^{-1}|_{U_f}\left(z\right) \in U_{gf}.
\end{equation}
In this case, by plugging in $z=\tilde x$ in (\ref{eq:391}), we see that (\ref{eq:391}) as a function of $z$ must be the same as $$s^{-1}|_{U_{gf}}.$$ Therefore, on $W$, the following equation holds
\begin{equation}\label{eq:lnof5}
t \circ s^{-1}|_{U_g} \left(t \circ s^{-1}|_{U_f}\left(z\right) \right)= t \circ s^{-1}|_{U_{gf}}.
\end{equation}
Since $\h_{\tilde x}$ is finite, we may shrink $W$ so that equation (\ref{eq:lnof5}) holds for all composable arrows in $\h_{\tilde x}$. Let $$V_x:=\bigcap\limits_{h \in \h_{\tilde x}} \left(t \circ s^{-1}|_{U_f}\left(W\right)\right).$$ Then, for all $h \in h_{\tilde x}$, $$t \circ s^{-1}|_{U_h}\left(V_x\right)=V_x.$$ So $t \circ s^{-1}|_{U_h}$ is a homeomorphism from $V_x$ to itself for all $x$, and since equation (\ref{eq:lnof5}) holds, this determines an action of $\h_{\tilde x}\cong Aut\left(x\right)$ on $V_x$. Finally, define $p$ to be the atlas $a$ composed with the inclusion $V_x \hookrightarrow \h_0$
\end{proof}

\begin{dfn}
An orbifold $\X$ is an \textbf{effective} orbifold, if the actions of $Aut\left(x\right)$ on $V_x$ as in the previous lemma can be chosen to be effective.
\end{dfn}

The finiteness of the stabilizer groups played a crucial role in the proof of Proposition \ref{prop:orbeff}. Without this finiteness, one cannot arrange (in general) for even a single arrow in an \'etale groupoid to induce a self-homeomorphism of an open subset of the object space. Additionally, even if each arrow had such an action, there is no guarantee that the (infinite) intersection running over all arrows in the stabilizing group of these neighborhoods will be open. Hence, for a general \'etale groupoid, the best we can get is a germ of a locally defined homeomorphism. It is using these germs that we shall extend the definition of effectiveness to arbitrary \'etale groupoids and stacks.

Given a space $X$ and a point $x \in X$, let $\mathit{Diff}_x\left(X\right)$ denote the group of germs of (locally defined) homeomorphisms that fix $x$.

\begin{prop}
Let $\X$ be an \'etale stack and pick an \'etale atlas $$V \to \X.$$ Then for each point $x:* \to \X$,
\begin{itemize}
\item[i)] the point $x$ factors (up to isomorphism) as $* \stackrel{\tilde x}{\longrightarrow} V\stackrel{p}{\longrightarrow} \X,$ and
\item[ii)] there is a canonical homomorphism $Aut\left(x\right) \to \mathit{Diff}_{\tilde x}\left(V\right)$.
\end{itemize}
\end{prop}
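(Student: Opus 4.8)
The plan is to follow the proof of Proposition \ref{prop:orbeff} almost verbatim, but to pass to \emph{germs} at every step; this is precisely the device that removes the finiteness hypothesis, since germs of locally defined homeomorphisms always compose, whereas honest self-homeomorphisms of a \emph{fixed} open set require the finiteness used there to produce a common domain $W$ on which equation (\ref{eq:lnof5}) holds. First I would present $\X$ by an \'etale groupoid $\h$ with object space $\h_0=V$ and with $p:V\to\X$ the associated atlas (for instance $\h=V\times_\X V$), so that $\X\simeq\left[\h\right]$. For (i), I would lift $x$ along the atlas exactly as in loc.\ cit.: since $p$ is a representable surjective local homeomorphism, the fibre product $*\times_\X V$ is a nonempty space that is \'etale over $*$, and any of its points yields $\tilde x:*\to V$ with $p\circ\tilde x\cong x$. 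As recalled in the proof of Proposition \ref{prop:orbeff}, this lift identifies $Aut\left(x\right)$ with the stabilizer $\h_{\tilde x}=s^{-1}\left(\tilde x\right)\cap t^{-1}\left(\tilde x\right)$, which is discrete because $\h$ is \'etale.

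To construct the homomorphism $\tilde\rho_x$, for each $h\in\h_{\tilde x}$ I would use \'etaleness to choose an open $U_h\ni h$ on which both $s$ and $t$ restrict to homeomorphisms onto open neighborhoods of $\tilde x$ in $V$. The composite $t\circ\left(s|_{U_h}\right)^{-1}$ is then a locally defined homeomorphism of $V$ fixing $\tilde x$, and I would set $\tilde\rho_x\left(h\right)$ to be its germ at $\tilde x$, an element of $\mathit{Diff}_{\tilde x}\left(V\right)$. This assignment is independent of the choice of $U_h$: any two local sections of the local homeomorphism $s$ passing through $h$ agree on a neighborhood of $h$, so the induced germ at $\tilde x$ is unique.

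The homomorphism property is the only point requiring real care, and it is the germ-level shadow of equation (\ref{eq:lnof5}). Given composable $f,g\in\h_{\tilde x}$, I would argue that for $z$ in a small enough neighborhood of $\tilde x$ the arrow $\left(s|_{U_f}\right)^{-1}\left(z\right)$ is composable with $\left(s|_{U_g}\right)^{-1}\left(t\circ\left(s|_{U_f}\right)^{-1}\left(z\right)\right)$, and that, by continuity of the groupoid multiplication, this composite lies in $U_{gf}$ (it equals $gf$ when $z=\tilde x$) and hence coincides with $\left(s|_{U_{gf}}\right)^{-1}\left(z\right)$. Comparing targets gives exactly equation (\ref{eq:lnof5}) as an equality of germs, that is $\tilde\rho_x\left(gf\right)=\tilde\rho_x\left(g\right)\circ\tilde\rho_x\left(f\right)$. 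The essential difference with Proposition \ref{prop:orbeff} is that here the identity is only needed as a germ, never on a fixed open set, so the finiteness of $\h_{\tilde x}$ is never invoked; continuity of multiplication alone keeps the composite inside $U_{gf}$ near $\tilde x$. Accordingly, I expect the only genuine content to be this germ computation, while canonicity reduces to the routine checks that $\tilde\rho_x$ is independent of the auxiliary neighborhoods (done above) and compatible with the identification $Aut\left(x\right)\cong\h_{\tilde x}$ furnished by the chosen atlas.
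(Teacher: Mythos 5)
Your proposal is correct and follows essentially the same route as the paper, which likewise presents $\X$ by the groupoid with object space $V$ and sends each $h\in\h_{\tilde x}$ to the germ at $\tilde x$ of $t\circ s^{-1}|_{U_h}$. The additional germ-level verification of the homomorphism property that you supply is exactly the point the paper leaves implicit, and your observation that passing to germs is what removes the finiteness hypothesis of the orbifold case matches the paper's discussion preceding the proposition.
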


\begin{proof}
Following the proof of Proposition \ref{prop:orbeff}, let $V=\h_0$ and let the homomorphism send each $h \in \h_x$ to the germ of $t \circ s^{-1}|_{U_h}$, which is a locally defined homeomorphism of $V$ fixing $\tilde x$.
\end{proof}

\begin{cor}
For $\h$ an \'etale $S$-groupoid, for each $x \in \h_0$, there exists a canonical homomorphism of groups $\h_x \to \mathit{Diff}_{x}\left(\h_0\right)$.
\end{cor}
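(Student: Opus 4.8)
The plan is to read this corollary directly off the preceding proposition by presenting the germ data of $\h$ through its associated étale stack. Let $\X \simeq \left[\h\right]$ be the étale stack presented by $\h$, with canonical atlas $a:\h_0 \to \X$; since $\h$ is étale, $a$ is a representable local homeomorphism, i.e. an étale atlas of the kind the previous proposition requires. Given $x \in \h_0$, let $\bar x := a \circ x : * \to \X$ be the induced point of the stack. As recalled in the proof of Proposition \ref{prop:orbeff}, the $2$-Cartesian square computing $Aut\left(\bar x\right)$ identifies it with the isotropy $S$-group $\h_x = s^{-1}\left(x\right) \cap t^{-1}\left(x\right)$. Applying the previous proposition to $\X$ with the atlas $V = \h_0$ — so that the chosen factorization of $\bar x$ through $\h_0$ is $x$ itself — yields a canonical homomorphism $Aut\left(\bar x\right) \to \mathit{Diff}_{x}\left(\h_0\right)$, and precomposing with the isomorphism $\h_x \cong Aut\left(\bar x\right)$ gives the asserted map.

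It is worth recording this homomorphism explicitly, since this both makes the word ``canonical'' precise and removes any apparent dependence on the presentation. For $h \in \h_x$, étaleness of $\h$ supplies an open neighborhood $U_h \ni h$ in $\h_1$ on which both $s$ and $t$ restrict to homeomorphisms onto open subsets of $\h_0$; the composite $t \circ s^{-1}|_{U_h}$ is then a locally defined homeomorphism of $\h_0$ which fixes $x$, because $s\left(h\right) = t\left(h\right) = x$. The map of the corollary sends $h$ to the germ of $t \circ s^{-1}|_{U_h}$ at $x$. There are two things to check, both at the level of germs. Well-definedness is immediate: if $U_h$ and $U_h'$ are two admissible neighborhoods of $h$, then $s$ restricts to a homeomorphism on $U_h \cap U_h'$ onto a neighborhood of $x$ and the two candidate maps agree there, so their germs at $x$ coincide. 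For multiplicativity I would reuse the local identity (\ref{eq:lnof5}) established in the proof of Proposition \ref{prop:orbeff}: for composable $g, f \in \h_x$ the maps $t \circ s^{-1}|_{U_g} \circ \left(t \circ s^{-1}|_{U_f}\right)$ and $t \circ s^{-1}|_{U_{gf}}$ agree on a neighborhood of $x$, which is exactly the assertion that the germs multiply correctly. Crucially, and in contrast to Proposition \ref{prop:orbeff}, no finiteness of $\h_x$ is invoked here: one never forms an infinite intersection of open sets but only passes to germs, so the construction goes through verbatim for an arbitrary étale $S$-groupoid.

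The only genuine subtlety — hence the step I would treat most carefully — is the compatibility of the composition law of $\h$ with composition of germs of homeomorphisms, namely verifying that $h \mapsto \mathrm{germ}_x\left(t \circ s^{-1}|_{U_h}\right)$ is a homomorphism rather than an anti-homomorphism. This is precisely what (\ref{eq:lnof5}), together with the source/target and ordering conventions fixed in Proposition \ref{prop:orbeff}, pins down, so no new argument is needed; once the conventions are matched, everything else reduces to the routine germ computations above.
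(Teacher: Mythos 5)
Your proposal is correct and follows essentially the same route as the paper: the paper states this as an immediate consequence of the preceding proposition, whose proof already specifies the map $h \mapsto \mathrm{germ}_{x}\left(t \circ s^{-1}|_{U_h}\right)$ with $V=\h_0$, exactly as you do. Your added verifications (independence of the choice of $U_h$, multiplicativity via the local identity from the proof of Proposition \ref{prop:orbeff}, and the observation that no finiteness is needed once one passes to germs) are accurate and consistent with the paper's own remarks.
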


\begin{dfn}\label{dfn:ineffgp}
Let $x$ be a point of an \'etale stack $\X$. The \textbf{ineffective isotropy group} of $x$ is the kernel of the induced homomorphism $$Aut\left(x\right) \to \mathit{Diff}_{\tilde x}\left(V\right).$$ Similarly for $\h$ an \'etale groupoid.
\end{dfn}

\begin{dfn}
An \'etale stack $\X$ is \textbf{effective} if the ineffective isotropy group of each of its points is trivial. Similarly for $\h$ an \'etale groupoid.
\end{dfn}

\begin{prop}
An orbifold $\X$ is an effective orbifold if and only if it is effective when considered as an \'etale stack.
\end{prop}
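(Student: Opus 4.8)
The statement is local at each point, so the plan is to fix a point $x:* \to \X$ and an \'etale atlas $V \to \X$ presenting $\X$ by an orbifold groupoid $\h$, with $\tilde x \in \h_0 = V$ a lift of $x$ and $Aut\left(x\right) \cong \h_{\tilde x}$ the (necessarily finite) stabilizer. The key observation I would make is that the two homomorphisms supplied by Proposition \ref{prop:orbeff} and by the proposition constructing the canonical map $Aut\left(x\right) \to \mathit{Diff}_{\tilde x}\left(V\right)$ are built from the \emph{same} locally defined homeomorphisms $t \circ s^{-1}|_{U_h}$. The former gives an honest action $\alpha: Aut\left(x\right) \to \mathit{Diff}\left(V_x\right)$ on the open set $V_x$ by $h \mapsto t \circ s^{-1}|_{U_h}|_{V_x}$, while the latter is the germ homomorphism $\rho_x: Aut\left(x\right) \to \mathit{Diff}_{\tilde x}\left(V\right)$, $h \mapsto \mathrm{germ}_{\tilde x}\left(t \circ s^{-1}|_{U_h}\right)$. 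Since each $h \in \h_{\tilde x}$ fixes $\tilde x$ and $V_x$ is a neighborhood of $\tilde x$, restricting $\alpha\left(h\right)$ and passing to its germ at $\tilde x$ recovers $\rho_x\left(h\right)$; that is, $\rho_x$ factors as $Aut\left(x\right) \xrightarrow{\alpha} \mathit{Diff}\left(V_x\right) \xrightarrow{\mathrm{germ}_{\tilde x}} \mathit{Diff}_{\tilde x}\left(V\right)$. In particular $\ker \alpha \subseteq \ker \rho_x$.

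From this factorization one direction is immediate. If $\X$ is effective as an \'etale stack, then by Definition \ref{dfn:ineffgp} the ineffective isotropy group $\ker \rho_x$ is trivial for every $x$, whence $\ker \alpha \subseteq \ker \rho_x = \left\{e\right\}$ for the $V_x$ produced by Proposition \ref{prop:orbeff}. Thus the action $\alpha$ is faithful, and $\X$ is an effective orbifold.

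For the converse I would show that, after shrinking, the reverse inclusion $\ker \rho_x \subseteq \ker \alpha$ holds, so that faithfulness of $\alpha$ forces injectivity of $\rho_x$. First I would replace $V_x$ by the connected component of $\tilde x$; this is still an $Aut\left(x\right)$-invariant neighborhood of $\tilde x$ (the action fixes $\tilde x$) and hence a legitimate choice of $V_x$. It then remains to prove: if $h \in \ker \rho_x$, so that $\phi_h := t \circ s^{-1}|_{U_h}$ is the identity on some neighborhood of $\tilde x$ inside $V_x$, then in fact $\phi_h = \mathrm{id}_{V_x}$. This rigidity is the heart of the matter: $\phi_h$ is a self-homeomorphism of the \emph{connected} space $V_x$ of \emph{finite order} (as $\h_{\tilde x}$ is finite, by properness), and a finite-order self-map that is the identity on a nonempty open set is the identity on the whole connected component. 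In the differentiable, analytic, and complex settings this follows by averaging to obtain a $\phi_h$-invariant metric, for which $\phi_h$ becomes an isometry; its fixed-point set is then a closed, totally geodesic submanifold which, containing an open set, must have full dimension and hence be all of $V_x$. Granting this, $\ker \rho_x = \ker \alpha$, so if $\alpha$ is faithful then $\rho_x$ is injective, and $\X$ is effective as an \'etale stack.

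The main obstacle is precisely this rigidity step, which is also where the hypotheses genuinely enter: both the finiteness of $Aut\left(x\right)$ (furnished by properness of the orbifold groupoid) and the differentiable structure are essential. I would explicitly flag that in the purely topological setting this rigidity can fail — a finite-order homeomorphism need not be determined by its germ at a fixed point — which is exactly the phenomenon warned about in the Remark following the faithfulness definition. In that setting the germ-theoretic Definition \ref{dfn:ineffgp} is the correct one, and ``effective orbifold'' must be read with $V_x$ taken connected (equivalently, as a germ condition) for the equivalence to hold.
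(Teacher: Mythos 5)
Your argument is correct in substance, and it is worth noting that it does strictly more work than the paper: the paper's entire proof is a one-line citation of Lemma 2.11 of \cite{Fol}, whereas you supply the actual mechanism, which is essentially the content of that lemma. Both pillars of your argument are sound. The germ homomorphism $\rho_x$ of Definition \ref{dfn:ineffgp} visibly factors as $\mathrm{germ}_{\tilde x}\circ\alpha$ through the action of Proposition \ref{prop:orbeff}, since both are built from the same local sections $t\circ s^{-1}|_{U_h}$; this gives $\ker\alpha\subseteq\ker\rho_x$ for free and hence the easy direction. The converse correctly reduces to the rigidity statement that a finite-order diffeomorphism of a connected manifold which is the identity near a point is the identity, and your averaging argument (make $\phi_h$ an isometry, observe that its fixed-point set is open and closed) establishes this. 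One small point of hygiene: the paper's manifolds are not assumed Hausdorff or paracompact, so before averaging a metric you should shrink the set $W$ in the construction of $V_x$ into a chart around $\tilde x$; this costs nothing.

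The one place you should be more explicit is the quantifier in the definition of effective orbifold, which asks only that the actions \emph{can be chosen} to be effective. Your converse assumes faithfulness of $\alpha$ on the \emph{connected} $V_x$, but the hypothesis only hands you faithfulness for \emph{some} admissible $V_x$, and a disconnected $V_x$ can be faithful ``for the wrong reasons'': an element with trivial germ at $\tilde x$ is forced by your rigidity step to act trivially on the component of $\tilde x$, but nothing prevents it from acting nontrivially on other components of a large invariant neighborhood (consider the quotient of $M_1\sqcup M_2$ by a $\mathbb{Z}/2$ acting trivially on $M_1$ and faithfully on $M_2$, at a point of $M_1$). So either one reads the definition with connected, or at least sufficiently small, $V_x$ --- the standard orbifold-chart convention, under which your proof is complete --- or the literal statement requires this caveat. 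Your closing remarks on the topological setting are apt: for honest topological manifolds Newman's theorem actually rescues the rigidity step, but for the arbitrary sober spaces allowed here it fails, which is exactly what the Remark following the definition of an effective action is warning about.
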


\begin{proof}
This follows from \cite{Fol}, Lemma 2.11.
\end{proof}

\subsection{Haefliger groupoids}
We now describe a very useful construction, originally due to Haefliger \cite{Haefliger} in his study of foliation theory. Out of any space $X,$ this construction produces a canonically associated \'etale $S$-groupoid $\Ha\left(X\right).$ As we shall see in Section \ref{sec:amazing}, this groupoid has many special properties.

\begin{dfn}\label{dfn:haf}\cite{Haefliger}
Let $X$ be a space. Consider the presheaf $$\mathit{Emb}:\mathcal{O}\left(X\right)^{op} \to \Set,$$ which assigns an open subset $U$ the set of open embeddings of $U$ into $X$. Denote by $\Ha\left(X\right)_1$ the total space  of the \'etal\'e space of the associated sheaf. Denote the map to $X$ by $s$. The stalk at $x$ is the set of germs of locally defined homeomorphisms (which no longer need to fix $x$). If $germ_x\left(f\right)$ is one such germ, the element $f\left(x\right) \in X$ is well-defined. We assemble this into a map $$t:\Ha\left(X\right)_1 \to X.$$ This extends to a natural structure of an \'etale $S$-groupoid $\Ha\left(X\right)$ with objects $X$, called the \textbf{Haefliger groupoid} of $X$.
\end{dfn}

\begin{rmk}
In literature, the Haefliger groupoid is usually denoted by $\Gamma\left(X\right)$, but, we wish to avoid the clash of notation with the stack of sections $2$-functor.
\end{rmk}

\begin{prop}
For $\h$ an \'etale $S$-groupoid, there is a canonical map $$\tilde \iota_{\h}:	\h \to \Ha\left(\h_0\right).$$
\end{prop}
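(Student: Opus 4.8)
The plan is to construct $\tilde\iota_{\h}$ as a morphism of $S$-groupoids (an internal functor) which is the identity on objects, $(\tilde\iota_{\h})_0=\mathrm{id}_{\h_0}$, and which on arrows records the germ of the local bisection attached to an arrow. Concretely, since $\h$ is \'etale, for each $h\in\h_1$ I may choose an open neighborhood $U_h\ni h$ on which both $s$ and $t$ restrict to homeomorphisms onto open subsets of $\h_0$; then $t\circ s^{-1}|_{U_h}$ is a locally defined homeomorphism of $\h_0$ carrying $s(h)$ to $t(h)$, exactly as in the proof of Proposition \ref{prop:orbeff}. I set
$$(\tilde\iota_{\h})_1(h):=germ_{s(h)}\left(t\circ s^{-1}|_{U_h}\right)\in\Ha(\h_0)_1.$$

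First I would check this is well defined, i.e. independent of $U_h$: if $U_h'$ is another such neighborhood, then on $U_h\cap U_h'$ the two local sections of $s$ landing in this set agree (a local homeomorphism admits at most one continuous section through a given point), so $t\circ s^{-1}|_{U_h}$ and $t\circ s^{-1}|_{U_h'}$ coincide on the open neighborhood $s(U_h\cap U_h')$ of $s(h)$ and define the same germ. Next I would verify that $(\tilde\iota_{\h})_1$ is a morphism in $S$. For a fixed choice of $U_h$, the open embedding $t\circ s^{-1}|_{U_h}\colon s(U_h)\hookrightarrow\h_0$ is a section of the presheaf $\mathit{Emb}$ over $s(U_h)$, hence defines a continuous local section $s(U_h)\to\Ha(\h_0)_1$ of the \'etal\'e space; precomposing with the homeomorphism $s|_{U_h}\colon U_h\to s(U_h)$ yields a morphism $U_h\to\Ha(\h_0)_1$ agreeing with $(\tilde\iota_{\h})_1$ on $U_h$. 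By well-definedness these local morphisms agree on overlaps and glue to a global morphism $\h_1\to\Ha(\h_0)_1$. Compatibility with source and target is immediate, since $germ_{s(h)}(t\circ s^{-1}|_{U_h})$ is by construction an arrow from $s(h)$ to $t(h)$.

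It then remains to check that $\tilde\iota_{\h}$ is functorial. Units go to units: a local section of $s$ through an identity arrow may be taken to be the identity bisection, whose induced homeomorphism of $\h_0$ is the identity, with germ the unit of $\Ha(\h_0)$. For composition, given composable $g,h$ with $s(g)=t(h)$, I would use continuity of the multiplication $m$ to produce, for $z$ near $s(h)$, the local section $z\mapsto m\!\left(s^{-1}|_{U_g}\left(t\circ s^{-1}|_{U_h}(z)\right),\, s^{-1}|_{U_h}(z)\right)$ of $s$ through the arrow $gh$. Since continuous local sections of the local homeomorphism $s$ that agree at a point agree on a neighborhood, this coincides near $s(h)$ with $s^{-1}|_{U_{gh}}$, whence $t\circ s^{-1}|_{U_{gh}}=\left(t\circ s^{-1}|_{U_g}\right)\circ\left(t\circ s^{-1}|_{U_h}\right)$ as germs at $s(h)$. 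This is precisely the composition law of $\Ha(\h_0)$, so $\tilde\iota_{\h}$ is an internal functor. Restricting it to isotropy groups recovers the homomorphisms $\h_x\to\mathit{Diff}_x(\h_0)$ of the preceding corollary.

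I expect the composition step to be the main point requiring care. Unlike in Proposition \ref{prop:orbeff}, where finiteness of the stabilizer was used to arrange honest self-homeomorphisms of a common open set and to take a finite intersection, here no finiteness or properness is available, so the argument must remain entirely at the level of germs. The essential inputs are that $s$ is a local homeomorphism, so that local sections through a given arrow are unique up to germ, together with continuity of $m$; these are exactly the \'etale groupoid axioms, and once they are invoked the matching of germs is forced. Conceptually, $\tilde\iota_{\h}$ simply sends each local bisection of $\h$ to the germ of the homeomorphism of $\h_0$ it induces, thereby realizing $\h$ inside the pseudogroup of locally defined homeomorphisms of $\h_0$ out of which $\Ha(\h_0)$ is built.
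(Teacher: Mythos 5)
Your construction is exactly the one the paper uses: $\tilde\iota_{\h}$ is the identity on objects and sends an arrow $h$ to the germ at $s(h)$ of $t\circ s^{-1}|_{U_h}$, with independence of the choice of $U_h$ noted. The paper's proof stops there, leaving continuity and functoriality as evident; your verifications of these (gluing the local sections of the \'etal\'e space, and the germ-level composition argument via uniqueness of local sections of $s$) are correct fillings-in of the same argument rather than a different route.
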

\begin{proof}
For each $h \in \h_1$, choose a neighborhood $U$ such that $s$ and $t$ restrict to embeddings. Then $h$ induces a homeomorphism $$s\left(h\right) \in s\left(U\right) \to t\left(U\right) \ni t\left(h\right),$$ namely $t\circ s^{-1}|_{U}$. Define $\tilde \iota_{\h}$ by having it be the identity on objects and having it send an arrow $h$ to the germ at $s\left(h\right)$ of $t\circ s^{-1}|_{U}$. This germ clearly does not depend on the choice of $U$.
\end{proof}

The following proposition is immediate:

\begin{prop}
An \'etale $S$-groupoid $\h$ is effective if and only if $\tilde \iota_{\h}$ is faithful.
\end{prop}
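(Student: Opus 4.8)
The plan is to reduce the claim to a formal property of faithful functors between groupoids, together with the identification of the vertex-group restrictions of $\tilde\iota_\h$ with the canonical germ homomorphisms of the preceding Corollary.

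First I would record what $\tilde\iota_\h$ does on arrows: it is the identity on the object space $\h_0$, and it sends an arrow $h$ with $s(h)=x$, $t(h)=y$ to the germ at $x$ of $t\circ s^{-1}|_U$, a germ of a locally defined homeomorphism carrying $x$ to $y$. Two such germs coincide only when they have the same base point and the same value there, so an equality $\tilde\iota_\h(h)=\tilde\iota_\h(h')$ already forces $s(h)=s(h')$ and $t(h)=t(h')$. Consequently, faithfulness of $\tilde\iota_\h$ is the same as injectivity of $\tilde\iota_\h$ on arrows, and may be tested separately on each set $\h(x,y)$ of arrows from $x$ to $y$.

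The key observation is that on a vertex group $\h_x=\h(x,x)$ the germ $\tilde\iota_\h(h)$ fixes $x$, so $\tilde\iota_\h|_{\h_x}$ takes values in $\mathit{Diff}_x(\h_0)$ and is exactly the canonical homomorphism $\h_x\to\mathit{Diff}_x(\h_0)$ of the Corollary. By Definition \ref{dfn:ineffgp} its kernel is the ineffective isotropy group of $x$, so injectivity of $\tilde\iota_\h|_{\h_x}$ is equivalent to triviality of that group. Thus $\h$ is effective precisely when $\tilde\iota_\h$ is injective on every vertex group.

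It remains only to promote injectivity on vertex groups to full faithfulness, by the standard groupoid argument: if $\tilde\iota_\h(h)=\tilde\iota_\h(h')$ with $h,h'\in\h(x,y)$, then $h^{-1}h'$ is an automorphism of $x$ and, since $\tilde\iota_\h$ is a morphism of groupoids, $\tilde\iota_\h(h^{-1}h')=\tilde\iota_\h(h)^{-1}\tilde\iota_\h(h')=\mathrm{id}_x$; injectivity on $\h_x$ then gives $h^{-1}h'=1_x$, i.e. $h=h'$, and the converse implication is trivial. I expect the only point needing care to be the internal reading of ``faithful'' as a monomorphism at the level of the space of arrows; but because equality of germs pins down both source and target, this internal condition collapses to pointwise injectivity of $\tilde\iota_\h$ on $\h_1$, so no extra topological argument is required. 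Everything else is purely formal, which is why the statement is labelled immediate.
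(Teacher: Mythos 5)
Your argument is correct and is precisely the unpacking the paper has in mind when it declares this proposition ``immediate'' and omits the proof: since $\tilde\iota_\h$ is the identity on objects and its restriction to each vertex group $\h_x$ is the canonical homomorphism $\h_x \to \mathit{Diff}_x\left(\h_0\right)$ whose kernel is by definition the ineffective isotropy group, faithfulness reduces by the standard translation argument to injectivity on vertex groups, i.e.\ to effectivity. No gaps; the observation that equality of germs already forces equality of sources and targets correctly disposes of the only point where the internal reading of ``faithful'' could have caused friction.
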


\begin{dfn}
Let $\h$ be an \'etale $S$-groupoid. The \textbf{effective part} of $\h$ is the image in $\Ha\left(\h_0\right)$ of $\tilde \iota_{\h}$. It is denoted by  $\Ef\left(\h\right)$. This is an open subgroupoid, so it is clearly effective and \'etale. We will denote the canonical map $\h \to \Ef \h$ by $\iota_{\h}$.
\end{dfn}

\begin{rmk}
$\h$ is effective if and only if $\iota_{\h}$ is an isomorphism.
\end{rmk}

\subsection{\'Etale invariance}
Unfortunately, the assignment $\h \mapsto \Ef\left(\h\right)$ is not functorial with respect to all maps, that is, a morphism of \'etale $S$-groupoids need not induce a morphism between their effective parts. However, there are classes of maps for which this assignment is indeed functorial. In this subsection, we shall explore this functoriality.
\begin{dfn}\label{dfn:etalinv}
Let $P$ be a property of a map of spaces which forms a subcategory of $S$. We say that $P$ is \textbf{\'etale invariant} if the following two properties are satisfied:
\begin{itemize}
\item[i)] $P$ is stable under pre-composition with local homeomorphisms
\item[ii)] $P$ is stable under pullbacks along local homeomorphisms.
\end{itemize}
If in addition, every morphism in $P$ is open, we say that $P$ is a class of \textbf{open \'etale invariant} maps. Examples of such open \'etale invariant maps are open maps, local homeomorphisms, or, in the smooth setting, submersions. We say a map $\psi:\G \to \h$ of \'etale $S$-groupoids has property $P$ if both $\psi_0$ and $\psi_1$ do. We denote corresponding $2$-category of $S$-groupoids as $\left(S-\Gpd\right)^{et}_{P}$. We say a morphism $$\varphi:\Y \to \X$$ has property $P$ is there exists a homomorphism of \'etale $S$-groupoids $$\psi:\G \to \h$$ with property $P$, such that $$\varphi \cong \left[\psi\right].$$
\end{dfn}

\begin{rmk}
This agrees with the definition of a local homeomorphism of \'etale stacks given in Definition \ref{dfn:localhomeostacks} in the case $P$ is local homeomorphisms. When $P$ is open maps, under the correspondence between \'etale stacks and \'etendues (Theorem \ref{thm:etendue}), this agrees with the notion of an open map of topoi in the sense of \cite{elephant2}. When $P$ is submersions, this is equivalent to the definition of a submersion of smooth \'etendues given in \cite{Ie}.
\end{rmk}

\begin{rmk}
Notice that being \'etale invariant implies being invariant under restriction and local on the target, as in Definition \ref{dfn:local}.
\end{rmk}

\begin{prop}
Let $P$ be a property of a map of spaces which forms a subcategory of $S$. $P$ is \'etale invariant if and only if the following conditions are satisfied:
\begin{itemize}
\item[i)] every local homeomorphism is in $P$
\item[ii)] for any commutative diagram
$$\xymatrix{W \ar[d]_-{g} \ar[r]^-{f'} & Y \ar[d]^-{g'}\\
X \ar[r]^-{f} & Z,}$$
with both $g$ and $g'$ local homeomorphisms, if $f$ has property $P$, then so does $f'$.
\end{itemize}
\end{prop}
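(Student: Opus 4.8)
The plan is to prove the two implications separately, with nearly all of the substance residing in the forward direction (\'etale invariance $\Rightarrow$ (i) and (ii)); the reverse implication will then fall out by applying condition (ii) to two well-chosen squares. Throughout I will freely use the standard closure properties of the class of local homeomorphisms in a category of spaces (Appendix \ref{sec:catspaces2}): they contain the isomorphisms, are closed under composition, are stable under base change, and satisfy left cancellation, i.e. if $k \circ u$ and $k$ are local homeomorphisms then so is $u$.

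For the forward direction, condition (i) is immediate: since $P$ forms a subcategory it contains every identity, so precomposing $\mathrm{id}_Y \in P$ with a local homeomorphism $g\colon X \to Y$ exhibits $g = \mathrm{id}_Y \circ g$ as an element of $P$, by stability under precomposition with local homeomorphisms. For condition (ii), I would start from the given commutative square with $g,g'$ local homeomorphisms and $f \in P$, and form the pullback of $f$ along $g'$,
$$\xymatrix{X\times_Z Y \ar[d]_{\pi_X} \ar[r]^-{\pi_Y} & Y \ar[d]^{g'}\\ X \ar[r]_-{f} & Z.}$$
Here $\pi_Y \in P$ by stability under pullback along the local homeomorphism $g'$, while $\pi_X$ is a local homeomorphism because local homeomorphisms are stable under base change. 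The commutativity relation $g' \circ f' = f \circ g$ yields a unique comparison map $u\colon W \to X\times_Z Y$ with $\pi_Y \circ u = f'$ and $\pi_X \circ u = g$. Since $g$ and $\pi_X$ are local homeomorphisms, left cancellation forces $u$ to be a local homeomorphism; hence $f' = \pi_Y \circ u$ lies in $P$, once more by stability under precomposition with local homeomorphisms.

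The reverse direction is a direct application of (ii). For stability under precomposition, given $f \in P$ and a local homeomorphism $h$ with $f \circ h$ defined, I would apply (ii) to the square whose left edge is $h$, whose right edge is $\mathrm{id}$, whose bottom is $f$, and whose top is $f \circ h$; both verticals are local homeomorphisms, so (ii) gives $f \circ h \in P$. For stability under pullback along a local homeomorphism, I would apply (ii) to the pullback square itself: its two vertical edges are the given local homeomorphism and its base change (again a local homeomorphism), its base is $f \in P$, and its top is exactly the pulled-back map, which (ii) then places in $P$.

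The main obstacle is the verification of (ii) in the forward direction, and specifically the step showing that the comparison map $u$ into the pullback is a local homeomorphism: this is the only point at which the argument genuinely leaves the two defining clauses of \'etale invariance and relies on the closure properties of local homeomorphisms (stability under base change for $\pi_X$, and left cancellation to promote $u$). Once those properties are invoked, both directions reduce to routine bookkeeping with precomposition and base change, so the remaining steps require no further ideas.
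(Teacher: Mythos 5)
Your proof is correct and follows essentially the same route as the paper: identities plus precomposition stability for (i), the factorization of $f'$ through the pullback $X\times_Z Y$ together with left cancellation for local homeomorphisms for (ii), and the two well-chosen squares for the converse. The only difference is cosmetic — you name the left-cancellation step explicitly where the paper just asserts that the comparison map is a local homeomorphism.
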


\begin{proof}
Suppose that $P$ is \'etale invariant. Then, as $P$ is a subcategory, it contains all the identity arrows, and since it is stable under pre-composition with local homeomorphisms, this implies that every local homeomorphism is in $P$. Now suppose that $f \in P$, and $$\xymatrix{W \ar[d]_-{g} \ar[r]^-{f'} & Y \ar[d]^-{g'}\\
X \ar[r]^-{f} & Z,}$$ is commutative with both $g$ and $g'$ local homeomorphisms. Then as $P$ is stable under pullbacks along local homeomorphisms, the induced map $X \times_{Z} Y \to Y$ has property $P$. Moreover, as local homeomorphisms are invariant under change of base (Definition \ref{dfn:local}), the induced map $X \times_{Z} Y \to X$ is a local homeomorphism. It follows that the induced map $W \to X \times_{Z} Y$ is a local homeomorphism, and since $f'$ can be factored as $$W \to X \times_{Z} Y \to Y,$$ and $P$ is stable under pre-composition with local homeomorphisms, it follows that $f'$ has property $P$.
Conversely, suppose that the conditions of the proposition are satisfied. Condition $ii)$ clearly implies that $P$ is stable under pullbacks along local homeomorphisms. Suppose that $e:W \to X$ is a local homeomorphism and $f:X \to Z$ is in $P$. Then as
$$\xymatrix{W \ar[d]_-{e} \ar[r]^-{f \circ e} & Z \ar[d]^-{id_Z}\\
X \ar[r]^-{f} & Z,}$$
commutes, it follows  that the $f \circ e$ has property $P$.
\end{proof}

\begin{lem}\label{lem:etin1}
For any open \'etale invariant $P$, the assignment $$\h \mapsto \Ef\left(\h\right)$$ extends to a $2$-functor

$$\Ef_P:\left(S-\Gpd\right)^{et}_{P} \to \Ef\left(S-\Gpd\right)^{et}_{P}$$
from \'etale $S$-groupoids and $P$-morphisms to effective \'etale $S$-groupoids and $P$-morphisms.
\end{lem}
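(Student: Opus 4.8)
The plan is to reduce the statement to a concrete construction on groupoids using the Haefliger groupoid as a universal receptacle for effective parts, and then to verify functoriality by exploiting the characterization of $P$-maps via commutative squares from the proposition preceding the lemma. The key observation is that for any \'etale $S$-groupoid $\h$, the effective part $\Ef(\h)$ was \emph{defined} as the image of the canonical map $\tilde\iota_\h:\h \to \Ha(\h_0)$, which on objects is the identity and on arrows sends $h$ to the germ of $t\circ s^{-1}|_U$. So $\Ef(\h)$ is an open subgroupoid of $\Ha(\h_0)$ sharing the same object space $\h_0$. The crux, therefore, is to show that a $P$-morphism $\psi:\G \to \h$ induces a morphism $\Ef(\psi):\Ef(\G) \to \Ef(\h)$ covering $\psi_0$, and that this assignment respects composition and $2$-cells up to coherent isomorphism.

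First I would treat the case where $\psi$ is genuinely a strict homomorphism of \'etale groupoids (the reduction to this case for bibundle/weak morphisms being the standard but tedious bicategorical bookkeeping, which I would handle at the end). Given such $\psi$ with $\psi_0,\psi_1 \in P$, I want to build $\Ef(\psi)$ on objects as $\psi_0:\G_0 \to \h_0$, and on arrows by sending a germ $\gamma \in \Ef(\G)_1 \subseteq \Ha(\G_0)_1$ — represented by a local homeomorphism $\varphi$ of $\G_0$ — to the germ of an induced local homeomorphism of $\h_0$. The natural candidate is to realize $\gamma$ as coming from some $g \in \G_1$ (at least locally, since $\Ef(\G)$ is the image of $\tilde\iota_\G$), push $g$ forward to $\psi_1(g) \in \h_1$, and take $\tilde\iota_\h(\psi_1(g))$. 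The content is that this germ depends only on $\gamma$ and not on the chosen lift $g$: this is exactly where effectiveness is used, since two arrows of $\G$ inducing the same germ on $\G_0$ must, after applying $\psi$ and comparing germs on $\h_0$, still agree — one needs that $\psi_0$ being an open map (here I use that $P$ is \emph{open} \'etale invariant) sends the neighborhoods witnessing the germ equality to neighborhoods witnessing it downstairs. I would check that $t\circ s^{-1}$ for $\psi_1(g)$ restricted suitably equals $\psi_0$ conjugated against $t\circ s^{-1}$ for $g$, so that the square relating source/target maps of $\G$ and $\h$ through $\psi_0$ commutes on germs.

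The step I expect to be the main obstacle is precisely this well-definedness, i.e. showing the assignment on germs is independent of the lift and genuinely factors through $\Ef(\G) = \mathrm{im}(\tilde\iota_\G)$ rather than through $\G$ itself. This is where openness of the $P$-morphism is essential: if $\psi_0$ were not open, the image of a germ-defining open set under $\psi_0$ might not be open in $\h_0$, and one could not produce an honest germ of a local homeomorphism of $\h_0$. Concretely, I would argue that since $\psi_0 \in P$ is open and $\psi$ intertwines source and target, for $g \in \G_1$ the local homeomorphism $t_\h \circ s_\h^{-1}$ associated to $\psi_1(g)$ satisfies, on a small enough neighborhood, the identity $\psi_0 \circ (t_\G \circ s_\G^{-1}|_{U_g}) = (t_\h \circ s_\h^{-1}|_{U_{\psi_1(g)}}) \circ \psi_0$; hence if two arrows $g,g'$ give the same germ upstairs, the two germs downstairs must coincide as well, by cancelling the open (hence locally invertible on images) map $\psi_0$.

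Having established well-definedness, the remaining verifications are routine and I would dispatch them briefly: that $\Ef(\psi)$ is a homomorphism of groupoids (immediate from $\tilde\iota$ being a homomorphism and $\psi$ preserving composition), that $\Ef(\psi)_0 = \psi_0$ and $\Ef(\psi)_1$ inherit property $P$ (using that they are built from $\psi_0,\psi_1 \in P$ together with local homeomorphisms, via the proposition preceding the lemma, which shows $P$ is closed under the relevant compositions and base changes), that $\Ef(\mathrm{id}) = \mathrm{id}$ and $\Ef(\psi'\circ\psi) \cong \Ef(\psi')\circ\Ef(\psi)$, and finally that $2$-cells (natural transformations between homomorphisms) are carried to $2$-cells compatibly. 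Passing from strict homomorphisms to the weak/bibundle morphisms of the bicategory $(S\text{-}\Gpd)^{et}_P$ is then a formal consequence: a $P$-bibundle can be presented, after refining the atlas, by a span of strict $P$-homomorphisms whose left leg is an essential equivalence that is also a local homeomorphism, and since $\Ef$ visibly preserves essential equivalences (it is the identity on objects and full onto the image on arrows), applying $\Ef$ levelwise to such a span yields a well-defined $P$-morphism of effective groupoids, independent of the presentation up to coherent isomorphism. This gives the desired $2$-functor $\Ef_P$.
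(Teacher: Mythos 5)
Your proposal is correct and follows essentially the same route as the paper: define $\Ef(\psi)$ as $\psi_0$ on objects and by $[g]\mapsto[\psi_1(g)]$ on germs, establish well-definedness via the intertwining identity $\psi_0\circ\bigl(t\circ s^{-1}|_{U_g}\bigr)=\bigl(t\circ s^{-1}|_{U_{\psi_1(g)}}\bigr)\circ\psi_0$ together with openness of the $P$-map (so that the germ equality is witnessed on the open set $\psi_0(W)$), and then deduce that $\Ef(\psi)_1$ has property $P$ from the commuting square with the source maps and the characterization of \'etale invariance. The only superfluous part is the final reduction from bibundles to strict homomorphisms: the morphisms of $\left(S-\Gpd\right)^{et}_{P}$ are already internal functors with property $P$, so no such reduction is needed (and the preservation of Morita equivalences is a separate, later theorem in the paper).
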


\begin{proof}
Suppose $\varphi:\G \to \h$ has property $P$. Since $\Ef$ does not affect objects, we define $$\Ef\left(\varphi\right)_0=\varphi_0.$$ Given $g \in \G_1$, denote its image in $\Ef\left(\G\right)_1$ by $[g]$. Define $$\Ef\left(\varphi\right)_1\left([g]\right)=\left[\varphi\left(g\right)\right].$$ We need to show that this is well defined. Suppose that $[g]=[g']$. Let $V_g$ and $V_{g'}$ be neighborhoods of $g$ and $g'$ respectively, on which both $s$ and $t$ restrict to embeddings. Denote by $x$ the source of $g$ and $g'$. Then there exists a neighborhood $W$ of $x$ over which $$t \circ s^{-1}|_{g}$$ and $$t \circ s^{-1}|_{g'}$$ agree. Since $\varphi_1$ has property $P$, it is open, so $\varphi_1\left(V_g\right)$ is a neighborhood of $\varphi_1\left(g\right)$, and similarly for $g'$. By making $V_g$ and $V_g'$ smaller if necessary, we may assume that $s$ and $t$ restrict to embeddings on $\varphi_1\left(V_g\right)$ and $\varphi_1\left(V_g'\right)$. Since $\varphi$ is a groupoid homomorphism, it follows that $$t \circ s^{-1}|_{\varphi_1\left(V_g\right)}$$ and  $$\varphi_0 \circ t \circ s^{-1}|_{V_g}$$ agree on $W$, and similarly for $g'$. Hence, if $g$ and $g'$ induce the same germ of a locally defined homeomorphism, so do $\varphi_1\left(g\right)$ and $\varphi_1\left(g'\right)$. It is easy to check that $\Ef\left(\varphi\right)$ as defined is a homomorphism of $S$-groupoids. In particular, the following diagram commutes:

$$\xymatrix@C=1.5cm{\Ef\left(\G\right)_1 \ar[d]_-{s} \ar[r]^-{\Ef\left(\varphi\right)_1} & \Ef\left(\h\right) \ar[d]^-{s}\\
\G_0 \ar[r]^-{\varphi_0} & \h_0.}$$
Since $P$ is \'etale invariant and the source maps are local homeomorphisms, it implies that $\Ef\left(\varphi\right)_1$ has property $P$. The rest is proven similarly.
\end{proof}

\begin{thm}\label{thm:bungalo}
Let $j_P:\Ef\left(S-\Gpd\right)^{et}_{P} \hookrightarrow \left(S-\Gpd\right)^{et}_{P}$ be the inclusion. Then $\Ef_P$ is left-adjoint to $j_P$.
\end{thm}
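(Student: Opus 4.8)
The plan is to exhibit $\Ef_P \dashv j_P$ as a (strict) $2$-adjunction by producing a unit, a counit, and checking the triangle identities, exploiting that $j_P$ is the inclusion of a full sub-$2$-category (so this is really a reflective situation).

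First I would take as unit the canonical maps $\iota_{\h}:\h \to \Ef\left(\h\right)$. One must check these are legitimate $1$-cells of $\left(S-\Gpd\right)^{et}_{P}$, i.e. that $\left(\iota_{\h}\right)_0$ and $\left(\iota_{\h}\right)_1$ have property $P$. On objects $\iota_{\h}$ is the identity, hence a local homeomorphism; on arrows, $\left(\iota_{\h}\right)_1:\h_1 \to \Ef\left(\h\right)_1$ commutes with the source maps down to $\h_0$, both of which are local homeomorphisms, so it is itself a local homeomorphism onto the open subspace $\Ef\left(\h\right)_1 \subseteq \Ha\left(\h_0\right)_1$. Since every local homeomorphism lies in $P$, each $\iota_{\h}$ is a $P$-morphism. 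I would then verify that $\iota$ is strictly $2$-natural: for a $P$-morphism $\varphi:\h \to \K$, the germ-level description of $\Ef_P\left(\varphi\right)$ from Lemma \ref{lem:etin1} gives, on an arrow, $\Ef_P\left(\varphi\right)_1\left([g]\right)=\left[\varphi\left(g\right)\right]=\iota_{\K}\left(\varphi\left(g\right)\right)$, and $\varphi_0$ on objects, so that $\Ef_P\left(\varphi\right)\circ\iota_{\h}=\iota_{\K}\circ\varphi$ holds on the nose.

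For the counit I would use that an effective groupoid $\K$ satisfies $\iota_{\K}:\K \xrightarrow{\sim} \Ef\left(\K\right)$ (the remark following the definition of the effective part), and set $\epsilon_{\K}:=\iota_{\K}^{-1}:\Ef\left(\K\right)\to\K$; naturality of $\epsilon$ on effective groupoids is just the naturality of $\iota$ rearranged. The two triangle identities then reduce to strict equalities: $j_P\left(\epsilon_{\K}\right)\circ\iota_{\K}=\iota_{\K}^{-1}\circ\iota_{\K}=\mathrm{id}_{\K}$, and $\epsilon_{\Ef\left(\h\right)}\circ\Ef_P\left(\iota_{\h}\right)=\iota_{\Ef\left(\h\right)}^{-1}\circ\Ef_P\left(\iota_{\h}\right)=\mathrm{id}_{\Ef\left(\h\right)}$, the last step holding because $\Ef_P\left(\iota_{\h}\right)=\iota_{\Ef\left(\h\right)}$. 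This equality one sees either by the direct computation $\left[\iota_{\h}\left(g\right)\right]=\left[[g]\right]=\iota_{\Ef\left(\h\right)}\left([g]\right)$, or by cancelling the object-identity, arrow-surjective (hence epic) map $\iota_{\h}$ in the naturality square of $\iota$ evaluated at $\iota_{\h}$. As everything holds strictly, with identity coherence $2$-cells, these data assemble into a strict $2$-adjunction $\Ef_P \dashv j_P$.

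The main obstacle I anticipate is bookkeeping at the $2$-categorical level rather than any single hard computation: one must confirm that $\iota$ and $\epsilon$ are genuine $2$-natural transformations of $2$-functors (compatible with $2$-cells, i.e. with natural transformations of groupoid homomorphisms) and that the triangle identities hold as equalities of $2$-cells, not merely of underlying $1$-morphisms. The saving grace is that $\Ef_P$ is identity-on-objects and surjective-on-arrows while $j_P$ is a full inclusion, so every comparison map is forced to be an identity or an isomorphism and no genuinely new $2$-cells appear, the germ-level formula of Lemma \ref{lem:etin1} making every required square commute strictly. Equivalently, one can repackage the whole argument as: precomposition with $\iota_{\h}$ yields an equivalence of hom-groupoids $\Hom\left(\Ef\left(\h\right),\K\right)\to\Hom\left(\h,\K\right)$ for every effective $\K$, with essential surjectivity supplied by $\varphi\mapsto\iota_{\K}^{-1}\circ\Ef_P\left(\varphi\right)$ and full faithfulness by the observation that a natural transformation between homomorphisms out of $\Ef\left(\h\right)$ is a map on the common object space $\h_0=\Ef\left(\h\right)_0$, hence determined by, and constrained identically under, whiskering with the arrow-surjective $\iota_{\h}$.
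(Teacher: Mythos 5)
Your proposal is correct and follows essentially the same route as the paper: the paper also takes the unit to be the family $\iota_{\h}:\h\to\Ef\left(\h\right)$ and the counit to be the canonical isomorphism $\Ef_P\circ j_P\Rightarrow \mathrm{id}$ coming from the fact that an effective groupoid is its own effective part, and then asserts that these define a $2$-adjunction. You simply carry out in detail the verifications (that $\iota_{\h}$ is a $P$-morphism, strict naturality via the germ-level formula of Lemma \ref{lem:etin1}, and the triangle identities) that the paper leaves to the reader.
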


\begin{proof}
There is a canonical natural isomorphism $$\Ef_P \circ j_P \Rightarrow id_{\Ef\left(S-\Gpd\right)^{et}_{P}}$$ since any effective \'etale groupoid is canonically isomorphic to its effective part. Furthermore, the maps $\iota_{\h}$ assemble into a natural transformation $$\iota:id_{\left(S-\Gpd\right)^{et}_{P}} \Rightarrow j_P \circ \Ef_P.$$ It is easy to check that these define a $2$-adjunction.
\end{proof}

\begin{thm}
$\Ef_P$ sends Morita equivalences to Morita equivalences.
\end{thm}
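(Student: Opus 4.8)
The plan is to prove that $\Ef_P$ preserves essential equivalences directly at the level of groupoids, using the explicit germ description of $\Ef$. Recall that a homomorphism $\varphi:\G \to \h$ of \'etale $S$-groupoids is a Morita equivalence precisely when it is \emph{essentially surjective} --- the map $t\circ\mathrm{pr}_2:\G_0\times_{\varphi_0,\h_0,s}\h_1 \to \h_0$ is a surjective local homeomorphism --- and \emph{fully faithful} --- the square with vertical legs $(s,t)$ and horizontal legs $\varphi_0\times\varphi_0$ and $\varphi_1$ is cartesian. First I would observe that for such a $\varphi$ both $\varphi_0$ and $\varphi_1$ are automatically local homeomorphisms: the unit $u:\h_0\to\h_1$ is an open embedding (a section of the \'etale map $s$), so $x\mapsto\left(x,u\left(\varphi_0\left(x\right)\right)\right)$ is an open embedding of $\G_0$ into $\G_0\times_{\h_0}\h_1$ whose composite with the essential-surjectivity map is $\varphi_0$; thus $\varphi_0$ is a local homeomorphism, and $\varphi_1$ is then a pullback of $\varphi_0\times\varphi_0$ along $(s,t)$ by full faithfulness. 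In particular every Morita equivalence is automatically a $P$-morphism (local homeomorphisms always lie in $P$), so the statement concerns all essential equivalences, and $\Ef\left(\varphi\right)_0=\varphi_0$ is a local homeomorphism.

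Since $\varphi_0$ is a local homeomorphism it induces a homomorphism $\Ha\left(\varphi_0\right):\Ha\left(\G_0\right)\to\Ha\left(\h_0\right)$ of Haefliger groupoids, sending a germ $\psi$ at $x$ to the germ of $\varphi_0\circ\psi\circ\varphi_0^{-1}$ at $\varphi_0\left(x\right)$ (using local inverses of $\varphi_0$). The key identity, which follows from $\varphi$ being a groupoid homomorphism exactly as in the proof of Lemma \ref{lem:etin1}, is that under the open inclusions $\Ef\left(\G\right)\hookrightarrow\Ha\left(\G_0\right)$ and $\Ef\left(\h\right)\hookrightarrow\Ha\left(\h_0\right)$ the map $\Ef\left(\varphi\right)_1$ is simply the restriction of $\Ha\left(\varphi_0\right)$; that is, $\mathrm{germ}\left(\varphi_1 g\right)=\Ha\left(\varphi_0\right)\left(\mathrm{germ}\left(g\right)\right)$ for every $g\in\G_1$. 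Because conjugation by the local homeomorphism $\varphi_0$ is injective on germs, $\Ha\left(\varphi_0\right)$ is injective on arrows, and hence so is $\Ef\left(\varphi\right)_1$.

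For essential surjectivity of $\Ef\left(\varphi\right)$, let $q=\left(\iota_\h\right)_1:\h_1\to\Ef\left(\h\right)_1$ be the canonical map, which is a surjective local homeomorphism (it is onto by definition of the effective part, and a local homeomorphism by cancellation against the \'etale source maps). Pulling back along $\varphi_0$ gives a surjective local homeomorphism $Q:\G_0\times_{\h_0}\h_1\to\G_0\times_{\h_0}\Ef\left(\h\right)_1$ over which the essential-surjectivity map $v$ of $\varphi$ factors as $v=u'\circ Q$, where $u'$ is the essential-surjectivity map of $\Ef\left(\varphi\right)$. Since $v$ and $Q$ are both surjective local homeomorphisms, $u'$ is a surjective local homeomorphism as well.

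It remains to prove full faithfulness, which is the main obstacle. I would show that the comparison map $$\Phi:\Ef\left(\G\right)_1\longrightarrow \Ef\left(\h\right)_1\times_{\h_0\times\h_0}\left(\G_0\times\G_0\right)$$ is an isomorphism. Bijectivity is checked on germs: injectivity is immediate from the injectivity of $\Ha\left(\varphi_0\right)$ established above, while surjectivity uses that every arrow of $\Ef\left(\h\right)$ lifts to some $h\in\h_1$, whereupon the full faithfulness of $\varphi$ supplies a unique $g\in\G_1$ with $\varphi_1 g=h$ and the prescribed source and target, whose germ maps to the given element. To upgrade this bijection to an isomorphism of spaces I would first note that $\Ef\left(\varphi\right)_1$ is a local homeomorphism --- the source square of Lemma \ref{lem:etin1}, together with $\varphi_0$ being a local homeomorphism, forces this by cancellation against the \'etale source maps --- and then observe that the projection of the fibre product onto $\Ef\left(\h\right)_1$ is a local homeomorphism (a pullback of $\varphi_0\times\varphi_0$). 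Since this projection composed with $\Phi$ equals $\Ef\left(\varphi\right)_1$, a final cancellation shows $\Phi$ is a local homeomorphism, hence a bijective local homeomorphism, hence an isomorphism in $S$. The square is therefore cartesian, so $\Ef\left(\varphi\right)$ is fully faithful and, combined with the previous paragraph, an essential equivalence. The subtle points to watch are the repeated cancellation arguments for local homeomorphisms (if $g\circ f$ and $g$ are local homeomorphisms then so is $f$) and the verification of the germ-conjugation identity, both of which rest on $\varphi_0$ being a local homeomorphism.
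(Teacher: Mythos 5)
Your proof follows essentially the same route as the paper's: essential surjectivity is obtained by factoring the essential-surjectivity map of $\varphi$ through the quotient $\h_1 \to \Ef\left(\h\right)_1$ and cancelling local homeomorphisms, and fully faithfulness by lifting an arrow $\left[h\right]$ of $\Ef\left(\h\right)$ to $\h_1$, invoking fully faithfulness of $\varphi$ to produce a unique $g$, and comparing germs using that $\varphi_0$ is locally an embedding. You are in fact more careful than the paper on one point: you explicitly upgrade the bijection $\Phi:\Ef\left(\G\right)_1 \to \Ef\left(\h\right)_1\times_{\h_0\times\h_0}\left(\G_0\times\G_0\right)$ to an isomorphism in $S$ by cancellation of local homeomorphisms, whereas the paper only verifies bijectivity on points; and packaging the germ computation as conjugation by $\varphi_0$ is a clean way to organize what the paper does by hand.

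There is, however, one incorrect intermediate assertion: the claim that $\Ha\left(\varphi_0\right)$, and hence $\Ef\left(\varphi\right)_1$, is injective on arrows. Conjugation by $\varphi_0$ is injective only on germs based at a \emph{fixed} point of $\G_0$. If $\varphi_0$ identifies two points $x \neq x'$ (for instance if $\varphi_0$ is a nontrivial covering, which is perfectly possible for a Morita equivalence), then the identity germs at $x$ and at $x'$ have the same image under $\Ha\left(\varphi_0\right)$, so neither $\Ha\left(\varphi_0\right)$ nor $\Ef\left(\varphi\right)_1$ is injective on arrows. Fortunately this does not sink the argument: what you actually need is injectivity of $\Phi$, and $\Phi$ records the source and target in $\G_0\times\G_0$ in addition to the image germ. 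Two arrows of $\Ef\left(\G\right)$ with the same image under $\Phi$ therefore share a source $x$ and a target $y$, and the fixed-basepoint injectivity of conjugation (equivalently, the paper's observation that $\varphi_0\circ t\circ s^{-1}|_{V_g}=\varphi_0\circ t\circ s^{-1}|_{V_{g'}}$ forces $t\circ s^{-1}|_{V_g}=t\circ s^{-1}|_{V_{g'}}$ because $\varphi_0$ is locally an embedding) then yields $\left[g\right]=\left[g'\right]$. So the step should be restated as injectivity of $\Phi$ rather than of $\Ef\left(\varphi\right)_1$; with that repair the proof is complete.
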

\begin{proof}
Suppose $\varphi:\G \to \h$ is a Morita equivalence. Since $\G$ and $\h$ are \'etale, this implies $\varphi$ is a local homeomorphism. Hence, in the pullback diagram
$$\xymatrix{\h_1\times_{\h_0}\G_0 \ar[d]_-{pr_1} \ar[r]^-{pr_2} & \G_0 \ar[d]^-{\varphi_0}\\
\h_1 \ar[r]^-{s} & \h_0,}$$
$pr_1$ is a local homeomorphism, and hence the map $$t \circ pr_1:\h_1 \times_{\h_0} \G_0 \to \h_0$$ is as well. We have a commutative diagram

$$\xymatrix{\h_1 \times_{\h_0} \G_0 \ar[r]^{t \circ pr_1} \ar[d] & \h_0\\
\Ef\left(\h\right)_1 \times_{\h_0} \G_0. \ar[ur]_-{t \circ pr_1}  &   }$$
The map $$\Ef\left(\h\right)_1 \times_{\h_0} \G_0 \to \h_1\times_{\h_0} \G_0 $$ is the pullback of a local homeomorphism, hence one itself, and the upper arrow $t \circ pr_1$ is a local homeomorphism. This implies $$t \circ pr_1:\Ef\left(\h\right)_1 \times_{\h_0} \G_0 \to \h_0$$ is a local homeomorphism as well. In particular, it admits local sections. Therefore $\Ef\left(\varphi\right)$ is essentially surjective.
Now suppose that $$\left[h\right]:\varphi\left(x\right) \to \varphi\left(y\right).$$ Then  $$h:\varphi\left(x\right) \to \varphi\left(y\right).$$ So there is a unique $g:x \to y$ such that $\varphi\left(g\right)=h$. Now suppose $$\left[h\right]=\left[h'\right].$$ We can again choose a unique $g'$ such that $\varphi\left(g'\right)=h'.$ We need to show that $$\left[g\right]=\left[g'\right].$$ Let $V_g$ and $V_{g'}$ be neighborhoods of $g$ and $g'$ respectively chosen so small that $s$ and $t$ of $\G_1$ restrict to embeddings on them, $s$ and $t$ of $\h_1$ restrict to embeddings on $\varphi_1\left(V_g\right)$ and $\varphi_1\left(V_g'\right),$ $\varphi_0$ restricts to an embedding on $s\left(V_g\right),$ which is possible since $\varphi_0$ is a local homeomorphism, and $$t \circ s^{-1}|_{\varphi_1\left(V_g\right)}$$ and $$t\circ s^{-1}|_{\varphi_1\left(V_g'\right)}$$ agree on $\varphi_0\left(s\left(V_g\right)\right),$ which is possible since $$\left[\varphi\left(g\right)\right]=\left[\varphi\left(g'\right)\right].$$ Then by the proof of Lemma \ref{lem:etin1}, $$t \circ s^{-1}|_{\varphi_1\left(V_g\right)}$$ and  $$\varphi_0 \circ t \circ s^{-1}|_{V_g}$$ agree on $s\left(V_g\right)$, and similarly for $g'$. Hence $$\varphi_0 \circ t \circ s^{-1}|_{V_g}$$ and $$\varphi_0 \circ t \circ s^{-1}|_{V_g'}$$ agree on $W$, but $\varphi_0$ is an embedding when restricted to $W$, hence $$t \circ s^{-1}|_{V_g}$$ and $$t \circ s^{-1}|_{V_g'}$$ agree on $W$ so $\left[g\right]=\left[g'\right].$
\end{proof}

\begin{lem}
Let $\mathcal{U}$ be an \'etale cover of $\h_0$, with $\h$ an \'etale $S$-groupoid. Then there is a canonical isomorphism between $\Ef\left(\h_{\mathcal{U}}\right)$ and $\left(\Ef\left(\h\right)\right)_{\mathcal{U}}$ (See Definition \ref{dfn:cech}).
\end{lem}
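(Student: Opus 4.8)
The plan is to realize both groupoids as the same open subgroupoid of a common ambient groupoid over $U:=\coprod\mathcal{U}$. Write $e\colon U\to\h_0$ for the \'etale cover. Since neither $\Ef$ nor restriction along $\mathcal{U}$ alters object spaces, both $\Ef(\h_{\mathcal{U}})$ and $(\Ef(\h))_{\mathcal{U}}$ have object space $U$, so it suffices to produce a canonical isomorphism of their arrow spaces which is the identity on $U$ and intertwines the structure maps. Recall that the arrows of $\h_{\mathcal{U}}$ are triples $(u,h,u')\in U\times_{\h_0}\h_1\times_{\h_0}U$ with $s(h)=e(u)$ and $t(h)=e(u')$, that $\Ef(\h_{\mathcal{U}})_1$ is the (open) image of $\tilde\iota_{\h_{\mathcal{U}}}$ inside $\Ha(U)_1$, and that $(\Ef(\h))_{\mathcal{U}}$ has arrow space $U\times_{\h_0}\Ef(\h)_1\times_{\h_0}U$, an open subspace of $\Ha(\h_0)_{\mathcal{U},1}$.

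First I would prove a clean auxiliary fact: the Haefliger construction commutes with restriction along an \'etale cover, that is, there is a canonical isomorphism of $S$-groupoids $\alpha\colon\Ha(U)\xrightarrow{\sim}\Ha(\h_0)_{\mathcal{U}}$ lying over $U$. On arrows $\alpha$ is germ-pushforward along the local homeomorphism $e$: a germ at $u$ of a locally defined homeomorphism $\phi$ of $U$ with $\phi(u)=u'$ is sent to $(u,\,germ_{e(u)}(e\circ\phi\circ e^{-1}),\,u')$, where $e^{-1}$ denotes the branch through $u$. Its inverse sends $(u,\gamma,u')$ to the germ at $u$ of $e^{-1}\circ\gamma\circ e$, using on the target side the branch of $e^{-1}$ through $u'$. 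Because $e$ is a local homeomorphism, these two assignments are mutually inverse, are the identity on objects, and are compatible with composition.

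Next I would verify the single compatibility $\alpha(\tilde\iota_{\h_{\mathcal{U}}}(u,h,u'))=(u,\tilde\iota_{\h}(h),u')$. Unwinding the definitions, $\tilde\iota_{\h_{\mathcal{U}}}(u,h,u')$ is the germ at $u$ of $e^{-1}\circ(t\circ s^{-1}|_{h})\circ e$, and pushing this germ forward along $e$ returns exactly the germ at $e(u)$ of $t\circ s^{-1}|_{h}$, which is $\tilde\iota_{\h}(h)$. A short bookkeeping of sources and targets then shows that $\alpha$ carries the image of $\tilde\iota_{\h_{\mathcal{U}}}$ bijectively onto $U\times_{\h_0}\operatorname{im}(\tilde\iota_{\h})\times_{\h_0}U=(\Ef(\h))_{\mathcal{U},1}$: indeed an element $(u,\gamma,u')$ of the right-hand side has $\gamma=\tilde\iota_{\h}(h)$ for some $h\in\h_1$ with $s(h)=e(u)$ and $t(h)=e(u')$, i.e. $(u,h,u')\in(\h_{\mathcal{U}})_1$, and conversely. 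Hence $\alpha$ restricts to a bijection $\Ef(\h_{\mathcal{U}})_1\to(\Ef(\h))_{\mathcal{U},1}$ which is the identity on $U$ and a groupoid homomorphism; since both are open subgroupoids matched under the isomorphism $\alpha$, this restriction is the desired canonical isomorphism.

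The one genuinely topological point, which I expect to be the main obstacle, is checking that $\alpha$ is a homeomorphism of \'etal\'e spaces rather than a mere bijection, since $\Ha(U)_1$ is defined as the \'etal\'e space of a sheaf while $\Ha(\h_0)_{\mathcal{U},1}$ carries the fibre-product topology. I would dispatch this by noting that both arrow spaces are \'etale over $U$ via their source maps and that $\alpha$ commutes with those source maps; a continuous bijection between two spaces each \'etale over the same base and compatible with the projections is automatically a local homeomorphism, hence a homeomorphism. It therefore remains only to see that germ-pushforward along $e$ is continuous, which is immediate from the description of the \'etal\'e topology by basic opens coming from representatives. Everything else — compatibility of $\alpha$ with units, inverses and composition, and the source/target bookkeeping — is routine, and canonicity is clear since $e$, $\tilde\iota$, and germ-pushforward are all canonical.
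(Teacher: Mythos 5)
Your proof is correct, and it is organized differently from the paper's. The paper works entirely inside $\Ha\left(\h_0\right)$: it sends $\left(h,p,q\right)\mapsto\left(\left[h\right],p,q\right)$, observes surjectivity, and then proves by an explicit germ computation that $\left[h\right]=\left[h'\right]$ if and only if $\left[\left(h,p,q\right)\right]=\left[\left(h',p,q\right)\right]$, so that the map descends to a bijection on effective arrow spaces. You instead isolate the auxiliary isomorphism $\Ha\left(U\right)\cong\Ha\left(\h_0\right)_{\mathcal{U}}$ given by germ-pushforward along $e$, check the single naturality square $\alpha\circ\tilde\iota_{\h_{\mathcal{U}}}=\left(\tilde\iota_{\h}\right)_{\mathcal{U}}$, and obtain the lemma by matching images under an ambient isomorphism. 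The computational core is the same in both arguments --- namely that the local inverse of the source map of $\h_{\mathcal{U}}$ through $\left(h,p,q\right)$ is $e^{-1}\circ\left(t\circ s^{-1}|_{U_h}\right)\circ e$ for suitable branches of $e^{-1}$ --- but your packaging buys a reusable statement (the Haefliger construction commutes with passage to \v{C}ech groupoids of \'etale covers) and makes injectivity of the descended map automatic rather than requiring the separate "conversely" direction of the paper's claim; the paper's version is more self-contained and avoids having to verify that germ-pushforward is an isomorphism of \'etal\'e spaces, a point you correctly flag and correctly dispatch via the observation that a continuous bijection over $U$ commuting with the \'etale source projections is a homeomorphism.
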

\begin{proof}
Both of these groupoids have the same object space. It suffices to show that their arrow spaces are isomorphic (and that this determines an internal functor). Suppose the cover $\mathcal{U}$ is given by a local homeomorphism $e:U \to \h_0$. An arrow in $\h_{\mathcal{U}}$ is a triple $$\left(h,p,q\right)$$ with $$h:e\left(p\right) \to e\left(p\right).$$ An arrow in $\left(\Ef\left(\h\right)\right)_{\mathcal{U}}$ is a triple $$\left(\left[h\right],p,q\right)$$ such that $\left[h\right]$ is the image of an arrow $h \in \h_1$ under $\iota_\h$ such that $$h:e\left(p\right) \to e\left(p\right).$$ Define a map
\begin{align}
\left(\h_{\mathcal{U}}\right)_1 &\to  \left(\left(\Ef\left(\h\right)\right)_{\mathcal{U}}\right)_1 \nonumber\\
\left(h,p,q\right) &\mapsto \left(\left[h\right],p,q\right)\label{eq:cake}.\end{align}
This map is clearly surjective.

We make the following claim:

$$\left[h\right]=\left[h'\right]$$
if and only if
$$\left[\left(h,p,q\right)\right]=\left[\left(h',p,q\right)\right].$$
Suppose that $$\left[h\right]=\left[h'\right].$$
Pick a neighborhood $U_h$ of $h$ in $\h_1$ such that both $s$ and $t$ are injective over it, and $U_h'$ an analogous neighborhood of $h'$. Let $W$ be a neighborhood of $s\left(h\right)=s\left(h'\right)$ over which
\begin{equation}\label{eq:yikes}
t\circ s|_{U_h}^{-1}=t\circ s|_{U_h'}^{-1}.
\end{equation}
Pick neighborhoods $V_p$ and $V_q$ of $p$ and $q$ respectively so small that $e$ is injective over them, and for all $a \in V_p$, $$e\left(a\right) \in W$$ and $$t\circ s|_{U_h}^{-1}\left(e\left(a\right)\right) \in e\left(V_q\right).$$
As the arrow space $\left(\h_{\mathcal{U}}\right)_1$ fits into the pullback diagram $$\xymatrix{\left(\h_{\mathcal{U}}\right)_1 \ar[r] \ar[d]_-{\left(s,t\right)} & \h_1 \ar[d]^-{\left(s,t\right)} \\
U \times U \ar[r] & \h_0 \times \h_0,}$$
$\left(V_p \times V_q \times U_h \right) \cap \left(\h_{\mathcal{U}}\right)_1$ is a neighborhood of $\left(h,p,q\right)$ over which both the source and target maps are injective. The set $\left(V_p \times V_q \times U_h' \right) \cap \left(\h_{\mathcal{U}}\right)_1$ is an analogous neighborhood of $\left(h',p,q\right)$. The local inverse of $s$ through $\left(h,p,q\right)$ is then given by
$$a \mapsto \left(s|_{U_h}^{-1}\left(e\left(a\right)\right),a,e|_{V_p}^{-1}\left(t\circ s|_{U_h}^{-1}\left(e\left(a\right)\right)\right)\right).$$
Hence, the germ associated to $\left(h,p,q\right)$ is the germ of $$a \mapsto e|_{V_p}^{-1}\left(t\circ s|_{U_h}^{-1}\left(e\left(a\right)\right)\right).$$ Similarly the germ associated to $\left(h',p,q\right)$ is the germ of $$a \mapsto e|_{V_p}^{-1}\left(t\circ s|_{U_h'}^{-1}\left(e\left(a\right)\right)\right).$$ From equation (\ref{eq:yikes}), it follows that these maps are identical. Moreover, supposing instead that $$\left[\left(h,p,q\right)\right]=\left[\left(h',p,q\right)\right],$$ by the above argument, it follows that $\left[h\right]=\left[h'\right]$ since $e$ is injective over $V_q$.

Hence the assignment (\ref{eq:cake}) depends only  on the  image of $\left(h,p,q\right)$ in $\Ef\left(\h_{\mathcal{U}}\right)$. So there is an induced well defined and surjective map
\begin{equation}\label{eq:turtle}
\left(\Ef\left(\h_{\mathcal{U}}\right)\right)_1 \to  \left(\left(\Ef\left(\h\right)\right)_{\mathcal{U}}\right)_1.
\end{equation}
Since, $\left[h\right]=\left[h'\right]$ implies $\left[\left(h,p,q\right)\right]=\left[\left(h',p,q\right)\right],$ it follows  that this map is also injective, hence bijective. It is easy to check that it is moreover a homeomorphism. It clearly defines a groupoid homomorphism
\end{proof}

\begin{cor}\label{cor:etadj}
There is an induced $2$-adjunction
$$\xymatrix{\Eft_{P}  \ar@<-0.5ex>[r]_-{j_P}  & \Et_{P}\ar@<-0.5ex>[l]_-{\Ef_P},}$$
between \'etale stacks with $P$-morphisms and effective \'etale stacks with $P$-morphisms, where $j_P$ is the canonical inclusion.
\end{cor}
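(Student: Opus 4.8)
The plan is to promote the $2$-adjunction $\Ef_P \dashv j_P$ of Theorem \ref{thm:bungalo} from the level of $S$-groupoids to the level of stacks by descending it through the localization at Morita equivalences. Recall that $\Et_P$ and $\Eft_P$ are obtained from $\left(S-\Gpd\right)^{et}_P$ and $\Ef\left(S-\Gpd\right)^{et}_P$, respectively, by formally inverting the Morita equivalences; this is legitimate inside the $P$-morphism setting because Morita equivalences of étale $S$-groupoids are local homeomorphisms, and local homeomorphisms lie in every étale invariant $P$. Write $L$ and $L'$ for the two localization homomorphisms. In this picture a $1$-cell of $\Et_P$ is represented by a span $\h \xleftarrow{w} \K \xrightarrow{\varphi} \G$ with $w$ a Morita equivalence and $\varphi$ a $P$-morphism, two such spans being identified when they admit a common refinement along étale covers.

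First I would extend $\Ef_P$ to stacks. On a span as above I would set $\Ef_P$ to be the span $\Ef\left(\h\right) \xleftarrow{\Ef\left(w\right)} \Ef\left(\K\right) \xrightarrow{\Ef\left(\varphi\right)} \Ef\left(\G\right)$; this is again a valid $P$-span, because $\Ef$ sends Morita equivalences to Morita equivalences (proved above) and sends $P$-morphisms to $P$-morphisms by Lemma \ref{lem:etin1}. The essential point is well-definedness on equivalence classes: equivalent spans become equal after pulling back along étale covers, and the canonical isomorphism $\Ef\left(\h_{\mathcal{U}}\right) \cong \left(\Ef\left(\h\right)\right)_{\mathcal{U}}$ of the preceding lemma shows that $\Ef$ commutes with such cover-refinements, so that it respects the identification of spans. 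The functor $j_P$ extends trivially, as the canonical inclusion of effective étale stacks, since it preserves Morita equivalences tautologically.

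It then remains to descend the adjunction data. The unit $\iota:\mathrm{id}\Rightarrow j_P\circ\Ef_P$ and the counit isomorphism $\Ef_P\circ j_P\Rightarrow\mathrm{id}$ of Theorem \ref{thm:bungalo} are $2$-natural transformations of honest $2$-functors, so applying $L$ and $L'$ and invoking the universal property of the bicategory of fractions (which identifies pseudonatural transformations and modifications downstairs with those upstairs that are compatible with the inverted Morita equivalences) produces pseudonatural transformations $\tilde\iota$ and $\tilde\epsilon$ between the descended homomorphisms. The triangle identities hold before localization, and since they are equations of modifications transported faithfully by this universal property, they persist afterwards, yielding the desired $2$-adjunction $\Ef_P\dashv j_P$ on stacks.

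The main obstacle is precisely the well-definedness of $\Ef_P$ on generalized morphisms: since the $1$-cells of $\Et_P$ are equivalence classes of spans rather than honest groupoid homomorphisms, one must verify that $\Ef$ carries equivalent spans to equivalent spans, and does so coherently on the $2$-cells between spans. This is where the cover-compatibility lemma does the real work, reducing the problem to naturality of $\iota$ and of the $\Ef$ construction against the ordinary homomorphisms underlying a common refinement. Once this coherence is in hand, everything else — that $\Ef_P$ and $j_P$ are genuine homomorphisms of bicategories, and that the unit, counit, and triangle identities survive the descent — follows formally from the universal property of the localization.
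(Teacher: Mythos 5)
Your proposal is correct in substance, and it leans on exactly the same two pillars as the paper's argument — the groupoid-level $2$-adjunction of Theorem \ref{thm:bungalo} and the canonical isomorphism $\Ef\left(\h_{\mathcal{U}}\right) \cong \left(\Ef\left(\h\right)\right)_{\mathcal{U}}$ — but the mechanism by which you transport the adjunction to stacks is different. You work with the explicit span presentation of $\Et_P$ as a bicategory of fractions, extend $\Ef_P$ to spans, check well-definedness and coherence, and then descend the unit, counit, and triangle identities through the universal property of the localization. The paper instead sidesteps the construction of $\Ef_P$ on generalized morphisms entirely: for $\h$ \'etale and $\G$ effective \'etale it computes
$$\Hom\left(\left[\Ef\left(\h\right)\right],\left[\G\right]\right) \simeq \underset{\mathcal{U}}\hc \Hom\left(\left(\Ef\left(\h\right)\right)_{\mathcal{U}},\G\right) \simeq \underset{\mathcal{U}}\hc \Hom\left(\Ef\left(\h_{\mathcal{U}}\right),\G\right) \simeq \underset{\mathcal{U}}\hc \Hom\left(\h_{\mathcal{U}},j_P\G\right) \simeq \Hom\left(\left[\h\right],j_P\left[\G\right]\right)$$
using the colimit formula (\ref{eq:pizza}), which already packages the span calculus, and then reads off the adjoint pointwise. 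What the paper's route buys is economy: the only thing to verify beyond Theorem \ref{thm:bungalo} is the \v{C}ech-compatibility lemma, and all the coherence you flag as ``the main obstacle'' (well-definedness of $\Ef_P$ on equivalence classes of spans and on $2$-cells between them) is absorbed into the naturality of the chain of equivalences. What your route buys is an explicit description of the functor $\Ef_P$ on $1$-cells of $\Et_P$, at the cost of having to carry out the coherence check you describe only in outline; that check is genuinely the same computation as the paper's middle two equivalences, so neither argument is circular and both are complete once that lemma is in hand.
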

\begin{proof}
Let $\mathcal{U}$ be an \'etale cover of $\h_0$, with $\h$ an \'etale $S$-groupoid. From the previous lemma, there is a canonical isomorphism between $\Ef\left(\h_{\mathcal{U}}\right)$ and $\left(\Ef\left(\h\right)\right)_{\mathcal{U}}.$ Let $\G$ be an effective \'etale $S$-groupoid. Then
\begin{eqnarray*}
\Hom\left(\left[\Ef\left(\h\right)\right],\left[\G\right]\right)  &\simeq& \underset{\mathcal{U}} \hc \Hom\left(\left(\Ef\left(\h\right)\right)_{\mathcal{U}}, \G\right)\\
&\simeq& \underset{\mathcal{U}} \hc \Hom\left(\Ef\left(\h_{\mathcal{U}}\right), \G \right) \\
&\simeq& \underset{\mathcal{U}} \hc \Hom\left(\h_{\mathcal{U}},j_P\G\right)\\
&\simeq& \Hom\left(\left[\h\right],j_P\left[\G\right]\right).
\end{eqnarray*}
\end{proof}
Note that this implies that $\Eft_{P}$ is a localization of $\Et_{P}$ with respect to those morphisms whose image under $\Ef_P$ become equivalences. When $P$ is local homeomorphisms, denote $P=et$. We make the following definition for later:

\begin{dfn}\label{dfn:effectlocal}
A morphism $\varphi:\Y \to \X$ between \'etale stacks is called an \textbf{effective local equivalence} if $\varphi$ is a local homeomorphism and $\Ef_{et}\left(\varphi\right)$ is an equivalence.
\end{dfn}

\section{The Amazing Properties of The Haefliger Groupoid}\label{sec:amazing}
\subsection{Properties.}
We now go on to derive many of the surprising properties of the Haefliger groupoid construction. These results will play a pivotal role in the rest of the paper.
\begin{dfn}
Let $X$ be a space. An \textbf{$X$-manifold} is a space $Y$ which admits a covering $$\left(U_\alpha \to Y\right)$$ by open subsets of $X.$ Denote the full  subcategory of $S$ spanned by $X$-manifolds by $\xm.$
\end{dfn}

\begin{rmk}\label{rmk:xmfdsh}
Note that if $f:Y \to X$ is a local homeomorphism, then $Y$ is an $X$-manifold. In particular, one has $$S^{et}/X \simeq \xm^{et}/X \simeq \Sh\left(X\right),$$ where $\xm^{et}$ denotes the subcategory of $\xm$ spanned by local homeomorphisms, and similarly for $S^{et}$.
\end{rmk}

\begin{rmk}\label{rmk:Xmfd}
For any $X,$ $\xm$ is a category of spaces in the sense of \cite{etalspme}, over topological spaces.
\end{rmk}

\begin{rmk}
If $S$ is smooth manifolds and $X$ is an $n$-manifold, then $Y$ is an $X$-manifold if and only if $Y$ is an $n$-manifold, if and only if $X$ is a $Y$ manifold. If instead of taking a space $X$ of fixed dimension we let $$X:=\coprod\limits^\infty_{n=0} \RR^n,$$ then every space in $S$ is an $X$-manifold. More generally, this holds with each $\RR^n$ replace by any $n$-manifold. Similarly for other types of manifolds.
\end{rmk}

\begin{thm}\label{thm:eqltoposhaf}
For any space $X,$ there is a canonical equivalence of topoi $$\Sh\left(\left[\Ha\left(X\right)\right]\right) \simeq \Sh\left(\xm^{et}\right),$$ where $\xm^{et}$ denotes the subcategory of $\xm$ spanned by local homeomorphisms, and sheaves are taken with respect to the induced open-cover topology.
\end{thm}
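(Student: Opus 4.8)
The plan is to factor the desired equivalence through a small subsite, namely the category $\mathcal{O}\left(X\right)^{et}$ of open subsets of $X$ with the local homeomorphisms between them, and to recognize this single site as a site of definition for \emph{both} topoi. Throughout I will use the description, recorded in the appendix and in \cite{etalspme}, of the topos of small sheaves on an \'etale stack: if $\X \simeq \left[\h\right]$ for an \'etale $S$-groupoid $\h$, then $\Sh\left(\X\right)$ is equivalent to the topos of $\h$-equivariant sheaves on $\h_0$, i.e. \'etale spaces $E \to \h_0$ equipped with an action of $\h_1$. Applying this to $\h = \Ha\left(X\right)$, whose object space is $X$ and whose arrows are germs of locally defined homeomorphisms, identifies $\Sh\left(\left[\Ha\left(X\right)\right]\right)$ with the category of sheaves $E$ on $X$ together with a compatible family of stalk isomorphisms indexed by such germs.

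First I would pass from $\xm^{et}$ to the small site $\mathcal{O}\left(X\right)^{et}$. Let $\mathcal{O}\left(X\right)^{et} \hookrightarrow \xm^{et}$ be the full subcategory on the open subsets of $X$, equipped with the induced open-cover topology. By the very definition of an $X$-manifold, every object $Y$ of $\xm^{et}$ admits an open cover $\left(U_\alpha \to Y\right)$ by (opens isomorphic to) objects of $\mathcal{O}\left(X\right)^{et}$, with each $U_\alpha \to Y$ a local homeomorphism; thus this full subcategory is covering-dense. The Comparison Lemma for sites then yields that restriction is an equivalence $\Sh\left(\xm^{et}\right) \stackrel{\sim}{\longrightarrow} \Sh\left(\mathcal{O}\left(X\right)^{et}\right)$. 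This reduces the theorem to producing a canonical equivalence $\Sh\left(\mathcal{O}\left(X\right)^{et}\right) \simeq \Sh\left(\left[\Ha\left(X\right)\right]\right)$.

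Second I would set up the two functors realizing this equivalence. Given a sheaf $F$ on $\mathcal{O}\left(X\right)^{et}$, restricting $F$ along the inclusions of open subsets of $X$ produces a sheaf $E_F$ on the space $X$; the value of $F$ on a general local homeomorphism $f:U \to V$ between opens then supplies, via the sheaf condition, transport isomorphisms which---because the topology is generated by open covers and hence everything is determined locally---depend only on the germ of $f$ at each point. This is exactly the data of an action of $\Ha\left(X\right)_1$ on $E_F$. Conversely, from an $\Ha\left(X\right)$-equivariant sheaf $E \to X$ one defines $\tilde E\left(U\right) = \Gamma\left(U, E\right)$ and uses the germ-action to define $\tilde E\left(f\right):\tilde E\left(V\right) \to \tilde E\left(U\right)$ for a local homeomorphism $f:U\to V$, combining ordinary pullback of the \'etale space with the transport isomorphisms; the cocycle condition for the action gives functoriality, and \'etaleness of $E \to X$ gives the sheaf condition. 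These assignments are visibly natural in $X$ and mutually inverse.

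The hard part will be the second step: verifying precisely that functoriality together with descent on $\mathcal{O}\left(X\right)^{et}$ is \emph{equivalent} to an $\Ha\left(X\right)$-action, rather than merely related to it. The delicate points are the direction conventions (a contravariant sheaf on local homeomorphisms versus a covariant groupoid action on stalks), and the fact that morphisms of $\mathcal{O}\left(X\right)^{et}$ include non-injective local homeomorphisms; one must check that the open-cover sheaf condition forces the transport data to be local, so that it factors through germs and assembles into a genuine, associative action of the arrow space $\Ha\left(X\right)_1$. Once this local-to-germ reduction is in place, the matching of descent data with equivariance---and the canonicity of the whole construction---is routine.
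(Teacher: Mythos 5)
Your proposal is correct, but it follows a genuinely different route from the paper's. The paper does not reduce to a small site in this proof: it observes that $y\left(X\right) \to 1$ is an effective epimorphism in $\Sh\left(\xm^{et}\right)$, that $\Sh\left(\xm^{et}\right)/y\left(X\right) \simeq \Sh\left(\xm^{et}/X\right) \simeq \Sh\left(X\right)$, and hence that $p:\Sh\left(X\right) \to \Sh\left(\xm^{et}\right)$ is an \'etale effective epimorphism of topoi exhibiting $\Sh\left(\xm^{et}\right)$ as an \'etendue; the presenting groupoid is then read off from the pullback topos $\Sh\left(X\right) \times_{\Sh\left(\xm^{et}\right)} \Sh\left(X\right)$, which is sheaves on the \'etal\'e space of $p^*y\left(X\right)$ --- the sheaf assigning $U \subseteq X$ the set of local homeomorphisms $U \to X$ --- and this identifies the groupoid with $\Ha\left(X\right)$, giving $\Sh\left(\xm^{et}\right) \simeq \B\Ha\left(X\right) \simeq \Sh\left(\left[\Ha\left(X\right)\right]\right)$. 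Your first step (Comparison Lemma down to $\mathcal{O}\left(X\right)^{et}$) is exactly the reduction the paper performs later, in the Consequences subsection, when constructing $\Theta$; and your second step admits a shortcut you may want to note: by Definition \ref{dfn:sit}, the canonical site $\sit\left(\Ha\left(X\right)\right)$ has as objects the opens of $X$ and as arrows $U \to V$ the sections of $s:\Ha\left(X\right)_1 \to X$ over $U$ landing in $V$, and these are precisely the local homeomorphisms $U \to V$; thus $\mathcal{O}\left(X\right)^{et} \cong \sit\left(\Ha\left(X\right)\right)$ and $\Sh\left(\mathcal{O}\left(X\right)^{et}\right) \simeq \Sh\left(\left[\Ha\left(X\right)\right]\right)$ without unwinding the equivariant-sheaf dictionary. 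Your hands-on identification with $\Ha\left(X\right)$-equivariant sheaves is nonetheless sound: the germ-locality you flag follows from functoriality applied to $f|_{W'} = f \circ \iota_{W'}$ together with the fact that a section of a sheaf is determined by its germs, and the non-injective local homeomorphisms cause no trouble since each is locally an embedding. In sum, the paper's argument buys an intrinsic, topos-level construction that simultaneously shows $\Sh\left(\xm^{et}\right)$ is an \'etendue; yours buys an elementary site-level description at the cost of a longer (but routine) verification.
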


\begin{proof}
Consider the representable sheaf $y\left(X\right)$ in $\Sh\left(\xm^{et}\right).$ We claim that the canonical projection $$y\left(X\right) \to 1$$ to the terminal object is an effective epimorphism. This claim is equivalent to the statement that for any $X$-manifold $Y,$ there exists a cover $$U_\alpha \to Y$$ in $\xm^{et}$ such that each element of the cover admits some local homeomorphism $$U_\alpha \to X.$$ This is now clear by definition of $X$-manifold. It follows that the induced \'etale geometric morphism $$\Sh\left(\xm^{et}\right)/y\left(X\right) \to \Sh\left(\xm^{et}\right)$$ is of effective descent (i.e. is an effective epimorphism). Note that there is a canonical equivalence $$\Sh\left(\xm^{et}\right)/y\left(X\right) \simeq \Sh\left(\xm^{et}/X\right).$$ By Remark \ref{rmk:xmfdsh}, $$\xm^{et}/X \simeq \Sh\left(X\right).$$ Moreover, it is easy to check that the induced Grothendieck topology on $\xm^{et}/X$ agrees with the canonical topology on $\Sh\left(X\right),$ hence one has $$\Sh\left(\xm^{et}\right)/y\left(X\right) \simeq \Sh\left(X\right).$$ Denote by $p$ the effective \'etale epimorphism $$p:\Sh\left(X\right) \to \Sh\left(\xm^{et}\right).$$ The existence of $p$ already implies that $\Sh\left(\xm^{et}\right)$ is an \'etendue (see Definition \ref{dfn:etendue}). In particular, the pullback topos
$$\xymatrix{P \ar[d] \ar[r] & \Sh\left(X\right) \ar[d]^-{p}\\
\Sh\left(X\right) \ar[r]^-{p} & \Sh\left(\xm^{et}\right)}$$
has the structure of an \'etale $S$-groupoid $\h$ such that $$\b\h \simeq \Sh\left(\xm^{et}\right),$$ see Remark \ref{rmk:etendskt}.
Note that this pullback is also the pullback topos
$$\xymatrix{P \ar[d] \ar[r] & \Sh\left(\xm^{et}\right)/y\left(X\right) \ar[d]\\
\Sh\left(X\right) \ar[r]^-{p} & \Sh\left(\xm^{et}\right).}$$
Hence $P$ is equivalent to the topos of sheaves on the \'etal\'e space of the sheaf $$p^*\left(y\left(X\right)\right) \in \Sh\left(X\right).$$ This sheaf assigns an open subset $U$ of $X$ the set of local homeomorphisms $$U \to X.$$ It thus can be identified with the sheafification of the presheaf $\mathit{Emb}$ defined in Definition \ref{dfn:haf}. It is easy to check that $\h$ is fact is isomorphic to the Haefliger groupoid $\Ha\left(X\right).$ The result now follows.
\end{proof}

We have the following immediate corollary:

\begin{cor}\label{cor:morithaf}
If $X$ and $Y$ are such that $\xm = Y\mbox{-}\mathbf{Mfd},$ then $\Ha\left(X\right)$ and $\Ha\left(Y\right)$ are Morita equivalent. If $M$ and $N$ are two $n$-manifolds, then $\Ha\left(M\right)$ and $\Ha\left(N\right)$ are Morita equivalent. In particular, for any $n$-manifold $M,$ $\Ha\left(M\right)$ is Morita equivalent to $\Ha\left(\RR^n\right).$
\end{cor}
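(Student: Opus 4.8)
The plan is to deduce this directly from Theorem \ref{thm:eqltoposhaf}, by turning Morita equivalence of \'etale groupoids into equivalence of their sheaf toposes. The key input I would record first is the standard dictionary: for \'etale $S$-groupoids $\G$ and $\h$, a Morita equivalence between $\G$ and $\h$ is the same as an equivalence of associated \'etale stacks $\left[\G\right] \simeq \left[\h\right]$, and, via the equivalence of bicategories between \'etale stacks and \'etendues (Theorem \ref{thm:etendue}), this is in turn equivalent to an equivalence of toposes $\Sh\left(\left[\G\right]\right) \simeq \Sh\left(\left[\h\right]\right)$. The direction I actually need is that an equivalence of toposes forces a Morita equivalence, which is precisely the faithfulness of the assignment $\h \mapsto \Sh\left(\left[\h\right]\right)$ built into Theorem \ref{thm:etendue}. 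Hence it suffices, in each case, to produce an equivalence between the relevant sheaf toposes.

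For the first assertion, assume $\xm = Y\mbox{-}\mathbf{Mfd}$. Then $\xm^{et}$ and $\left(Y\mbox{-}\mathbf{Mfd}\right)^{et}$ are the very same full subcategory of $S$, carrying the same induced open-cover topology, so $\Sh\left(\xm^{et}\right) = \Sh\left(\left(Y\mbox{-}\mathbf{Mfd}\right)^{et}\right)$. Applying Theorem \ref{thm:eqltoposhaf} to $X$ and to $Y$ separately and splicing the resulting equivalences gives
$$\Sh\left(\left[\Ha\left(X\right)\right]\right) \simeq \Sh\left(\xm^{et}\right) = \Sh\left(\left(Y\mbox{-}\mathbf{Mfd}\right)^{et}\right) \simeq \Sh\left(\left[\Ha\left(Y\right)\right]\right),$$
whence $\Ha\left(X\right)$ and $\Ha\left(Y\right)$ are Morita equivalent by the dictionary above.

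For the second assertion I would invoke the earlier remark that, in the smooth setting, two $n$-manifolds $M$ and $N$ satisfy $M\mbox{-}\mathbf{Mfd} = N\mbox{-}\mathbf{Mfd}$ (both being the category of smooth $n$-manifolds); the first assertion then applies verbatim with $X = M$ and $Y = N$, and specializing $N = \RR^n$ yields the final claim. The only genuine content here is the translation between Morita equivalence and topos equivalence; granting Theorem \ref{thm:etendue}, which supplies it, every remaining step is a formal consequence of the hypotheses on $X$-manifolds, so the corollary is indeed immediate. I expect the one place to exercise a little care is verifying that the topology induced on $\xm^{et}$ from $\HH\left(X\right)$ agrees strictly on the nose when $\xm = Y\mbox{-}\mathbf{Mfd}$, but this is automatic since the open-cover topology depends only on the underlying category of spaces and local homeomorphisms, which is shared.
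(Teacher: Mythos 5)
Your proof is correct and is exactly the argument the paper intends: the corollary is stated as an immediate consequence of Theorem \ref{thm:eqltoposhaf} (the paper gives no separate proof), obtained by applying that theorem to $X$ and $Y$, noting $\Sh\left(\xm^{et}\right)=\Sh\left(\left(Y\mbox{-}\mathbf{Mfd}\right)^{et}\right)$, and translating the resulting topos equivalence back into a Morita equivalence via Theorem \ref{thm:etendue}. The specializations to $n$-manifolds and to $\RR^n$ follow verbatim, as you say.
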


\begin{dfn}
By Remark \ref{rmk:Xmfd}, we may replace the role of the category $S$ of spaces by $\xm.$ We can therefore consider \textbf{\'etale $\xm$-stacks}, that is stacks over $\xm$ arising from \'etale groupoid objects in $\xm.$ Denote the associated $2$-category by $\Et\left(\xm\right).$
\end{dfn}

\begin{rmk}
When $S$ is smooth manifolds and $X=\coprod\limits^\infty_{n=0} \RR^n,$ $\Et\left(\xm\right)$ is the bicategory of \'etale differentiable stacks, as in Definition \ref{dfn:etalestack}.
\end{rmk}

\begin{rmk}\label{rem:xmff}
We have a canonical inclusion functor $i_X:\xm \to S,$ which induces a restriction $2$-functor
$$\left(i_X\right)^*:\St\left(S\right) \to \St\left(\xm\right).$$ It has a left adjoint
$$\left(i_X\right)_!:\St\left(\xm\right) \to \St\left(S\right);$$ $\left(i_X\right)_!$ is the left Kan extension $\Lan_{y_X}\left(y_S \circ i_X\right),$ where $y_X$ and $y_S$ are the Yoneda embeddings into stacks of $\xm$ and $S$ respectively. The unit of the adjunction $$\left(i_X\right)_! \dashv \left(i_X\right)^*$$ is always an equivalence, hence $\left(i_X\right)_!$ is full and faithful. Moreover, if $\X \simeq \left[\G\right]_{\xm}$ is an \'etale $\xm$-stack, $$\X \simeq \hc \left(\G_2 \rrrarrow \G_1 \rrarrow \G_0\right),$$ and hence
\begin{eqnarray*}
\left(i_X\right)_!\left(\left[\G\right]_{\xm}\right) &\simeq& \hc \left(\G_2 \rrrarrow \G_1 \rrarrow \G_0\right)\\
&\simeq& \left[\G\right]_{S}.
\end{eqnarray*}
Hence, $\left(i_X\right)_!$ restricts to a full and faithful embedding $$\Et\left(\xm\right) \to \Et\left(S\right).$$
Therefore, we may always choose to view an \'etale $\xm$-stack as an \'etale $S$-stack arising from an \'etale groupoid object in $\xm.$ It is easy to see that $\X$ is in the essential image of $\left(i_X\right)_!$ if and only if it admits an \'etale atlas $$Y \to \X,$$ from an $X$-manifold $Y.$
\end{rmk}

\begin{dfn}
Denote by $\HH\left(X\right)$ the \'etale $\xm$-stack $\left[\Ha\left(X\right)\right].$ It is the \textbf{Haefliger stack} of $X.$
\end{dfn}

Denote by $\Et\left(\xm\right)^{et}$ the bicategory of \'etale $\xm$-stacks and local homeomorphisms between them.

\begin{lem}\label{lem:contrc}
Let $V \subset X$ be an open subset. Then the groupoid $$\Hom_{\Et\left(\xm\right)^{et}}\left(V,\HH\left(X\right)\right)$$ of local homeomorphisms from $V$ to the Haefliger stack $\HH\left(X\right)$ is contractible.
\end{lem}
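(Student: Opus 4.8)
My plan is to show that this mapping groupoid is equivalent to the terminal groupoid $\mathbf 1$, i.e. that it is nonempty and that between any two objects there is a \emph{unique} $2$-isomorphism. Nonemptiness is immediate: since $V\subset X$ is open, the inclusion $V\hookrightarrow X$ is a local homeomorphism, and composing with the atlas $a:X\to\HH\left(X\right)$ (which is a local homeomorphism because $\Ha\left(X\right)$ is \'etale) yields a canonical object $a\circ\iota_V\in\Hom_{\Et\left(\xm\right)^{et}}\left(V,\HH\left(X\right)\right)$. For the rest I would use two ingredients: that the assignment $Y\mapsto\Hom_{\Et\left(\xm\right)^{et}}\left(Y,\HH\left(X\right)\right)$ satisfies descent for open covers (it is a stack on $\xm^{et}$), and the fact that maps from a space into the \'etale stack $\HH\left(X\right)$ are representable, so that a local homeomorphism can be analyzed locally through the atlas.

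First I would establish a local normal form. Since $a:X\to\HH\left(X\right)$ is a local homeomorphism and an effective epimorphism, pulling $a$ back along any local homeomorphism $f:V\to\HH\left(X\right)$ gives a local-homeomorphism epimorphism $V\times_{\HH\left(X\right)}X\to V$, which admits local sections over a cover $\left\{V_i\to V\right\}$. These sections produce local homeomorphisms $\sigma_i:V_i\to X$ with $f|_{V_i}\cong a\circ\sigma_i$. Thus every object is locally of the form $a\circ\sigma$ for a local homeomorphism $\sigma:V'\to X$. The decisive observation is that any two such local models are \emph{canonically} isomorphic: given local homeomorphisms $\sigma,\sigma':V'\to X$, the assignment $v\mapsto germ_{\sigma\left(v\right)}\left(\sigma'\circ\sigma^{-1}\right)$ is a continuous map $V'\to\Ha\left(X\right)_1$ with source $\sigma$ and target $\sigma'$, and since $V'$ carries only identity arrows this is precisely a $2$-isomorphism $a\circ\sigma\Rightarrow a\circ\sigma'$ (no cocycle condition is required).

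The main obstacle, and the conceptual heart of the argument, is the rigidity statement that every such object has \emph{trivial} automorphism group. A $2$-automorphism of $a\circ\sigma$ is a continuous map $\gamma:V'\to\Ha\left(X\right)_1$ with $s\circ\gamma=t\circ\gamma=\sigma$. Working over a neighborhood on which $\sigma$ is an open embedding, $\gamma$ is a continuous section of the \'etal\'e projection $s:\Ha\left(X\right)_1\to X$, hence by the very definition of $\Ha\left(X\right)$ (Definition \ref{dfn:haf}) it is the germ of an open embedding $\psi$ of $X$; the condition $t\circ\gamma=s\circ\gamma$ forces $\psi$ to fix every point, so $\psi$ is the identity and $\gamma$ is the identity transformation. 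This is exactly the effectiveness of the Haefliger groupoid. Because automorphisms form a sheaf on $V$, triviality spreads from the cover $\left\{V_i\right\}$ to all of $V$, so $\mathrm{Aut}\left(f\right)=\left\{\mathrm{id}\right\}$ for \emph{every} local homeomorphism $f:V\to\HH\left(X\right)$.

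Finally I would assemble these pieces by descent. Given two objects $f,f'$, choose a common cover $\left\{V_i\right\}$ with $f|_{V_i}\cong a\circ\sigma_i$ and $f'|_{V_i}\cong a\circ\tau_i$; the transport germs give isomorphisms $\theta_i:f|_{V_i}\Rightarrow f'|_{V_i}$. On each overlap $V_{ij}$ the two restrictions $\theta_i$ and $\theta_j$ differ by an automorphism of $f'|_{V_{ij}}$, which is trivial by the previous paragraph, so $\theta_i$ and $\theta_j$ agree there; the stack condition then glues them to a global $2$-isomorphism $\theta:f\Rightarrow f'$. Uniqueness of $\theta$ again follows from triviality of automorphisms. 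Hence the groupoid $\Hom_{\Et\left(\xm\right)^{et}}\left(V,\HH\left(X\right)\right)$ is nonempty with a unique morphism between any two objects, i.e. it is contractible. I expect the two points requiring the most care to be the germ-section rigidity in the third step (where effectiveness is genuinely used) and the verification that the overlap data glue, which rests on that same triviality of automorphisms.
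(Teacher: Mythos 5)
Your argument is correct, and it rests on exactly the same two germ-level facts as the paper's proof: connectivity comes from the transport isomorphisms $v\mapsto \mathit{germ}_{\sigma\left(v\right)}\left(\sigma'\circ\sigma^{-1}\right)$, and rigidity comes from the observation that a continuous $\Ha\left(X\right)_1$-valued family with equal source and target is a continuous section of $s$ over an open set, hence the family of germs of a single map fixing every point, hence the family of identity germs. Where you differ is in the assembly. The paper works directly with the homotopy colimit formula (\ref{eq:pizza}) over \v{C}ech groupoids, rewrites the cocycle data of an arbitrary object $\psi$ as $\left(\psi_\alpha,\gamma_{\alpha,\beta}\right)$ with the $\gamma_{\alpha,\beta}$ actual locally defined homeomorphisms of $X$, and then produces a single \emph{global} $2$-cell from the canonical object to $\psi$ by taking $\omega_\alpha:=\psi_\alpha$; the compatibility on overlaps is precisely the repackaged cocycle condition for $\psi$, so no gluing step is needed, and triviality of automorphisms is only ever checked for the one canonical object $\varphi$. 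You instead prove that \emph{every} object has trivial automorphism group (locally, then globally by the sheaf property of $\underline{\mathrm{Aut}}$), and use that triviality to force agreement of the locally defined transport isomorphisms on overlaps so that descent glues them. Your version is more modular and makes the role of effectiveness of $\Ha\left(X\right)$ more visible, at the cost of an extra descent argument that the paper's direct cocycle computation sidesteps. The only small point you should make explicit is that the local sections $\sigma_i:V_i\to X$ obtained from the atlas are themselves local homeomorphisms (this follows since both $f$ and the atlas are, by the cancellation property of local homeomorphisms), as this is needed for the transport germs $\mathit{germ}_{\sigma_i\left(v\right)}\left(\tau_i\circ\sigma_i^{-1}\right)$ to be defined.
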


\begin{proof}
Consider the map $$V \hookrightarrow X \to \HH\left(X\right),$$ and denote it by $\varphi.$ It is a local homeomorphism, and hence an object in $$\Hom_{\Et\left(\xm\right)^{et}}\left(V,\HH\left(X\right)\right).$$ First, we will show that any other object in this groupoid admits a morphism from $\varphi.$

Let $\h$ be an arbitrary \'etale $S$-groupoid. Denote by $\Hom^{et}\left(\h,\Ha\left(X\right)\right)$ the full subgroupoid of the groupoid of internal functors spanned by those functors $$\varphi:\h \to \Ha\left(X\right)$$ with $\varphi_0$ a local homeomorphism. It follows from (\ref{eq:pizza}) of Appendix \ref{sec:prelim} that
$$\Hom_{\Et\left(\xm\right)^{et}}\left(V,\HH\left(X\right)\right) \simeq \underset{\mathcal{U} \in Cov\left(V\right)} \hc \Hom^{et} \left(V_{\mathcal{U}},\Ha\left(X\right)\right),$$
where the groupoid $V_{\mathcal{U}}$ is the \v{C}ech groupoid (Definition \ref{dfn:cech}).
Let $$\psi:V_{\mathcal{U}} \to \Ha\left(X\right)$$ an arbitrary object of $\Hom^{et} \left(V_{\mathcal{U}},\Ha\left(X\right)\right).$ To show that $\Hom_{\Et\left(\xm\right)^{et}}\left(V,\HH\left(X\right)\right)$ is connected, it suffices to show that there exists an internal natural transformation $$\varphi \circ p_\mathcal{U} \Rightarrow \psi,$$ where $$p_\mathcal{U}:V_\mathcal{U} \to V$$ is the canonical morphism of groupoids.
For this, it will be helpful to have another description of the groupoid $\Hom^{et} \left(V_{\mathcal{U}},\Ha\left(X\right)\right).$

Lets first look at the objects. Without loss of generality, suppose that the cover $\mathcal{U}$ is an open cover $\left(U_\alpha \to V\right).$ The functor $\psi$ then corresponds to cocycle data $\left(\psi_\alpha,\psi_{\alpha,\beta}\right),$ where $$\psi_\alpha:U_\alpha \to X,$$ $$\psi_{\alpha,\beta}:U_\alpha \cap U_\beta \to \Ha\left(X\right)_1,$$ such that for all $$x \in U_\alpha \cap U_\beta,$$
\begin{equation}
s\left(\psi_{\alpha,\beta}\left(x\right)\right)=\psi_\beta\left(x\right),
\end{equation}
and
\begin{equation}\label{eq:2}
t\left(\psi_{\alpha,\beta}\left(x\right)\right)=\psi_\alpha\left(x\right),
\end{equation}
subject to the cocycle condition that on all $$z \in U_\alpha \cap U_\beta \cap U_\delta,$$
\begin{equation}\label{eq:cocycccc}
\psi_{\alpha,\beta}\left(z\right) \psi_{\beta,\delta}\left(z\right)=\psi_{\alpha,\delta}\left(z\right).
\end{equation}
Let $i_{\alpha,\beta}:U_\alpha \cap U_\beta \hookrightarrow U_\alpha$ be the canonical inclusion. Then the pair $\left(i_{\alpha,\beta},\psi_{\beta,\alpha}\right)$ corresponds to a unique morphism $$\tilde \psi_{\beta,\alpha}:U_\alpha \cap U_\beta \to \psi_\alpha^*\left(\Ha\left(X\right)_1\right),$$ where $\psi_\alpha^*\left(\Ha\left(X\right)_1\right)$ is the following pullback diagram
$$\xymatrix{\psi_\alpha^*\left(\Ha\left(X\right)_1\right) \ar[d] \ar[r] & \Ha\left(X\right)_1 \ar[d]^-{s} \\
U_\alpha \ar[r]^-{\psi_\alpha} & X.}$$
Recall that $$s:\Ha\left(X\right)_1 \to X$$ is the \'etal\'e space of the sheaf $F$ which assigns an open subset $W$ of $X$ the set of local homeomorphisms $$W \to X.$$ As such, $\tilde \psi_{\beta,\alpha}$ corresponds to a section of the pullback sheaf $\psi_\alpha^*\left(F\right).$ Notice that since $\psi_\alpha$ is a local homeomorphism, it is open, and hence for each open subset $$N \subset U_\alpha,$$ $$\psi_\alpha^*\left(F\right)\left(N\right)=F\left(\psi_\alpha\left(N\right)\right).$$ It follows that $\tilde \psi_{\beta,\alpha}$ corresponds to a local homeomorphism $$\gamma_{\beta,\alpha}:\psi_\alpha\left(U_\alpha \cap U_\beta\right) \to X.$$ Explicitly, viewing $\Ha\left(X\right)_1$ (as a set) as the disjoint union of stalks of $F,$ one has
\begin{equation}\label{eq:reco}
\mathit{germ}_{\psi_\alpha\left(x\right)}\left(\gamma_{\beta,\alpha}\right)=\psi_{\beta,\alpha}\left(x\right).
\end{equation}
By equation (\ref{eq:2}), it follows that $\gamma_{\beta,\alpha}$ restricts to a local homeomorphism such that
\begin{eqnarray*}
\gamma_{\beta,\alpha}:\psi_\alpha\left(U_\alpha \cap U_\beta\right) &\to& \psi_\beta\left(U_\alpha \cap U_\beta\right)\\
\psi_\alpha\left(x\right) &\mapsto&  \psi_\beta\left(x\right).
\end{eqnarray*}
Notice that each $\psi_{\alpha,\beta}$ can be recovered from $\gamma_{\alpha,\beta}$ by equation (\ref{eq:reco}). Hence the data $\left(\psi_\alpha,\psi_{\alpha,\beta}\right)$ and $\left(\psi_\alpha,\gamma_{\alpha,\beta}\right)$ are equivalent. The cocycle condition (\ref{eq:cocycccc}) translate into the condition that for all $$z \in \psi_\delta\left( U_\alpha \cap U_\beta \cap U_\delta\right),$$ $$\gamma_{\alpha,\beta}\left(\gamma_{\beta,\delta}\left(z\right)\right)= \gamma_{\alpha,\delta}\left(z\right).$$
By a similar argument, one concludes that morphisms $$\omega:\left(\psi_\alpha,\gamma_{\alpha,\beta}\right) \to \left(\theta_\alpha,\rho_{\alpha,\beta}\right)$$ can be described as the data of for each $\alpha,$ a local homeomorphism
$$\omega_\alpha:\psi_\alpha\left(U_\alpha\right) \to \theta_\alpha\left(U_\alpha\right)$$
such that for $x \in U_\alpha,$
\begin{equation}\label{eq:iddd}
\psi_\alpha\left(x\right) \mapsto \theta_\alpha\left(x\right),
\end{equation}
subject to the natural compatibility condition that for all $\alpha$ and $\beta,$ the following diagram commutes:
$$\xymatrix@C=2.5cm{\psi_\alpha\left(U_\alpha \cap U_\beta\right) \ar[r]^-{\gamma_{\beta,\alpha}} \ar[d]_-{\omega_\alpha} & \psi_\beta\left(U_\alpha \cap U_\beta\right) \ar[d]^-{\omega_\beta}\\
\theta_\alpha\left(U_\alpha \cap U_\beta\right) \ar[r]_-{\rho_{\beta,\alpha}} & \theta_\beta\left(U_\alpha \cap U_\beta\right).}$$
Let us now return to constructing $$\omega:\varphi \circ p_\mathcal{U} \Rightarrow \psi.$$ Notice that $\varphi \circ p_\mathcal{U}$ corresponds to the data $\left(j_\alpha,id_{U_\alpha \cap U_\beta}\right),$ where each $$j_\alpha:U_\alpha \hookrightarrow X$$ is the canonical inclusion. It easy to check now that the assignment to each $\alpha$ the map $$\omega_\alpha:=\psi_\alpha:U_\alpha \to \psi_\alpha\left(U_\alpha\right)$$ determines valid data for a morphism $\omega.$ Hence, $\Hom_{\Et\left(\xm\right)^{et}}\left(V,\HH\left(X\right)\right)$ is non-empty and connected, and equivalent to the group of automorphisms of $\varphi.$

Notice that the canonical functor $$\Hom^{et}\left(V,\Ha\left(X\right)\right) \to \Hom_{\Et\left(\xm\right)^{et}}\left(V,\HH\left(X\right)\right),$$ is full and faithful. It therefore suffices to consider automorphisms of $$\varphi:V \to \Ha\left(X\right)$$ in $\Hom^{et}\left(V,\Ha\left(X\right)\right).$ This corresponds to the special case where $\mathcal{U}$ is the singleton cover by all of $V.$ Any morphism $$\omega:\varphi \Rightarrow \varphi$$ corresponds to a singleton map $$\omega:V \to V,$$ which by (\ref{eq:iddd}), must be the identity of $V.$ It follows that $\Hom_{\Et\left(\xm\right)^{et}}\left(V,\HH\left(X\right)\right)$ is contractible.
\end{proof}

\begin{thm}\label{thm:terminal}
The Haefliger stack $\HH\left(X\right)$ is a terminal object in the bicategory $\Et\left(\xm\right)^{et}.$
\end{thm}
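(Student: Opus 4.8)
The plan is to prove $\HH\left(X\right)$ terminal by showing directly that for \emph{every} \'etale $X$-manifold stack $\X$ the mapping groupoid $\Hom_{\Et\left(\xm\right)^{et}}\left(\X,\HH\left(X\right)\right)$ is contractible, using Lemma \ref{lem:contrc} as the base case and bootstrapping through two descent arguments. Throughout I would invoke the following elementary fact, call it $(\star)$: if $D$ is any diagram of groupoids all of whose values are contractible, then $\holim D$ is contractible. Indeed the unique map of diagrams from $D$ to the constant diagram $\underline{\ast}$ at the terminal groupoid is a levelwise equivalence, hence induces an equivalence on homotopy limits, and $\holim \underline{\ast}\simeq \ast$ because $\holim$ preserves the terminal object.

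First I would treat the case of an $X$-manifold $Y$, i.e. show $\Hom_{\Et\left(\xm\right)^{et}}\left(Y,\HH\left(X\right)\right)\simeq \ast$. By definition $Y$ admits an open cover $\left(U_\alpha \hookrightarrow Y\right)$ with each $U_\alpha$ an open subset of $X$. Since $\HH\left(X\right)$ is a stack and being a local homeomorphism is local on the source, the mapping groupoid is the homotopy limit over the \v{C}ech nerve $Y_{\mathcal U}$ of the cosimplicial groupoid sending $[n]$ to $\prod_{\alpha_0,\ldots,\alpha_n}\Hom_{\Et\left(\xm\right)^{et}}\left(U_{\alpha_0}\cap\cdots\cap U_{\alpha_n},\HH\left(X\right)\right)$. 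The key point is that each finite intersection $U_{\alpha_0}\cap\cdots\cap U_{\alpha_n}$ is open in $U_{\alpha_0}$, hence is itself an open subset of $X$; so by Lemma \ref{lem:contrc} every factor, and therefore every cosimplicial level, is contractible, and $(\star)$ gives $\Hom_{\Et\left(\xm\right)^{et}}\left(Y,\HH\left(X\right)\right)\simeq \ast$.

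Next I would treat a general \'etale $X$-manifold stack $\X$. By Remark \ref{rem:xmff}, $\X$ admits an \'etale atlas $Y\to\X$ with $Y$ an $X$-manifold, so $\X\simeq\left[\G\right]$ for an \'etale groupoid $\G$ in $\xm$ with $\G_0=Y$. Since $\G$ is \'etale, all of its face and degeneracy maps are local homeomorphisms, so the simplicial object $\G_\bullet$ lives in $\Et\left(\xm\right)^{et}$, and each $\G_n$ is \'etale over $Y$ and hence (being an $X$-manifold property is stable under local homeomorphisms, by Remark \ref{rmk:xmfdsh}) is itself an $X$-manifold. Writing $\X\simeq\hocolim\left(\cdots\,\G_1 \rrarrow \G_0\right)$ and using that $\HH\left(X\right)$ satisfies $2$-descent for local-homeomorphism maps, I obtain $\Hom_{\Et\left(\xm\right)^{et}}\left(\X,\HH\left(X\right)\right)\simeq\holim_n\Hom_{\Et\left(\xm\right)^{et}}\left(\G_n,\HH\left(X\right)\right)$. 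By the previous paragraph each term is contractible, so $(\star)$ yields $\Hom_{\Et\left(\xm\right)^{et}}\left(\X,\HH\left(X\right)\right)\simeq \ast$. As this holds for all $\X$, the Haefliger stack $\HH\left(X\right)$ is terminal in $\Et\left(\xm\right)^{et}$.

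The hard part will be the passage from the homotopy-colimit presentation $\X\simeq\hocolim\left(\G_\bullet\right)$ to the homotopy-limit formula for the mapping groupoid \emph{within the restricted bicategory} $\Et\left(\xm\right)^{et}$: one must know that $\HH\left(X\right)$ satisfies descent with respect to maps that are local homeomorphisms rather than arbitrary continuous maps, and that the entire simplicial diagram $\G_\bullet$ together with its atlas genuinely lies in $\Et\left(\xm\right)^{et}$. This is precisely what the \'etale hypothesis on $\G$ and the $et$-version of the descent formula behind (\ref{eq:pizza}) (used already in the proof of Lemma \ref{lem:contrc}) are there to supply; once this is secured, the two contractibility arguments are formal consequences of Lemma \ref{lem:contrc} and $(\star)$.
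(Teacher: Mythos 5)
Your proposal is correct and follows essentially the same route as the paper: reduce to showing the mapping groupoid into $\HH\left(X\right)$ is contractible, establish this for an $X$-manifold $Y$ by writing $Y$ as a (\v{C}ech) colimit of open subsets of $X$ and invoking Lemma \ref{lem:contrc} together with the fact that a homotopy limit of contractible groupoids is contractible, and then bootstrap to a general \'etale $\xm$-stack via its homotopy-colimit presentation by the nerve of an \'etale groupoid whose spaces are $X$-manifolds. The only difference is that you spell out the descent step (the $et$-version of (\ref{eq:pizza})) and the fact $(\star)$ explicitly, where the paper leaves both implicit.
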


\begin{proof}
It suffices to show that for any \'etale $\xm$-stack $\X,$ the groupoid $$\Hom_{\Et\left(\xm\right)^{et}}\left(\X,\HH\left(X\right)\right)$$ is a contractible. First, suppose that $\X$ is an $X$-manifold $Y.$ Then $Y$ can be written as a colimit of open subsets of $X,$ so it follows that $\Hom_{\Et\left(\xm\right)^{et}}\left(Y,\HH\left(X\right)\right)$ is a homotopy limit of contractible groupoids by Lemma \ref{lem:contrc}, hence contractible. Now, any \'etale $\xm$-stack $\X$ can be written as a homotopy colimit of the form $$\X \simeq \hc \left(\G_2 \rrrarrow \G_1 \rrarrow \G_0\right),$$ where each $\G_i$ is an $X$-manifold. By the same argument, the result follows.
\end{proof}

\begin{rmk}\label{rmk:subterminal}
In the larger bicategory of all \'etale $S$-stacks and \'etale morphisms $\Et^{et}\left(S\right),$ the Haefliger stack $\HH\left(X\right)$ need not be terminal. However, it is \emph{sub}terminal, in that the for all \'etale $S$-stacks $\X,$ the groupoid $\Hom_{\Et\left(\xm\right)^{et}}\left(\X,\HH\left(X\right)\right)$ is either empty or contractible. It is non-empty if and only if $\X$ is an \'etale $\xm$-stack.
\end{rmk}

\subsection{Consequences}
We now derive many consequences of Theorems \ref{thm:eqltoposhaf} and \ref{thm:terminal}.

On one hand, by Theorem \ref{thm:etalsp}, there is an equivalence of bicategories $$\Et\left(\xm\right)^{et}/\HH\left(X\right) \simeq \St\left(\HH\left(X\right)\right).$$ On the other hand, by Theorem \ref{thm:terminal}, $\HH\left(X\right)$ is terminal so the canonical projection $$\Et\left(\xm\right)^{et}/\HH\left(X\right) \to \Et\left(\xm\right)^{et}$$ is an equivalence of bicategories. Hence, one has that the bicategory of \'etale $\xm$-stacks and local homeomorphisms is a $2$-topos, namely $$\Et\left(\xm\right)^{et} \simeq \St\left(\HH\left(X\right)\right).$$ One can say even more. By Theorem \ref{thm:eqltoposhaf} and the Comparison Lemma for stacks, a straight-forward stacky analogue of the theorem in \cite{sga4} III, one has $$\St\left(\HH\left(X\right)\right)\simeq \St\left(\xm^{et}\right).$$ Combining all these results, gives us that there is an equivalence of bicategories
\begin{equation}\label{eq:yay}
\St\left(\xm^{et}\right) \simeq \Et\left(\xm\right)^{et}.
\end{equation}

We will now be a bit more careful, so that we may see what this functor does.

\begin{prop}\label{prop:incri}
The canonical (non-full) inclusion
$$k:\Et\left(\xm\right)^{et} \to \St\left(\xm\right)$$ admits a right adjoint.
\end{prop}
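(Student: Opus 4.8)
The plan is to show that, under the equivalence of bicategories (\ref{eq:yay}), the inclusion $k$ is nothing but the prolongation functor, whose right adjoint is the restriction functor $j^*$. Write
$$e:\St\left(\xm^{et}\right)\stackrel{\sim}{\longrightarrow}\Et\left(\xm\right)^{et}$$
for the equivalence of (\ref{eq:yay}), and recall the restriction--prolongation adjunction $j_!\dashv j^*$ attached to the inclusion of sites $j:\xm^{et}\to\xm$, with $j_!:\St\left(\xm^{et}\right)\to\St\left(\xm\right)$ and $j^*:\St\left(\xm\right)\to\St\left(\xm^{et}\right)$. The core of the argument is a natural equivalence
$$k\circ e\simeq j_!$$
of functors $\St\left(\xm^{et}\right)\to\St\left(\xm\right)$; granting this, since $e$ is an equivalence we have $k\simeq j_!\circ e^{-1}$, and composing the (adjoint) equivalence $e^{-1}\dashv e$ with $j_!\dashv j^*$ yields $k\dashv e\circ j^*$. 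Thus the desired right adjoint is $R:=e\circ j^*$, sending a stack $\Z$ to the \'etale $\xm$-stack associated, via (\ref{eq:yay}), to the restriction $j^*\Z$ of $\Z$ to the site of local homeomorphisms.

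To establish $k\circ e\simeq j_!$ I would argue that both functors preserve (homotopy) colimits and that they agree on representables. On representables this is immediate: for $U\in\xm^{et}$ the representable stack $y\left(U\right)$ on $\xm^{et}$ has, for every $Y,$ the set of local homeomorphisms $Y\to U$ as its groupoid of points, so by the construction underlying (\ref{eq:yay}) one has $e\left(y\left(U\right)\right)\simeq U$ as an \'etale $\xm$-stack; on the other hand the prolongation of a representable is the corresponding representable, $j_!\left(y\left(U\right)\right)\simeq U$ in $\St\left(\xm\right),$ and $k\left(U\right)=U.$ Cocontinuity of $j_!$ holds because it is a left adjoint; cocontinuity of $k\circ e$ reduces to the fact that an \'etale $\xm$-stack $\X\simeq\hc\left(\G_2\rrrarrow\G_1\rrarrow\G_0\right)$ is this homotopy colimit both in the $2$-topos $\Et\left(\xm\right)^{et}\simeq\St\left(\xm^{et}\right)$ and in $\St\left(\xm\right),$ exactly as in Remark \ref{rem:xmff}, so that $k$ carries the generating colimits (coproducts and realizations of \v{C}ech/groupoid diagrams of local homeomorphisms) to the homotopy colimits presenting the corresponding \'etale stack. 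Since every object of $\St\left(\xm^{et}\right)$ is a colimit of representables, the two cocontinuous functors agreeing on representables agree in general.

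As a direct check of the resulting adjunction, for an \'etale $\xm$-stack $\X$ and a stack $\Z\in\St\left(\xm\right)$ one unwinds
$$\Hom_{\St\left(\xm\right)}\left(k\X,\Z\right)\simeq\Hom_{\St\left(\xm\right)}\left(j_!\,e^{-1}\X,\Z\right)\simeq\Hom_{\St\left(\xm^{et}\right)}\left(e^{-1}\X,j^*\Z\right)\simeq\Hom_{\Et\left(\xm\right)^{et}}\left(\X,e\,j^*\Z\right),$$
using $k\X\simeq j_!e^{-1}\X,$ the adjunction $j_!\dashv j^*,$ and that $e$ is an equivalence; concretely this reads as the statement that arbitrary maps from $\X$ into $\Z$ are the same as local homeomorphisms from $\X$ into $R\Z,$ in the spirit of the $\tilde y^{et}$--$\Theta$ correspondence. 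The main obstacle is precisely the identification $k\circ e\simeq j_!$: the delicate points are verifying the cocontinuity of $k$ at the bicategorical level, i.e. that the homotopy colimits presenting \'etale stacks are genuinely preserved by the non-full inclusion into $\St\left(\xm\right),$ and keeping track of the coherence data for the resulting pseudo-adjunction, the non-fullness of $k$ being harmless exactly because $e$ turns all morphisms of stacks on $\xm^{et}$ into local homeomorphisms of \'etale stacks.
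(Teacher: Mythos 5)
Your proposal inverts the paper's logical order and in doing so leaves the crucial step unproven. You reduce everything to the identification $k\circ e\simeq j_!$ (with $e=\Theta$), which is exactly Theorem \ref{thm:prol} of the paper — but that theorem is proved \emph{after} this proposition, and its proof runs through Corollary \ref{cor:42} (the cocontinuity of $k$), which is itself deduced from the existence of the right adjoint you are trying to construct. The gap in your argument is precisely the point you flag at the end as "delicate": to run the "two cocontinuous functors agreeing on representables must agree" argument, you need $k\circ e$ to preserve \emph{all} weak colimits, and your justification only gestures at $k$ preserving the particular groupoid-nerve colimits presenting étale stacks. Remark \ref{rem:xmff}, which you cite, concerns the functor $\left(i_X\right)_!$ from $\xm$-stacks to $S$-stacks, not the non-full inclusion $k:\Et\left(\xm\right)^{et}\to\St\left(\xm\right)$; it does not tell you that a homotopy colimit computed in the $2$-topos $\Et\left(\xm\right)^{et}\simeq\St\left(\xm^{et}\right)$ is carried by $k$ to the corresponding homotopy colimit in $\St\left(\xm\right)$. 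Without an independent proof of that preservation (even for the generating colimits, let alone all of them), the identification $k\circ e\simeq j_!$ is not established, and the adjunction $k\dashv e\circ j^*$ does not follow.

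The paper avoids this circularity entirely by a different route: it factors $k$ through slice bicategories over the terminal object $\HH\left(X\right)$, using that $\Et\left(\xm\right)^{et}/\HH\left(X\right)\to\Et\left(\xm\right)^{et}$ is an equivalence (Theorem \ref{thm:terminal}), that $\St\left(\xm\right)/\HH\left(X\right)\to\St\left(\xm\right)$ corresponds to $l_!$ with right adjoint $l^*$, and that $\Et\left(\xm\right)^{et}/\HH\left(X\right)\to\St\left(\xm\right)/\HH\left(X\right)$ has a right adjoint built from the already-established étalé realization adjunction $L\dashv\Gamma$ of Theorem \ref{thm:etalsp}. Each factor admits a right adjoint, hence so does $k$, with no appeal to cocontinuity of $k$. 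If you want to salvage your approach, you would need to first prove directly that $\tilde y^{et}\left(\left[\G\right]\right)\simeq\hc\, y^{et}\left(\G_\bullet\right)$ in $\St\left(\xm^{et}\right)$ and that $k$ carries this colimit to the one in $\St\left(\xm\right)$ — which amounts to redoing the descent arguments that the paper's use of $L\dashv\Gamma$ packages for you.
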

\begin{proof}
First, notice that the following diagram commutes up to natural isomorphism
$$\xymatrix{\Et\left(\xm\right)^{et}/\HH\left(X\right) \ar[r]^-{\sim} \ar[d] & \Et\left(\xm\right)^{et} \ar[d]^-{k}\\
\St\left(\xm\right)/\HH\left(X\right) \ar[r] & \St\left(\xm\right),}$$
where all the $2$-functors are the canonical ones. Under the equivalence $$\St\left(\xm\right)/\HH\left(X\right) \simeq \St\left(\xm/\HH\left(X\right)\right)$$  of Proposition \ref{prop:stand2}, $$\St\left(\xm\right)/\HH\left(X\right) \to \St\left(\xm\right)$$ corresponds to $$l_!:\St\left(\xm/\HH\left(X\right)\right) \to \St\left(\xm\right),$$ which has a right adjoint $l^*,$ where $$l:\xm/\HH\left(X\right) \to \xm$$ is the canonical projection. Furthermore, it follows easily from \cite{etalspme} that $$\Et\left(\xm\right)^{et}/\HH\left(X\right)\to\St\left(\xm\right)/\HH\left(X\right)$$ has a right adjoint. Explicitly, it may be written as the composite
$$\resizebox{4.8in}{!}{$\St\left(\xm\right)/\HH\left(X\right) \stackrel{\sim}{\longrightarrow} \St\left(\xm/\HH\left(X\right)\right) \stackrel{\chi^*}{\longrightarrow} \St\left(\sit\left(\HH\left(X\right),X\right)\right)=\St\left(\HH\left(X\right)\right) \stackrel{L}{\longrightarrow} \Et\left(\xm\right)^{et}/\HH\left(X\right),$}$$ where $$\chi:\sit\left(\HH\left(X\right),X\right) \to \xm/\HH\left(X\right)$$ is the full and faithful inclusion, as in Definition \ref{dfn:sit}, and $L$ is as in Theorem \ref{thm:etalsp}. The result now follows.
\end{proof}

Note that from (\ref{eq:yay}), $\Et\left(\xm\right)^{et}$ is a $2$-topos and hence in particular cocomplete. We have the following immediate corollary of Proposition \ref{prop:incri}:

\begin{cor}\label{cor:42}
The canonical inclusion
$$k:\Et\left(\xm\right)^{et} \to \St\left(\xm\right)$$ preserves and reflects all weak colimits.
\end{cor}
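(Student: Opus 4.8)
The plan is to deduce both halves from the right adjoint constructed in Proposition \ref{prop:incri} together with the cocompleteness of $\Et\left(\xm\right)^{et}$ recorded in~(\ref{eq:yay}).

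Preservation is immediate: by Proposition \ref{prop:incri} the functor $k$ is a left adjoint, and a left adjoint preserves all weak colimits.

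For reflection I would argue as follows. Since $\Et\left(\xm\right)^{et}\simeq\St\left(\xm^{et}\right)$ is a $2$-topos it is cocomplete, so any diagram $F:I \to \Et\left(\xm\right)^{et}$ has a weak colimit $\mu:F \Rightarrow \Delta\left(\hc F\right)$ computed inside $\Et\left(\xm\right)^{et}$. Given a cocone $\lambda:F \Rightarrow \Delta c$ whose image $k\left(\lambda\right)$ is a weak colimit cocone in $\St\left(\xm\right)$, the universal property of $\hc F$ yields a comparison $\phi:\hc F \to c$ with $\phi \circ \mu \cong \lambda$. Applying $k$ and using that $k$ preserves $\hc F$, both $k\left(\mu\right)$ and $k\left(\lambda\right)$ exhibit $kF$ as its weak colimit, whence $k\left(\phi\right)$ is an equivalence in $\St\left(\xm\right)$.

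Everything then reduces to the claim that $k$ reflects equivalences, which I expect to be the only non-formal step. Here $\phi$ is a local homeomorphism of \'etale stacks, and $k\left(\phi\right)$ being an equivalence of stacks supplies a weak inverse $\psi$ in $\St\left(\xm\right)$. The point is that any equivalence of \'etale stacks is itself a local homeomorphism---as already used in the proof that $\Ef_P$ preserves Morita equivalences---so $\psi$ is a morphism of $\Et\left(\xm\right)^{et}$; since $k$ is locally full, the invertibility $2$-cells live in $\Et\left(\xm\right)^{et}$ as well, and $\phi$ is an equivalence there. Hence $\lambda$ is a weak colimit cocone, so $k$ reflects weak colimits. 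Formally this is the standard fact that a weak-colimit-preserving functor out of a cocomplete bicategory which reflects equivalences also reflects weak colimits; the geometric input specific to our situation is exactly the reflection of equivalences noted above.
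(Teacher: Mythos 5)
Your proof is correct and follows the same route the paper intends: the paper offers no argument beyond declaring the corollary ``immediate'' from Proposition \ref{prop:incri}, so preservation is exactly the left-adjoint observation you make. Your reflection argument---cocompleteness of $\Et\left(\xm\right)^{et}$ plus conservativity of $k$, the latter resting on the fact that an equivalence of \'etale stacks is itself a local homeomorphism and that $k$ is locally fully faithful---correctly supplies the step the paper leaves implicit.
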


Denote by $\mathcal{O}\left(X\right)^{et}$ the full subcategory of $\xm^{et}$ spanned by the open subsets of $X.$ By Lemma \ref{lem:contrc}, the canonical projection $$\pi:\mathcal{O}\left(X\right)^{et}/\HH\left(X\right) \to \mathcal{O}\left(X\right)^{et}$$ is an equivalence of categories. Denote by $$i:\mathcal{O}\left(X\right)^{et} \hookrightarrow \xm^{et}$$ the canonical inclusion. By the Comparison Lemma for stacks, we have $$\left(i \pi\right)^*:\St\left(\xm^{et}\right) \to \St\left(\sit\left(\Ha\left(X\right)\right)\right)$$ is an equivalence. It follows that the equivalence (\ref{eq:yay}) may be realized as the composite
$$\St\left(\xm^{et}\right) \stackrel{\left(i \pi\right)^*}{\longlongrightarrow} \St\left(\HH\left(X\right)\right) \stackrel{L}{\longrightarrow} \Et\left(\xm\right)^{et}/\HH\left(X\right) \stackrel{\sim}{\longrightarrow} \Et\left(\xm\right)^{et}.$$ Denote this composite by $\Theta.$ In particular, we have proven that $\Theta$ is an equivalence, so we shall record as a theorem to reference later:

\begin{thm}\label{thm:whatetmeans1}
The functor $$\Theta:\St\left(\xm^{et}\right) \to \Et\left(\xm\right)^{et}$$ is an equivalence of bicategories.
\end{thm}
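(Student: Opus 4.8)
The plan is to exploit the fact that, by the way $\Theta$ was built just above, it is already exhibited as a composite of three functors,
$$\St\left(\xm^{et}\right) \stackrel{\left(i\pi\right)^*}{\longrightarrow} \St\left(\HH\left(X\right)\right) \stackrel{L}{\longrightarrow} \Et\left(\xm\right)^{et}/\HH\left(X\right) \stackrel{\sim}{\longrightarrow} \Et\left(\xm\right)^{et},$$
so the entire assertion reduces to checking that each of the three factors is an equivalence of bicategories; a composite of equivalences being an equivalence, this will finish the proof. I would therefore treat the three factors in turn, taking the easiest ones first.

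First I would dispatch the last factor. The projection $\Et\left(\xm\right)^{et}/\HH\left(X\right) \to \Et\left(\xm\right)^{et}$ is an equivalence for the purely formal reason that $\HH\left(X\right)$ is a \emph{terminal} object of $\Et\left(\xm\right)^{et}$, which is exactly Theorem \ref{thm:terminal}: in any bicategory with terminal object $1$, an object (resp.\ morphism, resp.\ $2$-cell) of the slice over $1$ is the same datum as an object (resp.\ morphism, resp.\ $2$-cell) of the ambient bicategory together with its essentially unique structure map to $1$, so the forgetful projection is an equivalence. Next, the middle factor $L$ is an equivalence directly by Theorem \ref{thm:etalsp}, which identifies stacks on an \'etale stack with local homeomorphisms into it, i.e.\ provides the \'etal\'e-realization equivalence $L:\St\left(\HH\left(X\right)\right) \xrightarrow{\sim} \Et\left(\xm\right)^{et}/\HH\left(X\right)$.

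The substantive factor is the first one, $\left(i\pi\right)^*$, which is restriction along the composite $\mathcal{O}\left(X\right)^{et}/\HH\left(X\right) \xrightarrow{\pi} \mathcal{O}\left(X\right)^{et} \xrightarrow{i} \xm^{et}$, its target being identified with $\St\left(\sit\left(\Ha\left(X\right)\right)\right)=\St\left(\HH\left(X\right)\right)$. Here $\pi$ is already an equivalence of categories by Lemma \ref{lem:contrc}, so the work is to see that restriction along $i$ is an equivalence of bicategories of stacks. The tool is the stacky Comparison Lemma (the $2$-categorical analogue of \cite{sga4}~III): I would verify that $\mathcal{O}\left(X\right)^{et}$ is a topologically dense subcategory of $\xm^{et}$ — every $X$-manifold is, by definition, covered in $\xm^{et}$ by open subsets of $X$ — and that the induced open-cover topologies match, so that restriction is an equivalence on stacks.

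The main obstacle I anticipate is precisely this last verification. One must confirm that the open-cover topology on $\mathcal{O}\left(X\right)^{et}$ transported through $\pi$ coincides with the canonical topology on the site $\sit\left(\Ha\left(X\right)\right)$ of the Haefliger stack, and that the resulting identification of $\mathcal{O}\left(X\right)^{et}/\HH\left(X\right)$ with $\sit\left(\Ha\left(X\right)\right)$ is compatible with the definition of $\Theta$; this compatibility is exactly what Theorem \ref{thm:eqltoposhaf} secures at the level of underlying topoi, and I would lift it to stacks of groupoids via the Comparison Lemma. Once the density hypothesis and the agreement of topologies are in hand, the remaining $2$-categorical coherence checks are routine, and assembling the three equivalences yields that $\Theta$ is an equivalence of bicategories.
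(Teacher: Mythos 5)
Your proposal is correct and is essentially the paper's own argument: the paper likewise establishes the theorem by exhibiting $\Theta$ as the composite $\St\left(\xm^{et}\right) \stackrel{\left(i\pi\right)^*}{\longrightarrow} \St\left(\HH\left(X\right)\right) \stackrel{L}{\longrightarrow} \Et\left(\xm\right)^{et}/\HH\left(X\right) \stackrel{\sim}{\longrightarrow} \Et\left(\xm\right)^{et}$, invoking the Comparison Lemma together with Lemma \ref{lem:contrc} for the first factor, Theorem \ref{thm:etalsp} for the second, and the terminality of $\HH\left(X\right)$ (Theorem \ref{thm:terminal}) for the third.
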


\begin{cor}\label{cor:whatetmeansdiff}
The functor $$\Theta:\St\left(\Mfd^{et}\right) \to \Et\left(\Mfd\right)^{et}$$ is an equivalence between \'etale differentiable stacks and local diffeomorphisms, and stacks on the site of manifolds and local diffeomorphisms.
\end{cor}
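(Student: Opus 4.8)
The plan is to obtain this as an immediate specialization of Theorem \ref{thm:whatetmeans1}, which has already established that $\Theta:\St\left(\xm^{et}\right) \to \Et\left(\xm\right)^{et}$ is an equivalence of bicategories for \emph{any} space $X$ in \emph{any} admissible category of spaces $S$. The entire content of the corollary is to pin down the two identifications that turn the general statement into the differentiable one, so there is no genuinely new argument to run; the work is purely in matching up categories.

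First I would fix the category of spaces $S$ to be the category of smooth manifolds (with separation conditions dropped, as permitted by Convention II of Section \ref{sec:conventions}), and set $X := \coprod\limits_{n=0}^\infty \RR^n$. As observed in the remarks following the definition of an $X$-manifold, for this particular choice of $X$ \emph{every} smooth manifold admits a cover by open subsets of $X$ — each chart of an $n$-manifold is an open subset of the summand $\RR^n$ — so that $\xm = \Mfd$ on the nose. Restricting morphisms to local homeomorphisms (which, per the conventions, means local diffeomorphisms in the smooth setting) then gives $\xm^{et} = \Mfd^{et}$, and hence an equality of stack bicategories $\St\left(\xm^{et}\right) = \St\left(\Mfd^{et}\right)$.

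Second I would invoke the remark identifying $\Et\left(\xm\right)$, for this same choice of $X$, with the bicategory of \'etale differentiable stacks of Definition \ref{dfn:etalestack}: an \'etale $\xm$-stack is exactly a stack arising from an \'etale groupoid object whose object and arrow spaces are smooth manifolds. Passing to local homeomorphisms on both sides yields $\Et\left(\xm\right)^{et} = \Et\left(\Mfd\right)^{et}$, the bicategory of \'etale differentiable stacks and local diffeomorphisms. With these two identifications in place, the equivalence furnished by Theorem \ref{thm:whatetmeans1} reads precisely as $\Theta:\St\left(\Mfd^{et}\right) \to \Et\left(\Mfd\right)^{et}$, which is the claim.

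The only point requiring any care — and it is bookkeeping rather than a real obstacle — is verifying the equality $\xm = \Mfd$ and the corresponding identification of \'etale $\xm$-stacks with \'etale differentiable stacks; both are recorded as remarks in the text, so I would simply cite them. In particular no properties of $\Theta$ beyond those proved in Theorem \ref{thm:whatetmeans1} are needed, and the corollary follows at once.
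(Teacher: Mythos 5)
Your proposal is correct and matches the paper's (implicit) argument exactly: the corollary is the specialization of Theorem \ref{thm:whatetmeans1} to $S=\Mfd$ with $X=\coprod_{n=0}^\infty\RR^n$, using the recorded remarks that $\xm=\Mfd$ and that $\Et\left(\xm\right)$ is then the bicategory of \'etale differentiable stacks. Nothing further is needed.
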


\begin{dfn}
Consider the canonical (non-full) inclusion $$j_X:\xm^{et} \to \xm.$$ It induces a \textbf{restriction functor}
$$j_X^*:\St\left(\xm\right) \to \St\left(\xm^{et}\right)$$ with a left adjoint $$\left(j_X\right)_!:\St\left(\xm^{et}\right) \to \St\left(\xm\right),$$ called the \textbf{prolongation functor.}
\end{dfn}

\begin{thm}\label{thm:prol}
$\X \in \St\left(\xm\right)$ is an \'etale $\xm$-stack if and only if it is in the essential image of the prolongation functor $\left(j_X\right)_!.$
\end{thm}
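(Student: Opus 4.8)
The plan is to identify the prolongation $\left(j_X\right)_!$ with the composite $k \circ \Theta$, where $\Theta:\St\left(\xm^{et}\right) \to \Et\left(\xm\right)^{et}$ is the equivalence of Theorem \ref{thm:whatetmeans1} and $k:\Et\left(\xm\right)^{et} \to \St\left(\xm\right)$ is the canonical inclusion. Granting a natural equivalence $\left(j_X\right)_! \simeq k \circ \Theta$, the theorem is immediate: since $\Theta$ is an equivalence, hence essentially surjective, the essential image of $\left(j_X\right)_!$ equals the essential image of $k$, which by construction of $\Et\left(\xm\right)^{et}$ and Remark \ref{rem:xmff} is exactly the class of \'etale $\xm$-stacks inside $\St\left(\xm\right)$. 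In other words, $\left(j_X\right)_!\Z \simeq k\Theta\Z$ is always \'etale, and conversely any \'etale $\xm$-stack $\X \simeq \Theta\Z$ is recovered as $\left(j_X\right)_!\Z$.

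To establish $\left(j_X\right)_! \simeq k \circ \Theta$, I would use that every stack $\Z \in \St\left(\xm^{et}\right)$ is the canonical weak colimit of the representables lying over it, so that any weak-colimit-preserving $2$-functor out of $\St\left(\xm^{et}\right)$ is determined up to equivalence by its restriction to representables. It therefore suffices to check two things: that both functors preserve weak colimits, and that they agree (compatibly) on representables. Cocontinuity holds on both sides: $\left(j_X\right)_!$ is the left Kan extension of $y \circ j_X$ along the Yoneda embedding and a left adjoint (Remark \ref{rem:xmff}), while $k \circ \Theta$ is cocontinuous because $\Theta$ is an equivalence and $k$ preserves all weak colimits by Corollary \ref{cor:42}.

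For the agreement on representables, the universal property of the Kan extension gives $\left(j_X\right)_!\left(y\left(U\right)\right) \simeq y\left(U\right)$ for each $X$-manifold $U$. On the other side, $\Theta\left(y\left(U\right)\right)$ is the \'etale $\xm$-stack whose local homeomorphisms out of a space $Y$ are the sections $y\left(U\right)\left(Y\right)$, i.e. the local homeomorphisms $Y \to U$; this corepresenting property forces $\Theta\left(y\left(U\right)\right) \simeq U$, and so $k\Theta\left(y\left(U\right)\right) \simeq y\left(U\right)$ as well. Both functors thus restrict to $y \circ j_X$ on representables. (That $\X \mapsto \Hom_{\Et\left(\xm\right)^{et}}\left(-,\X\right)$ inverts $\Theta$ on representables is precisely what Lemma \ref{lem:contrc} underwrites for the open subsets $V \subseteq X$.)

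The main obstacle is the $2$-categorical coherence in this last step: one must promote the pointwise equivalences $\left(j_X\right)_!\left(y\left(U\right)\right) \simeq y\left(U\right) \simeq k\Theta\left(y\left(U\right)\right)$ to a single pseudonatural equivalence on the full sub-$2$-category of representables, after which cocontinuity transports it to all of $\St\left(\xm^{et}\right)$. Carrying out this bookkeeping, together with the clean identification $\Theta\left(y\left(U\right)\right) \simeq U$, is where essentially all the work lies; the reduction itself and the colimit arguments are then formal.
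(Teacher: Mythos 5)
Your proposal is correct and follows essentially the same route as the paper: both identify $(j_X)_!$ with $k\circ\Theta$ by noting that each functor preserves weak colimits (Corollary \ref{cor:42} together with the Kan-extension description of $(j_X)_!$) and that they agree on representables. The only cosmetic difference is that the paper verifies the agreement on representables first for open subsets of $X$ and then extends to arbitrary $X$-manifolds via a \v{C}ech colimit decomposition, whereas you invoke the corepresenting property (that $\tilde y^{et}$ inverts $\Theta$, underwritten by Lemma \ref{lem:contrc}) directly.
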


\begin{proof}
Notice that by  Corollary \ref{cor:42}, $$k\circ \Theta:\St\left(\xm^{et}\right) \to \St\left(\xm\right)$$ is weak colimit preserving, with essential image the \'etale $\xm$-stacks $\Et\left(\xm\right).$ We will show that $$k \circ \Theta \simeq \left(j_X\right)_!.$$ Since $k\circ \Theta$ is weak colimit preserving, it may be identified with the weak left Kan extension
\begin{equation}\label{eq:lanj}
\Lan_{y^{et}} \left(k\circ \Theta \circ y^{et}\right),$$ where $$y^{et}:\xm^{et} \hookrightarrow \St\left(\xm^{et}\right)
\end{equation}
is the Yoneda embedding. By construction, for $V$ an open subset of $X,$ one has $$k\circ \Theta \circ y^{et}\left(V\right) \simeq y\left(V\right),$$ where $y$ is the Yoneda embedding. If $Y$ is any $X$-manifold, and $\left(U_\alpha \to Y\right)$ is a covering of $Y$ by open subsets of $X,$ then
$$y^{et}\left(Y\right) \simeq \hc \left(\coprod y^{et}\left(U_\alpha \cap U_\beta \cap U_\gamma\right) \rrrarrow  \coprod y^{et}\left(U_\alpha \cap U_\beta\right) \rightrightarrows  \coprod y^{et}\left(U_\alpha\right)\right)$$ so that
\begin{eqnarray*}
k\circ \Theta \circ y^{et}\left(Y\right) &\simeq& \hc \left(\coprod y\left(U_\alpha \cap U_\beta \cap U_\gamma\right) \rrrarrow  \coprod y\left(U_\alpha \cap U_\beta\right) \rightrightarrows  \coprod y\left(U_\alpha\right)\right)\\
&\simeq& y\left(Y\right).
\end{eqnarray*}
It follows that $$k\circ \Theta \circ y^{et} \simeq y \circ j_X.$$ Hence, by (\ref{eq:lanj}), $$k\circ \Theta = \Lan_{y^{et}} \left( y \circ j_X\right) = \left(j_X\right)_!.$$
\end{proof}

\begin{cor}
A stack $\X \in \St\left(\Mfd\right)$ is an \'etale differentiable stack if and only if it is in the essential image of the prolongation functor $j_!,$ where $$j:\Mfd^{et} \to \Mfd$$ is the canonical functor.
\end{cor}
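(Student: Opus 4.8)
The plan is to deduce this corollary directly from Theorem \ref{thm:prol} by specializing the ambient category of spaces and the reference object $X$, exactly as the earlier corollaries specialized Theorem \ref{thm:nn} and Theorem \ref{thm:whatetmeans1}. I would take $S$ to be the category of smooth manifolds (with smooth maps as continuous maps and local diffeomorphisms as local homeomorphisms), and set
$$X := \coprod_{n=0}^\infty \RR^n.$$

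The first step is to verify that with this choice $\xm = \Mfd$. Every smooth manifold admits a cover by charts, i.e. by open subsets diffeomorphic to open subsets of some $\RR^n$, and each such chart is an open subspace of $X$; hence every smooth manifold is an $X$-manifold. Conversely, an $X$-manifold is by definition covered by open subsets of $X$, each of which is a smooth manifold, so the $X$-manifold is itself a smooth manifold. Thus $\xm = \Mfd$ as categories, and restricting to local diffeomorphisms gives $\xm^{et} = \Mfd^{et}$. This is precisely the content of the earlier remark, which records that for this $X$ every space in $S$ is an $X$-manifold.

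Under this identification, the canonical inclusion $j_X:\xm^{et} \to \xm$ is exactly the inclusion $j:\Mfd^{et} \to \Mfd$, so the prolongation functor $\left(j_X\right)_!$ coincides with $j_!$. Moreover, by the remark following the definition of $\Et\left(\xm\right)$, when $S$ is smooth manifolds and $X = \coprod_{n=0}^\infty \RR^n$ the bicategory $\Et\left(\xm\right)$ of \'etale $\xm$-stacks is precisely the bicategory of \'etale differentiable stacks. With these identifications in place, the statement of Theorem \ref{thm:prol} reads verbatim as the assertion of the corollary: a stack $\X \in \St\left(\Mfd\right)$ is an \'etale differentiable stack if and only if it lies in the essential image of $j_!$.

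There is no substantive obstacle in this argument; the entire content lies in Theorem \ref{thm:prol}, and the corollary is its instance at the universal model space $X = \coprod_{n=0}^\infty \RR^n$. The only point requiring care is the identification $\xm = \Mfd$, which is immediate from the structure of $X$ as the disjoint union of all the model spaces $\RR^n$ and ensures that no manifold is excluded by a dimension constraint.
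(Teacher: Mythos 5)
Your proposal is correct and is exactly the paper's intended argument: the corollary is stated without proof as an immediate specialization of Theorem \ref{thm:prol} to $S=\Mfd$ and $X=\coprod_{n=0}^{\infty}\RR^{n}$, using the identifications $\xm=\Mfd$ and $\Et\left(\xm\right)=\Et\left(\Mfd\right)$ recorded in the surrounding remarks. Nothing further is needed.
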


Suppose we are given an \'etale $\xm$-stack $\X$. It determines a stack $\tilde y^{et}\left(\X\right)$ on $\xm^{et},$ which assigns an $X$-manifold $Y$ the groupoid of local homeomorphisms from $Y$ to $\X.$

\begin{thm}\label{thm:yet}
The functor $$\tilde y^{et}:\Et\left(\xm\right)^{et} \to \St\left(\xm^{et}\right)$$ is inverse to $\Theta.$
\end{thm}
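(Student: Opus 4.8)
The plan is to leverage the fact that $\Theta$ is \emph{already} known to be an equivalence of bicategories (Theorem \ref{thm:whatetmeans1}): it will then suffice to exhibit a pseudonatural equivalence $\tilde y^{et}\circ\Theta \simeq \mathrm{id}_{\St\left(\xm^{et}\right)}$, because from this one obtains $\tilde y^{et} \simeq \tilde y^{et}\circ\Theta\circ\Theta^{-1}\simeq\Theta^{-1}$, identifying $\tilde y^{et}$ as a pseudo-inverse of $\Theta$. So I would fix a stack $\Z\in\St\left(\xm^{et}\right)$ and aim to produce an equivalence of stacks $\tilde y^{et}\left(\Theta\Z\right)\simeq\Z$ on $\xm^{et}$, natural in $\Z$.

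First I would check this on the full subcategory $\mathcal{O}\left(X\right)^{et}\subset\xm^{et}$ of open subsets of $X$. For $V$ open in $X$, unwinding the definition of $\tilde y^{et}$ gives $\tilde y^{et}\left(\Theta\Z\right)\left(V\right)=\Hom_{\Et\left(\xm\right)^{et}}\left(V,\Theta\Z\right)$. By the construction of $\Theta$ (concretely, the computation $k\circ\Theta\circ y^{et}\left(V\right)\simeq y\left(V\right)$ in the proof of Theorem \ref{thm:prol}, or directly from $L$ of Theorem \ref{thm:etalsp} together with Lemma \ref{lem:contrc}), the representable $y^{et}\left(V\right)$ is carried by $\Theta$ to the \'etale $\xm$-stack represented by $V$ itself, i.e. $\Theta\left(y^{et}\left(V\right)\right)\simeq V$. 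Since $\Theta$ is an equivalence it induces equivalences on all hom-groupoids, so
\[
\tilde y^{et}\left(\Theta\Z\right)\left(V\right)=\Hom_{\Et\left(\xm\right)^{et}}\left(V,\Theta\Z\right)\simeq\Hom_{\Et\left(\xm\right)^{et}}\left(\Theta y^{et}\left(V\right),\Theta\Z\right)\simeq\Hom_{\St\left(\xm^{et}\right)}\left(y^{et}\left(V\right),\Z\right)\simeq\Z\left(V\right),
\]
the last step being the bicategorical Yoneda lemma. Each equivalence in this chain is pseudonatural in $V\in\mathcal{O}\left(X\right)^{et}$ and in $\Z$, so the composite is a pseudonatural equivalence $\tilde y^{et}\left(\Theta\Z\right)|_{\mathcal{O}\left(X\right)^{et}}\simeq\Z|_{\mathcal{O}\left(X\right)^{et}}$.

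Next I would promote this from open subsets of $X$ to all $X$-manifolds. Every $X$-manifold is by definition covered by open subsets of $X$, so $\mathcal{O}\left(X\right)^{et}$ is a dense sub-site of $\xm^{et}$, and by the stacky Comparison Lemma the restriction functor $i^*:\St\left(\xm^{et}\right)\to\St\left(\mathcal{O}\left(X\right)^{et}\right)$ is an equivalence of bicategories; this is precisely the ingredient already used to build $\left(i\pi\right)^*$ in the construction of $\Theta$. As $i^*$ is an equivalence and $\tilde y^{et}\circ\Theta$ agrees with the identity after restriction along $i$, the two agree on all of $\St\left(\xm^{et}\right)$, yielding $\tilde y^{et}\circ\Theta\simeq\mathrm{id}$ and hence the theorem.

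The main obstacle I anticipate is coherence bookkeeping rather than any new geometric input: one must verify that the chain of equivalences in the display is genuinely pseudonatural in \emph{both} arguments (so that it assembles first into a morphism of stacks and then into a $2$-natural transformation of $2$-functors), and that the identification $\Theta\left(y^{et}\left(V\right)\right)\simeq V$ is compatible with the restriction maps between nested open subsets of $X$. Granting the bicategorical Yoneda lemma, the Comparison Lemma for stacks, and the explicit action of $\Theta$ on representables, the remainder of the argument is formal.
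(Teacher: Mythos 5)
Your proof is correct, but it takes a genuinely different route from the paper's. The paper verifies both composites directly: for an \'etale $\xm$-stack $\X$ it observes that, because $\HH\left(X\right)$ is terminal in $\Et\left(\xm\right)^{et}$ (Theorem \ref{thm:terminal}), the restriction of $\tilde y^{et}\left(\X\right)$ to open subsets of $X$ coincides with the stack of sections $\Gamma\left(!_\X\right)$ of the unique local homeomorphism $!_\X:\X \to \HH\left(X\right)$; the adjoint equivalence $L \dashv \Gamma$ of Theorem \ref{thm:etalsp} then yields $\Theta\left(\tilde y^{et}\left(\X\right)\right)\simeq \X$, and the same identification run in the other direction gives $\left(i\pi\right)^*\tilde y^{et}\left(\Theta\left(\Z\right)\right)\simeq\left(i\pi\right)^*\Z$, whence the claim since $\left(i\pi\right)^*$ is an equivalence. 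You instead exploit the fact that $\Theta$ is already known to be an equivalence (Theorem \ref{thm:whatetmeans1}) to reduce to a single composite, and you compute that composite on the subsite $\mathcal{O}\left(X\right)^{et}$ by a purely formal chain: the identification $\Theta\left(y^{et}\left(V\right)\right)\simeq V$, full faithfulness of $\Theta$, and the bicategorical Yoneda lemma. Both arguments funnel through the same Comparison Lemma reduction to open subsets of $X$, and the identification $\Theta\left(y^{et}\left(V\right)\right)\simeq V$ that you invoke is exactly the computation recorded in the proof of Theorem \ref{thm:prol}, which does not depend on Theorem \ref{thm:yet}, so there is no circularity; note also that an equivalence of \'etale stacks is automatically a local homeomorphism, so this identification does take place in $\Et\left(\xm\right)^{et}$ where you need it. What the paper's route buys is the extra identification $\tilde y^{et}\left(\X\right)\simeq\Gamma\left(!_\X\right)$, which ties the statement to the \'etal\'e realization machinery and is reused elsewhere; what yours buys is economy, replacing a second appeal to the geometric content of Theorem \ref{thm:etalsp} by Yoneda, at the cost of the pseudonaturality bookkeeping in both variables that you already flag.
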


\begin{proof}
Suppose that $\X$ is an \'etale $\xm$-stack. Denote by $$!_\X:\X \to \HH\left(X\right)$$ the essentially unique local homeomorphism to $\HH\left(X\right).$ Then, again since $\HH\left(X\right)$ is terminal in $\Et\left(\xm\right)^{et},$ one has that $$\Gamma\left(!_\X\right)\left(U\right) \simeq \tilde y^{et}\left(\X\right)\left(U\right)$$ for each open subset $U$ of $X,$ where $\Gamma$ is as in Theorem \ref{thm:etalsp}. By Theorem \ref{thm:etalsp}, it follows that $$L\left(i \pi\right)^*\left(\tilde y^{et}\left(\X\right)\right)\simeq !_\X$$ and hence $$\Theta\left(\tilde y^{et}\left(\X\right)\right) \simeq \X.$$
Conversely, suppose that $\Z$ is a stack on $\Et\left(\xm\right)^{et}.$ Since $\HH\left(X\right)$ is terminal in $\Et\left(\xm\right)^{et},$ there are equivalences $$!_{\Theta\left(\Z\right)} \simeq L\left(\left(i \pi\right)^*\left(\Z\right)\right)$$ and $$\tilde y^{et}\left(\Theta\left(\Z\right)\right)\left(U\right) \simeq \Gamma\left(!_{\Theta\left(\Z\right)}\right)\left(U\right),$$ for each open subset $U$ of $X.$ It follows that $$\left(i \pi\right)^*\tilde y^{et}\left(\Theta\left(\Z\right)\right) \simeq \left(i \pi\right)^* \Z.$$ Since $\left(i \pi\right)^*$ is an equivalence, we are done.
\end{proof}

\begin{rmk}
What this means is that for any stack $\Z \in \St\left(\xm^{et}\right),$ which we can view as a (generalized) \emph{moduli problem} which is functorial with respect to local homeomorphisms, there exists a unique \'etale $\xm$-stack $\Theta \Z,$ such that for a given $X$-manifold $Y$ the groupoid $\Z\left(Y\right)$ is equivalent to the groupoid of local homeomorphisms from $Y$ to $\Theta \Z.$ Conversely, given any \'etale $\xm$-stack $\X$, it determines a stack on $X$-manifolds and local homeomorphisms $\tilde y^{et}\left(\X\right)$ by assigning an $X$-manifold $Y$ the groupoid of local homeomorphisms from $Y$ to $\X,$ and these operations are inverse to each other.
\end{rmk}

It follows from Remark \ref{rmk:sites} that if $\Z$ is an \'etale $X$-manifold stack such that $\Z \simeq \left[\h\right]_{\xm},$ for $\h$ an \'etale groupoid object in $\xm$, and $U_!\left(\Z\right)$ is its underlying topological stack, then $$\sit\left(\left[\h\right]_{\xm},\h_0\right) \simeq \sit\left(U_!\left[\h\right],U\h_0\right),$$ where the equivalence sends an open subspace $V \subset \h_0$ to its underlying open subset $U\left(V\right) \subset U\left(\h_0\right).$ More precisely, the equivalence is induced by applying $U_!.$ This induces an equivalence between their associated $2$-topoi of stacks:

$$u^\Z:\St\left(\Z\right) \stackrel{\sim}{\longrightarrow} \St\left(U_!\Z\right).$$

\begin{lem}
The following diagram commutes up to canonical homotopy:

$$\xymatrix{\St\left(\Z\right) \ar[r]^-{u^\Z} \ar[d]_-{L} & \St\left(U_!\Z\right) \ar[d]^-{L}\\
\Et\left(\xm\right)^{et}/\Z \ar[r]^-{U_!/\Z} & \Et\left(\T\right)^{et}/U_!\Z,}$$ where $\Et\left(\T\right)^{et}$ denotes the bicategory of \'etale topological stacks and local homeomorphisms between them, and by abuse of notation, each $L$ denotes the respective \'etal\'e realization functor (see Theorem \ref{thm:etalsp}).
\end{lem}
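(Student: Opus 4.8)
The plan is to exploit that each vertical functor $L$ is an \emph{equivalence} of bicategories, with inverse the stack-of-sections functor $\Gamma$ of Theorem \ref{thm:etalsp}. Since $u^\Z$ is itself an equivalence (it is induced by the equivalence of sites $\sit\left(\left[\h\right]_{\xm},\h_0\right) \simeq \sit\left(U_!\left[\h\right],U\h_0\right)$ of Remark \ref{rmk:sites}), the square commutes up to canonical homotopy if and only if the transposed square obtained by replacing each $L$ with its inverse $\Gamma$ and reversing the vertical arrows commutes, i.e. if and only if there is a natural equivalence
$$\Gamma \circ \left(U_!/\Z\right) \simeq u^\Z \circ \Gamma : \Et\left(\xm\right)^{et}/\Z \to \St\left(U_!\Z\right).$$
So I would first reduce the problem to producing, for each local homeomorphism $p:\Y \to \Z$ of \'etale $\xm$-stacks, a canonical equivalence $\Gamma\left(U_!\left(p\right)\right) \simeq u^\Z\left(\Gamma\left(p\right)\right)$ in $\St\left(U_!\Z\right)$, natural in $p$.

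Next I would evaluate both stacks on the common site. By Remark \ref{rmk:sites} an object of $\sit\left(\Z,\h_0\right)$ is an open subspace $V \subset \h_0$ together with the \'etale structure map $V \hookrightarrow \h_0 \to \Z$, and $U_!$ carries it to the object $U\left(V\right) \to U_!\Z$ of $\sit\left(U_!\Z,U\h_0\right)$. Unwinding the definitions, the value of $u^\Z\left(\Gamma\left(p\right)\right)$ at $U\left(V\right) \to U_!\Z$ is the groupoid $\Hom_{\Et\left(\xm\right)^{et}/\Z}\left(V,\Y\right)$ of lifts of $V \to \Z$ along $p$, whereas the value of $\Gamma\left(U_!\left(p\right)\right)$ at the same object is the groupoid $\Hom_{\Et\left(\T\right)^{et}/U_!\Z}\left(U\left(V\right),U_!\Y\right)$ of lifts of $U\left(V\right) \to U_!\Z$ along $U_!\left(p\right)$. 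Applying $U_!$ to a lift defines a comparison functor between these two groupoids, and the task is reduced to showing it is an equivalence, naturally in the site object $V$.

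The heart of the argument — and the step I expect to be the main obstacle — is therefore showing that $U_!$ induces an equivalence of section groupoids
$$\Hom_{\Et\left(\xm\right)^{et}/\Z}\left(V,\Y\right) \stackrel{\sim}{\longrightarrow} \Hom_{\Et\left(\T\right)^{et}/U_!\Z}\left(U\left(V\right),U_!\Y\right)$$
for every site object $V$. Faithfulness and fullness on $2$-cells are straightforward, since $U_!$ is faithful on the presenting groupoids. The genuinely delicate point is essential surjectivity: a \emph{continuous} local section of $U_!\left(p\right)$ over $U\left(V\right)$ must be shown to come from an $\xm$-section of $p$ over $V$. Here I would argue site-locally, presenting $\Y$ by an \'etale groupoid $\g$ in $\xm$ and using that the source and target maps of $\g$ are \'etale: a continuous lift is described by descent data valued in $\g$, and because the relevant \'etale charts are local isomorphisms of \'etale $\xm$-stacks, this descent data is automatically $\xm$-admissible. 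This is exactly the content of the equivalence of sites in Remark \ref{rmk:sites}, now applied to sections rather than merely to objects, and it is where the properties of $U_!$ from \cite{etalspme} are used in an essential way.

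Finally, having established the equivalence objectwise and checked naturality in $V$ (which is immediate, as the comparison is induced by the single $2$-functor $U_!$), I would invoke that both $\Gamma\left(U_!\left(p\right)\right)$ and $u^\Z\left(\Gamma\left(p\right)\right)$ are genuine stacks on the site $\sit\left(U_!\Z,U\h_0\right)$, so a morphism of stacks inducing an equivalence on each generating object is an equivalence. This yields the natural equivalence $\Gamma \circ \left(U_!/\Z\right) \simeq u^\Z \circ \Gamma$, and hence, transposing back through the inverse equivalences $L$, the canonical homotopy making the original square commute.
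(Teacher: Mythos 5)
Your proposal is correct in outline, but it takes a genuinely different route from the paper. The paper's proof is a two-line formal argument: both composites $L\circ u^{\Z}$ and $\left(U_!/\Z\right)\circ L$ are weak colimit preserving (each factor is a left adjoint or an equivalence), and both send a representable $V\subset \h_0$ of the site $\sit\left(\left[\h\right]_{\xm},\h_0\right)$ to $U\left(V\right)\to U_!\Z$; since every object of $\St\left(\Z\right)$ is a weak colimit of representables, the two composites agree. You instead transpose through the inverse equivalences $\Gamma$ and verify the square objectwise on the site, which forces you to prove that $U_!$ induces equivalences of section groupoids $\Hom_{\Et\left(\xm\right)^{et}/\Z}\left(V,\Y\right)\simeq \Hom_{\Et\left(\T\right)^{et}/U_!\Z}\left(U\left(V\right),U_!\Y\right)$. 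That statement is essentially Corollary \ref{cor:xmfdastop}, which the paper \emph{deduces} from this lemma; your argument is not circular because you propose to prove it directly from the admissibility axioms (conditions 2), 5), 6) of Definition \ref{dfn:admiss}, via cocycle data for a groupoid presentation of $\Y$), but it does relocate all the real content into a descent argument that the paper's colimit trick sidesteps entirely. The trade-off: the paper's route is shorter and purely formal, while yours, if the essential-surjectivity step is written out carefully (lifting the cocycles $U_\alpha\cap U_\beta \to U\g_1$ uses that they are forced to be local homeomorphisms by the two-out-of-three axiom, and their compatibility over $\Z$ must also be lifted), yields the stronger slice-level full faithfulness of $U_!$ as a byproduct rather than a corollary.
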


\begin{proof}
Each composite is manifestly weak colimit preserving and they both agree on representables for the site $\sit\left(\left[\h\right]_{\xm},\h_0\right).$
\end{proof}

This has the following immediate corollary:

\begin{cor}\label{cor:xmfdastop}
The functor
\begin{eqnarray*}
\Et\left(\xm\right)^{et} &\to& \Et\left(\T\right)^{et}/U_!\HH\left(X\right)\\
\X &\mapsto& U_!\left(!_\X:\X \to \HH\left(X\right)\right)
\end{eqnarray*}
is an equivalence of bicategories.
\end{cor}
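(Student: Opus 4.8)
The plan is to deduce the statement directly from the preceding Lemma, specialized to $\Z=\HH\left(X\right)$, by a two-out-of-three argument for equivalences of bicategories.

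First I would rewrite the functor of the corollary as a composite passing through the slice over $\HH\left(X\right)$. Since $\HH\left(X\right)$ is terminal in $\Et\left(\xm\right)^{et}$ by Theorem \ref{thm:terminal}, the canonical projection $\Et\left(\xm\right)^{et}/\HH\left(X\right) \to \Et\left(\xm\right)^{et}$ is an equivalence, with pseudo-inverse sending an \'etale $\xm$-stack $\X$ to the essentially unique local homeomorphism $!_\X:\X \to \HH\left(X\right)$. By the definition of the slice $2$-functor $U_!/\HH\left(X\right)$ on objects, it sends $!_\X$ to $U_!\left(!_\X:\X \to \HH\left(X\right)\right)$. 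Hence the functor of the corollary is exactly the composite
$$\Et\left(\xm\right)^{et} \stackrel{\sim}{\longrightarrow} \Et\left(\xm\right)^{et}/\HH\left(X\right) \stackrel{U_!/\HH\left(X\right)}{\longrightarrow} \Et\left(\T\right)^{et}/U_!\HH\left(X\right),$$
and it suffices to show that $U_!/\HH\left(X\right)$ is an equivalence.

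For this I would invoke the commuting square of the preceding Lemma with $\Z=\HH\left(X\right)$. Its top edge $u^{\HH\left(X\right)}$ is an equivalence, as recorded just before the Lemma, and both vertical edges are instances of the \'etal\'e-realization functor $L$, which is an equivalence of bicategories by Theorem \ref{thm:etalsp}: on the left applied to the \'etale $\xm$-stack $\HH\left(X\right)$, giving $\St\left(\HH\left(X\right)\right) \simeq \Et\left(\xm\right)^{et}/\HH\left(X\right)$, and on the right applied to the \'etale topological stack $U_!\HH\left(X\right)$, giving $\St\left(U_!\HH\left(X\right)\right) \simeq \Et\left(\T\right)^{et}/U_!\HH\left(X\right)$. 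Because the square commutes up to canonical homotopy and three of its four edges are equivalences, the remaining edge $U_!/\HH\left(X\right)$ is an equivalence. Composing with the terminality equivalence of the first step then proves the corollary.

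The argument is entirely formal once the Lemma is in hand; the genuine content lies in Theorem \ref{thm:etalsp} and in the commutativity of the Lemma's square, both of which have been assumed. The lone point requiring care — and the step I expect to be error-prone rather than truly difficult — is the identification of the corollary's functor with $U_!/\HH\left(X\right)$ under the terminality equivalence, which amounts only to unwinding how the slice $2$-functor $U_!/\HH\left(X\right)$ acts on the object $!_\X$.
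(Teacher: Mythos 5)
Your argument is exactly the one the paper intends: the corollary is stated as an "immediate" consequence of the preceding Lemma, and the paper's implicit proof is precisely your two-out-of-three argument on the Lemma's square (with $u^{\HH\left(X\right)}$ and both instances of $L$ being equivalences), composed with the terminality equivalence $\Et\left(\xm\right)^{et} \simeq \Et\left(\xm\right)^{et}/\HH\left(X\right)$ from Theorem \ref{thm:terminal}. The proposal is correct and follows the same route, merely spelling out the details the paper leaves to the reader.
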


\subsection{Some computations and examples.}\label{sec:examples}

\subsubsection{Binary Products}
Since $\Et\left(\xm\right)^{et}$ is a $2$-topos, in particular, it has binary products. These products are very different from the product in $\Et\left(\xm\right)$ ($k$ \emph{does not} preserve limits), so let us denote it by $\times^{et}$ to distinguish it from $\times.$

Given an \'etale $X$-manifold stack $\X,$ denote by $$\tau_\X:\xm^{et}/\X \to \xm^{et}$$ the canonical projection. Note that by Remark \ref{rmk:sites}, we have $$St\left(\xm^{et}/\X\right) \simeq St\left(\X\right).$$

\begin{prop}\label{prop:prods1}
Suppose that $\X$ and $\F$ are \'etale $X$-manifold stacks. Then the projection map $$pr_1:\X \times^{et} \F \to \X$$ is the \'etal\'e realization of $\left(\tau_\X\right)^*\tilde y^{et}\left(\F\right).$
\end{prop}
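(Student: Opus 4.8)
The plan is to exhibit $pr_1$ as a base change, along $!_{\X}$, of the universal local homeomorphism to the Haefliger stack, and to show that \'etal\'e realization intertwines this pullback with $\left(\tau_\X\right)^*$. Since $\HH\left(X\right)$ is terminal in $\Et\left(\xm\right)^{et}$ (Theorem \ref{thm:terminal}), every binary product there is a fibre product over $\HH\left(X\right)$: thus $\X\times^{et}\F\simeq\X\times^{et}_{\HH\left(X\right)}\F$, and $pr_1:\X\times^{et}\F\to\X$ is the bicategorical pullback of $!_{\F}:\F\to\HH\left(X\right)$ along $!_{\X}:\X\to\HH\left(X\right)$. Writing $\left(!_{\X}\right)^*$ for pullback along $!_{\X}$, this says $pr_1\simeq\left(!_{\X}\right)^*\left(!_{\F}\right)$ in $\Et\left(\xm\right)^{et}/\X$. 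Since $\tilde y^{et}$ is inverse to $\Theta$ (Theorem \ref{thm:yet}), we also have $\F\simeq\Theta\,\tilde y^{et}\left(\F\right)$.

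Consider then the square
$$\xymatrix@C=1.7cm{\St\left(\xm^{et}\right) \ar[r]^-{\left(\tau_\X\right)^*} \ar[d]_-{\Theta} & \St\left(\X\right) \ar[d]^-{L}\\
\Et\left(\xm\right)^{et}/\HH\left(X\right) \ar[r]^-{\left(!_{\X}\right)^*} & \Et\left(\xm\right)^{et}/\X,}$$
in which $L$ is the \'etal\'e realization equivalence of Theorem \ref{thm:etalsp}, the identification $\St\left(\X\right)\simeq\St\left(\xm^{et}/\X\right)$ is that of Remark \ref{rmk:sites}, and the left vertical arrow sends $\Z$ to the local homeomorphism $!_{\Theta\Z}:\Theta\Z\to\HH\left(X\right)$ (using that $\HH\left(X\right)$ is terminal to regard $\Theta\Z$ as an object over $\HH\left(X\right)$). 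If this square commutes up to canonical equivalence, then evaluating it at $\tilde y^{et}\left(\F\right)$ and using $\Theta\,\tilde y^{et}\left(\F\right)\simeq\F$ yields
$$L\left(\left(\tau_\X\right)^*\tilde y^{et}\left(\F\right)\right)\;\simeq\;\left(!_{\X}\right)^*\left(!_{\F}\right)\;\simeq\;pr_1,$$
which is exactly the assertion of the proposition.

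Proving that the square commutes is where the real work lies, and I would argue as in the compatibility lemma for $U_!$ established above. Both composites are weak colimit preserving: $\Theta$ and $L$ are equivalences, $\left(\tau_\X\right)^*$ is an inverse image functor, and pullback in the $2$-topos $\Et\left(\xm\right)^{et}$ preserves weak colimits because it is a left adjoint. Consequently each composite is the weak left Kan extension, along the Yoneda embedding, of its restriction to the representables of the site $\sit\left(\Ha\left(X\right)\right)$, namely the open subsets $V\subset X$; so it suffices to check that the two composites agree on each such $V$. Down-then-across sends $y^{et}\left(V\right)$ first to $!_{V}:V\to\HH\left(X\right)$ — since $\Theta\,y^{et}\left(V\right)\simeq V$, as computed in the proof of Theorem \ref{thm:prol} — and then to $\left(!_{\X}\right)^*\left(!_{V}\right)\simeq\left(\X\times^{et}V\to\X\right)$. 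Across-then-down sends $y^{et}\left(V\right)$ to the stack on $\xm^{et}/\X$ carrying a local homeomorphism $Y\to\X$ to the set of local homeomorphisms $Y\to V$, and then to its \'etal\'e realization; because $\HH\left(X\right)$ is terminal, this stack is precisely the sheaf of sections of $\X\times^{et}V\to\X$, whose realization is again $\X\times^{et}V\to\X$ (as $L$ and $\Gamma$ are mutually inverse, Theorem \ref{thm:etalsp}). The two composites therefore agree on representables, and the density argument identifies them. The one genuinely delicate point is this Beck--Chevalley-type compatibility of $L$ with pullback along the local homeomorphism $!_{\X}$; the rest is bookkeeping with the terminality of $\HH\left(X\right)$.
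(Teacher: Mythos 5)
Your proof is correct, but it takes a substantially longer route than the paper's, and the step you flag as "genuinely delicate" is already available off the shelf. The paper's proof is two lines: $pr_1$ is a local homeomorphism, hence $pr_1\simeq L\left(\Gamma\left(pr_1\right)\right)$ by the adjoint equivalence of Theorem \ref{thm:etalsp}, and the universal property of the product identifies $\Gamma\left(pr_1\right)\left(T\to\X\right)$ with the groupoid of local homeomorphisms $T\to\F$, i.e.\ with $\left(\tau_\X\right)^*\tilde y^{et}\left(\F\right)\left(T\to\X\right)$ — directly, for arbitrary $\F$. Notice that your verification of the "across-then-down" composite on a representable $V$ is exactly this argument specialized to $\F=V$; since nothing in it uses representability, the whole Kan-extension/density scaffolding is unnecessary. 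Your alternative decomposition — terminality of $\HH\left(X\right)$ turns the product into the pullback of $!_\F$ along $!_\X$, and one then needs \'etal\'e realization to commute with base change — is also perfectly viable, but the base-change compatibility you propose to prove by hand is precisely Theorem \ref{thm:inv2} (applied to $!_\X:\X\to\HH\left(X\right)$, after identifying $\St\left(\HH\left(X\right)\right)\simeq\St\left(\xm^{et}\right)$ and $\left(!_\X\right)^*$ with $\left(\tau_\X\right)^*$), so you could cite it rather than rederive it. What your route buys is a cleaner conceptual picture ($pr_1$ as a base change of the universal map to the Haefliger stack), which the paper itself exploits in the subsequent example; what the paper's route buys is brevity and independence from the colimit-preservation bookkeeping.
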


\begin{proof}
The map $pr_1$ is a local homeomorphism, so its sections encode a stack $$\Gamma\left(pr_1\right) \in \St\left(\X\right).$$ By the universal property of products, one sees that the groupoid of sections of this map over a local homeomorphism $T \to \X$ is canonically equivalent to the groupoid of local homeomorphisms $T \to \F,$ which is by definition the groupoid $\tilde y^{et}\left(\F\right)\left(T\right).$
\end{proof}

\begin{cor}\label{cor:effprod1}
Suppose that $T$ is an $X$-manifold and $\F$ is an \'etale $X$-manifold stack such that $y^{et}\left(\F\right)$ is a sheaf, then $\X \times^{et} \F$ is an $X$-manifold.
\end{cor}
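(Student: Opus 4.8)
The plan is to recognize the corollary as the special case of Proposition~\ref{prop:prods1} in which the first factor is the space $T$ itself (which, being representable, is an \'etale $X$-manifold stack), and then to use that the \'etal\'e realization of a \emph{sheaf of sets} over a space is again a space. By Proposition~\ref{prop:prods1}, the projection
$$pr_1:T \times^{et} \F \to T$$
is the \'etal\'e realization of the stack $\left(\tau_T\right)^*\tilde y^{et}\left(\F\right) \in \St\left(\xm^{et}/T\right)$, and by Remark~\ref{rmk:sites} we have $\St\left(\xm^{et}/T\right) \simeq \St\left(T\right)$. Since $T$ is an $X$-manifold, any space admitting a local homeomorphism to $T$ is again an $X$-manifold, so the same argument as in Remark~\ref{rmk:xmfdsh} gives $\xm^{et}/T \simeq \Sh\left(T\right)$; thus a stack over $T$ is a sheaf of sets exactly when the corresponding object of $\St\left(T\right)$ is.

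First I would verify that $\left(\tau_T\right)^*\tilde y^{et}\left(\F\right)$ is a sheaf of sets over $T$. By hypothesis $\tilde y^{et}\left(\F\right)=y^{et}\left(\F\right)$ is a sheaf on $\xm^{et}$ (as $y^{et}\left(\F\right)$ is already a sheaf, its associated stack is the same sheaf, now $0$-truncated). The functor $\left(\tau_T\right)^*$ is restriction along the \'etale projection $\tau_T:\xm^{et}/T \to \xm^{et}$, i.e. the inverse image of an \'etale geometric morphism; it therefore preserves $0$-truncatedness and the sheaf condition for the induced topology on the slice. Hence $\left(\tau_T\right)^*\tilde y^{et}\left(\F\right)$ is an object of $\Sh\left(T\right)$.

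The remaining, and crucial, step is to invoke that the \'etal\'e realization functor $L$ of Theorem~\ref{thm:etalsp}, restricted to sheaves of sets over the space $T$, recovers the classical espace \'etal\'e construction: it sends $F \in \Sh\left(T\right)$ to a genuine space $E$ together with a local homeomorphism $E \to T$. Applying this to $F = \left(\tau_T\right)^*\tilde y^{et}\left(\F\right)$ identifies $T \times^{et} \F$ with such an $E$; and since $E \to T$ is a local homeomorphism into the $X$-manifold $T$, the space $E$ is itself an $X$-manifold. This gives the claim. The main obstacle is precisely this last point: one must know from \cite{etalspme} (packaged in Theorem~\ref{thm:etalsp}) that the \'etal\'e realization of a $0$-truncated object is \emph{representable} by a space, rather than merely an \'etale stack equipped with a local homeomorphism to $T$; once that representability is in hand, the rest is a routine unwinding of the equivalence $\xm^{et}/T \simeq \Sh\left(T\right)$ and the fact that pullback preserves sheaves of sets.
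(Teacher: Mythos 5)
Your proof is correct and follows essentially the same route as the paper: the paper's entire argument is the one-line observation that, by Proposition \ref{prop:prods1}, the product is the \'etal\'e space of a sheaf over $T$, and your write-up simply fills in the implicit steps (that $\left(\tau_T\right)^*\tilde y^{et}\left(\F\right)$ remains a sheaf, that the \'etal\'e realization of a $0$-truncated object over a space is the classical espace \'etal\'e and hence a space, and that a space admitting a local homeomorphism to an $X$-manifold is an $X$-manifold). No discrepancies to report.
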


\begin{proof}
By Proposition \ref{prop:prods1}, $\X \times^{et} \F$ is the \'etal\'e space of a sheaf over $T.$
\end{proof}

\begin{rmk}
In light of Theorem \ref{thm:whateffmeans}, this implies that if $T$ is an $X$-manifold, and $\F$ an effective \'etale $X$-manifold stack, then $\X \times^{et} \F$ is an $X$-manifold.
\end{rmk}

This has the following strange consequence:

\begin{cor}\label{cor:weird1}
The category $\xm^{et}$ has binary products.
\end{cor}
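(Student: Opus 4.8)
The plan is to obtain binary products in $\xm^{et}$ by borrowing them from the ambient $2$-topos $\Et\left(\xm\right)^{et}$ and then checking that, for the objects at hand, the bicategorical product lands back inside $\xm^{et}$ and satisfies the ordinary $1$-categorical universal property. Recall from (\ref{eq:yay}) that $\Et\left(\xm\right)^{et}$ is a $2$-topos, so in particular the product $\times^{et}$ of any two objects exists there; and recall that every $X$-manifold $Y$ may be regarded as an object of $\Et\left(\xm\right)^{et}$ with the defining feature that, for any $X$-manifold $W$, the mapping groupoid $\Hom_{\Et\left(\xm\right)^{et}}\left(W,Y\right)$ is canonically the set $\tilde y^{et}\left(Y\right)\left(W\right)$ of local homeomorphisms $W \to Y$, a \emph{discrete} groupoid since $Y$ is a space.

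First I would take two $X$-manifolds $Y$ and $Z$ and form their product $Y \times^{et} Z$ in $\Et\left(\xm\right)^{et}$. Since $Z$ is a space, $\tilde y^{et}\left(Z\right)$ is a sheaf of sets; hence Corollary \ref{cor:effprod1} applies (with $\X = Y$ and $\F = Z$) and shows that $Y \times^{et} Z$ is again an $X$-manifold. By Proposition \ref{prop:prods1} the projections $pr_1$ and $pr_2$ out of $Y \times^{et} Z$ are local homeomorphisms, so they are genuine morphisms of $\xm^{et}$. Thus the candidate product, together with its projections, lives entirely inside $\xm^{et}$.

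It remains to check that this object really is the categorical product of $Y$ and $Z$ in the $1$-category $\xm^{et}$, and this is the only step requiring care. The universal property in the $2$-topos gives, for every $X$-manifold $W$, an equivalence of groupoids
$$\Hom_{\Et\left(\xm\right)^{et}}\left(W,Y \times^{et} Z\right) \simeq \Hom_{\Et\left(\xm\right)^{et}}\left(W,Y\right) \times \Hom_{\Et\left(\xm\right)^{et}}\left(W,Z\right).$$
By the observation above, each of the three mapping groupoids is the discrete set $\Hom_{\xm^{et}}\left(W,-\right)$ of local homeomorphisms; hence the equivalence of groupoids is precisely a natural bijection
$$\Hom_{\xm^{et}}\left(W,Y \times^{et} Z\right) \cong \Hom_{\xm^{et}}\left(W,Y\right) \times \Hom_{\xm^{et}}\left(W,Z\right),$$
exhibiting $Y \times^{et} Z$ as the binary product in $\xm^{et}$. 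The main obstacle is thus not existence but precisely this bookkeeping: one must ensure the $2$-categorical product introduces no spurious automorphisms, which is guaranteed by the fact that local homeomorphisms into a space form a set rather than a nontrivial groupoid (equivalently, that $X$-manifolds are effective with $\tilde y^{et}$ valued in sheaves of sets).
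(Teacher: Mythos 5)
Your argument is correct and is exactly the deduction the paper intends: the paper states this corollary as an immediate consequence of Corollary \ref{cor:effprod1} (and Proposition \ref{prop:prods1}) without writing out the details, and your proposal supplies precisely those details — forming $\times^{et}$ in the $2$-topos, noting the result is an $X$-manifold with local-homeomorphism projections, and observing that the bicategorical universal property collapses to the $1$-categorical one because hom-groupoids into spaces are discrete. No gaps.
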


\begin{ex}
For example, suppose that $S=n\mbox{-}\Mfd$, the category of smooth $n$-manifolds, so that $S=\xm,$ with $$X=\RR^n.$$ Then Corollary \ref{cor:weird1} implies \emph{the category of $n$-manifolds and local diffeomorphisms has binary products!}. This is really quite bizarre, so let us give two descriptions of this product. The first is furnished by Corollary \ref{cor:effprod1}, namely, if $M$ and $N$ are $n$-manifolds, then $M \times^{et} N$ is the \'etal\'e space of the sheaf on $N$ that assigns each open subset $U$ the set of local diffeomorphism from $U$ to $M.$ By symmetry, we could also describe $M \times^{et} N$ as the \'etal\'e space of a sheaf on $M$, in the same way.

There is also another description of $M \times^{et} N$. By Theorem \ref{thm:terminal}, the \'etale stack $\HH\left(\RR^n\right)$ is terminal, so $$\Et\left(n\mbox{-}\Mfd\right)^{et} \simeq \Et\left(n\mbox{-}\Mfd\right)^{et} / \HH\left(\RR^n\right).$$ It follows that the product of two $n$-dimensional \'etale stacks \emph{in the bicategory $\Et\left(n\mbox{-}\Mfd\right)^{et}$} can be computed as the pullback $\X \times_{\HH\left(\RR^n\right)} \Y,$ along the unique local diffeomorphisms to $\HH\left(\RR^n\right).$ Consider now the special case where $\X=M$ and $\Y=N$ are $n$-manifolds. Then $M \times^{et} N$ is  the pullback $$\xymatrix{P \ar[d] \ar[r] & M \ar[d] \\ N \ar[r] & \HH\left(\RR^n\right).}$$ By Corollary \ref{cor:morithaf}, it follows that both maps with codomain $\HH\left(\RR^n\right)$ are atlases, and that $P$ is the total space of the principal $\Ha\left(M\right)\mbox{-}\Ha\left(N\right)$ bibundle realizing their Morita equivalence.
\end{ex}

\subsubsection{Groupoid presentations for moduli stacks.}
In light of Theorem \ref{thm:whatetmeans1}, given a stack $\Z \in \St\left(\xm^{et}\right),$ it is natural to want to find a presentation of the \'etale stack $\Theta\left(\Z\right)$ as an \'etale groupoid.  In this subsection, we describe how to do so explicitly, and compute a few examples.

\begin{rmk}
In \cite{etalspme}, a concrete description is given, but to employ it, one would first have to use the equivalence $$\St\left(\xm^{et}\right) \simeq \St\left(\HH\left(X\right)\right),$$ and then find a presentation in terms of a groupoid object in sheaves. This is not so computationally appealing.
\end{rmk}

We will first turn our attention to the general case of an arbitrary stack $$\Z \in \St\left(\xm^{et}\right),$$ and show how to construct an \'etale groupoid presenting it, in a functorial way. We will then give a much nicer construction in the case that $\Z$ is in fact a sheaf.

Consider the Grothendieck construction of $\Z,$ $$\int\limits_{\xm^{et}} \!\!\!\!\!\!\!\! \Z.$$ Its objects consist of pairs $\left(T,x\right)$ with $T$ an $X$-manifold and $x \in \Z\left(T\right)_0.$ A morphism $$\left(T,x\right) \to \left(T',x'\right)$$ is a pair $\left(f,\alpha\right)$ where $f:T \to T'$ is a local homeomorphism, and $$\alpha:x \to \Z\left(f\right)\left(x'\right)$$ is an isomorphism in $\Z\left(T\right).$ Note that by Theorem \ref{thm:whatetmeans1}, $$\int\limits_{\xm^{et}} \!\!\!\!\!\!\!\Z \simeq \xm^{et}/\Theta\left(\Z\right),$$ i.e. the Grothendieck construction is equivalent to the category of local homeomorphisms into $\Theta\left(\Z\right)$ with source an $X$-manifold. It follows from Remark \ref{rmk:sites}, that $$\Sh\left(\mspace{5mu} \int\limits_{\xm^{et}} \!\!\!\!\!\!\!\Z\right)$$ is equivalent to the topos of sheaves on $\Theta\left(\Z\right),$ $\Sh\left(\Theta\left(\Z\right)\right),$ which is an \'etendue. Under the equivalence between \'etale topological stacks and \'etendues (Theorem \ref{thm:etendue}), this is $U_!\left(\Theta \left(\Z\right)\right),$ the underlying topological stack of $\Theta \left(\Z\right).$

Fix a basis $B$ for the space $X$ (e.g. the maximal one). Let $\Lambda^0_B\left(Z\right)$ denote $$\coprod\limits_{V \in B} \coprod\limits_{g \in \Z\left(V\right)_0} V,$$ which carries a canonical element $\mathscr{O} \in \Z\left(\Lambda^0_B\left(Z\right)\right)$ which, when restricted to the copy of $V$ indexed by $g \in \Z\left(V\right)_0,$ is $g$ itself. Denote the pair  $$\left(\Lambda^0_B\left(Z\right),\mathscr{O}\right) \in \int\limits_{\xm^{et}} \!\!\!\!\!\!\!\Z$$ by $\Lambda_B\left(Z\right).$ Consider its representable sheaf $$y\left(\Lambda_B\left(Z\right)\right)\in \Sh\left(\mspace{5mu} \int\limits_{\xm^{et}} \!\!\!\!\!\!\!\Z\right).$$ By Remark \ref{rmk:etendskt}, following the proof of Theorem \ref{thm:eqltoposhaf}, the canonical morphism $$y\left(\Lambda^0_B\left(Z\right)\right) \to 1$$ is an epimorphism, and $$\Sh\left(\mspace{5mu} \int\limits_{\xm^{et}} \!\!\!\!\!\!\!\Z\right)/y\left(\Lambda^0_B\left(Z\right)\right) \simeq \Sh\left(\Lambda^0_B\left(Z\right)\right).$$
From this data, we can construct an \'etale groupoid in $X$-manifolds, $\G\left(\Z\right)_B,$ whose object space is given by $\Lambda^0_B\left(Z\right).$ Consider the pullback topos
$$\xymatrix{P \ar[d] \ar[r] & \Sh\left(\Lambda^0_B\left(Z\right)\right) \ar[d]^-{p}\\
\Sh\left(\Lambda^0_B\left(Z\right)\right) \ar[r]^-{p} & \Sh\left(\mspace{5mu} \int\limits_{\xm^{et}} \!\!\!\!\!\!\!\Z\right),}$$
which is canonically equivalent to sheaves on the \'etal\'e space of the sheaf $p^*y\left(\Lambda_B\left(Z\right)\right)$ on $\Lambda^0_B\left(Z\right).$ This \'etal\'e space is the space of arrows of $\G\left(\Z\right)_B.$ The sheaf $p^*y\left(\Lambda_B\left(Z\right)\right)$ assigns an open subset $W$ of $\Lambda^0_B\left(Z\right)$ the set of pairs $\left(f,\alpha\right)$ with $$f:W \to \Lambda^0_B\left(Z\right)$$ a local homeomorphism, and $$\alpha:\mathscr{O}|_{W} \to \Z\left(f\right)\left(\mathscr{O}\right)$$ an isomorphism in $\Z\left(W\right).$ In summary, the \'etale groupoid $\G\left(\Z\right)_B$ can be described as the groupoid whose space of objects is the space $\Lambda^0_B\left(Z\right),$ and whose arrows are given by germs of pairs $\left(f,\alpha\right),$ as above (with the sheaf topology), with composition induced by composition in the fibered category $$\int\limits_{\xm^{et}} \!\!\!\!\!\!\!\Z$$ in the obvious way.

\begin{thm}\label{thm:presen1}
The \'etale $X$-manifold stack $\Theta\left(\Z\right)$ is equivalent to $\left[\G\left(\Z\right)_B\right]_{\xm}.$
\end{thm}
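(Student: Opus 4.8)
The plan is to exhibit the canonical element $\mathscr{O}$ as an \'etale atlas for $\Theta\left(\Z\right)$ and then to recognize $\G\left(\Z\right)_B$ as exactly the \'etale groupoid this atlas generates. First I would use Theorem \ref{thm:yet}, which identifies $\Z$ with $\tilde y^{et}\left(\Theta\left(\Z\right)\right)$, to translate the object $\mathscr{O} \in \Z\left(\Lambda^0_B\left(Z\right)\right)_0$ into a local homeomorphism
$$a:\Lambda^0_B\left(Z\right) \to \Theta\left(\Z\right).$$
Because $\Lambda^0_B\left(Z\right)$ is a coproduct of open subsets of $X$, it is an $X$-manifold, so $a$ is a representable local homeomorphism out of an $X$-manifold, which is the right kind of candidate for an \'etale atlas.

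Next I would verify that $a$ is genuinely an \'etale atlas, i.e. a representable effective epimorphism. Passing to underlying topoi, by Remark \ref{rmk:sites} together with Theorem \ref{thm:whatetmeans1} one has $\Sh\left(\int_{\xm^{et}} \Z\right) \simeq \Sh\left(\Theta\left(\Z\right)\right)$, which under Theorem \ref{thm:etendue} is $U_!\left(\Theta\left(\Z\right)\right)$. In this language the assertion that $a$ is an atlas is precisely the statement established immediately before the theorem, following the proof of Theorem \ref{thm:eqltoposhaf} and Remark \ref{rmk:etendskt}, namely that $y\left(\Lambda^0_B\left(Z\right)\right) \to 1$ is an effective epimorphism in $\Sh\left(\int_{\xm^{et}} \Z\right)$.

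Given the \'etale atlas $a$, the stack $\Theta\left(\Z\right)$ is presented by the \'etale groupoid with object space $\Lambda^0_B\left(Z\right)$ and arrow space the weak fiber product $\Lambda^0_B\left(Z\right) \times_{\Theta\left(\Z\right)} \Lambda^0_B\left(Z\right)$. The remaining task is to identify this groupoid with $\G\left(\Z\right)_B$. I would do this by computing the arrow space via the pullback topos
$$\xymatrix{P \ar[d] \ar[r] & \Sh\left(\Lambda^0_B\left(Z\right)\right) \ar[d]^-{p}\\
\Sh\left(\Lambda^0_B\left(Z\right)\right) \ar[r]^-{p} & \Sh\left(\int_{\xm^{et}} \Z\right),}$$
which, since $\Theta\left(\Z\right)$ is \'etale and hence $a$ is a local homeomorphism, is the \'etal\'e space of $p^* y\left(\Lambda_B\left(Z\right)\right)$ over $\Lambda^0_B\left(Z\right)$. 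Unwinding $p^* y\left(\Lambda_B\left(Z\right)\right)$ exactly as in the construction preceding the theorem shows its sections over $W$ are germs of pairs $\left(f,\alpha\right)$ with $f:W \to \Lambda^0_B\left(Z\right)$ a local homeomorphism and $\alpha:\mathscr{O}|_W \to \Z\left(f\right)\left(\mathscr{O}\right)$ an isomorphism in $\Z\left(W\right)$ — precisely the arrows of $\G\left(\Z\right)_B$, with source, target, and composition inherited from the fibered category $\int_{\xm^{et}} \Z$. Since every \'etale stack is the stacky quotient of the groupoid associated to any of its \'etale atlases, this yields $\Theta\left(\Z\right) \simeq \left[\G\left(\Z\right)_B\right]_{\xm}$.

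The hard part will be the last identification: one must check that the germ-of-pairs description of the weak fiber product matches, on the nose and as an $X$-manifold, the \'etal\'e space of $p^* y\left(\Lambda_B\left(Z\right)\right)$, and in particular that composition of germs of $\left(f,\alpha\right)$'s corresponds to composition in $\int_{\xm^{et}} \Z$. This is exactly where the hypothesis that $\Theta\left(\Z\right)$ is \'etale is indispensable: it guarantees that isomorphisms in the fiber product are detected by germs, so that the arrow space is a local homeomorphism over $\Lambda^0_B\left(Z\right)$ and therefore an $X$-manifold, ensuring that $\G\left(\Z\right)_B$ is a groupoid object in $\xm$ rather than merely in topological spaces, and that $\left[\G\left(\Z\right)_B\right]_{\xm}$ is the intended $X$-manifold stack and not just its underlying topological stack.
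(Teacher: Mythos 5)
Your proposal is correct and follows essentially the same route as the paper: both arguments rest on the identification $\Sh\left(\int_{\xm^{et}}\Z\right)\simeq\Sh\left(\Theta\left(\Z\right)\right)$, on the epimorphism $y\left(\Lambda_B\left(Z\right)\right)\to 1$ supplying an \'etale atlas $\Lambda^0_B\left(Z\right)\to\Theta\left(\Z\right)$, and on the pullback topos computing the arrow space as the \'etal\'e space of $p^*y\left(\Lambda_B\left(Z\right)\right)$. The only real difference is organizational: you produce the atlas map directly from $\mathscr{O}$ via Theorem \ref{thm:yet}, whereas the paper first presents the underlying topological stack by $U\left(\G\left(\Z\right)_B\right)$ and then transports the atlas back to $\Et\left(\xm\right)^{et}$ using Corollary \ref{cor:xmfdastop} — the step your final paragraph correctly flags as the place where one must check that everything lives in $\xm$ and not merely in topological spaces.
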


\begin{proof}
By construction, since $p$ is an epimorphism of topoi, it follows that if $U\left(\G\left(\Z\right)_B\right)$ is the  underlying topological groupoid of $\G\left(\Z\right)_B,$ then $$\Sh\left(\mspace{5mu} \int\limits_{\xm^{et}} \!\!\!\!\!\!\!\Z\right) \simeq \B U\left(\G\left(\Z\right)_B\right),$$ and hence $$\left[U\left(\G\left(\Z\right)_B\right)\right]\simeq U_!\left(\Theta\left(\Z\right)\right),$$ i.e. $U\left(\G\left(\Z\right)_B\right)$ is a presentation for the underlying topological stack of $\Theta\left(\Z\right).$ Since $$U_!\left(\Theta\left(\Z\right)\right) \simeq \hc \left( U\left(\G\left(\Z\right)_B\right)_2 \rrrarrow U\left(\G\left(\Z\right)_B\right)_1 \rrarrow U\left(\G\left(\Z\right)_B\right)_0\right),$$ the canonical local homeomorphisms $$U\left(\G\left(\Z\right)_B\right)_n \to U_!\HH\left(X\right)$$ exhibiting each $\left(\G\left(\Z\right)_B\right)_n$ as an $X$-manifold, assemble into a local homeomorphism $$U_!\left(\Theta\left(\Z\right)\right) \to U_!\HH\left(X\right),$$ such that the following diagram $2$-commutes
$$\xymatrix@C=1cm{U\left(\Lambda^0_B\left(Z\right)\right) \ar[rr] \ar[dr] & & U_!\left(\Theta\left(\Z\right)\right) \ar[dl]\\
& U_!\HH\left(X\right). &}$$ Notice that the horizontal map is an epimorphism, since it is an atlas. Under the equivalence of Corollary \ref{cor:xmfdastop}, this corresponds to an \'etale atlas $$\Lambda^0_B\left(Z\right) \to \Theta\left(\Z\right).$$ The pullback of this atlas against itself is equivalent to the \'etal\'e space of the sheaf $p^*y\left(\Lambda_B\left(Z\right)\right).$ It follows that $\Theta\left(\Z\right)\simeq \left[\G\left(\Z\right)_B\right]_{\xm}.$
\end{proof}

Although Theorem \ref{thm:presen1} gives a presentation for the \'etale stack $\Theta\left(\Z\right),$ for any stack $\Z \in \St\left(\xm^{et}\right),$ it is quite a large and cumbersome model. Often, by examining the local geometric properties of $\Z,$ one can find a more simple presentation. We present the following example:

\begin{ex}\label{ex:symplectic1}
Let $X=\mathbb{R}^{2n},$ so that $\xm$ is the category of smooth $2n$-dimensional manifolds. Consider the functor $$\mathcal{S}_n:\left(2n\mbox{-}\Mfd^{et}\right)^{op} \to \Set$$ sending a $2n$-dimensional manifold $M$ to the set of symplectic forms on $M$. Note that this is not even a functor on $2n\mbox{-}\Mfd$, but it is functorial with respect to local diffeomorphisms, and in fact is a sheaf. Let $\S_n$ denote the associated $2n$-dimensional \'etale differentiable stack $\Theta\left(\mathcal{S}_n\right).$ Similarly, consider the sheaf $$\mathcal{S}:\left(\Mfd^{et}\right)^{op} \to \Set,$$ also assigning symplectic forms. If $\S$ denotes $\Theta\left(\mathcal{S}\right),$ then it decomposes as a disjoint union $$\S=\coprod\limits_{n=0}^{\infty} \S_n.$$ We call $\S$ the \emph{classifying stack for symplectic forms}. By Theorem \ref{thm:whatetmeans1}, for any manifold $M,$ symplectic forms on $M$ are the same as local diffeomorphisms $$M \to \S$$ (and when $\dim\left(M\right)=2n,$ such a local diffeomorphism must factor through $\S_n$). Moreover, suppose that $\X$ is an \'etale differentiable stack, presented by an \'etale Lie groupoid $\G.$ Then since $\mathcal{S}$ is a sheaf,
\begin{eqnarray*}
\Hom_{\Et\left(\Mfd\right)^{et}}\left(\X,\S\right) &\simeq& \Hom\left(\tilde y^{et}\left(\X\right), \mathcal{S}\right)\\
 &\simeq& \varprojlim \left(\mathcal{S}\left(\G_0\right) \rrarrow \mathcal{S}\left(\G_1\right)\right)\\
 &=& \mathcal{S}\left(\G_0\right)^{\G},
\end{eqnarray*}
where $\mathcal{S}\left(\G_0\right)^{\G}$ is the set of $\G$-invariant symplectic forms on $\G_0.$ Hence, local diffeomorphisms $$\X \to \S$$ are the same as symplectic forms on the \'etale stack $\X,$ in the sense of \cite{intpoisson}, Definition 4.3.

We will now turn our attention to calculating an explicit groupoid presentation for each $\S_n$. (Since the Yoneda embedding into stacks preserves coproducts, this also tells us a groupoid presentation for $\S.$) The stacks $\S_n$ have much nicer groupoid presentations than the one afforded by Theorem \ref{thm:presen1}, thanks to \emph{Darboux's theorem}, namely, any symplectic $2n$-manifold is locally symplectomorphic to $$\RR^{2n}_{can.}=\left(\RR^{2n},\sum_{i=1}^{n} dp_i \wedge dq_i\right),$$ where $\RR^{2n}$ has coordinates $\left(p_1,\ldots,p_n,q_1,\ldots,q_n\right).$ This groupoid will naturally be a groupoid object in the category of $2n$-dimensional symplectic manifolds and local symplectomorphisms, which we denote by $\mathbf{Sym}\mbox{-}2n\mbox{-}\Mfd^{et}$. Its object space will be $\RR^{2n}_{can.}.$ To construct this groupoid presentation, notice that $$\int\limits_{2n\mbox{-}\Mfd^{et}} \!\!\!\!\!\!\!\! \mathcal{S}$$ is canonically equivalent to the category $\mathbf{Sym}\mbox{-}2n\mbox{-}\Mfd^{et}.$ As a consequence of Darboux's theorem, the canonical map $$y\left(\RR^{2n}_{can.}\right) \to 1$$ is an epimorphism. Following the construction of $\G\left(\Z\right)_B,$ one arrives at an \'etale groupoid presentation for $\S$, as the groupoid with object space $\RR^{2n}_{can.}$ and whose arrows are given by germs of locally defined symplectomorphisms (with the sheaf topology). We thus recover the groupoid discussed in Remark 2 b) of \cite{Haefliger}, commonly written as $\Gamma^{Sp}_{2n}.$

On one hand, by a result of Haefliger \cite{Haefliger}, the classifying space, $B\Gamma^{Sp}_{2n},$ classifies integrably homotopic classes of codimension-$2n$ (transversally) symplectic foliations on open manifolds. Such symplectic foliations are in natural bijection with closed $2$-forms $\omega$ of constant rank $2n$ (the foliation is determined by the kernel, see \cite{quant}, Section 3). On the other hand, by \cite{NoohiH}, the weak homotopy type of $B\Gamma^{Sp}_{2n}$ is the weak homotopy type of the underlying topological stack $U_!\S_n.$ Moreover, the cohomology of $B\Gamma^{Sp}_{2n},$ can be computed as the cohomology of the total complex ${C^{\bullet}}_{DR}\left(\Gamma^{Sp}_{2n}\right)$ coming from the De Rham double complex $\Omega^{\bullet}\left(\left(\Gamma^{Sp}_{2n}\right)_\bullet\right)$ as in \cite{cohst}. In particular, $$\omega_{can.} \in C^{2}_{DR}\left(\Gamma^{Sp}_{2n}\right)=\Omega^0_{DR}\left(\left(\Gamma^{Sp}_{2n}\right)_2\right) \oplus \Omega^1_{DR}\left(\left(\Gamma^{Sp}_{2n}\right)_1\right)\oplus \Omega^2_{DR}\left(\left(\Gamma^{Sp}_{2n}\right)_0\right).$$ The form $\omega_{can.}$ is closed in this complex since as a $2$-form on $\RR^{2n},$ it is closed, and $$\left(d_0\right)^*\omega_{can.}=\left(d_1\right)^*\omega_{can.},$$ since $\omega_{can.}$ is $\Gamma^{Sp}_{2n}$-invariant. So $\left[\omega_{can.}\right]$ is a well defined cohomology class in $$\operatorname{H}^2\left(\S_n,\RR\right)\cong \operatorname{H}^2\left(B\Gamma^{Sp}_{2n},\RR\right).$$ If $\left(M,\alpha\right)$ is any symplectic $2n$-manifold, classified by a local diffeomorphism $$\varphi_\alpha:M \to \S_n,$$ then $$\varphi_\alpha^*\left[\omega_{can.}\right]=\left[\alpha\right].$$ Let $M=\left(S^2\right)^n$ be equipped with a symplectic form $\alpha$ coming from each factor of $S^2$ equipped with a volume form. Then $\alpha^n$ is a volume form on $\left(S^2\right)^n$ and $$\left[\alpha\right]^n=\varphi_\alpha^*\left[\omega_{can.}\right]^n$$ generates the top cohomology group. It follows that $\left[\omega_{can.}\right]^k$ is a non-zero cohomology class in $\operatorname{H}^{2k}\left(B\Gamma^{Sp}_{2n},\RR\right)$ for $k=1,\ldots,n$.
\end{ex}

We now turn our attention to finding a groupoid presentation for $\Theta\left(\F\right),$ when $\F$ is a \emph{sheaf}, i.e. $$\F \in \Sh\left(\xm^{et}\right).$$ In particular, this will give a different presentation for $\S_n$ than the one described above.

First, let us introduce some terminology. An object of $$\int\limits_{\xm^{et}} \!\!\!\!\!\!\! \F$$ consists of a pair $\left(M,x\right),$ with $M$ an $X$-manifold and $x \in \F\left(M\right).$ An arrow from $\left(M,x\right)$ to $\left(N,y\right)$ consists of a map $$f:M \to N$$ such that $f^*y=x.$ We call such a morphism a \textbf{local $\F$-homeomorphism}

Notice that since epimorphisms are stable under pullback, and $$y^{et}\left(X\right) \to 1$$ is an epimorphism in $\Sh\left(\xm^{et}\right),$ that it follows that the canonical projection map $$y^{et}\left(X\right) \times \F \to \F$$ is an epimorphism. By Proposition \ref{prop:prods1}, $$\Theta\left(y^{et}\left(X\right) \times \F\right) \simeq X \times^{et} \Theta\left(\F\right)$$ is the \'etal\'e space of the sheaf $\left(\tau_X\right)^*\F$ on $X,$ and hence an $X$-manifold. It follows that the canonical projection map $$pr_2:X \times^{et} \Theta\left(\F\right) \to \Theta\left(\F\right)$$ is an \'etale atlas for $\Theta\left(\F\right).$ The corresponding \'etale groupoid is given by the pullback of stacks
$$\xymatrix{\left(X \times^{et} \Theta\left(\F\right)\right) \times_\F \left(X \times^{et} \Theta\left(\F\right)\right) \ar[d]_-{s} \ar[r]^-{t} & X \times^{et} \Theta\left(\F\right) \ar[d]^-{pr_2} \\
X \times^{et} \Theta\left(\F\right) \ar[r]^-{pr_2} & \F.}$$
The map $$s:\left(X \times^{et} \Theta\left(\F\right)\right) \times_\F \left(X \times^{et} \Theta\left(\F\right)\right) \to X \times^{et} \Theta\left(\F\right)$$ is a local homeomorphism, so it encodes a sheaf over $X \times^{et} \Theta\left(\F\right).$ Given an open subset $V \subset X \times^{et} \Theta\left(\F\right),$ sections of $s$ over $V$ are in natural bijection with lifts
\begin{equation}\label{eq:lifts}
\resizebox{6.5cm}{!}{\xymatrix{& & X \times^{et} \Theta\left(\F\right) \ar[d]^-{pr_2} \\
V \ar@{^{(}->}[r] \ar@{-->}[urr]<+1.3ex> & X \times^{et} \Theta\left(\F\right) \ar[r]^-{pr_2} & \F.}}
\end{equation}

By Theorem \ref{thm:whatetmeans1}, the local homeomorphism $pr_2$ corresponds to a unique element $$\mathscr{O}_\F \in \F\left(X \times^{et} \Theta\left(\F\right)\right) \cong \Hom\left(y^{et}\left(X \times^{et} \Theta\left(\F\right)\right),\F\right).$$ It follows that lifts of the form (\ref{eq:lifts}) are in natural bijection with local homeomorphisms $$f:V \to X \times^{et} \Theta\left(\F\right)$$ such that $f^*\mathscr{O}_\F = \mathscr{O}_\F|_{V},$ i.e. local $\F$-homeomorphisms

$$\left(V,\mathscr{O_\F}|_V\right) \to \left(X \times^{et} \Theta\left(\F\right),\mathscr{O_\F}\right).$$ It follows that $\left(X \times^{et} \Theta\left(\F\right)\right) \times_\F \left(X \times^{et} \Theta\left(\F\right)\right)$ is the \'etal\'e space of the sheaf of germs of local $\F$-homeomorphisms of $\left(X \times^{et} \Theta\left(\F\right),\mathscr{O_\F}\right).$ The groupoid structure is given by composition of such germs. Let us denote this groupoid by $P_X\left(\F\right).$

Let us provide an illustrative example:

\begin{ex}\label{ex:Riemannian1}
Let $\mathcal{R}:\left(\Mfd^{et}\right)^{op} \to \Set$ be the functor which assigns a manifold $M$ its set of Riemannian metrics. Just as with the functor $\mathcal{S},$ this is not a functor on $\Mfd$, but it is functorial with respect to local diffeomorphisms, and in fact is a sheaf. By Theorem \ref{thm:whatetmeans1}, there exists an \'etale differentiable stack $\R=\Theta\left(\mathcal{R}\right),$ such that local diffeomorphisms $$M \to \R$$ are the same as Riemannian metrics on $M.$ We call $\R$ the \emph{classifying stack for Riemannian metrics}. In fact, it follows easily that for an \'etale differentiable stack $\X,$ local diffeomorphisms $\X \to \R,$ are the same as Riemannian metrics on $\X,$ in the sense of \cite{morsifold} (and may also be described by invariant Riemannian metrics on a groupoid presentation, similar to the case of symplectic forms as in Example \ref{ex:symplectic1}). We also have that $\R$ decomposes as a direct sum $$\coprod\limits_{n=0}^{\infty} \R_n,$$ where $$\R_n=\Theta\left(\mathcal{R}|_{n\mbox{-}\Mfd^{et}}\right).$$

Since each $$\mathcal{R}_n:=\mathcal{R}|_{n\mbox{-}\Mfd^{et}}$$ is a sheaf, it follows that $P_{\RR^n}\left(\mathcal{R}_n\right)$ is a groupoid presentation for $\R_n.$ The object space is the \'etal\'e space of the sheaf on $\RR^n,$ which assigns each open subset its set of Riemannian metrics. This (non-Hausdorff) $n$-manifold carries a canonical Riemannian metric $g_n:=\mathscr{O}_{\mathcal{R}_n}.$ Explicitly, if $z \in P_{\RR^n}\left(\mathcal{R}_n\right)_0,$ then from the description of this space as an \'etal\'e space, $z$ can be written as $$z=\mathit{germ}_{x} \tau$$ for $\tau$ some Riemannian metric defined on a neighborhood of $x$ in $\RR^n.$ The inner product at $z$ is $$\resizebox{5in}{!}{$g\left(x\right)_n=\left(d\varphi\right)^{-1} \circ \tau\left(x\right) \circ \left(d\varphi \otimes d\varphi\right):\mathbf{T}_z\left(P_{\RR^n}\left(\mathcal{R}_n\right)_0\right) \otimes \mathbf{T}_z\left(P_{\RR^n}\left(\mathcal{R}_n\right)_0\right) \to \mathbf{T}_z\left(P_{\RR^n}\left(\mathcal{R}_n\right)_0\right),$}$$ where $$\varphi:P_{\RR^n}\left(\mathcal{R}_n\right)_0 \to  \RR^n$$ is the local diffeomorphism encoding the sheaf of metrics. The Riemannian manifold $\left(P_{\RR^n}\left(\mathcal{R}_n\right)_0,g_n\right)$ plays the same role as $\RR^{2n}_{can.},$ as any Riemannian $n$-manifold is locally isometric to $\left(P_{\RR^n}\left(\mathcal{R}_n\right)_0,g_n\right).$ Note that $$\int\limits_{n\mbox{-}\Mfd^{et}} \!\!\!\!\!\!\! \mathcal{R}_n$$ is canonically equivalent to the category of smooth Riemannian $n$-manifolds and local isometries, $n\mbox{-}\Mfd^{li},$ in such a way that local $\mathcal{R}_n$-homeomorphisms are local isometries. It follows that $P_{\RR^n}\left(\mathcal{R}_n\right)$ is the groupoid of germs of local isometries of the Riemannian manifold $$\left(P_{\RR^n}\left(\mathcal{R}_n\right)_0,g_n\right).$$ We thus recover the groupoid discussed in Example c) of \cite{Haefliger}, commonly written as $R\Gamma^n$; its classifying space classifies integrably homotopic classes of codimension-$n$ Riemannian foliations on open manifolds (\cite{Haefliger}).
\end{ex}

\begin{ex}
Let $S=\mbox{$n$-}C^{0}\Mfd$ be the category of topological $n$-manifolds. Consider the functor $$C^{\infty}_{st}:\left(\mbox{$n$-}C^{0}\Mfd^{et}\right)^{op} \to \Set$$ which assigns a topological $n$-manifold the (possibly empty) set of smooth structures it can support. Concretely, it assigns to each topological manifold $M$ the set of maximal smooth atlases that can exist on $M.$ It is not functorial with respect to all continuous maps, but it is with respect to local homeomorphisms, since atlases can be pulled back via local homeomorphisms, and with this functoriality, $C^{\infty}_{st}$ is a sheaf. The Grothendieck construction of this sheaf is equivalent to the category $n\mbox{-}\Mfd^{et}$ of \emph{smooth} $n$-manifolds and local diffeomorphisms between them. By Theorem \ref{thm:whatetmeans1}, there exists an \'etale $S$-stack $\Theta\left(C^{\infty}_{st}\right)$ such that for a given topological $n$-manifold $M,$ smooth structures on $M$ are in bijection with local homeomorphisms $$M \to \Theta\left(C^{\infty}_{st}\right).$$ In fact, the proof of Theorem \ref{thm:eqltoposhaf} with the category of spaces being $n\mbox{-}\Mfd^{et}$ and $X=\RR^n,$ shows that $\Theta\left(C^{\infty}_{st}\right) = \HH_{\C^\infty}\left(\RR^{n}\right),$ that is, $\Theta\left(C^{\infty}_{st}\right)$ is the \'etale stack coming from the underlying topological groupoid of the smooth $n$-dimensional Haefliger groupoid. The terminal object in $\Et\left(\mbox{$n$-}C^{0}\Mfd\right)^{et}$ is $\HH_{\C^0}\left(\RR^{n}\right),$ and hence there is a unique local homeomorphism $$\HH_{\C^\infty}\left(\RR^{n}\right) \to \HH_{\C^0}\left(\RR^{n}\right).$$ For a given topological $n$-manifold $M$, smooth structures on $M$ corresponds to lifts
$$\xymatrix{& \HH_{\C^\infty}\left(\RR^{n}\right) \ar[d]\\
M \ar@{-->}[ru] \ar[r]_-{!_M} & \HH_{\C^0}\left(\RR^{n}\right).}$$
Of course, this becomes most interesting for $n>3,$ where the set of such lifts may be empty on one extreme, or a continuum on the other. There are many variants of this idea possible. For example, one can look at the sheaf on $2n\mbox{-}\Mfd^{et}$ assigning a manifold its set of complex structures, and its associated \'etale stack will be presented by the $n$-dimensional complex Haefliger groupoid.
\end{ex}

\subsection{Large categories of spaces}\label{sec:large}

In general, we have tacitly avoided assuming our category of spaces $S$ is small. This does not cause much of a complication in the case that $S$ is smooth manifolds for example, since every manifold can be covered by open subsets of (possibly different) $\RR^n$'s. Hence, the Cartesian manifolds of the form $\RR^n$, form a set of \emph{topological generators} of $\Sh\left(\Mfd\right)$ in the sense of \cite{sga4}. This is what allows us to choose the space $$X=\coprod\limits^\infty_{n=0} \RR^n$$ and have the equivalence $$\xm \cong \Mfd.$$

\begin{dfn}
A category of spaces $S$ is \textbf{of small topological generation} if there exists a \emph{small} subcategory $G$ of $S$ such that every space $Y$ in $S$ can be covered by open subsets of spaces in $G.$ The objects of $G$ are said be \textbf{generators} of $S$.
\end{dfn}

\begin{rmk}
We could also ask for the seemingly stronger condition that each space admits a cover by spaces in $G$, (or even demand that $G$ consists of a single space, by Remark \ref{rmk:onesp}) and arrive at an equivalent concept, but for a different subcategory of generators, $G'.$ ($G'$ consists of all open subspaces of elements of $G$.) For such a $G'$ there is an equivalence $\St\left(G'\right) \simeq \St\left(S\right),$ where the former is stacks with respect to the induced Grothendieck topology on $G'.$ This follows immediately from the Comparison Lemma \cite{sga4} III.
\end{rmk}

\begin{rmk}\label{rmk:onesp}
If $S$ is generated by a set $G,$ then $$X:=\coprod\limits_{Z_\alpha \in G} Z_\alpha$$ has the property that $\xm = S.$
\end{rmk}

In particular, Theorem \ref{thm:prol} applies, namely:

\begin{cor}\label{cor:prol2}
If $S$  is of small topological generation, then $\X \in \St\left(S\right)$ is an \'etale stack if and only if it is in the essential image of the prolongation functor $j_!,$ where $$j:S^{et} \to S$$ is the canonical functor.
\end{cor}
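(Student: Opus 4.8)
The plan is to obtain this as an immediate consequence of Theorem \ref{thm:prol} after making a single judicious choice of the space $X$. Since $S$ is of small topological generation, I would fix a small generating subcategory $G$ and set $X := \coprod_{Z_\alpha \in G} Z_\alpha$, which is a legitimate object of $S$ (disjoint unions of spaces are spaces, no separation conditions being imposed). By Remark \ref{rmk:onesp}, the defining covering property of $G$ guarantees that $\xm = S$: indeed, the open subsets of $X$ are exactly the disjoint unions of open subsets of the various $Z_\alpha$, so a space $Y$ admits a cover by open subsets of $X$ precisely when it admits a cover by open subsets of spaces in $G$, which is the hypothesis.

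With this identification in place, I would check that every piece of data appearing in Theorem \ref{thm:prol} specializes correctly. Because $\xm = S$ as categories, we have $\St\left(\xm\right) = \St\left(S\right)$ and $\xm^{et} = S^{et}$, and the inclusion $j_X : \xm^{et} \to \xm$ is literally the functor $j : S^{et} \to S$; hence the prolongation functors $\left(j_X\right)_!$ and $j_!$ coincide. It then remains to match the two notions of \'etale object: an \'etale $\xm$-stack is by definition a stack arising from an \'etale groupoid object in $\xm$, and since $\xm = S$ such groupoid objects are exactly the \'etale $S$-groupoids, so $\Et\left(\xm\right) = \Et\left(S\right)$ is precisely the bicategory of \'etale stacks. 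Alternatively one may invoke Remark \ref{rem:xmff}, whose embedding $\left(i_X\right)_! : \Et\left(\xm\right) \to \Et\left(S\right)$ reduces to the identity when $i_X$ is.

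Granting these routine identifications, Theorem \ref{thm:prol} asserts exactly that $\X \in \St\left(S\right)$ is an \'etale stack if and only if it lies in the essential image of $j_!$, which is the claim. There is essentially no obstacle here beyond bookkeeping; the only point that genuinely uses the hypothesis $-$ and the nearest thing to a subtlety $-$ is the verification that $X = \coprod_{Z_\alpha \in G} Z_\alpha$ satisfies $\xm = S$, which is guaranteed by the small-topological-generation assumption via Remark \ref{rmk:onesp}. Everything else is the transport of Theorem \ref{thm:prol} across the equality of categories $\xm = S$.
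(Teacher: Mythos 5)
Your proposal is correct and follows exactly the paper's route: the paper derives this corollary immediately from Theorem \ref{thm:prol} by taking $X=\coprod_{Z_\alpha\in G}Z_\alpha$ and invoking Remark \ref{rmk:onesp} to get $\xm=S$, which is precisely your argument. You merely spell out the bookkeeping identifications ($\St(\xm)=\St(S)$, $j_X=j$, $\Et(\xm)=\Et(S)$) that the paper leaves implicit.
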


However, this type of trick will not work when $S$ is topological spaces, or $S$ is schemes, so we must take a bit of care. We will assume that we have a Grothendieck universe $\mathcal{U}$ contained in a larger Grothendieck universe $\mathcal{V},$ such that $S$ is locally $\mathcal{U}$-small and has a $\mathcal{V}$-small set of objects. We will denote by $\Gpd$ the bicategory of essentially $\mathcal{U}$-small groupoids, and $\widehat{\Gpd}$ the bicategory of essentially $\mathcal{V}$-small groupoids (and similarly for $\Set$). Consider the subcategory of the functor bicategory $\Gpd^{S^{op}}$ consisting of those functors which satisfy descent, and denote it by $\St\left(S\right)$. Note that in reality $St\left(S\right)$ may not be a $2$-topos in $\mathcal{U}$ if $S$ does not have a $\mathcal{U}$-small set of generators (e.g. when $S$ is topological spaces or schemes), since it may fail to be locally presentable. However, this is mostly a technical annoyance, and we can still refer to elements of $\St\left(S\right)$ as stacks, since $\St\left(S\right)$ is equivalent to the subcategory of $\widehat{\St}\left(S\right)$ of stacks of $\mathcal{V}$-small groupoids on $S$ (which is a $2$-topos in $\mathcal{V}$), consisting of those which take values in $\mathcal{U}$-small groupoids. In particular, \'etale $S$-stacks lie in $\St\left(S\right).$ Moreover, $\St\left(S\right)$ still has $\mathcal{U}$-small colimits and by \cite{htt} (Remark 6.3.5.17) the inclusion $$\St\left(S\right) \to \widehat{\St}\left(S\right)$$ preserves these. What we cannot fix is that there will not be a space $X$ such that $S = \xm.$ However, a certain modification of Corollary \ref{cor:prol2} holds even for such large categories of spaces:

\begin{dfn}
For every space $X$ in $S,$ denote by $\tau_X:\xm^{et} \to S$ the composite $$\xm^{et} \stackrel{j_X}{\longrightarrow} \xm \stackrel{i_X}{\longrightarrow} S.$$ The restriction functor $$\tau_X^*:\St\left(S\right) \to \St\left(\xm^{et}\right)$$ has a left adjoint $$\left(\tau_X\right)_!:\St\left(\xm^{et}\right) \to \St\left(S\right)$$ called the \textbf{relative prolongation functor.}
\end{dfn}

\begin{thm}\label{thm:prol3}
A stack $\X \in \St\left(S\right)$ is an \'etale stack if and only if it is in the essential image of the relative prolongation functor $\left(\tau_X\right)_!,$ for some space $X$ in $S.$
\end{thm}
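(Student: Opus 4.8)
The plan is to reduce the statement to Theorem \ref{thm:prol} by factoring the relative prolongation functor. Since $\tau_X = i_X \circ j_X$, restriction factors as $\tau_X^* = j_X^* \circ i_X^*$, so passing to left adjoints I expect a canonical equivalence
$$\left(\tau_X\right)_! \simeq \left(i_X\right)_! \circ \left(j_X\right)_!.$$
Once this factorization is in place, both factors are already under control: by Theorem \ref{thm:prol} the essential image of $\left(j_X\right)_!$ is exactly $\Et\left(\xm\right)$, and by Remark \ref{rem:xmff} the functor $\left(i_X\right)_!$ restricts to the full and faithful embedding $\Et\left(\xm\right) \to \Et\left(S\right)$ carrying $\left[\G\right]_{\xm}$ to $\left[\G\right]_S$.

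For the ``if'' direction I would argue that any $\X$ in the essential image of $\left(\tau_X\right)_!$ is of the form $\left(i_X\right)_!\left(\left(j_X\right)_!\left(\Z\right)\right)$ for some $\Z \in \St\left(\xm^{et}\right)$; the inner term is an \'etale $\xm$-stack by Theorem \ref{thm:prol}, and applying $\left(i_X\right)_!$ lands it among the \'etale $S$-stacks.

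The ``only if'' direction is where the choice of $X$ must be made, and it is the one place requiring a genuine idea rather than bookkeeping. Given an \'etale $S$-stack $\X \simeq \left[\G\right]_S$ with atlas $\G_0 \to \X$, I would simply take $X := \G_0$. Then $\G_0$ is tautologically an $X$-manifold, and since $\G$ is \'etale its source map $s:\G_1 \to \G_0$ is a local homeomorphism, forcing $\G_1$ to be an $X$-manifold by Remark \ref{rmk:xmfdsh}. Hence $\G$ is an \'etale groupoid object in $\xm$ and $\X$ acquires an \'etale atlas from the $X$-manifold $\G_0$; by Remark \ref{rem:xmff} this places $\X$ in the essential image of $\left(i_X\right)_!$ as an \'etale $\xm$-stack. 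Theorem \ref{thm:prol} then furnishes some $\Z \in \St\left(\xm^{et}\right)$ whose prolongation $\left(j_X\right)_!\left(\Z\right)$ realizes this $\xm$-stack, and applying $\left(i_X\right)_!$ yields $\X \simeq \left(\tau_X\right)_!\left(\Z\right)$.

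I do not expect a serious obstacle in the categorical argument; the one point demanding care is the universe-theoretic caveat raised earlier in this section, namely that $\St\left(S\right)$ need not be a $2$-topos in $\mathcal{U}$. The safeguard is that every object appearing in the argument lies in $\St\left(S\right)$ and that the inclusion $\St\left(S\right) \to \widehat{\St}\left(S\right)$ preserves $\mathcal{U}$-small colimits, so both the existence of the left adjoint $\left(\tau_X\right)_!$ and the adjoint-composition identity survive in this large setting.
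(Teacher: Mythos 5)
Your proposal is correct and follows essentially the same route as the paper: factor $\left(\tau_X\right)_! \simeq \left(i_X\right)_!\circ\left(j_X\right)_!$, use Theorem \ref{thm:prol} and Remark \ref{rem:xmff} for the ``if'' direction, and for the converse take $X=\G_0$ so that $\G$ becomes an \'etale groupoid object in $\G_0\mbox{-}\Mfd$. The only cosmetic difference is that the paper names the witness explicitly as $\tilde y^{et}\left(\left[\G\right]_{\G_0\mbox{-}\Mfd}\right)$ via Theorem \ref{thm:yet}, where you let Theorem \ref{thm:prol} furnish an unspecified $\Z$.
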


\begin{proof}
Notice that $\left(\tau_X\right)_!=\left(i_X\right)_! \circ \left(j_X\right)_!$, and $\left(i_X\right)_!$ is fully faithful and restrict to an embedding of \'etale $\xm$-stacks into \'etale stacks by Remark \ref{rem:xmff}. So, if $\X \simeq \left(\tau_X\right)_!\left(\Z\right)$ for some $\Z \in \St\left(\xm^{et}\right),$ then by Theorem \ref{thm:prol}, $\left(j_X\right)_!\left(\Z\right)$ is an \'etale $\xm$-stack and hence by Remark \ref{rem:xmff}, $\X$ is an \'etale stack. Conversely, suppose that $\X \simeq \left[\G\right]_S$ is an \'etale stack. Notice that $\G$ is canonically an \'etale groupoid object in $\G_0\mbox{-}\Mfd,$ and hence $\X \simeq \left(i_{\G_0}\right)_!\left(\left[\G\right]_{\G_0\mbox{-}\Mfd}\right).$ By Theorem \ref{thm:prol} and Theorem \ref{thm:yet}, $\left[\G\right]_{\G_0\mbox{-}\Mfd} \simeq \left(j_{\G_0}\right)_!\left(\tilde y^{et}\left(\left[\G\right]_{\G_0\mbox{-}\Mfd}\right)\right),$ and hence $$\X \simeq \left(\tau_{\G_0}\right)_!\left(\tilde y^{et}\left(\left[\G\right]_{\G_0\mbox{-}\Mfd}\right)\right).$$
\end{proof}

\begin{rmk}
A version Theorem \ref{thm:prol3} remains valid for the \'etale site of affine schemes and classifies Deligne-Mumford stacks (with no separation conditions, as in \cite{dagl}). We will outline the idea, but we will not give the full proof here. The proof will be given in \cite{higherme}, in a much more general context. However, there also exists a more down-to-earth proof which we will sketch here. Let $Q$ be an algebraic space (with no separation conditions), and let $Q\mbox{-}\mathbf{AlgSp}$ denote the full subcategory of algebraic spaces, consisting of those which are locally \'etale isomorphism to $Q.$ Denote by $Q\mbox{-}\mathbf{AlgSp}^{et}$ the subcategory whose morphisms are all \'etale, and denote by $$j_Q:Q\mbox{-}\mathbf{AlgSp}^{et} \to \mathbf{AlgSp}$$ the inclusion into algebraic spaces. Denote by $A\left(Q\right)$ a fixed set of affine schemes which can form an \'etale cover of $Q,$ and denote by $A^{et}$ the subcategory of $Q\mbox{-}\mathbf{AlgSp}^{et}$ on those affines. Notice that we have a commutative diagram
$$\xymatrix{A^{et} \ar[r]^{j_A} \ar[d] &  \mathbf{Aff} \ar[d]\\
Q\mbox{-}\mathbf{AlgSp}^{et} \ar[r]_-{j_Q} & \mathbf{AlgSp},}$$ and by the comparison lemma \cite{sga4}, a diagram
$$\xymatrix{\St\left(A^{et},et\right) \ar[r]^{\left(j_A\right)_!} \ar[d] &  \St\left(\mathbf{Aff},et\right) \ar[d]\\
\St\left(Q\mbox{-}\mathbf{AlgSp}^{et},et\right) \ar[r]_-{\left(j_Q\right)_!} & \St\left(\mathbf{AlgSp},et\right),}$$
such that the vertical maps are both equivalences. Hence, the essential image of $\left(j_A\right)_!$ in $\St\left(\mathbf{Aff},et\right)$ may be identified with the essential image of $\left(j_Q\right)_!$ in $\St\left(\mathbf{AlgSp},et\right).$  The \emph{Haefliger groupoid} $\Ha\left(Q\right)$ of $Q$ is a groupoid object  in algebraic spaces, all of whose structure maps are \'etale, and has base $Q$. The arrows may be constructed from the sheaf $E$ on the small \'etale site of $Q$ which assigns an \'etale morphisms $U \to Q$ the set of \emph{all} \'etale morphisms $U \to  Q.$ This sheaf $E$ is the sheaf of sections of a unique algebraic space $\Ha\left(Q\right)_1 \to Q$ \'etale over $Q.$ By a similar proof as with the ordinary Haefliger groupoid, one can show that the essential image of $\left(j_Q\right)_!$ is those Deligne-Mumford stacks which arise from a groupoid object $\G$ in algebraic spaces, all of whose structure maps are \'etale, such  that $\G_0$ and $\G_1$ are \'etale locally isomorphic to $Q.$ Putting this all together yields that $\X \in \St\left(\mathbf{Aff}\right)$ is a Deligne-Mumford stack (again with no separation conditions) if and only if it is in the essential image of $\left(j_A\right)_!$ for some set of affines $A.$ Put another way, Deligne-Mumford stacks are precisely solutions to those moduli problems arising as the prolongation of some moduli problem on a set of affine schemes which is functorial with respect to \'etale morphisms. This very nice categorical description is good reason to allow in the definition of Deligne-Mumford stack, stacks without any separation conditions on their diagonals.
\end{rmk}

\section{Gerbes and Effectivity}\label{sec:gerbeff}
\subsection{Internal homotopy theory and gerbes}
Recall that for any Grothendieck site $\left(\C,J\right),$ the inclusion $$i:\Sh\left(\C,J\right) \hookrightarrow \St\left(\C,J\right)$$ from sheaves into stacks admits a left adjoint $\pi_0.$ Concretely, $\pi_0\left(\X\right)$ is the sheafification of the presheaf on $\C$ assigning an object $C$ the set of \emph{isomorphism classes} of the groupoid $\X\left(C\right).$ This presheaf is in general not a sheaf (hence the need for sheafification).

\begin{dfn}
A morphism $f:\Y \to \X$ in $\St\left(\C,J\right)$ is said to be \textbf{truncated} if, when regarded as an object of $\St\left(\C/\X\right),$ $f$ is a sheaf.
\end{dfn}

\begin{rmk}\label{rmk:sheafo}
A stack $\X$ in $\St\left(\C,J\right)$ is a sheaf, if and only if its canonical map $$\X \to 1$$ to the terminal object is truncated.
\end{rmk}

\begin{rmk}\label{rmk:trunrep}
It follows from \cite{etalspme}, Theorem \ref{thm:shhhhvs}, that a local homeomorphism $$f:\X \to \Y$$ in $\St\left(S\right),$ is truncated if and only if it is representable. This is also equivalent to the induced geometric morphism $$\Sh\left(f\right):\Sh\left(\X\right) \to \Sh\left(\Y\right)$$ being an \'etale geometric morphism.
\end{rmk}

\begin{rmk}
A truncated morphism in $\St\left(\C,J\right)$ is the same as a \emph{$0$-truncated} morphism in the $\infty$-topos $\Sh_\infty\left(C,J\right),$ in the sense of \cite{htt}.
\end{rmk}

\begin{dfn}
A morphism $f:\Y \to \X$ is said to be \textbf{connected}, if it is an epimorphism, and the diagonal map $$\Y \to \Y \times_{\X} \Y$$ is an epimorphism.
\end{dfn}

\begin{rmk}
A connected morphism in $\St\left(\C/\Y\right)$ is the same as a \emph{$1$-connective} morphism in the $\infty$-topos $\Sh_\infty\left(C,J\right),$ in the sense of \cite{htt}.
\end{rmk}

\begin{rmk}\label{rmk:locfull}
The condition on $f:\Y \to \X$ to have the diagonal $$\Y \to \Y\times_\X \Y$$ an epimorphism can be phrased concretely as saying that $f$ is \emph{locally full} in the following sense:

Recall that a morphism $f:\Y \to \X$ is a weak-natural transformation, that is, it is an assignment to each object $C$ of $\C$ a functor $$f\left(C\right):\Y\left(C\right) \to \X\left(C\right)$$ and to each morphism $g:C \to D$ a natural transformation
$$
\xymatrix@C=1.5cm{
\Y(D)\ar[r]^{f(D)}\ar[d]_{\Y(g)}&\X(D)\ar[d]^{\X(g)}\\
\Y(C)\ar[r]_{f(C)}\ar@2{->}[ur]|{f\left(g\right)}&\X(C),}
$$
such that on each pair of composable arrows, the natural pentagon commutes. The map $f$ is locally full if:

For all $C \in \C,$ $y_0,y_1 \in \Y\left(C\right)_0,$ and $$h:f\left(C\right)\left(y_0\right) \to f\left(C\right)\left(y_1\right)$$ in $\X\left(C\right),$ there exists a cover $\left(\lambda_\alpha:C_\alpha \to C\right)$ such that for each $\alpha,$ there exists $$g_\alpha:\Y\left(\lambda_\alpha\right)\left(y_0\right) \to \Y\left(\lambda_\alpha\right)\left(y_1\right)$$ such that the following diagram commutes:

$$\xymatrix@C=3.5cm@R=1.5cm{\left(f\left(C_\alpha\right) \circ \Y\left(\lambda_\alpha\right)\right)\left(y_0\right) \ar[r]^-{f\left(C_\alpha\right)\left(g_\alpha\right)} \ar[d]_-{f\left(\lambda_\alpha\right)\left(y_0\right)} & \left(f\left(C_\alpha\right) \circ \Y\left(\lambda_\alpha\right)\right)\left(y_1\right) \ar[d]^-{f\left(\lambda_\alpha\right)\left(y_1\right)}\\
\left(\X\left(\lambda_\alpha\right) \circ f\left(C\right)\right)\left(y_0\right) \ar[r]_-{\X\left(\lambda_\alpha\right)\left(h\right)} & \left(\X\left(\lambda_\alpha\right) \circ f\left(C\right)\right)\left(y_0\right).}$$
\end{rmk}

Recall the following definition of a gerbe \cite{Giraud}:

\begin{dfn}\label{dfn:gerbe}
A \textbf{gerbe} over a Grothendieck site $\left(\C,J\right)$ is a stack $\g$ over $\C$ such that
\begin{itemize}
\item[i)] the unique map $\g \to 1$ to the terminal stack is an epimorphism, and
\item[ii)] the diagonal map $\g \to \g \times \g$ is an epimorphism.
\end{itemize}
\end{dfn}

Some readers may be more used to another definition:

The first condition means that for any object $C \in \C_0$, the unique map $C \to *$ \emph{locally} factors through $\g \to *$, up to isomorphism. Spelling this out means that there exists a cover $\left(f_\alpha:C_\alpha \to C\right)$ of $C$ such that each groupoid $\g\left(C_\alpha\right)$ is non-empty. This condition is often phrased by saying $\g$ is \emph{locally non-empty}.

The second condition means that for all $C$, any map $C \to \g \times \g$ \emph{locally} factors through the diagonal $\g \to \g \times \g$ up to isomorphism. Spelling this out, any map $C \to \g \times \g$, by Yoneda, corresponds to objects $x$ and $y$ of the groupoid $\g\left(C\right)$. The fact that this map locally factors through the diagonal means that, given two such objects $x$ and $y$, there exists a cover $\left(g_\beta:C_\beta \to C\right)$ of $C$ such that for all $\beta$, $$\g\left(f_\beta\right)\left(x\right) \cong \g\left(f_\beta\right)\left(y\right)$$ in $\g\left(C_\beta\right)$. This condition is often phrased by saying $\g$ is \emph{locally connected}.

If it were not for the locality of these properties, then this would mean that each $\g\left(C\right)$ would be a non-empty and connected groupoid, hence, equivalent to a group.

\begin{dfn}
The full subcategory of $\St\left(\C,J\right)$ on all gerbes, is called the $2$-category of gerbes and is denoted by $\scr{Gerbe}\left(\C\right)$.
\end{dfn}

\begin{rmk}
On one hand, a stack $\X$ is a gerbe if and only if the canonical map $$\X \to 1$$ is connected. On the other hand, a morphism $f:\Y \to \X$ is connected if and only if, when viewed as an object of $\St\left(\C/\Y\right)$, $f$ is a gerbe.
\end{rmk}

\begin{prop}\label{prop:gerbterm}
A stack $\X$ in $\St\left(\C,J\right)$ is a gerbe if and only if $\pi_0\left(\X\right)$ is terminal.
\end{prop}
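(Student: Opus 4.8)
The plan is to test the canonical map $\pi_0(\X)\to 1$ for being an isomorphism by checking that it is simultaneously an epimorphism and a monomorphism, and then to invoke the fact that the sheaf topos $\Sh\left(\C,J\right)$ is balanced (a morphism that is mono and epi is iso). The two clauses of Definition \ref{dfn:gerbe} will match up exactly: clause (i) (local non-emptiness) with $\pi_0(\X)\to 1$ being an epimorphism, and clause (ii) (local connectedness) with $\pi_0(\X)\to 1$ being a monomorphism. Recall from the discussion preceding the statement that $\pi_0(\X)$ is the sheafification $a(P)$ of the presheaf $P\colon C\mapsto \pi_0\left(\X(C)\right)$ of isomorphism classes; write $\eta\colon P\to a(P)$ for the unit of sheafification, and note that $\pi_0$ sends the terminal stack to the terminal sheaf, so the comparison map is the canonical $\pi_0(\X)\to 1$.

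For the epimorphism clause I would simply unwind definitions. The map $\X\to 1$ is an epimorphism of stacks precisely when every object $C$ admits a cover $\left(C_\alpha\to C\right)$ with each $\X(C_\alpha)$ nonempty, i.e.\ with each $P(C_\alpha)\neq\emptyset$; this is exactly the condition that $a(P)\to 1$ be an epimorphism of sheaves. Thus clause (i) holds if and only if $\pi_0(\X)\to 1$ is epi, using only the standard fact that sheafification carries local surjections of presheaves to epimorphisms of sheaves.

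The monomorphism clause is the technical heart. A map of sheaves $F\to 1$ is a monomorphism if and only if $F$ is subterminal, i.e.\ each $F(C)$ has at most one element. I would show directly that $a(P)$ is subterminal if and only if any two sections $s,t\in P(C)$ become equal after restriction to some cover of $C$: the unit $\eta$ identifies $\eta(s)$ and $\eta(t)$ exactly when $s,t$ agree locally, and a presheaf all of whose sections over each object are locally equal sheafifies to a subterminal object, since a matching family whose constituent sets have cardinality $\le 1$ glues to at most one section. Translating back through $P(C)=\pi_0\left(\X(C)\right)$, the statement ``$s,t$ agree on a cover'' says precisely that the two corresponding objects of $\X(C)$ become isomorphic on that cover, which is clause (ii) — local connectedness, equivalently $\X\to\X\times\X$ an epimorphism.

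Combining the two matchings, $\X$ is a gerbe iff $\pi_0(\X)\to 1$ is both epi and mono, hence (by balancedness of $\Sh\left(\C,J\right)$) iff it is an isomorphism, i.e.\ iff $\pi_0(\X)$ is terminal. The main obstacle is the mono step: one must verify carefully that passing through sheafification neither creates nor destroys sections, so that subterminality of $a(P)$ is controlled entirely by the local behavior of $P$. The separated-presheaf (plus-construction) description of $a$, together with the characterization of exactly when $\eta$ equalizes two sections, is what makes this precise.
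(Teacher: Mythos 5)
Your argument is correct, but it takes a genuinely different route from the paper. The paper's entire proof is a one-line appeal to \cite{htt}, Proposition 6.5.1.12, i.e.\ the general fact that an object of an $\infty$-topos is $n$-connective if and only if its $(n-1)$-truncation is terminal, specialized via the identifications (set up in the surrounding remarks) of ``gerbe'' with ``$1$-connective'' and of $\pi_0$ with $0$-truncation inside $\Sh_\infty\left(\C,J\right)$. You instead verify the statement by hand at the level of ordinary sheaf theory: clause (i) of Definition \ref{dfn:gerbe} matches epimorphy of $\pi_0\left(\X\right)\to 1$ because sheafification preserves local surjectivity; clause (ii) matches subterminality of $\pi_0\left(\X\right)$ because two sections of the presheaf of isomorphism classes are identified by the unit of sheafification exactly when they agree on a cover, which unwinds to local isomorphy of the corresponding objects of $\X$; and balancedness of the Grothendieck topos $\Sh\left(\C,J\right)$ converts ``epi and mono'' into ``iso.'' All three steps are sound (the mono step is correctly handled through the plus-construction, and the separatedness of $\pi_0\left(\X\right)$ is what lets local equality of sections propagate to global equality). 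What the paper's citation buys is brevity and uniformity across all truncation levels; what your argument buys is self-containedness --- it never leaves the $2$-categorical world of stacks of groupoids and makes the exact correspondence between the two gerbe axioms and the epi/mono dichotomy explicit, which is arguably more illuminating for a reader who does not want to import the machinery of \cite{htt}.
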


\begin{proof}
This follows immediately from \cite{htt} Proposition 6.5.1.12.
\end{proof}

\begin{prop}\label{prop:facto}
Let $\mathfrak{C}$ be the subcategory of $\St\left(\C,J\right)$ spanned by connected morphisms, and $\mathfrak{T}$ be the subcategory spanned by truncated ones. Then the pair $\left(\mathfrak{C},\mathfrak{T}\right)$ forms an orthogonal factorization system on $\St\left(\C,J\right).$
\end{prop}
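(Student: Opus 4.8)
The plan is to deduce this from the corresponding statement in the ambient $\infty$-topos $\Sh_\infty\left(C,J\right)$, where it is an instance of the general theory of connectivity factorization systems, and then descend it to the sub-$2$-topos $\St\left(\C,J\right)$. Recall from the remarks above that, under the fully faithful inclusion of $\St\left(\C,J\right)$ into $\Sh_\infty\left(C,J\right)$ as the full subcategory of $1$-truncated objects, a morphism is truncated precisely when it is $0$-truncated in $\Sh_\infty\left(C,J\right)$, and is connected precisely when it is $1$-connective there. Thus $\mathfrak{T}$ and $\mathfrak{C}$ are exactly the restrictions to $\St\left(\C,J\right)$ of the classes of $0$-truncated and $1$-connective morphisms, and ``orthogonal factorization system'' is to be read in the $\left(2,1\right)$-categorical sense, i.e. with a contractible groupoid of diagonal fillers.

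First I would invoke the fact that in any $\infty$-topos the $1$-connective and $0$-truncated morphisms form an orthogonal factorization system; this is the $n=0$ instance of the family of $(n+1)$-connective/$n$-truncated factorization systems that exist in any $\infty$-topos \cite{htt}. In particular every morphism $f:\Y \to \X$ of $\Sh_\infty\left(C,J\right)$ factors, functorially and essentially uniquely, as $\Y \xrightarrow{l} \mathcal{Z} \xrightarrow{r} \X$ with $l$ $1$-connective and $r$ $0$-truncated, and for any $1$-connective $u$ and $0$-truncated $v$ the groupoid of lifts in a commuting square with $u$ on the left and $v$ on the right is contractible.

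The essential point is then to check that this factorization system restricts to the full subcategory $\St\left(\C,J\right)$. Given $f:\Y \to \X$ with both $\Y$ and $\X$ in $\St\left(\C,J\right)$, hence $1$-truncated, I would argue that the intermediate object $\mathcal{Z}$ is again $1$-truncated: the map $\mathcal{Z} \to 1$ factors as $\mathcal{Z} \xrightarrow{r} \X \to 1$, where $r$ is $0$-truncated and therefore $1$-truncated, and $\X \to 1$ is $1$-truncated because $\X$ is a $1$-truncated object; since $1$-truncated morphisms are closed under composition in an $\infty$-topos, $\mathcal{Z} \to 1$ is $1$-truncated, i.e. $\mathcal{Z}$ lies in $\St\left(\C,J\right)$. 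Hence the connectivity factorization of any morphism of $\St\left(\C,J\right)$ stays inside $\St\left(\C,J\right)$, with left factor in $\mathfrak{C}$ and right factor in $\mathfrak{T}$; closure of each class under composition and the presence of all equivalences are inherited from $\Sh_\infty\left(C,J\right)$.

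Finally, orthogonality is immediate once the factorization is known to restrict: because $\St\left(\C,J\right) \hookrightarrow \Sh_\infty\left(C,J\right)$ is fully faithful, the groupoid of diagonal fillers of a square lying in $\St\left(\C,J\right)$ is computed the same way in the $\infty$-topos, and the latter is contractible whenever the left leg is $1$-connective and the right leg is $0$-truncated. This shows that $\left(\mathfrak{C},\mathfrak{T}\right)$ is an orthogonal factorization system on $\St\left(\C,J\right)$. The main obstacle---really the only nontrivial point---is the restriction step, namely ensuring that the factorization computed in the $\infty$-topos does not leave the sub-$2$-topos of stacks of groupoids; the truncation-composition argument above is exactly what makes this work. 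A reader preferring to remain inside $\St\left(\C,J\right)$ could instead build the factorization by hand as the (locally-essentially-surjective-and-full, faithful) factorization, taking $\mathcal{Z}$ to be the stackification of the image prestack of $f$, but then verifying orthogonality directly becomes considerably more laborious.
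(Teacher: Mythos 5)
Your proposal is correct and follows essentially the same route as the paper: both deduce the result from the $(1\mbox{-connective},\,0\mbox{-truncated})$ factorization system in the ambient $\infty$-topos (the paper cites \cite{htt}, Example 5.2.8.16) and then observe that the intermediate object is $1$-truncated because the right-hand factor is $0$-truncated (hence $1$-truncated) and the target is $1$-truncated, so the factorization stays inside $\St\left(\C,J\right)$. Your additional explicit remark that orthogonality is inherited along the fully faithful inclusion is left implicit in the paper but is the same reasoning.
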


\begin{proof}
Let $f:\Y \to \X$ be a morphism in $\St\left(\C,J\right).$ We may regard $f$ as a morphism in the $\infty$-topos $\Sh_\infty\left(C,J\right).$ By \cite{htt}, Example 5.2.8.16, the class of $0$-truncated morphisms and $1$-connective morphisms form an orthogonal factorization system. Therefore, up to a contractible space of choices, $f$ factors uniquely as
\begin{equation}\label{eq:factor}
\Y \stackrel{g}{\longrightarrow} \Y' \stackrel{h}{\longrightarrow} \X,
\end{equation}
such that $g$ is $1$-connective and $h$ is $0$-truncated. (This is a weak-factorization, in that $h\circ g \simeq f.$) Since $h$ is $0$-truncated, it is also $1$-truncated. Since $\X$ is also $1$-truncated (as it is a stack of groupoids), by \cite{htt} Lemma 5.5.6.14, so is $\Y'.$ This implies that the factorization (\ref{eq:factor}), actually takes place in $\St\left(\C,J\right).$
\end{proof}
\begin{rmk}
\emph{Any} $2$-topos carries a factorization system of this form.
\end{rmk}

\begin{rmk}\label{rmk:facto}
A concrete way to produce the factorization (\ref{eq:factor}) is as follows. Regard $f$ as an object of $\St\left(\C/\X\right).$ Consider the unit of the adjunction $\pi^\X_0 \dashv i$ for $\St\left(\C/\X\right),$ at $f$:
$$\eta_f:f \to \pi^\X_0\left(f\right).$$ By abuse of notation, this is a $2$-commutative diagram
$$\xymatrix@C=0.8cm{\Y \ar[dr]_-{f} \ar[rr]^-{\eta_f} & & \pi^\X_0\left(f\right) \ar[dl]^-{f'}\\
& \X &.}$$
The factorization (\ref{eq:factor}) is given by $$\Y \stackrel{\eta_f}{\longrightarrow} \pi^\X_0\left(f\right) \stackrel{f'}{\longrightarrow} \X.$$  In particular, if $$\X \to 1$$ is the canonical map, then factorization is given by $$\X \stackrel{\eta_\X}{\longrightarrow} \pi_0\left(\X\right) \to 1.$$
\end{rmk}

\begin{prop}\label{prop:quu}
Suppose that $X$ is a sheaf, and $f:\Y \to X$ is any map from a stack. Then $f$ is connected if and only if the induced morphism $$\pi_0\left(\Y\right) \stackrel{\pi_0\left(f\right)}{\longlongrightarrow} \pi_0\left(X\right) \stackrel{\eta_X}{\longlongrightarrow} X$$ is an equivalence.
\end{prop}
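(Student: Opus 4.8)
The plan is to exhibit the displayed composite as the connected--truncated factorization of $f$ and then invoke the standard property of an orthogonal factorization system that a morphism lies in the left class precisely when the right-hand factor of its factorization is an equivalence. First I would record that, since $X$ is a sheaf, the unit $\eta_X:X \to \pi_0\left(X\right)$ is an equivalence: the inclusion $i:\Sh\left(\C,J\right) \hookrightarrow \St\left(\C,J\right)$ is fully faithful, so the unit of $\pi_0 \dashv i$ at any object in its essential image is an equivalence. Hence the assertion of the proposition is exactly that the morphism $\eta_X \circ \pi_0\left(f\right)$ is an equivalence, and it suffices to analyze this morphism.

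Next I would use naturality of the unit $\eta$ of $\pi_0 \dashv i$ applied to $f$: the square
$$\xymatrix{\Y \ar[r]^-{f} \ar[d]_-{\eta_\Y} & X \ar[d]^-{\eta_X}\\ \pi_0\left(\Y\right) \ar[r]_-{\pi_0\left(f\right)} & \pi_0\left(X\right)}$$
commutes, so that $f \simeq \left(\eta_X \circ \pi_0\left(f\right)\right) \circ \eta_\Y$. I then check that this is the connected--truncated factorization of $f$. On one hand, $\eta_\Y:\Y \to \pi_0\left(\Y\right)$ is connected, since by Remark \ref{rmk:facto} the factorization of the canonical map $\Y \to 1$ is precisely $\Y \stackrel{\eta_\Y}{\longrightarrow} \pi_0\left(\Y\right) \to 1$, whose left factor is connected. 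On the other hand, $\eta_X \circ \pi_0\left(f\right):\pi_0\left(\Y\right) \to X$ is a morphism between two sheaves; regarded as an object of $\St\left(\C/X\right)$ it takes values in discrete groupoids, hence is a sheaf, i.e. truncated. By the uniqueness (up to equivalence) of factorizations in the orthogonal factorization system $\left(\mathfrak{C},\mathfrak{T}\right)$ of Proposition \ref{prop:facto}, the displayed composite is therefore the connected--truncated factorization of $f$.

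Finally I would conclude via the general property of orthogonal factorization systems: $f$ lies in $\mathfrak{C}$ (is connected) if and only if the right-hand factor $\eta_X \circ \pi_0\left(f\right)$ is an equivalence. The forward direction holds because, if $f$ is connected, then $f = \mathrm{id}_X \circ f$ is itself a $\left(\mathfrak{C},\mathfrak{T}\right)$-factorization, forcing the right factor to be equivalent to $\mathrm{id}_X$; the reverse direction holds because $\mathfrak{C}$ contains all equivalences and is closed under composition. This is exactly the condition that $\eta_X \circ \pi_0\left(f\right)$ be an equivalence, which (as $\eta_X$ is already an equivalence) is the statement of the proposition. The only points demanding care are the two identifications in the middle step, namely that $\eta_\Y$ is connected and that a map of sheaves is truncated; both are light, the first being a direct application of Remark \ref{rmk:facto} and the second following immediately from the definition of a truncated morphism together with Remark \ref{rmk:sheafo}.
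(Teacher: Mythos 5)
Your proof is correct and follows essentially the same route as the paper: both directions rest on the orthogonal factorization system of Proposition \ref{prop:facto}, the identification of $\eta_{\Y}$ as the connected factor via Remark \ref{rmk:facto}, and the naturality square for the unit of $\pi_0 \dashv i$. The only cosmetic difference is that you compare two factorizations of $f$ itself (checking that a map of sheaves is truncated), whereas the paper's forward direction compares two factorizations of the terminal map $\Y \to 1$ using Remark \ref{rmk:sheafo}; the content is the same.
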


\begin{proof}
Suppose that $f$ is connected. Then, since $X$ is a sheaf, by Remark \ref{rmk:sheafo}, $$\Z \stackrel{f}{\longrightarrow} \X \to 1$$ is a factorization of the unique map $\Z \to 1$ by a connected morphism followed by a truncated one. By Remark \ref{rmk:facto} and the uniqueness of this factorization (Proposition \ref{prop:facto}), it follows that the induced morphism $$\pi_0\left(\Y\right) \to X$$ is an equivalence. Conversely, suppose that $X$ is a sheaf, and $f:\Y \to X$ is any map from a stack. Then $f$ is connected if and only if the induced morphism $$\pi_0\left(\Y\right) \to X$$ is an equivalence. By naturality, the following diagram $2$-commutes:
$$\xymatrix@C=2.5cm{\Y \ar[d]_-{\eta_\Y} \ar[r]^-{f} & X \ar[d]^-{\eta_\X}\\
\pi_0\left(\Y\right) \ar[r]_-{\pi_0\left(f\right)} & \pi_0\left(X\right).}$$ Notice that $\eta_\Y$ is connected by Remark \ref{rmk:facto}, and $\pi_0\left(f\right)$ is an equivalence by assumption, and $\pi_0\left(\eta_X\right)$ is an equivalence since $X$ is sheaf. It follows that $f$ is connected.
\end{proof}

\begin{rmk}\label{rmk:sheafconn}
It follows that $f:\Y \to X$, when $X$ a sheaf, is connected if and only if $\pi_0\left(f\right)$ is an isomorphism of sheaves. \end{rmk}

\begin{prop}
If $f:\Y \to \X$ is connected, then $\pi_0\left(f\right)$ is an isomorphism of sheaves.
\end{prop}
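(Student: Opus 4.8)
The plan is to reduce to the already-handled case in which the target is a sheaf (Remark \ref{rmk:sheafconn}, which records the content of Proposition \ref{prop:quu}) by post-composing $f$ with the unit $\eta_\X:\X \to \pi_0\left(\X\right).$ The point is that although $\X$ itself need not be a sheaf, the sheaf $\pi_0\left(\X\right)$ is, and $\eta_\X$ is a connected map, so replacing $f$ by $\eta_\X \circ f$ lands us in the sheaf-target situation while changing $\pi_0\left(f\right)$ only by an invertible factor.

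First I would note that $\eta_\X$ is connected. This is immediate from Remark \ref{rmk:facto}: the essentially unique $\left(\mathfrak{C},\mathfrak{T}\right)$-factorization of the canonical map $\X \to 1$ supplied by Proposition \ref{prop:facto} is precisely $\X \stackrel{\eta_\X}{\longrightarrow} \pi_0\left(\X\right) \to 1,$ and the first factor $\eta_\X$ is the connected (left-class) one, while $\pi_0\left(\X\right) \to 1$ is truncated — consistent with $\pi_0\left(\X\right)$ being a sheaf by Remark \ref{rmk:sheafo}. Next, since $\left(\mathfrak{C},\mathfrak{T}\right)$ is an orthogonal factorization system, its left class $\mathfrak{C}$ of connected morphisms is closed under composition; hence the composite $\eta_\X \circ f:\Y \to \pi_0\left(\X\right)$ is connected, and its target is a sheaf.

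Now I would invoke the sheaf-target case directly: by Remark \ref{rmk:sheafconn}, a connected morphism into a sheaf induces an isomorphism of sheaves upon applying $\pi_0,$ so $\pi_0\left(\eta_\X \circ f\right)$ is an isomorphism. By functoriality of $\pi_0$ this equals $\pi_0\left(\eta_\X\right) \circ \pi_0\left(f\right).$ Finally, because the inclusion $i:\Sh\left(\C,J\right) \hookrightarrow \St\left(\C,J\right)$ is fully faithful, the counit of the adjunction $\pi_0 \dashv i$ is invertible, and the triangle identity forces $\pi_0\left(\eta_\X\right)$ to be an isomorphism; cancelling it shows $\pi_0\left(f\right)$ is an isomorphism of sheaves.

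I do not expect any genuine obstacle here: the argument is entirely formal once the two ingredients are in place, namely that connected morphisms compose and that $\eta_\X$ is connected. Both are immediate consequences of the orthogonal factorization system of Proposition \ref{prop:facto}, so the only care needed is bookkeeping with the unit/counit of $\pi_0 \dashv i$ to see that $\pi_0\left(\eta_\X\right)$ is invertible.
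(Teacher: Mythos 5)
Your argument is correct. It shares the paper's key step --- post-composing with the unit $\eta_\X:\X\to\pi_0\left(\X\right)$ and observing that $\eta_\X\circ f$ is connected with a sheaf (hence truncated) target --- but finishes differently: the paper compares the resulting factorization $\Y\to\pi_0\left(\X\right)\to 1$ of $\Y\to 1$ with the canonical one $\Y\to\pi_0\left(\Y\right)\to 1$ and concludes directly from the uniqueness clause of the orthogonal factorization system of Proposition \ref{prop:facto}, whereas you feed $\eta_\X\circ f$ into the already-established sheaf-target case (Proposition \ref{prop:quu}, via Remark \ref{rmk:sheafconn}) and then cancel the invertible factor $\pi_0\left(\eta_\X\right)$. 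The two routes are logically equivalent --- Proposition \ref{prop:quu} is itself proved by the factorization-uniqueness argument --- so yours adds one layer of indirection plus the (correct) bookkeeping that $\pi_0\left(\eta_\X\right)$ is invertible by full faithfulness of $i$ and the triangle identity; the paper's version is marginally shorter because it never needs to mention $\pi_0\left(\eta_\X\right)$ at all. What your route buys is that it isolates exactly where the sheaf hypothesis enters, making transparent why the converse fails for general $\X$ (as Remark \ref{rmk:locful2} points out). No gaps.
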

\begin{proof}
If $f$ is connected, then $$\Y \stackrel{\eta_\X \circ f}{\longlongrightarrow} \pi_0\left(\X\right) \to 1$$ is a factorization of the unique map $\Y \to 1$ by a connected map, followed by a truncated one. However, so is $$\Y \stackrel{\eta_\Y}{\longlongrightarrow} \pi_0\left(\Y\right) \to 1.$$ By uniqueness of factorizations (Proposition \ref{prop:facto}), it follows that $$\pi_0\left(f\right):\pi_0\left(\Y\right) \to  \pi_0\left(\X\right)$$ is an isomorphism.
\end{proof}

\begin{rmk}\label{rmk:locful2}
The converse is not true when $\X$ is not a sheaf. However, note that if $\pi_0\left(f\right)$ is an isomorphism, then in particular $\pi_0\left(f\right)$ is an epimorphism of sheaves, which implies that $f$ is an epimorphism. So, $f$ is connected if and only if, additionally, the diagonal map $$f:\Y \to \Y\times_\X \Y$$ is an epimorphism.
\end{rmk}

\subsection{A categorical characterization of ineffectivity}
\begin{thm}\label{thm:whateffmeans}
The following conditions on an \'etale stack $\X$ are equivalent:
\begin{itemize}
\item[1)] $\X$ is effective.
\item[2)] If $\X \simeq \left[\G\right],$ the unique local homeomorphism $\X \to \HH\left(\G_0\right)$ is representable.
\item[3)] If $\X \simeq \left[\G\right],$ under the equivalence $$\tilde y^{et}:\Et\left(\G_0\mbox{-}\Mfd\right)^{et}\stackrel{\sim}{\longrightarrow}\St\left(\G_0\mbox{-}\Mfd^{et}\right),$$
    $\tilde y^{et}\left(\X\right)$ is a sheaf.
\item[4)] For any space $X$ such that $\X$ is an \'etale $X$-manifold stack, the unique local homeomorphism $\X \to \HH\left(X\right)$ is representable.
\item[5)] For any space $X$ such that $\X$ is an \'etale $X$-manifold stack, under the equivalence $$\tilde y^{et}:\Et\left(\xm\right)^{et}\stackrel{\sim}{\longrightarrow}\St\left(\xm^{et}\right),$$
    $\tilde y^{et}\left(\X\right)$ is a sheaf.
\end{itemize}
\end{thm}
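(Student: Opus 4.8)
The plan is to treat $(1)\Leftrightarrow(2)$ as the one substantive geometric input and to deduce everything else formally. I would first fix a presentation $\X \simeq \left[\G\right]$ by an \'etale $S$-groupoid $\G$ and note that, since $\G_0$ and $\G_1$ are $\G_0$-manifolds, $\X$ is an \'etale $\G_0$-manifold stack; by Theorem \ref{thm:terminal} the Haefliger stack $\HH\left(\G_0\right)$ is terminal in $\Et\left(\G_0\mbox{-}\Mfd\right)^{et}$, so the unique local homeomorphism $\X \to \HH\left(\G_0\right)$ exists and is presented by $\tilde\iota_\G:\G \to \Ha\left(\G_0\right)$.

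The key lemma I would prove is that a local homeomorphism $f:\X \to \Y$ of \'etale stacks is representable if and only if, for every point $x$, the induced homomorphism $Aut\left(x\right) \to Aut\left(f\left(x\right)\right)$ is injective. By Remark \ref{rmk:trunrep} representability coincides with truncatedness, and a morphism is truncated exactly when all its homotopy fibres are sheaves of sets; the automorphism group of the fibre over $f\left(x\right)$ at the distinguished point is precisely $\ker\left(Aut\left(x\right) \to Aut\left(f\left(x\right)\right)\right)$, which is what detects $0$-truncatedness. (The no-separation conventions on $S$ ensure that an \'etale groupoid with trivial isotropy presents a genuine space, so vanishing kernels really do force representability.) Applying this to $f:\X \to \HH\left(\G_0\right)$, I would identify $Aut_{\HH\left(\G_0\right)}$ at the image of $\tilde x \in \G_0$ with the germ group $\mathit{Diff}_{\tilde x}\left(\G_0\right)$, since the isotropy of the Haefliger groupoid at a point is exactly the group of germs of locally defined homeomorphisms fixing it. Under this identification the isotropy map becomes the canonical homomorphism $Aut\left(x\right) \to \mathit{Diff}_{\tilde x}\left(\G_0\right)$ of Definition \ref{dfn:ineffgp}, whose kernel is the ineffective isotropy group of $x$. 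Hence $\X \to \HH\left(\G_0\right)$ is representable iff every ineffective isotropy group vanishes, i.e. iff $\X$ is effective, giving $(1)\Leftrightarrow(2)$. Equivalently one may run the computation directly with the fibre groupoid obtained by pulling back along the atlas $\G_0 \to \HH\left(\G_0\right)$, whose isotropy at an arrow is the kernel of $\tilde\iota_\G$ on the corresponding stabilizer, so that the fibre is a space iff $\tilde\iota_\G$ is faithful iff $\G$ is effective.

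I would then dispatch $(2)\Leftrightarrow(3)$ and $(4)\Leftrightarrow(5)$ purely formally. By the definition of truncatedness, the unique map $\X \to \HH\left(X\right)$ is truncated precisely when its stack of sections is a sheaf, and by (the proof of) Theorem \ref{thm:yet} this stack of sections is exactly $\tilde y^{et}\left(\X\right)$. Combining this with Remark \ref{rmk:trunrep} gives that $\X \to \HH\left(X\right)$ is representable iff $\tilde y^{et}\left(\X\right)$ is a sheaf, for any admissible $X$; taking $X = \G_0$ yields $(2)\Leftrightarrow(3)$ and the general case yields $(4)\Leftrightarrow(5)$.

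Finally, to link the single-presentation conditions $(2),(3)$ to the ``for all $X$'' conditions $(4),(5)$, I would observe that the representability lemma applies verbatim with $\HH\left(\G_0\right)$ replaced by $\HH\left(X\right)$ for any space $X$ making $\X$ an \'etale $\xm$-stack: choosing a presentation with $\G_0 \in \xm$ and using that $\G_0$ is locally isomorphic to $X$ identifies $Aut_{\HH\left(X\right)}$ at the image point again with $\mathit{Diff}_{\tilde x}\left(\G_0\right)$, so the isotropy map is once more the homomorphism of Definition \ref{dfn:ineffgp}. Thus representability of $\X \to \HH\left(X\right)$ is equivalent to effectivity for every admissible $X$, giving $(1)\Leftrightarrow(4)$; since $(4)$ specializes to $(2)$, all five conditions coincide. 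The main obstacle I expect is the representability lemma itself, specifically the passage between truncatedness and fibrewise isotropy via stalks of stacks, together with the clean identification of the Haefliger isotropy groups with germ groups of local homeomorphisms; the independence of the ineffective isotropy group from the chosen \'etale atlas, already implicit in Definition \ref{dfn:ineffgp}, is precisely what makes the presentation-free formulation legitimate.
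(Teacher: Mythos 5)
Your proposal is correct, and its core coincides with the paper's own argument: the equivalence $1)\Leftrightarrow 2)$ is obtained in both cases by identifying the isotropy of the fibre of $\X \to \HH\left(\G_0\right)$ over a point with the kernel of $Aut\left(x\right) \to \mathit{Diff}_{\tilde x}\left(\G_0\right)$ — the paper does this by writing out the pullback groupoid of $\tilde \iota_{\G}$ along the atlas $\G_0 \to \HH\left(\G_0\right)$ explicitly, which is precisely the ``equivalently'' variant you mention — and $2)\Leftrightarrow 3)$ is in both cases the observation that $\tilde y^{et}\left(\X\right)$ is the stack of sections of $!_\X$ together with the fact that a local homeomorphism of \'etale stacks is representable if and only if that stack of sections is a sheaf. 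The one place you genuinely diverge is the bridge to the ``for all $X$'' conditions $4)$ and $5)$: you rerun the pointwise isotropy computation against $\HH\left(X\right)$, using that $\G_0$ is locally homeomorphic to $X$ so that the relevant germ groups agree, whereas the paper argues purely formally that $3)\Rightarrow 5)$ because the unique map $\HH\left(\G_0\right) \to \HH\left(X\right)$ is subterminal, hence $\left(-1\right)$-truncated (Remark \ref{rmk:subterminal}), and truncated morphisms are closed under composition, so $\X \to \HH\left(X\right)$ inherits truncatedness from $\X \to \HH\left(\G_0\right)$. Both routes are valid; the paper's avoids having to re-identify the isotropy of $\HH\left(X\right)$ with germ groups and the attendant compatibility check with Definition \ref{dfn:ineffgp}, while yours makes $1)\Leftrightarrow 4)$ a direct geometric statement rather than a consequence of the chain through $2)$ and $3)$.
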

\begin{proof}
$$1)\Longleftrightarrow 2):$$
Consider the weak pullback diagram of $S$-groupoids:
$$\xymatrix{P \ar[d]_-{p} \ar[r] & \G \ar[d]^-{\tilde \iota_\G}\\
\G_0 \ar[r] & \Ha\left(\G_0\right).}$$
Concretely, we may describe $P$ as the groupoid whose objects form the space
$$P_0:=\set{\left(x,y,\varphi\right)\in \G_0 \times \G_0 \times \Ha\left(\G_0\right)_1\mspace{3mu}|\mspace{3mu}\varphi:x \to y},$$ where an arrow from $\left(x,y,\varphi\right)$ to $\left(x',y',\varphi'\right)$ exists if and only if $x=x',$ and if so is given by an arrow $$g:y \to y'$$ in $\G,$ such that $$\left[g\right] \circ \varphi=\varphi'.$$ Since representability in this context is a local property, and $$\G_0 \to \HH\left(\G_0\right)$$ is an atlas, it follows that $$!_\X:\X \to \HH\left(\G_0\right)$$ is representable if and only if $\left[P\right]$ is representable. Notice that if $\left[P\right]$ is representable, then $\Gamma\left(\left[p\right]\right)$ is a sheaf over $\G_0,$ and one can identify $\left[P\right]$ with its \'etal\'e space, hence in particular $\left[P\right]$ is representable. It follows that $!_\X$ is representable if and only if $\left[P\right]$ is a sheaf. This in turn is if and only if each isotropy group of $P$ is trivial. A quick calculation shows that the isotropy group of $\left(x,y,\varphi\right)$ is those $g$ in the isotropy group of $y$ in $\G,$ such that $$\left[g\right]=1_y,$$ in other words, it is the kernel of $$Aut\left(y\right) \to \mathit{Diff}_y\left(\G_0\right)$$- the ineffective isotropy group, as in Definition \ref{dfn:ineffgp}. By definition, $\X$ is effective if and only if each of these kernels are trivial.
$$2)\Longleftrightarrow 3):$$
Notice that $y^{et}\left(\X\right)$ is a sheaf if and only if $\Gamma\left(!_\X\right)$ is a sheaf over $\HH\left(\G_0\right).$ Hence, we are done by \cite{etalspme}, Theorem \ref{thm:shhhhvs}.
Similarly, $4)\Longleftrightarrow 5).$
$$3)\Longleftrightarrow 5).$$
$5) \Longrightarrow 3)$ is obvious. To see $3) \Longrightarrow 5)$, note by Remark \ref{rmk:trunrep}, it suffices to show that $$\X \to \HH\left(X\right)$$ is truncated. Notice there is a factorization
\begin{equation}\label{eq:compeff}
\X \to \HH\left(\G_0\right) \to \HH\left(X\right)
\end{equation}
By Remark \ref{rmk:subterminal}, $\HH\left(\G_0\right),$ regarded as an \'etale $X$-manifold stack, is subterminal, which means that $$\HH\left(\G_0\right) \to \HH\left(X\right)$$ is, in the terminology of \cite{htt}, $\left(-1\right)$-truncated (and hence in particular, truncated, in our sense). It follows that the  composite (\ref{eq:compeff}) is truncated, and hence representable.
\end{proof}

Theorem \ref{thm:whateffmeans} is the true categorical meaning of \emph{effectivity} of \'etale stacks. Indeed, by Theorem \ref{thm:prol3}, we know that \'etale stacks are precisely stacks in  $\St\left(S\right)$ which are prolongations of some stack on the category of $X$-manifolds and local homeomorphisms, for some space $X.$ By Theorem \ref{thm:whateffmeans}, it follows immediately that those \'etale stacks which are \emph{effective} are those which arise as prolongations of sheaves:

\begin{cor}
An \'etale stack $\X$ is effective if and only if $\X\simeq \left(\tau_X\right)_!\left(F\right)$ for some sheaf $F$ on the category of $X$-manifolds and local homeomorphisms, for some space $X,$ with notation as in Theorem \ref{thm:prol3}.
\end{cor}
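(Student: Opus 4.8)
The plan is to bolt together the prolongation description of \'etale stacks from Theorem \ref{thm:prol3} and the sheaf-theoretic characterization of effectivity supplied by Theorem \ref{thm:whateffmeans}. The two facts that make everything run are recorded in the proofs of those earlier results: first, that $\left(\tau_X\right)_! = \left(i_X\right)_! \circ \left(j_X\right)_!$; and second (from the proof of Theorem \ref{thm:prol}), that $\left(j_X\right)_! \simeq k \circ \Theta,$ so that prolonging a stack $\Z \in \St\left(\xm^{et}\right)$ yields precisely $\Theta\left(\Z\right),$ regarded as an \'etale $X$-manifold stack via the embedding of Remark \ref{rem:xmff}.

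For the ``only if'' direction, suppose $\X$ is effective and write $\X \simeq \left[\G\right]_S.$ Then $\G$ is canonically an \'etale groupoid object in $\G_0\mbox{-}\Mfd,$ and by Remark \ref{rem:xmff} we may identify $\X$ with $\left(i_{\G_0}\right)_!\left(\left[\G\right]_{\G_0\mbox{-}\Mfd}\right).$ The equivalence $1)\Leftrightarrow 3)$ of Theorem \ref{thm:whateffmeans} then tells us that $F:=\tilde y^{et}\left(\left[\G\right]_{\G_0\mbox{-}\Mfd}\right) \in \St\left(\G_0\mbox{-}\Mfd^{et}\right)$ is in fact a sheaf. Finally, the very computation concluding the proof of Theorem \ref{thm:prol3} gives $\X \simeq \left(\tau_{\G_0}\right)_!\left(F\right),$ which is the required presentation, with $X=\G_0.$

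For the ``if'' direction, suppose $\X \simeq \left(\tau_X\right)_!\left(F\right)$ for some sheaf $F \in \Sh\left(\xm^{et}\right).$ By Theorem \ref{thm:prol3}, $\X$ is an \'etale stack, so only effectivity needs checking. Unwinding $\left(\tau_X\right)_! = \left(i_X\right)_! \circ \left(j_X\right)_!$ and $\left(j_X\right)_! \simeq k\circ\Theta,$ we obtain $\X \simeq \left(i_X\right)_!\left(\Theta\left(F\right)\right),$ that is, $\X$ is the image under the fully faithful embedding of Remark \ref{rem:xmff} of the \'etale $X$-manifold stack $\Theta\left(F\right).$ By Theorem \ref{thm:yet}, $\tilde y^{et}\left(\Theta\left(F\right)\right) \simeq F$ is a sheaf, so the equivalence $5)\Leftrightarrow 1)$ of Theorem \ref{thm:whateffmeans} shows $\Theta\left(F\right)$ is effective.

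The one step that deserves attention---and the point I would flag as the main (if modest) obstacle---is that effectivity is preserved and reflected along the embedding $\left(i_X\right)_!:\Et\left(\xm\right) \hookrightarrow \Et\left(S\right),$ so that $\Theta\left(F\right)$ being effective as an $X$-manifold stack forces $\X \simeq \left(i_X\right)_!\left(\Theta\left(F\right)\right)$ to be effective as an $S$-stack (and conversely, as used silently in the first paragraph). This is not formal from full faithfulness, but is immediate from the concrete formula $\left(i_X\right)_!\left(\left[\G\right]_{\xm}\right) \simeq \left[\G\right]_S$ of Remark \ref{rem:xmff}: both stacks are presented by the \emph{same} groupoid $\G$ with the \emph{same} object space $\G_0,$ and the homomorphisms $Aut\left(x\right) \to \mathit{Diff}_{\tilde x}\left(\G_0\right)$ of Definition \ref{dfn:ineffgp}, whose kernels are the ineffective isotropy groups, are literally identical in the two settings. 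With this observation in place, the argument reduces to bookkeeping of equivalences already established.
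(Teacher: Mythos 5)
Your proof is correct and follows essentially the same route as the paper, which simply declares the corollary immediate from Theorem \ref{thm:prol3} and Theorem \ref{thm:whateffmeans}. Your extra care in checking that effectivity transfers across the embedding $\left(i_X\right)_!$ (via the observation that $\left(i_X\right)_!\left(\left[\G\right]_{\xm}\right)\simeq\left[\G\right]_S$ is presented by the same groupoid, so the ineffective isotropy groups coincide) is a worthwhile filling-in of a step the paper leaves implicit.
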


The following corollary of Theorem \ref{thm:whateffmeans} is also immediate:

\begin{cor}
For any space $X,$ the equivalence
$$\tilde y^{et}:\Et\left(\xm\right)^{et}\stackrel{\sim}{\longrightarrow}\St\left(\xm^{et}\right),$$
restricts to an equivalence
$$\Eft\left(\xm\right)^{et}\stackrel{\sim}{\longrightarrow}\Sh\left(\xm^{et}\right),$$
between effective \'etale $X$-manifold stacks and local homeomorphisms, and sheaves on the category of $X$-manifolds and local homeomorphisms. (In particular $\Eft\left(\xm\right)^{et}$ is equivalent to a $1$-category.)
\end{cor}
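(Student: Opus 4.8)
The plan is to deduce this corollary directly from the equivalence of Theorem \ref{thm:yet} together with the characterization of effectivity in Theorem \ref{thm:whateffmeans}. The guiding principle is that an equivalence of bicategories restricts to an equivalence between full sub-bicategories as soon as the two classes of objects correspond to one another under the equivalence and its inverse. So the entire content reduces to matching objects: I must show that $\tilde y^{et}$ carries effective \'etale $\xm$-stacks to sheaves, and that its inverse $\Theta$ carries sheaves back to effective \'etale $\xm$-stacks.

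Both matchings are precisely the biconditional $1)\Longleftrightarrow 5)$ of Theorem \ref{thm:whateffmeans}: an \'etale $X$-manifold stack $\X$ is effective if and only if $\tilde y^{et}\left(\X\right)$ is a sheaf. Since $\Eft\left(\xm\right)^{et}$ is by definition the full sub-bicategory of $\Et\left(\xm\right)^{et}$ on the effective objects, and $\Sh\left(\xm^{et}\right)$ is the full sub-bicategory of $\St\left(\xm^{et}\right)$ on the sheaves, the restriction of $\tilde y^{et}$ indeed lands in $\Sh\left(\xm^{et}\right)$, and is essentially surjective onto it because applying $\Theta$ to any sheaf yields an effective stack (again by $5)\Longrightarrow 1)$). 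Local fullness and faithfulness are inherited from the ambient equivalence, precisely because both sub-bicategories are full. This establishes the restricted equivalence $\Eft\left(\xm\right)^{et}\stackrel{\sim}{\longrightarrow}\Sh\left(\xm^{et}\right)$.

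For the parenthetical assertion, I would observe that $\Sh\left(\xm^{et}\right)$ is a category of sheaves of \emph{sets}, hence a $1$-category: its mapping objects are discrete groupoids, being mere sets of morphisms of sheaves. Since $\Eft\left(\xm\right)^{et}$ is equivalent to it, it too is equivalent to a $1$-category. The main (and only) obstacle here is purely formal, namely confirming that the abstract restriction principle applies at the bicategorical level; given Theorem \ref{thm:whateffmeans}, no further geometric input is required, so this corollary is essentially immediate.
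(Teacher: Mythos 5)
Your proof is correct and follows exactly the route the paper intends: the paper offers no written proof, simply declaring the corollary immediate from Theorem \ref{thm:whateffmeans}, and your argument supplies precisely the expected details (the $1)\Longleftrightarrow 5)$ biconditional matches effective stacks with sheaves on both sides, and fullness of the two sub-bicategories does the rest).
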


One may interpret this corollary as saying that effective \'etale stacks are those \'etale stacks $\X$ such that the stack of local homeomorphisms into $\X$ is a sheaf of sets, rather than a stack of groupoids.

By uniqueness of left adjoints, we also have the following:

\begin{cor}\label{cor:effispizero}
Under the equivalence $$\tilde y^{et}:\Et\left(\xm\right)^{et}\stackrel{\sim}{\longrightarrow}\St\left(\xm^{et}\right),$$
the functor $\Ef_{et}:\Et\left(\xm\right)^{et} \to \Eft\left(\xm\right)^{et},$ left adjoint to the inclusion as in Corollary \ref{cor:etadj}, corresponds to $$\pi_0:\St\left(\xm^{et}\right) \to \Sh\left(\xm^{et}\right).$$
\end{cor}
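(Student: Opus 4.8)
The plan is to argue formally from the uniqueness of left adjoints, the one geometric input being the compatibility of the equivalence with the relevant inclusions. Write $j:\Eft\left(\xm\right)^{et} \hookrightarrow \Et\left(\xm\right)^{et}$ for the inclusion of effective \'etale stacks, so that $\Ef_{et} \dashv j$ by Corollary \ref{cor:etadj}, and write $i:\Sh\left(\xm^{et}\right) \hookrightarrow \St\left(\xm^{et}\right)$ for the inclusion of sheaves, so that $\pi_0 \dashv i$. Denote by $\Theta$ the inverse of the equivalence $\tilde y^{et}$, as in Theorem \ref{thm:yet}.

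First I would observe that $\tilde y^{et}$ carries $\Eft\left(\xm\right)^{et}$ onto $\Sh\left(\xm^{et}\right)$, this being exactly the content of the preceding corollary (itself a consequence of Theorem \ref{thm:whateffmeans}, which identifies effectivity with the condition that $\tilde y^{et}\left(\X\right)$ be a sheaf). Since the restricted equivalence is literally $\tilde y^{et}$ applied to effective stacks, it intertwines the two inclusions: there is a natural equivalence
$$\tilde y^{et} \circ j \simeq i \circ \left(\tilde y^{et}|_{\Eft}\right),$$
i.e. the square of inclusions commutes up to canonical homotopy.

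Next I would transport the adjunction $\Ef_{et} \dashv j$ across the equivalences $\tilde y^{et}$ and its restriction. Because conjugation by an equivalence of bicategories preserves adjunctions, the composite $$\left(\tilde y^{et}|_{\Eft}\right) \circ \Ef_{et} \circ \Theta:\St\left(\xm^{et}\right) \to \Sh\left(\xm^{et}\right)$$ is left adjoint to the conjugate of $j$, which by the displayed square is naturally equivalent to $i$. Since $\pi_0$ is also left adjoint to $i$, uniqueness of left adjoints up to natural isomorphism forces $$\left(\tilde y^{et}|_{\Eft}\right) \circ \Ef_{et} \circ \Theta \simeq \pi_0.$$ Rewriting, $\tilde y^{et}$ intertwines $\Ef_{et}$ and $\pi_0$, which is the assertion.

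The argument is essentially all $2$-categorical bookkeeping, and the single point requiring genuine content is the commuting square of step two; but that compatibility is handed to us by the preceding corollary, so I do not expect any real obstacle. The only thing to double-check is that the restricted equivalence appearing there is indeed the honest restriction of $\tilde y^{et}$ — which is immediate from its construction — so that the conjugate of $j$ really is $i$ and not merely an equivalent inclusion onto the same subcategory.
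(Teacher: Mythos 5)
Your argument is correct and is exactly the paper's intended proof: the paper derives this corollary in one line ("by uniqueness of left adjoints") from the immediately preceding corollary that $\tilde y^{et}$ restricts to an equivalence $\Eft\left(\xm\right)^{et}\stackrel{\sim}{\longrightarrow}\Sh\left(\xm^{et}\right)$, which is precisely the commuting square of inclusions you identify as the only geometric input. Your write-up merely makes the conjugation-of-adjunctions bookkeeping explicit.
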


\begin{ex}\label{ex:Riemannian2}
This has applications to classifications of transverse structures in foliation theory. To see this, one first needs to recast some known results into the setting of \'etale differentiable stacks. Let $\HH$ denote the Haefliger stack of $\coprod\limits_n^\infty \RR^n.$

\begin{thm} \cite{Ie}
For $M$ a smooth manifold, equivalence classes of submersions $$M \to \HH$$ are in bijection with regular foliations on $M$. (If it factors through $\HH\left(\RR^q\right)$ then it is a $q$-codimensional foliation.)
\end{thm}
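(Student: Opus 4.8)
The plan is to reduce the statement to the classical holonomy/cocycle description of foliations by unwinding what a submersion into the Haefliger stack amounts to. Write $X=\coprod_n \RR^n$, so that $\HH=\HH(X)$, and recall (as used in the proof of Lemma \ref{lem:contrc}) that for an étale $S$-groupoid $\G$ a morphism from a manifold $M$, viewed as the trivial groupoid, into $[\G]$ is classified, up to the $2$-isomorphisms recorded in the mapping groupoid, by $\G$-cocycle data relative to an open cover $\mathcal U=(U_i)$ of $M$: maps $f_i:U_i\to \G_0$ together with germ-valued transitions $g_{ij}:U_i\cap U_j\to \G_1$ satisfying $s(g_{ij})=f_j$, $t(g_{ij})=f_i$ and the cocycle identity $g_{ij}g_{jk}=g_{ik}$. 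For $\G=\Ha(X)$ this is exactly a Haefliger $\Gamma$-structure on $M$: the $f_i:U_i\to\RR^{n_i}$ are the defining maps and the $g_{ij}$ are germs of local diffeomorphisms with $f_i=\gamma_{ij}\circ f_j$ near the relevant points. Since a germ of a local diffeomorphism between pieces of $X$ forces equal dimensions, the integers $n_i$ are constant on each connected component of $M$, and the classifying map factors through $\HH(\RR^q)$ precisely when $n_i=q$ throughout.

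Next I would identify the submersion condition at the level of this cocycle. By Definition \ref{dfn:etalinv} applied with $P$ equal to submersions, a map $M\to\HH$ is a submersion iff it is presented by a homomorphism of étale groupoids $\psi:\G\to\Ha(X)$ with $\psi_0$ and $\psi_1$ submersions; taking $\G=M_{\mathcal U}$ to be the \v{C}ech groupoid of a defining cover, $\psi_0=\coprod f_i$ and the transitions $g_{ij}$ act through local diffeomorphisms of the target, so the submersion condition reduces to the single requirement that each defining map $f_i:U_i\to\RR^q$ be a submersion. Thus submersions $M\to\HH$ correspond to those Haefliger cocycles whose defining maps are submersions.

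From such a cocycle I would build a foliation by Frobenius. Setting $D:=\ker(df_i)$ on each $U_i$ gives a corank-$q$ distribution there; the relation $f_i=\gamma_{ij}\circ f_j$ gives $df_i=d\gamma_{ij}\circ df_j$ with $d\gamma_{ij}$ invertible, so $\ker df_i=\ker df_j$ on overlaps and $D$ is globally well defined of constant codimension $q$. Since $D$ is locally the vertical distribution of a submersion onto $\RR^q$, its level sets furnish foliation charts, so $D$ is integrable and defines a codimension-$q$ regular foliation $\mathcal F(D)$. Conversely, a codimension-$q$ foliation admits a foliated atlas whose transverse projections $f_i:U_i\to\RR^q$ are submersions and whose transition maps are local diffeomorphisms of $\RR^q$; their germs assemble into a submersive Haefliger cocycle, i.e. a submersion $M\to\HH(\RR^q)\hookrightarrow\HH$. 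This is the classical correspondence underlying \cite{Ie}, which I would invoke for the geometric content.

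Finally I would pass to equivalence classes. Two submersive cocycles present $2$-isomorphic maps $M\to\HH$ iff, after passing to a common refinement, they are related by germ-valued $0$-cochains $h_i$ (germs of local diffeomorphisms) with the compatibility $f_i'=h_i\circ f_i$ intertwining the transitions; as the $h_i$ are local diffeomorphisms they do not change $\ker df_i$, so $\mathcal F(D)$ depends only on the equivalence class, and conversely a single foliation yields cohomologous cocycles for any two adapted atlases. This produces a well-defined bijection between $\pi_0$ of the groupoid of submersions $M\to\HH$ and regular foliations of $M$, matching codimension via the factorization through $\HH(\RR^q)$. I expect the main obstacle to be precisely this last step: verifying that the germ-valued $2$-cells in the mapping groupoid act on cocycles exactly by the classical cohomology relation and preserve the kernel distribution, so that distinct foliations are never identified and every foliation is realized.
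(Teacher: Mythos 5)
Your argument is essentially correct, but note that the paper does not actually prove this statement: it is imported verbatim from \cite{Ie}, and the only gloss the paper adds is the intrinsic description of the induced foliation as $\ker\left(\mathbf{T}F\right)\subset \mathbf{T}M$, which is well defined because $F^*\mathbf{T}\HH$ is a vector bundle for $\HH$ \'etale. So there is no in-paper proof to compare against; what you have written is the classical Haefliger-cocycle unwinding that underlies the cited result. Your reduction of the submersion condition to the $f_i$ alone is sound (since the source map of $\Ha\left(X\right)$ and the source map of the \v{C}ech groupoid are \'etale, $\psi_1$ is automatically a submersion once $\psi_0$ is), and the step you flag as the main obstacle --- that germ-valued transitions and $2$-cells act on cocycles by genuine locally defined diffeomorphisms satisfying $f_i'=h_i\circ f_i$, hence preserve $\ker df_i$ --- is in fact already carried out in the paper's proof of Lemma \ref{lem:contrc}, where the data $\left(\psi_\alpha,\psi_{\alpha,\beta}\right)$ is shown to be equivalent to data $\left(\psi_\alpha,\gamma_{\alpha,\beta}\right)$ with $\gamma_{\alpha,\beta}$ an actual local homeomorphism recovering the germs via equation (\ref{eq:reco}), and where morphisms are identified with local homeomorphisms $\omega_\alpha$ satisfying (\ref{eq:iddd}); specializing that computation from $\HH\left(X\right)$-valued cocycles on open subsets of $X$ to submersive cocycles on $M$ closes your remaining gap. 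The one point worth stating carefully is the converse direction of the last step: two submersions with the same kernel distribution locally differ by a uniquely determined germ of a diffeomorphism of the transversal, which is what produces the coboundary $h_i$; with that observation your bijection is complete and agrees with the paper's $\ker\left(\mathbf{T}F\right)$ description.
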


If $F:M \to \HH$ is such a submersion, then the induced foliation is just the kernel $$\ker\left(\mathbf{T}F\right):\mathbf{T}M \to F^*\mathbf{T}\HH,$$ viewed as an involutive subbundle of $\mathbf{T}M,$ which makes sense since $F^*\mathbf{T}\HH$ is a vector bundle since $\HH$ is \'etale - see \cite{hepworth}.

It is also proven in \cite{Ie} (in the language of smooth \'etendues) that any submersion $\X \to \Y$ between \'etale differentiable stacks, factors uniquely as $$\X \to \Y' \to \Y$$ with $\X \to \Y'$ a submersion with connected fibers, and $$\Y' \to \Y$$ a representable local diffeomorphism. (We will not elaborate on the meaning of ``connected fibers'' in this paper.) Moreover, it is shown that if $\F:M \to \HH$ is a submersion classifying a foliation on $M$, the factorization is given by $$M \to \left[\operatorname{Hol}\left(M,\F\right)\right] \to \HH,$$ where $\operatorname{Hol}\left(M,\F\right)$ is the holonomy groupoid \cite{holom} of the foliation. Armed with Theorem \ref{thm:terminal}, we know the map $$\left[\operatorname{Hol}\left(M,\F\right)\right] \to \HH$$ is the unique local diffeomorphism. We denote $$M//\F:=\left[\operatorname{Hol}\left(M,\F\right)\right]$$ and call it the \textbf{stacky leaf space.} In the nice cases where the actual leaf space (obtained as a quotient space) is a manifold, it is the same as the stacky leaf space.

Let $\R$ be the classifying stack for Riemannian metrics, as in Example \ref{ex:Riemannian1}. Suppose that $$M \to \R$$ is a \emph{submersion}. Then it factors uniquely as $M \to \R' \to \R,$ with $M \to \R'$ a submersion with connected fibers, and $$\R' \to \R$$ a representable local diffeomorphism. However, one can also consider the composite $$M \to \R \to \HH$$  by the unique local diffeomorphism $\R \to \HH.$ This is a submersion $\F:M \to \HH$ so it classifies a foliation. Moreover, it can be factored as the submersion with connected fibers $M \to \R'$ follows by the local diffeomorphism $$\R' \to \R \to \HH.$$ By Theorem \ref{thm:whateffmeans}, the unique map $$\R \to \HH$$ is a representable local diffeomorphism, hence so is the composite $$\R' \to \R \to \HH.$$ By uniqueness of factorizations, one has that $\R'\simeq M//\F,$ so that one gets a local diffeomorphism $$M//\F \to \R$$  between the stacky leaf space and the classifying stack for Riemannian metrics. This corresponds to a Riemannian metric \emph{on} the stacky leaf space, by Example \ref{ex:Riemannian1}. Moreover, the following result was proven by Camilo Angulo as part of a master class project that I advised:

\begin{thm}
Riemannian metrics on $M//\F$ are in natural bijection with \\ \underline{transverse metrics} on $M$ with respect to the foliation $\F$ (in the sense of \cite{Fol}).
\end{thm}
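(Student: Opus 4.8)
The plan is to reduce the statement to a computation with the classifying stack $\R$ and then match the resulting groupoid-invariance condition with the differential-geometric definition of a transverse metric. By Example \ref{ex:Riemannian1}, a Riemannian metric on the \'etale differentiable stack $M//\F$ is precisely a local diffeomorphism $M//\F \to \R$. Since $\R = \Theta\left(\mathcal R\right)$ with $\mathcal R$ a sheaf, Theorem \ref{thm:whateffmeans} shows that $\R$ is effective and that $\tilde y^{et}\left(\R\right) \simeq \mathcal R$. Applying the equivalence $\tilde y^{et}$ of Theorem \ref{thm:whatetmeans1} therefore yields a natural bijection
$$\Hom_{\Et\left(\Mfd\right)^{et}}\left(M//\F,\R\right) \simeq \Hom\left(\tilde y^{et}\left(M//\F\right),\mathcal R\right),$$
and because $\mathcal R$ is a sheaf the right-hand side is a set rather than a groupoid.

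Next I would pick an \'etale groupoid presentation of the stacky leaf space. Choosing a complete transversal $T$ to $\F$, the holonomy groupoid $\operatorname{Hol}\left(M,\F\right)$ restricts to an \'etale groupoid $\G$ with object space $\G_0 = T$ and arrows the germs of holonomy transformations, and $M//\F \simeq \left[\G\right]$. Exactly as in the computation carried out in Example \ref{ex:symplectic1}, the sheaf condition on $\mathcal R$ turns the mapping groupoid into the equalizer
$$\Hom\left(\tilde y^{et}\left(\left[\G\right]\right),\mathcal R\right) \simeq \varprojlim\left(\mathcal R\left(T\right) \rightrightarrows \mathcal R\left(\G_1\right)\right) = \mathcal R\left(T\right)^{\G},$$
the set of $\G$-invariant Riemannian metrics on the transversal $T$, i.e. metrics on $T$ invariant under every germ of holonomy. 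Thus a Riemannian metric on $M//\F$ is the same datum as a holonomy-invariant metric on a complete transversal, and the assignment is manifestly natural since every functor above is.

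The remaining and genuinely content-carrying step is to identify $\G$-invariant metrics on $T$ with transverse metrics on $\left(M,\F\right)$ in the sense of \cite{Fol}. Recall that a transverse metric is a fibrewise inner product on the normal bundle $\nu\left(\F\right) = \mathbf T M / \mathbf T \F$ which is holonomy-invariant (equivalently, a symmetric form $g^T$ with $\mathcal L_X g^T = 0$ for every $X$ tangent to the leaves). I would prove the identification in both directions: restricting such a $g^T$ along $T \hookrightarrow M$ and using the canonical isomorphism $\nu\left(\F\right)|_T \cong \mathbf T T$ yields a metric on $T$ whose holonomy-invariance is exactly $\G$-invariance, since the arrows of $\G$ are precisely the holonomy germs; conversely, a $\G$-invariant metric on a complete transversal determines a holonomy-invariant metric on $\nu\left(\F\right)$ over all of $M$ by transport along holonomy, with completeness of $T$ guaranteeing it is defined everywhere and invariance guaranteeing well-definedness. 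The main obstacle is precisely this dictionary: one must check that the abstract groupoid-invariance produced by the stack machinery coincides with the classical holonomy-invariance of a normal-bundle metric, and that the outcome is independent of the chosen transversal $-$ the latter reducing to the Morita invariance already built into $\tilde y^{et}$, and the former to standard properties of the holonomy groupoid of $\F$.
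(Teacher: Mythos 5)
The paper does not actually contain a proof of this statement: it is quoted as a result ``proven by Camilo Angulo as part of a master class project,'' with reference to \cite{Fol} for the notion of transverse metric, so there is no argument in the text against which to compare yours. With that caveat, your strategy is sound and is the one the surrounding text implicitly points to. The stack-theoretic reduction is fully covered by results already in the paper: Example \ref{ex:Riemannian1} identifies Riemannian metrics on an \'etale stack $\X$ with local diffeomorphisms $\X \to \R$ and, via the equivalence $\tilde y^{et}$ of Theorem \ref{thm:whatetmeans1} and the sheaf property of $\mathcal{R}$, with $\G$-invariant metrics on the object space of any \'etale presentation $\G$ (the equalizer computation you quote is carried out verbatim for symplectic forms in Example \ref{ex:symplectic1}). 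Passing to the \'etale holonomy groupoid on a complete transversal $T$ is legitimate since it is Morita equivalent to $\operatorname{Hol}\left(M,\F\right)$, and Morita invariance of $\tilde y^{et}$ disposes of the dependence on $T$. You correctly isolate the one genuinely content-carrying step, namely the classical dictionary between $\G$-invariant metrics on $T$ and holonomy-invariant fibrewise metrics on $\nu\left(\F\right)$; this is standard foliation theory from \cite{Fol}, but if you intend your sketch to be a complete proof you should spell out the two points you only gesture at: that the metric obtained on $\nu\left(\F\right)$ by holonomy transport from $T$ is smooth (this follows from the local triviality of the foliation charts), and that germ-level holonomy invariance of the transversal metric is equivalent to the Lie-derivative condition $\mathcal{L}_X g^T = 0$ for leafwise $X$ used to define transverse metrics. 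Neither point is difficult, but they are where the bijection actually lives.
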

Combining this with the previous discussion, one has that submersions into the classifying stack for Riemannian metrics, classify Riemannian foliations:

\begin{thm}
For a given manifold $M,$ equivalence classes of submersions $$M \to \R$$ into the classifying stack of Riemannian metrics are in natural bijection with Riemannian foliations of $M$.
\end{thm}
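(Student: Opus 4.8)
The plan is to assemble the asserted bijection from the identifications developed in the preceding discussion, the orthogonal factorization of submersions between \'etale differentiable stacks, and the bijection between Riemannian metrics on $M//\F$ and transverse metrics, so that an equivalence class of submersions $M \to \R$ decomposes into exactly the data of a foliation together with a transverse metric.

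First I would start with a submersion $\varphi \colon M \to \R$ and compose it with the representable local diffeomorphism $\R \to \HH$ supplied by Theorem \ref{thm:whateffmeans}, obtaining a submersion $\F \colon M \to \HH$. By the quoted theorem of \cite{Ie}, $\F$ classifies a regular foliation on $M$; this will be the underlying foliation of the output. To recover the transverse geometry, I would factor $\varphi$ as $M \to \R' \to \R$ with $M \to \R'$ a submersion with connected fibers and $\R' \to \R$ a representable local diffeomorphism. Because $\R \to \HH$ is itself a representable local diffeomorphism, the composite $\R' \to \R \to \HH$ is one too, so $M \to \R' \to \HH$ is a factorization of $\F$ of the required shape; by the uniqueness of such factorizations and the identification of the connected-fiber part with the holonomy groupoid, $\R' \simeq M//\F$. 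Hence $\varphi$ determines a local diffeomorphism $M//\F \to \R$, i.e. a Riemannian metric on the stacky leaf space, which by Angulo's theorem is the same datum as a transverse metric on $M$ relative to $\F$. The pair consisting of $\F$ and this transverse metric is precisely a Riemannian foliation.

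For the inverse I would begin with a Riemannian foliation, i.e. a foliation $\F$ equipped with a transverse metric. The foliation produces the stacky leaf space $M//\F$ together with the canonical connected-fiber submersion $M \to M//\F$, while Angulo's theorem converts the transverse metric into a Riemannian metric on $M//\F$, equivalently a local diffeomorphism $M//\F \to \R$. Composing gives a submersion $M \to M//\F \to \R$. That these two assignments are mutually inverse is forced by the uniqueness of the factorization used in the forward direction, which reconstructs exactly the composite $M \to M//\F \to \R$, together with the fact that Angulo's correspondence is a bijection.

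The main obstacle I anticipate is purely organizational: one must verify that each intermediate construction is well defined on equivalence classes and natural in $M$, which is where uniqueness of the factorization ``up to a contractible space of choices'' earns its keep, and one must check that the foliation underlying the submersion $M \to \R$ in the forward map agrees on the nose with the foliation fed into the inverse map. All of the genuinely geometric content has already been absorbed into Angulo's theorem and into the factorization system, so beyond this compatibility check I expect no further difficulty.
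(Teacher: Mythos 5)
Your proposal is correct and follows essentially the same route as the paper: compose with the unique representable local diffeomorphism $\R \to \HH$ to extract the foliation, use the connected-fibers/local-diffeomorphism factorization to identify the intermediate stack with $M//\F$, and then invoke Angulo's bijection between metrics on $M//\F$ and transverse metrics. The paper leaves the inverse assignment implicit ("combining this with the previous discussion"), so your explicit construction of the inverse via $M \to M//\F \to \R$ and the compatibility check is a welcome, if minor, elaboration rather than a departure.
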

By an analogous argument, one sees that submersions $M \to \R_q$ are the same as Riemannian foliations \emph{of codimension $q$}. Note that the \emph{homotopy type} of the underlying topological stack $U_!\left(\R_q\right)$ in the sense of \cite{NoohiH} is that of $BR\Gamma^q,$ so the stack $\R_q$ contains at least as much information as the classifying space $BR\Gamma^q,$ however it is more refined as it is able to classify Riemannian foliations up to isomorphism, on \emph{all manifolds} in a direct geometric way, whereas the classifying space only classifies Riemannian foliations up to homotopy, and on \emph{open} manifolds, and moreover, the way in which these foliations are classified by $BR\Gamma^q$ is subtle (one needs to use $BR\Gamma^q$ and a natural $q$-dimensional vector bundle sitting over it: see \cite{Haefliger}).
\end{ex}

\begin{ex}
In a similar vein, if $\S$ is the classifying stack for symplectic forms, as in Example \ref{ex:symplectic1}, it can be shown that submersions $M \to \S,$ with $M$ a manifold, are in natural bijection with foliations with transverse symplectic structure, or equivalently constant rank closed $2$-forms on $M$ (see \cite{quant}, Section 3), and similarly submersions $M \to \S_n$ classify closed $2$-forms of constant rank $2n.$ Just as in Example \ref{ex:Riemannian2}, the homotopy type of $\S_n$ is that of $B\Gamma^{Sp}_{2n},$ but $\S_n$ is a much richer object.
\end{ex}

\begin{thm}\label{thm:loceffeqisgerb}
Let $f:\Z \to \X$ be a local homeomorphism of \'etale stacks, with $\X$ effective. Then $\Gamma\left(f\right) \in \St\left(\X\right)$ is a gerbe if and only if $f$ is an effective local equivalence (Definition \ref{dfn:effectlocal}), i.e.  the induced map $\Ef\left(\Z\right) \to \X$ is an equivalence.
\end{thm}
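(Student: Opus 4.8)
The plan is to transport the whole statement across the equivalence $\tilde y^{et}$ of Theorem~\ref{thm:yet} and reduce it to the intrinsic description of connected morphisms over a sheaf. First I would fix a space $X$ for which both $\X$ and $\Z$ are \'etale $X$-manifold stacks; this is possible since $f$ is a local homeomorphism, so an \'etale atlas of $\X$ by an $X$-manifold pulls back along $f$ to an \'etale atlas of $\Z$ by an $X$-manifold. By Theorem~\ref{thm:etalsp}, the stack of sections $\Gamma\left(f\right)\in\St\left(\X\right)$ corresponds, under $\St\left(\X\right)\simeq\Et\left(\xm\right)^{et}/\X$, to the object $f\colon\Z\to\X$ of the slice; applying the equivalence $\tilde y^{et}$ slicewise gives $\St\left(\X\right)\simeq\St\left(\xm^{et}\right)/\tilde y^{et}\left(\X\right)$, under which $\Gamma\left(f\right)$ corresponds to $\tilde y^{et}\left(f\right)\colon\tilde y^{et}\left(\Z\right)\to\tilde y^{et}\left(\X\right)$. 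As this is an equivalence of $2$-topoi it preserves gerbes, and by the remark preceding Proposition~\ref{prop:gerbterm} an object of a slice $2$-topos is a gerbe precisely when the corresponding morphism into the base is connected. Hence $\Gamma\left(f\right)$ is a gerbe if and only if $\tilde y^{et}\left(f\right)$ is a connected morphism in $\St\left(\xm^{et}\right)$.

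Next I would invoke effectivity. By Theorem~\ref{thm:whateffmeans}, condition 5, the stack $\tilde y^{et}\left(\X\right)$ is a \emph{sheaf}. I can therefore apply Proposition~\ref{prop:quu} (equivalently Remark~\ref{rmk:sheafconn}) with the sheaf $\tilde y^{et}\left(\X\right)$ as base: $\tilde y^{et}\left(f\right)$ is connected if and only if $\pi_0\left(\tilde y^{et}\left(f\right)\right)$ is an isomorphism of sheaves, where I use that $\pi_0\left(\tilde y^{et}\left(\X\right)\right)\simeq\tilde y^{et}\left(\X\right)$ since $\tilde y^{et}\left(\X\right)$ is already a sheaf. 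Finally Corollary~\ref{cor:effispizero} identifies $\pi_0$ on $\St\left(\xm^{et}\right)$ with $\Ef_{et}$ under $\tilde y^{et}$, so that $\pi_0\left(\tilde y^{et}\left(f\right)\right)\simeq\tilde y^{et}\left(\Ef_{et}\left(f\right)\right)$; since $\tilde y^{et}$ restricts to an equivalence $\Eft\left(\xm\right)^{et}\simeq\Sh\left(\xm^{et}\right)$, this map is an isomorphism if and only if $\Ef_{et}\left(f\right)$ is an equivalence.

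It then remains to match this with the stated condition. Since $\X$ is effective one has $\Ef\left(\X\right)\simeq\X$, so $\Ef_{et}\left(f\right)$ is exactly the induced map $\Ef\left(\Z\right)\to\X$, and $f$ being an effective local equivalence in the sense of Definition~\ref{dfn:effectlocal} means precisely that this map is an equivalence. Concatenating the equivalences of the previous two paragraphs gives the theorem. The main obstacle I anticipate is the first reduction: one must verify carefully that the stack of sections $\Gamma\left(f\right)$ corresponds to $\tilde y^{et}\left(f\right)$ under the composite equivalence $\St\left(\X\right)\simeq\St\left(\xm^{et}\right)/\tilde y^{et}\left(\X\right)$, and that the property of being a gerbe is preserved both by passing to the slice and by the equivalence $\tilde y^{et}$. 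Once this bookkeeping is secured, the remaining steps are direct applications of Proposition~\ref{prop:quu} and Corollary~\ref{cor:effispizero}.
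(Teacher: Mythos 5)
Your proposal is correct and follows essentially the same route as the paper's own (much terser) proof: identify ``$\Gamma\left(f\right)$ is a gerbe'' with ``$f$ is connected,'' transport across $\tilde y^{et}$ so that $\X$ becomes a sheaf by Theorem \ref{thm:whateffmeans}, apply Proposition \ref{prop:quu}/Remark \ref{rmk:sheafconn}, and identify $\pi_0$ with $\Ef_{et}$ via Corollary \ref{cor:effispizero}. The extra bookkeeping you supply (fixing a common $X$, checking that $\Gamma\left(f\right)$ corresponds to $\tilde y^{et}\left(f\right)$ in the slice, and that gerbes are preserved under the equivalences) is exactly what the paper leaves implicit.
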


\begin{proof}
The map $f$ encodes a gerbe if and only if it is connected. By Theorem \ref{thm:whateffmeans}, under the equivalence,
$$\tilde y^{et}:\Et\left(\xm\right)^{et}\stackrel{\sim}{\longrightarrow}\St\left(\xm^{et}\right),$$
$\X$ is a sheaf, so we are done by Remark \ref{rmk:sheafconn}.
\end{proof}

\begin{cor}
If $\X$ is an orbifold, it encodes a small gerbe over its effective part $\Ef\left(\X\right)$ via $$\iota_\X:\X \to \Ef\left(\X\right),$$ where $\iota$ is the unit of the adjunction in Theorem \ref{thm:bungalo}.
\end{cor}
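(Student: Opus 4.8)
The plan is to recognize this corollary as a direct instance of Theorem~\ref{thm:loceffeqisgerb}, applied to the unit morphism $f=\iota_\X:\X\to\Ef(\X)$. Here ``$\X$ encodes a small gerbe over $\Ef(\X)$'' means precisely that the stack of sections $\Gamma(\iota_\X)\in\St(\Ef(\X))$ is a gerbe, so it suffices to verify the hypotheses of that theorem for $f=\iota_\X$: that the target $\Ef(\X)$ is effective, that $\iota_\X$ is a local homeomorphism of \'etale stacks, and that $\iota_\X$ is an \emph{effective local equivalence} in the sense of Definition~\ref{dfn:effectlocal}, i.e. that the induced map $\Ef_{et}(\iota_\X):\Ef(\X)\to\Ef(\X)$ is an equivalence.

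First I would dispatch the straightforward hypotheses. Since $\X$ is an orbifold it is in particular \'etale, and $\Ef(\X)$ is effective and \'etale by the very construction of the effective part. That $\iota_\X$ is a local homeomorphism can be read off a groupoid model $\X\simeq[\G]$: the map $\iota_\G$ is the identity on objects and sends an arrow to its germ on arrows, and because the source maps of both $\G$ and $\Ef(\G)$ are local homeomorphisms, $\iota_{\G,1}$ is locally a composite of local homeomorphisms, hence one itself; passing to quotient stacks shows $\iota_\X=[\iota_\G]$ is a local homeomorphism. (Equivalently, $\iota$ is a $1$-cell in $\Et_{et}$, which is exactly the assertion that its components are local homeomorphisms.)

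The main step is to show that $\Ef_{et}(\iota_\X)$ is an equivalence, and this I would extract from the $2$-adjunction $\Ef_{et}\dashv j_{et}$ of Corollary~\ref{cor:etadj} (the stack-level promotion of Theorem~\ref{thm:bungalo}, in the \'etale case $P=et$). The counit $\epsilon:\Ef_{et}\circ j_{et}\Rightarrow\mathrm{id}$ is an equivalence, since every effective \'etale stack is canonically equivalent to its own effective part --- this is the stack analogue of the natural isomorphism $\Ef_P\circ j_P\Rightarrow\mathrm{id}$ recorded in Theorem~\ref{thm:bungalo}. The triangle identity relating the unit $\iota$ and the counit $\epsilon$ then gives $\epsilon_{\Ef_{et}(\X)}\circ\Ef_{et}(\iota_\X)\simeq\mathrm{id}_{\Ef_{et}(\X)}$, so $\Ef_{et}(\iota_\X)$ is a quasi-inverse to the equivalence $\epsilon_{\Ef_{et}(\X)}$ and is therefore itself an equivalence. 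This is the abstract incarnation of the earlier remark that $\h$ is effective if and only if $\iota_\h$ is an isomorphism, applied to the already-effective stack $\Ef(\X)$.

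With all three hypotheses in hand, Theorem~\ref{thm:loceffeqisgerb} yields that $\Gamma(\iota_\X)\in\St(\Ef(\X))$ is a gerbe, and since $\Gamma$ takes values in small stacks on $\Ef(\X)$ this is the desired \emph{small} gerbe over the effective part. I expect the only point requiring genuine care --- rather than bookkeeping --- to be confirming that the counit of the stack-level adjunction really is an equivalence and that the triangle identity is available at the level of the bicategorical adjunction; once these are in place, the rest is a direct translation of definitions already established, and in particular the conclusion is uniform in the chosen presentation $\X\simeq[\G]$.
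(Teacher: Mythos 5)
Your proposal is correct and follows exactly the route the paper intends: the corollary is stated as an immediate consequence of Theorem \ref{thm:loceffeqisgerb}, and your argument simply verifies its three hypotheses for $f=\iota_\X$ (effectivity of the target, $\iota_\X$ being a local homeomorphism, and $\Ef_{et}\left(\iota_\X\right)$ being an equivalence via the triangle identity). The only simplification available is that $\iota_\X$ is a local homeomorphism already because $\iota_{\G,0}$ is the identity, which is all Definition \ref{dfn:localhomeostacks} requires.
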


Theorem \ref{thm:loceffeqisgerb} is the same result as Corollary 6.4 of \cite {gerbes},  however the proof here is much more conceptual and clear. The meaning of this theorem is any \'etale stack $\Y$ arises as the \'etal\'e realization of some gerbe $\g$ over an effective \'etale stack $\X.$ More specifically, $\Y$ arises as the gerbe over is effective part $\Ef\left(\Y\right)$ given by sections of the canonical map $$\Y \to \Ef\left(\Y\right).$$
In some sense, this reduces the structure theory of \'etale stacks, to those of effective \'etale stacks. We will return to this point shortly in Section \ref{sec:geom}. We now take this opportunity to make a subtle correction to \cite{gerbes} Corollary 6.7:

\begin{thm}\label{thm:localfullgerb}
Let $f:\Z \to \X$ be a local homeomorphism of \'etale stacks. Then $\Gamma\left(f\right) \in \St\left(\X\right)$ is a gerbe if and only if $f$ is \underline{locally} full (in the sense of Remark \ref{rmk:locfull}) and an effective local equivalence.
\end{thm}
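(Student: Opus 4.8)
The plan is to reduce the gerbe condition to connectedness of $f$, and then split connectedness into two independent conditions using the factorization system of Proposition \ref{prop:facto}. First I would note that, by the internal characterization of gerbes (the Remark following Proposition \ref{prop:gerbterm}) together with the \'etal\'e-realization equivalence $\St(\X) \simeq \Et(\xm)^{et}/\X$ of Theorem \ref{thm:etalsp}, the object $\Gamma(f) \in \St(\X)$ is a gerbe if and only if $f$, regarded as a morphism in the $2$-topos $\Et(\xm)^{et}$, is connected. Here I fix a space $X$ with $\X$ an \'etale $X$-manifold stack, using that $f$ is a local homeomorphism so that $\Gamma(f)$ really is a stack and the slice description applies.

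Next I would transport the problem along the equivalence $\tilde y^{et}:\Et(\xm)^{et} \stackrel{\sim}{\longrightarrow} \St(\xm^{et})$ of Theorem \ref{thm:yet}, setting $g := \tilde y^{et}(f)$. Since $\tilde y^{et}$ is an equivalence of $2$-topoi it preserves and reflects epimorphisms, pullbacks and $\pi_0$, hence connectedness, so $f$ is connected if and only if $g$ is. I then invoke the Proposition asserting that a connected map has $\pi_0(g)$ an isomorphism of sheaves, together with Remark \ref{rmk:locful2}, to obtain the clean dichotomy: $g$ is connected if and only if $\pi_0(g)$ is an isomorphism \emph{and} the diagonal $\tilde y^{et}(\Z) \to \tilde y^{et}(\Z) \times_{\tilde y^{et}(\X)} \tilde y^{et}(\Z)$ is an epimorphism. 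This decomposition is the crux: when $\X$ is effective, $\tilde y^{et}(\X)$ is a sheaf and Remark \ref{rmk:sheafconn} shows that $\pi_0(g)$ being an isomorphism already forces connectedness, so the diagonal condition is automatic and one recovers Theorem \ref{thm:loceffeqisgerb}; for general $\X$ the converse of Proposition \ref{prop:quu} fails and the diagonal condition is genuinely extra.

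It then remains to name the two conditions geometrically. By Corollary \ref{cor:effispizero}, $\Ef_{et}$ corresponds to $\pi_0$ under $\tilde y^{et}$, so $\pi_0(g)$ is an isomorphism if and only if $\Ef_{et}(f)$ is an equivalence, which---since $f$ is already a local homeomorphism---is exactly the statement that $f$ is an effective local equivalence (Definition \ref{dfn:effectlocal}). By Remark \ref{rmk:locfull}, the diagonal of $g$ being an epimorphism is precisely the condition that $g$, equivalently $f$, be locally full. Chaining these equivalences yields that $\Gamma(f)$ is a gerbe if and only if $f$ is an effective local equivalence and is locally full, as claimed.

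The hard part will be the final identification: matching the topos-theoretic ``diagonal is an epimorphism'' with the concrete ``locally full'' condition of Remark \ref{rmk:locfull}, and checking that this property is insensitive to the passage between $f$ (a morphism of \'etale stacks) and $g$ (a morphism in $\St(\xm^{et})$), given that $\tilde y^{et}$ does not merely restrict $\X$ but records only local homeomorphisms into it. The point is that local fullness is a purely local condition phrased over covers, and covers agree in $\xm$ and $\xm^{et}$; making this precise, and tracking that the fiber product $\Z \times_\X \Z$ is formed compatibly with $\tilde y^{et}$ because $f$ is a local homeomorphism, is the only place real care is needed. Recognizing that this extra diagonal / local-fullness condition cannot be dropped when $\X$ is not effective is precisely the substance of the correction to \cite{gerbes}, Corollary 6.7.
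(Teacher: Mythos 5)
Your proposal is correct and follows essentially the same route as the paper's own proof: reduce the gerbe condition to connectedness of $f$, split connectedness into ``$\pi_0(f)$ is an isomorphism'' plus ``the diagonal is an epimorphism'' via Remark \ref{rmk:locful2}, and then translate these into ``effective local equivalence'' via Corollary \ref{cor:effispizero} and ``locally full'' via Remark \ref{rmk:locfull}. The additional care you take in transporting the statement along $\tilde y^{et}$ is a reasonable elaboration of what the paper leaves implicit, not a different argument.
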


\begin{proof}
The map $f$ encodes a gerbe if and only if it is connected. By Remark \ref{rmk:locful2} combined with Corollary \ref{cor:effispizero}, this is if and only if $f$ is an effective local equivalence, and diagonal $$\Z \to \Z \times_\X \Z$$ is an epimorphism. By Remark \ref{rmk:locfull}, the latter is equivalent to $f$ being locally full.
\end{proof}

\begin{rmk}
In the preprint \cite{gerbes}, the condition above read ``full'' instead of ``locally full.''
\end{rmk}

\subsection{Geometric meaning of ineffective data}\label{sec:geom}

\begin{thm}
Let $\X$ be an \'etale stack. A small stack $\Z$ over $\X$ is a small gerbe if and only if for every point $$x:* \to \X,$$ the stalk $\Z_x$ of $\Z$ at $x$ is equivalent to a group (viewed as a one-object groupoid).
\end{thm}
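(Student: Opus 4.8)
The plan is to reduce the statement to a computation with stalks by way of the characterization of gerbes through $\pi_0$. By Proposition \ref{prop:gerbterm}, applied to the small site of $\X$ underlying $\St\left(\X\right)$, the stack $\Z$ is a gerbe over $\X$ if and only if the sheaf $\pi_0\left(\Z\right)\in\Sh\left(\X\right)$ is terminal. So the whole theorem will follow once I establish two facts: first, that $\pi_0\left(\Z\right)$ is terminal precisely when each of its stalks is a singleton; and second, that taking the stalk at a point $x$ commutes with $\pi_0$, so that $\left(\pi_0\Z\right)_x\simeq\pi_0\left(\Z_x\right)$. Granting these, $\Z$ is a gerbe if and only if $\pi_0\left(\Z_x\right)\cong *$ for every geometric point $x:*\to\X$; and since a groupoid $\Z_x$ has $\pi_0\left(\Z_x\right)$ a singleton exactly when it is non-empty and connected, i.e.\ equivalent to a one-object groupoid, this is precisely the condition that each stalk $\Z_x$ be equivalent to a group. (The smallness hypotheses play no role beyond being preserved throughout.)

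For the first fact I would invoke that the topos $\Sh\left(\X\right)$ of small sheaves on the \'etale stack $\X$ has enough points. Fixing a presentation $\X\simeq\left[\G\right]$ by an \'etale $S$-groupoid $\G$, one has $\Sh\left(\X\right)\simeq\B\G$; since every space in $S$ is sober (manifolds are $T_1$, hence sober, and schemes are sober by construction), the topos $\Sh\left(\G_0\right)$ has enough points, and as $\G_0\to\X$ is an atlas the induced stalk functors at the geometric points $x:*\to\X$ — equivalently, at the points $\tilde x\in\G_0$, as set up in the appendix — form a jointly conservative family. In particular a morphism of sheaves on $\X$ is an isomorphism if and only if it is one on every stalk, whence $\pi_0\left(\Z\right)\to 1$ is an isomorphism if and only if $\left(\pi_0\Z\right)_x\cong *$ for all $x$.

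For the second fact I would use that the stalk functor is computed by a filtered colimit over a neighbourhood basis of $x$ in the relevant site, together with the fact that sheafification does not alter stalks. Writing $\Z_x\simeq\operatorname{colim}_U\Z\left(U\right)$ and using that $\pi_0\colon\Gpd\to\Set$ preserves (filtered) colimits, while $\pi_0\left(\Z\right)$ is by definition the sheafification of $U\mapsto\pi_0\left(\Z\left(U\right)\right)$, I obtain
$$\left(\pi_0\Z\right)_x\simeq\operatorname{colim}_U\pi_0\left(\Z\left(U\right)\right)\simeq\pi_0\left(\operatorname{colim}_U\Z\left(U\right)\right)\simeq\pi_0\left(\Z_x\right).$$
Conceptually this is just the observation that the point $x$ is a geometric morphism whose inverse image (the stalk functor) is left exact and colimit preserving, and therefore commutes with the $0$-truncation $\pi_0$.

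The main obstacle I expect is bookkeeping rather than conceptual: making precise the notion of stalk $\Z_x$ at a geometric point of the \'etale stack, as developed in the appendix, and verifying that these stalk functors genuinely constitute an enough-points family for $\Sh\left(\X\right)$ and agree with the filtered-colimit description used above. Once the dictionary between geometric points of $\X$, points of the \'etendue $\Sh\left(\X\right)$, and the appendix's stalks is in place, the two displayed equivalences are immediate and the theorem follows from the chain of equivalences assembled in the first paragraph.
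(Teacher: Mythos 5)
Your argument is correct, but it takes a genuinely different route from the paper. The paper's proof works directly with the filtered-colimit description of the stalk, $\Z_x\simeq \hc_{\tilde x\in U}\Z\left(U\right)$ (Corollary \ref{cor:stalk}): local non-emptiness and local connectedness of a gerbe immediately force each such colimit to be non-empty and connected, and conversely the compactness of filtered colimits (a non-empty filtered colimit has a non-empty term; two objects isomorphic in the colimit are already isomorphic at some stage) lets one read off local non-emptiness and local connectedness from the stalks. You instead route through Proposition \ref{prop:gerbterm}, reducing the statement to ``$\pi_0\left(\Z\right)$ is terminal,'' and then combine two ingredients the paper never invokes here: that $\Sh\left(\X\right)\simeq\B\G$ has enough points (inherited from $\Sh\left(\G_0\right)$ via the atlas), and that the stalk functor, being an inverse image, commutes with $\pi_0$. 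Both ingredients are sound --- $\pi_0$ is a left adjoint so it preserves the filtered homotopy colimit computing the stalk, and sheafification does not change stalks --- so the chain of equivalences you assemble does prove the theorem. What your approach buys is conceptual uniformity (it is exactly the argument that would identify $n$-gerbes with objects whose stalks are $n$-connective, and it isolates where ``enough points'' enters); what the paper's approach buys is brevity and self-containedness, since it needs nothing beyond the definitions of locally non-empty and locally connected and the colimit formula for stalks. One small caveat: your parenthetical justification of enough points via sobriety is unnecessary (and $T_1$ does not imply sober in general); the stalk functors at the honest points of $\G_0$ are jointly conservative for $\Sh\left(\G_0\right)$ for any topological space, which is all you need.
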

\begin{proof}
Fix an \'etale groupoid $\h$ such that $\X \simeq \left[\h\right]$ and $\tilde x \in \h_0$ a point such  that $x \cong p \circ \tilde x,$ where $p:\h_0 \to \X$ is the atlas associated to $\h$. Suppose that $\g$ is a small gerbe over $\X$. Then, since $\g$ is locally non-empty, $$\g_x\simeq  \underset{\tilde x \in U} \hc\g\left(U\right),$$ is a non-empty groupoid. Furthermore, since $\g$ is locally connected, it follows that $$\underset{\tilde x \in U} \hc\g\left(U\right)$$ is also connected, hence, equivalent to a group.

Conversely, suppose that $\Z$ is a small stack and that $$\Z_x\simeq  \underset{\tilde x \in U} \hc\Z\left(U\right)$$ is equivalent to a group. This means it is a non-empty and connected groupoid. It follows that $\Z$ is locally non-empty and locally connected, hence a gerbe.
\end{proof}

\begin{thm}
Suppose that $\X$ is an \'etale stack. $\X$ is canonically a gerbe $\g$ over the effective part of $\X,$ $\Ef\left(\X\right),$ and for a point $x$ of $\X,$ the stalk $\g_x$ is equivalent to the ineffective isotropy group of $x,$ (viewed as a one-object groupoid).
\end{thm}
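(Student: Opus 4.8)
The plan is to exhibit the gerbe as the stack of sections of the unit map and then read off its stalks from a groupoid presentation. Fix an étale $S$-groupoid $\G$ with $\X\simeq\left[\G\right]$; by Section~\ref{sec:effective} we then have $\Ef\left(\X\right)\simeq\left[\Ef\left(\G\right)\right]$, and the unit of the adjunction $\Ef_{et}\dashv j_{et}$ (Corollary~\ref{cor:etadj}) is $\iota_\X=\left[\iota_\G\right]\colon\X\to\Ef\left(\X\right)$, where $\iota_\G$ is the identity on objects and sends an arrow $g$ to its germ. This $\iota_\X$ is a local homeomorphism, and since the counit of $\Ef_{et}\dashv j_{et}$ is invertible, the triangle identities force $\Ef_{et}\left(\iota_\X\right)$ to be an equivalence; hence $\iota_\X$ is an effective local equivalence (Definition~\ref{dfn:effectlocal}) into the \emph{effective} stack $\Ef\left(\X\right)$. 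By Theorem~\ref{thm:loceffeqisgerb}, $\g:=\Gamma\left(\iota_\X\right)\in\St\left(\Ef\left(\X\right)\right)$ is a gerbe, which proves the first assertion; it is canonical since $\iota_\X$ is.

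For the stalks, note first that because $\iota_\G$ is the identity on objects the two atlases $a\colon\G_0\to\X$ and $a'\colon\G_0\to\Ef\left(\X\right)$ share the object space $\G_0$ and satisfy $\iota_\X\circ a\cong a'$. A geometric point $x\colon *\to\X$ factoring through $\tilde x\in\G_0$ thus has image $\bar x=\iota_\X\circ x$ factoring through the same $\tilde x\in\Ef\left(\G\right)_0=\G_0$, which is the asserted bijection on points. Computing as in the proof of the preceding theorem, $\g_{\bar x}\simeq\underset{\tilde x\in W}{\hc}\g\left(W\right)$, where $W$ ranges over opens of $\G_0$ containing $\tilde x$ and $\g\left(W\right)$ is the groupoid of sections of $\iota_\X$ over $W\hookrightarrow\G_0\xrightarrow{a'}\Ef\left(\X\right)$. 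The restriction $a|_W$ is a canonical such section (using $\iota_\X\circ a\cong a'$), so $\g\left(W\right)$ is nonempty, and since $\g$ is a gerbe it is also connected; hence $\g\left(W\right)\simeq\mathbf{B}\,Aut_{\g\left(W\right)}\left(a|_W\right)$, and passing to the filtered colimit gives $\g_{\bar x}\simeq\mathbf{B}\left(\varinjlim_W Aut_{\g\left(W\right)}\left(a|_W\right)\right)$.

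It remains to identify this group. A $2$-automorphism of the atlas section $a|_W$ is exactly a continuous map $\beta\colon W\to\G_1$ with $s\circ\beta=t\circ\beta=\mathrm{incl}_W$, i.e.\ a continuous section $w\mapsto\beta\left(w\right)\in\G_w$ of the isotropy bundle; the requirement that it be an automorphism \emph{in} $\g\left(W\right)=\Gamma\left(\iota_\X\right)\left(W\right)$, i.e.\ that it descend to the identity of $a'|_W$, is precisely $\iota_\G\circ\beta=\mathrm{id}$, so that each $\beta\left(w\right)$ lies in $\ker\left(\G_w\to\mathit{Diff}_w\left(\G_0\right)\right)$. Taking germs at $\tilde x$ then identifies $\varinjlim_W Aut_{\g\left(W\right)}\left(a|_W\right)$ with the ineffective isotropy group $K=\ker\left(Aut\left(x\right)\to\mathit{Diff}_{\tilde x}\left(\G_0\right)\right)$ of Definition~\ref{dfn:ineffgp}: the evaluation $\beta\mapsto\beta\left(\tilde x\right)$ lands in $K$; conversely a given $k\in K$ has $\iota_\G\left(k\right)$ equal to the germ of the identity, so on a small $U_k$ on which $s,t$ are embeddings the bisection $\beta\left(w\right):=t\circ s^{-1}|_{U_k}$'s local inverse $s^{-1}|_{U_k}\left(w\right)$ satisfies $t\circ\beta=\mathrm{incl}$ and $\iota_\G\circ\beta=\mathrm{id}$ near $\tilde x$ and maps to $k$; and since $s^{-1}|_{U_k}$ is the unique local section of $s$ through $k$, the germ is determined by $k$, so evaluation is a group isomorphism. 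Therefore $\g_{\bar x}\simeq\mathbf{B}K$, i.e.\ $\g_{\bar x}$ is the ineffective isotropy group of $x$ viewed as a one-object groupoid.

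The first paragraph is immediate from Theorem~\ref{thm:loceffeqisgerb}. The crux is the third paragraph: one must verify carefully that $2$-automorphisms of the atlas section are exactly isotropy-valued sections, that the descend-to-identity condition is exactly valuedness in the kernel defining the ineffective isotropy group, and---most delicately---that passing to germs collapses these section groups to the single group $K$, using that $\iota_\G\left(k\right)$ being the identity germ forces $\beta=s^{-1}|_{U_k}$ near $\tilde x$ so that the germ is rigidly determined by $\beta\left(\tilde x\right)=k$. One should also confirm that the filtered homotopy colimit of the $\mathbf{B}\,Aut_{\g\left(W\right)}\left(a|_W\right)$ is the one-object groupoid on $\varinjlim_W Aut_{\g\left(W\right)}\left(a|_W\right)$, which holds because every transition map is an injective group homomorphism.
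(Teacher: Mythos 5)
Your proof is correct, but it computes the stalk by a genuinely different route than the paper. The paper exploits the fact that the stalk of $\Gamma\left(\iota_\X\right)$ at $x$ is the fiber of its \'etal\'e realization over $x$, i.e.\ the weak pullback of $S$-groupoids $P = * \times_{\Ef\left(\G\right)} \G$ along $\tilde x$ and $\iota_\G$; essential surjectivity of $\iota_\G$ makes $P$ transitive, and the isotropy group of the object $1_{\tilde x}$ is read off in one step as $\ker\left(\G_{\tilde x} \to \mathit{Diff}_{\tilde x}\left(\G_0\right)\right)$. You instead stay on the sheaf-of-sections side, computing $\g_{\bar x}$ as the filtered colimit $\underset{\tilde x \in W}{\hc}\, \g\left(W\right)$, identifying $2$-automorphisms of the atlas section $a|_W$ with ineffective-isotropy-valued bisections $\beta: W \to \G_1$, and passing to germs. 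Your route is longer but makes explicit the concrete picture of the gerbe as locally given by sections of the ineffective isotropy bundle, and your germ-rigidity argument (that $\beta$ is forced to be $s^{-1}|_{U_k}$ near $\tilde x$, so the germ is determined by $\beta\left(\tilde x\right)$) is the correct replacement for the paper's direct inspection of $P$. Your first paragraph actually supplies something the paper's proof leaves implicit, namely that $\Gamma\left(\iota_\X\right)$ is a gerbe at all, via Theorem \ref{thm:loceffeqisgerb}. One small inaccuracy: the transition maps $Aut_{\g\left(W\right)}\left(a|_W\right) \to Aut_{\g\left(W'\right)}\left(a|_{W'}\right)$ need \emph{not} be injective (a bisection can restrict to the unit section on a smaller open set without being the unit section), but this does not matter --- a filtered colimit of one-object groupoids along arbitrary group homomorphisms is still the one-object groupoid on the colimit group, which is all you need.
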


\begin{proof}
Suppose that $\X \simeq \left[\G\right],$ for some \'etale $S$-groupoid $\G.$ We can factor $x$ through the atlas provided by $\G,$ as $$* \stackrel{\tilde x}{\longrightarrow} \G_0 \to \X.$$ Consider the weak pullback of $S$-groupoids
$$\xymatrix{P \ar[d] \ar[r] & \G \ar[d]^-{\iota_\G}\\
{*} \ar[r]^-{\tilde x} & \Ef\left(\G\right).}$$
The objects $P_0$ of $P$ are the source-fiber of $\tilde x$ in $\G,$ that is the subspace of $\Ef\left(\G\right)_1,$ consisting of arrows $$\varphi:\tilde x \to y,$$ for some $y \in \G_0.$ An arrow in $P$ between $$\varphi:\tilde x \to y$$ and $$\varphi':\tilde x \to y'$$ is the choice of a $g:y \to y'$ in $\G,$ such that $$\left[g\right] \circ \varphi = \varphi'.$$ Since $${\iota_\G}:\G \to \Ef\left(\G\right)$$ is essentially surjective, it follows that $P$ is in fact transitive and equivalent to a group (this is also ensured by the fact that ${\iota_\G}$ encodes a gerbe). To find a group to which $P$ is equivalent, we may simply choose the object $$1_{\tilde x}:\tilde x \to \tilde x$$ and compute its isotropy group. However, one easily sees, by direct inspection, that this isotropy group is the kernel of $$\G_{\tilde x} \to \mathit{Diff}_{\tilde x}\left(\G_0\right),$$ which is the definition of the ineffective isotropy group.
\end{proof}

The significance of this theorem is the following:

Suppose we are given an effective \'etale stack $\X$ and a small gerbe $\g$ over it. By taking stalks, we get an assignment to each point $x$ of $\X$ a group $\g_x.$ From this data, we can build a new \'etale stack by taking the \'etale realization of $\g$. Denote this new \'etale stack by $\Y$. If $\g$ is non-trivial, $\Y$ is not effective, but it has $\X$ as its effective part and, for each point $x$ of $\X$, the stalk $\g_x$ is equivalent to the ineffective isotropy group of $x$ in $\Y$. In particular, if $\X$ is a space $X$, $\Y$ is an \'etale stack which ``looks like $X$'' except that each point $x \in X,$ instead of having a trivial automorphism group, has a group equivalent to $\g_x$ as an automorphism group. In this case, every automorphism group consists entirely of purely ineffective automorphisms.

\begin{rmk}
To a certain extent, the structure of the bicategory of \'etale stacks is determined by the bicategory of effective \'etale stacks. More precisely, if $P$ is any open \'etale invariant class of morphisms, as in Definition \ref{dfn:etalinv}, we can consider the bicategory $\Et_P$ whose objects are \'etale stacks, and whose morphisms are all those that are of class $P.$ Denote by $\Eft_{P}$ the full subcategory on those \'etale stacks which are effective. Taking the effective part (as in Corollary \ref{cor:etadj}) produces a functor $$\Eft_{P}:\Et_P \to \Eft_{P}$$ which is a Co-Cartesian fibration in the sense of \cite{htt} classifying the trifunctor $$\scr{Gerbe}\left(\mspace{3mu}\cdot\mspace{3mu}\right):\Eft_{P}^{op} \to \left(2,1\right)\mbox{-}\mathit{Cat}$$ which assigns to each effective \'etale stack $\X,$ its $\left(2,1\right)$-category of small gerbes. Less formally, this allows one to view objects of $\Et_P$ as ``gerbed effective \'etale stacks.'' See \cite{gerbes}, Section 7, for more details.
\end{rmk}

\appendix

\section{Categories of Spaces.} \label{sec:catspaces2}
We now formalize exactly what properties are needed of a category of spaces for the results of this paper to apply to it. We follow nearly the same definitions as \cite{etalspme} except that we replace the role of the category of locales with that of (sober) topological spaces.

\begin{dfn}\label{dfn:admiss}
Let $\T$ denote the category of sober topological spaces and let $U: S \to \T$ be a category over $\T.$ Let $S^{et}$ be a subcategory of $S$, on the same objects, such that for every $$f: X \to T$$ in $S^{et},$ $U\left(f\right)$ is a local homeomorphism. $U$ induces a functor $$U^{et}: S^{et} \to \T^{et},$$ where $\T^{et}$ denotes the category of topological spaces and local homeomorphisms. $S^{et}$ is called \textbf{admissible} if the following conditions hold:
\begin{itemize}
\item[1)] Every isomorphism in $S$ is in $S^{et}.$
\item[2)] If $f$ and $g$ are composable arrows of $S$ such that $gf$ exists, then if $f$ and $g$ are in $S^{et}$, so is $gf.$ If instead $gf$ and $g$ are in $S^{et},$ so is $f.$
\item[3)] $S$ has pullbacks along morphisms in $S^{et},$ $S^{et}$ is stable under these, and $U$ preserves these pullbacks.
\item[4)] The canonical functor $S^{et} \to S$ preserves coequalizers.
\item[5)] If $f: UT \to UZ$ is a local homeomorphism and there exists a family of morphisms in $S^{et}$ $$\left(T_\alpha \to T\right)_\alpha$$ such that the induced morphism $$\coprod\limits_\alpha UT_\alpha \to UT$$ is a surjective local homeomorphism, and each composite $$UT_\alpha \to UT \to UZ$$ is equal to $U\left(\varphi_\alpha\right)$ for some $\varphi_\alpha$ in $S,$ then there exists a morphism $\varphi: T \to Z$ in $S$ such that $U\left(\varphi\right)=f.$
\item[6)] The induced functor $$U^{et}: S^{et} \to \T^{et}$$ is faithful and locally an equivalence in the following sense:\\

For every object $T \in S,$ the induced functor $$U^{et}_T: S^{et}/T \to \T^{et}/UT$$ is an equivalence of categories.
\end{itemize}
If $S^{et}$ is admissible, morphisms in $S^{et}$ are called \textbf{$S$-local homeomorphisms.}
\end{dfn}

\begin{dfn}\label{dfn:catspace}
Let $U: S \to \T$ be a category over $\T$ with an admissible subcategory $S^{et}$ of $S$-local homeomorphisms. Then $S$ is called a \textbf{category of spaces} if the following conditions hold:
\begin{itemize}
\item[a)] $S$ has and $U$ preserves arbitrary coproducts.
\item[b)] If $\varphi: UT \to X$ is a surjective local homeomorphism, then there exists a $Z$ in $S$ such that $U\left(Z\right) \cong X.$
\end{itemize}
If $S$ is a category of spaces, we will often refer to objects of $S$ simply as \textbf{spaces}, morphisms as \textbf{continuous}, and $S$-local homeomorphisms as \textbf{local homeomorphisms}.
\end{dfn}

\begin{rmk}
This definition deviates slightly from \cite{etalspme} in that we now require arbitrary coproducts in $a)$ above rather than only finite ones.
\end{rmk}

The following is a non-exhaustive list of examples of categories of spaces in the sense of Definition \ref{dfn:catspace}. In each case, the functor $U$ is obvious, so shall be omitted.

\begin{itemize}
\item[I)] Sober topological spaces and local homeomorphisms.
\item[II)] Any type of manifold (e.g. smooth manifolds, $C^k$ manifolds, analytic manifolds, complex manifolds, super manifolds...) with the appropriate version of local diffeomorphism, provided we remove all separation conditions. For example, manifolds will neither be assumed paracompact nor Hausdorff.
\item[III)] Schemes over any fixed base and Zariski local homeomorphisms. When viewed as maps of locally ringed spaces, Zariski local homeomorphisms are those maps $$\left(f,\varphi\right): \left(X,\mathcal{O}_X\right) \to \left(Y,\mathcal{O}_Y\right)$$ such that $f$ is a local homeomorphism and $\varphi: f^*\left(\mathcal{O}_Y\right) \stackrel{\sim}{\longrightarrow} \mathcal{O}_X$ is an isomorphism. Again, we do not impose any separation conditions.
\end{itemize}

\begin{rmk}
By the conventions of Definition \ref{dfn:catspace}, in this paper, if $S$ is taken to be, for example, the category of smooth manifolds, the phrase ``continuous map'' will mean a smooth map, and ``local homeomorphism'' will mean local diffeomorphism. Similarly for the other examples above.
\end{rmk}

\begin{dfn}
We say a collection of local homeomorphisms $\left(T_\alpha \to T\right)_\alpha$ in $S$ is a \textbf{covering family of local homeomorphisms} if the induced morphism $$\coprod\limits_\alpha UT_\alpha \to UT$$ is a surjective local homeomorphism of topological spaces. The family is called a \textbf{open covering family} if each map $$UT_\alpha \to UT$$ is  an open embedding. Each of these notions of covering family define a Grothendieck pre-topology on $S.$ They both generate the same Grothendieck topology, which we shall call the \textbf{open cover topology} on $S.$ We shall denote by $\Sh\left(S\right)$ and $\St\left(S\right)$ the category of sheaves on $S$ and the bicategory of stacks on $S$ respectively, both with respect to the open cover topology.
\end{dfn}
For a list of consequences of the definitions, we refer the reader to \cite{etalspme}.

\section{\'Etale Stacks and their Sheaf Theory}\label{sec:prelim}
In this appendix, we recall the basic theory of \'etale stacks and establish the notational conventions concerning them. We then give a summary of the \'etal\'e realization construction from \cite{etalspme}, which is used in an essential way in this paper. Finally, we end with a brief discussion about stalks of stacks.

Denote by $S$ a category of space satisfying the axioms of Definition \ref{dfn:catspace}.

\begin{dfn}
An \textbf{$S$-groupoid} is a groupoid object in $S.$ For example, a \textbf{topological groupoid} is a groupoid object in $\T$, the category of topological spaces. Explicitly, it is a diagram

$$\xymatrix{ {\G_1 \times _{\G_0}\G_1} \ar[r]^(0.6){m} & \G_1 \ar@<+.7ex>^s[r]\ar@<-.7ex>_t[r] \ar@(ur,ul)[]_{\hat i}  & \G_0 \ar@/^1.65pc/^{1} [l] }$$
of topological spaces and continuous maps satisfying the usual axioms. Forgetting the topological structure (i.e. applying the forgetful functor from $\T$ to $\Set$), one obtains an ordinary small groupoid. Throughout this article, we shall denote the source and target maps of a groupoid by $s$ and $t$ respectively.

$S$-groupoids form a $2$-category with continuous functors as $1$-morphisms and continuous natural transformations as $2$-morphisms, respectively. (Recall that e.g. when $S$ is smooth manifolds, by continuous, we mean smooth.) We will denote this $2$-category by $S\mbox{-}\Gpd$.
\end{dfn}

\begin{rmk}
A \textbf{Lie groupoid} is a groupoid object in smooth manifolds such that the source and target maps are \emph{submersions}. Traditionally, Lie groupoids are required to have a Hausdorff object space, however, as every manifold is locally Hausdorff, any Lie groupoid in the sense we defined is Morita equivalent to one that meets this requirement. (See Definition \ref{dfn:Morita}.) We will not dwell on this issue as we will soon restrict our attention to \'etale groupoids.
\end{rmk}

Consider the $2$-category $\Gpd^{S^{op}}$\ of weak presheaves in groupoids over $S$, that is contravariant (possibly weak) $2$-functors from the category $S$ into the 2-category of (essentially small) groupoids $\Gpd$\footnote{Technically speaking, we may have to restrict ourselves to a Grothendieck universe of such spaces. If $S$ is smooth manifolds, we may avoid this by replacing $\Sts$ with stacks on Cartesian manifolds, i.e., manifolds of the form $\mathbb{R}^n$, which forms a small site.}.

We recall the $2$-Yoneda Lemma:

\begin{lem}\cite{FGA}
If $C$ is an object of a category $\C$ and $\X$ a weak presheaf in $\Gpd^{\C^{op}}$, then there is a natural equivalence of groupoids
$$\Hom_{\Gpd^{\C^{op}}}\left(C,\X\right) \simeq \X\left(C\right),$$
where we have identified $C$ with its representable presheaf under the Yoneda embedding.
\end{lem}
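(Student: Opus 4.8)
The plan is to run the ordinary Yoneda argument, upgraded to track the coherence data of a weak presheaf. First I would fix notation for the pseudofunctor $\X$: for $f:D \to E$ in $\C$ write $\X\left(f\right):\X\left(E\right) \to \X\left(D\right)$ for the associated structure functor, and denote by $\mu_{f,g}:\X\left(f\right)\circ\X\left(g\right) \stackrel{\sim}{\Rightarrow} \X\left(g\circ f\right)$ and $\varepsilon_D:\X\left(\mathrm{id}_D\right) \stackrel{\sim}{\Rightarrow} \mathrm{id}_{\X\left(D\right)}$ the associativity and unit coherence isomorphisms. Writing $y\left(C\right)=\Hom_\C\left(-,C\right)$ for the representable presheaf (a strict, set-valued, hence discrete-groupoid-valued presheaf), recall that an object of $\Hom_{\Gpd^{\C^{op}}}\left(C,\X\right)$ is a pseudonatural transformation $\eta:y\left(C\right) \Rightarrow \X$, consisting of functors $\eta_D:\Hom_\C\left(D,C\right) \to \X\left(D\right)$ together with invertible $2$-cells filling the naturality squares, and that a morphism is a modification.

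Next I would define an evaluation functor $\Phi:\Hom_{\Gpd^{\C^{op}}}\left(C,\X\right) \to \X\left(C\right)$ by $\Phi\left(\eta\right)=\eta_C\left(\mathrm{id}_C\right)$ on objects, and by sending a modification $m:\eta \Rightarrow \eta'$ to its component $m_C\left(\mathrm{id}_C\right):\eta_C\left(\mathrm{id}_C\right) \to \eta'_C\left(\mathrm{id}_C\right)$. In the reverse direction I would define $\Psi:\X\left(C\right) \to \Hom_{\Gpd^{\C^{op}}}\left(C,\X\right)$ by letting $\Psi\left(x\right)$ have $D$-component the functor $\Hom_\C\left(D,C\right) \to \X\left(D\right)$, $\left(f:D \to C\right) \mapsto \X\left(f\right)\left(x\right)$, with the coherence $2$-cell for $g:D \to E$ supplied by $\mu_{f,g}$ evaluated at $x$; on a morphism $\xi:x \to x'$ of $\X\left(C\right)$ I set $\Psi\left(\xi\right)$ to be the modification whose value at $f$ is $\X\left(f\right)\left(\xi\right)$. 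Checking that $\Psi\left(x\right)$ genuinely satisfies the pseudonaturality axioms reduces directly to the cocycle condition on the $\mu_{f,g}$.

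Finally I would exhibit the two comparison isomorphisms. In one direction, $\Phi\left(\Psi\left(x\right)\right)=\X\left(\mathrm{id}_C\right)\left(x\right)$, and the unit coherence $\varepsilon_C$ gives a natural isomorphism to $x$, so $\Phi\circ\Psi \simeq \mathrm{id}_{\X\left(C\right)}$. In the other, for a pseudonatural transformation $\eta$ I would build an invertible modification $\Psi\left(\Phi\left(\eta\right)\right) \Rightarrow \eta$ whose $D$-component at $f:D \to C$ comes from the pseudonaturality square of $\eta$ for the morphism $f$, applied to $\mathrm{id}_C \in \Hom_\C\left(C,C\right)$; here one uses that $y\left(C\right)$ is strict so all of its structure $2$-cells are identities, together with the unit coherence of $\eta$. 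Naturality of the equivalence in both $C$ and $\X$ then follows from the same bookkeeping.

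The main obstacle is purely organizational: the only genuine work lies in writing down the coherence $2$-cells in the weak setting and verifying that $\Psi\left(x\right)$ is an honest pseudonatural transformation and that the two round-trip comparisons are modifications respecting all compatibility conditions. Because $y\left(C\right)$ is representable, hence strict and set-valued, every coherence check collapses onto the defining axioms of $\X$ (the cocycle condition on the $\mu_{f,g}$ and the triangle identities relating $\mu$ and $\varepsilon$), so no computation beyond that of the classical Yoneda lemma is required — it is entirely a matter of careful bookkeeping.
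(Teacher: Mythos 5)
Your argument is the standard proof of the bicategorical Yoneda lemma and is correct: evaluation at $\mathrm{id}_C$ and the assignment $x \mapsto \left(f \mapsto \X\left(f\right)\left(x\right)\right)$ are mutually inverse up to the unit and composition coherence cells, and since $\X$ is groupoid-valued all modifications are invertible, so the hom-category is indeed a groupoid. The paper gives no proof of its own, citing \cite{FGA} for this standard result, so there is nothing to compare against beyond noting that your route is exactly the expected one (your only slip is the harmless index transposition $\mu_{f,g}$ versus $\mu_{g,f}$ in the pseudonaturality cell of $\Psi\left(x\right)$).
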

If $G$ is a topological group or a Lie group, then a standard example of a weak presheaf is the functor that assigns to each space the category of principal $G$-bundles over that space (this category is a groupoid). More generally, let $\G$ be an $S$-groupoid. Then $\G$ determines a weak presheaf on $S$ by the rule

\begin{equation*}
X \mapsto \Hom_{S\mbox{-}\Gpd}\left(\left(X\right)^{(id)},\G\right),
\end{equation*}
where $\left(X\right)^{(id)}$ is the $S$-groupoid whose object space is $X$ and has only identity morphisms. This defines an extended Yoneda $2$-functor $\tilde y: S\mbox{-}\Gpd \to \Gpd^{S^{op}}$ and we have the obvious commutative diagram

$$\xymatrix{S  \ar[d]_{\left(\mspace{2mu} \cdot \mspace{2mu}\right)^{(id)}} \ar[r]^{y} & \Set^{S^{op}} \ar^{\left(\mspace{2mu} \cdot \mspace{2mu}\right)^{(id)}}[d]\\
S\mbox{-}\Gpd \ar_{\tilde y}[r] & \Gpd^{S^{op}},}$$
where $y$ denotes the Yoneda embedding. We denote by $\left[\G\right]$ the associated stack on $S$, $a \circ \tilde y\left(\G\right)$, where $a$ is the stackification $2$-functor (with respect to the open cover Grothendieck topology). $\left[\G\right]$ is called the \textbf{stack completion} of the groupoid $\G$.

\begin{rmk}
There is a notion of principal bundle for topological groupoids and Lie groupoids, and $\left[\G\right]$ is in fact the functor that assigns to each space the category of principal $\G$-bundles over that space.
\end{rmk}

\begin{dfn}
A stack $\X$ on $\T$ is a \textbf{topological stack} if it is equivalent to $\left[\G\right]$ for some topological groupoid $\G$. A stack $\X$ on $\Mfd$, the category of smooth manifolds, is a \textbf{differentiable stack} if it is equivalent to $\left[\G\right]$ for some Lie groupoid $\G$.
\end{dfn}

\begin{dfn}
An $S$-groupoid $\G$ is \textbf{\'etale} if its source map $s$ (and therefore also its target map $t$) is a local homeomorphism.
\end{dfn}

\begin{dfn}\label{dfn:etalestack}
A stack $\X$ on $S$ is \textbf{\'etale} if it is equivalent to $\left[\G\right]$ for some \'etale $S$-groupoid $\G$. An \textbf{\'etale topological stack} is an \'etale stack on $S=\T,$ and an \textbf{\'etale differentiable stack} is an \'etale stack on $S=\Mfd.$ For any $S,$ we denote the full subcategory of $\St\left(S\right)$ on the \'etale stacks by $\Et\left(S\right).$
\end{dfn}

\begin{dfn}
A morphism $f: \Y \to \X$ of stacks is said to be \textbf{representable} if for any map from a space $T \to \X$, the weak $2$-pullback $T \times_{\X} \Y$ is (equivalent to) a space.
\end{dfn}

\begin{dfn}\label{dfn}
A morphism $\varphi: \X \to \Y$ between stacks is an \textbf{epimorphism} (or in the language of \cite{htt}, $0$-connective)  if it is locally essentially surjective in the following sense:

For every space $X$ and every morphism $f: X \to \Y,$ there exists an open cover  $\mathcal{U}=\left(U_i \hookrightarrow X\right)_i$ of $X$ such that for each $i$ there exists a map $\tilde f_i: U_i \to \Y,$ such that the following diagram $2$-commutes:

$$\xymatrix@M=5pt{U_i \ar@{^(->}[d] \ar[r]^-{\tilde f_i} & \Y \ar[d]^-{\varphi}\\
X \ar[r]^-{f} & \X.}$$
In words, this just means any map $X \to \Y$ from a space $X$ locally factors through $\varphi$ up to isomorphism.
\end{dfn}

\begin{dfn}
An \textbf{atlas} for a stack $\X$ is a representable epimorphism $X \to \X$ from a space $X$.
\end{dfn}

\begin{rmk}
A stack $\X$ comes from an $S$-groupoid if and only if it has an atlas. If $X \to \X$ is an atlas, then $\X$ is equivalent to the stack completion of the groupoid $X \times_\X X \rightrightarrows X$. Conversely, for any $S$-groupoid $\G$, the canonical morphism $\G_0 \to \left[\G\right]$ is an atlas.
\end{rmk}

\begin{dfn}\label{dfn:local}
Let $P$ be a property of a map of spaces. It is said to be \textbf{invariant under change of base} if for all $$f:  Y \to X$$ with property $P$, if $$g: Z \to X$$ is any representable map, the induced map $$Z \times_X Y \to Z$$ also has property $P$. The property $P$ is said to be \textbf{invariant under restriction}, if this holds whenever $g$ is an open embedding. Being invariant under change of base implies being invariant under restriction. A property $P$ which is invariant under restriction is said to be \textbf{local on the target} if any $$f:  Y \to X$$ for which there exists an open cover $\left(U_\alpha \to X\right)$  such that the induced map $$\coprod\limits_\alpha  {U_\alpha  }  \times_{X} Y \to \coprod\limits_\alpha  {U_\alpha  } $$ has property $P$, must also have property $P$.
\end{dfn}

Examples of such properties are being an open map, local homeomorphism, proper map, closed map etc.

\begin{prop}
A stack $\X$ over $S$ is \'etale if and only if it admits an \'etale atlas $p: X \to \X$, that is a representable epimorphism which is a local homeomorphism.
\end{prop}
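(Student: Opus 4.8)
The plan is to prove both implications by reducing everything to a single fact: for an $S$-groupoid $\G$ with associated stack $\left[\G\right]$, the canonical atlas $p:\G_0 \to \left[\G\right]$ has the property that its base change along itself recovers the source map, i.e. there is a weak pullback square
$$\xymatrix{\G_1 \ar[d]_-{s} \ar[r]^-{t} & \G_0 \ar[d]^-{p}\\ \G_0 \ar[r]_-{p} & \left[\G\right],}$$
so that $s$ (equivalently $t$) is precisely $p$ pulled back along $p$. Combined with Definition \ref{dfn:local} --- local homeomorphisms are invariant under change of base and local on the target --- this square converts the \'etaleness of $\G$ (the condition that $s$ is a local homeomorphism) into the statement that $p$ is a local homeomorphism, and conversely. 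Both directions will be formal consequences of this observation together with the Remark identifying stacks-with-atlas and groupoids.

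For necessity, suppose $\X \simeq \left[\G\right]$ with $\G$ \'etale. By the Remark following the definition of atlas, the canonical map $p:\G_0 \to \left[\G\right]$ is an atlas, hence representable and an epimorphism; it remains to check it is a local homeomorphism, i.e. that for every map $q:T \to \left[\G\right]$ from a space the base change $T\times_{\left[\G\right]}\G_0 \to T$ is a local homeomorphism of spaces. Since $p$ is an epimorphism, I would choose an open cover $\left(U_i \hookrightarrow T\right)$ together with lifts $\tilde q_i:U_i \to \G_0$ satisfying $p\circ\tilde q_i \cong q|_{U_i}$. Pasting pullback squares then identifies $U_i\times_{\left[\G\right]}\G_0$ with the base change of $s:\G_1\to\G_0$ along $\tilde q_i$, which is a local homeomorphism because $s$ is one and local homeomorphisms are invariant under change of base (Definition \ref{dfn:local}). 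As the $U_i$ form an open cover of $T$ and the property is local on the target, the map $T\times_{\left[\G\right]}\G_0 \to T$ is a local homeomorphism, so $p$ is an \'etale atlas.

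For sufficiency, suppose $\X$ admits an \'etale atlas $p:X \to \X$. By the same Remark, $\X$ is equivalent to the stack completion of the groupoid $\G := \left(X\times_\X X \rightrightarrows X\right)$, whose source map $s:X\times_\X X \to X$ is exactly the base change of $p$ along $p:X\to\X$. Applying the definition of $p$ being a representable local homeomorphism to the test map $p$ itself shows directly that $s$ is a local homeomorphism, so $\G$ is \'etale and $\X \simeq \left[\G\right]$ is an \'etale stack. The only genuinely delicate point is the locality argument in the necessity direction, where one must descend the local-homeomorphism property from a cover on which $q$ lifts through the atlas to the arbitrary base $T$; everything else is a formal consequence of base-change stability together with the identification $\G_0\times_{\left[\G\right]}\G_0 \simeq \G_1$.
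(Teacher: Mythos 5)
Your proof is correct and takes essentially the same route as the paper, whose entire argument is to cite the $2$-Cartesian square identifying $\G_1$ with $\G_0 \times_{\left[\G\right]} \G_0$; you have simply filled in the base-change and locality details that the paper leaves implicit.
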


\begin{proof}
This follows from the fact that if $\G$ is any $S$-groupoid, the following diagram is $2$-Cartesian:
$$\xymatrix{\G_1 \ar[d]_{s} \ar[r]^{t} & \G_0 \ar[d]\\
\G_0 \ar[r] & \left[\G\right],}$$
where the map $\G_0 \to \left[\G\right]$ is induced from the canonical map $\G_0 \to \G$.
\end{proof}

\begin{rmk} Traditionally speaking, a \textbf{differentiable stack} is a stack $\X$ equivalent to $\left[\G\right]$ where $\G$ is a Lie groupoid. This is equivalent to it having an atlas which is a representable submersion.
\end{rmk}

\begin{dfn}\label{dfn:Morita}
An internal functor $\varphi: \h \to \G$ of $S$-groupoids is a \textbf{Morita equivalence} if the following two properties hold:

\begin{itemize}
\item[i)] (Essentially Surjective)\\
The map $t \circ pr_1: \G_1 \times_{\G_0} \h_0 \to \G_0$ admits local sections, where $\G_1 \times_{G_0} \h_0$ is the fibered product

$$\xymatrix{\G_1 \times_{G_0} \h_0 \ar[r]^-{pr_2} \ar[d]_-{pr_1} & \h_0 \ar[d]^-{\varphi} \\
\G_1 \ar[r]^-{s} & \G_0.}$$
\item[i)] (Fully Faithful)
The following is a fibered product:

$$\xymatrix{\h_1 \ar[r]^-{\varphi} \ar[d]_-{\left(s,t\right)} &\G_1 \ar[d]^-{\left(s,t\right)}  \\
\h_0 \times \h_0 \ar[r]^-{\varphi \times \varphi} & \G_0 \times \G_0.}$$
\end{itemize}
Two $S$-groupoids $\Ll$ and $\K$ are \textbf{Morita equivalent} if there is a chain of Morita equivalences $\Ll \leftarrow \h \rightarrow \K$. Moreover, $\Ll$ and $\K$ are Morita equivalent if and only if $\left[\Ll\right] \simeq \left[\K\right]$
\end{dfn}

Every internal functor $\h \to \G$ induces a map $\left[\h\right] \to \left[\G\right]$ and the induced functor $$\Hom\left(\h,\G\right) \to \Hom\left(\left[\h\right],\left[\G\right]\right)$$ is full and faithful, but not in general essentially surjective. However, any morphism $$\left[\h\right] \to \left[\G\right]$$ arises from a chain $$\h \leftarrow \K \rightarrow \G,$$ with $\K \to \h$ a Morita equivalence. In fact, the class of Morita equivalences admits a calculus of fractions, and stacks arising from $S$-groupoids are equivalent to the bicategory of fractions of $S$-groupoids with inverted Morita equivalences. For details see \cite{Dorette}.

By the above discussion, any morphism $$\varphi:\X \to \Y$$ between \'etale stacks arising from some internal functor of \'etale groupoids $\h \to \G$ such that $\left[\h\right] \simeq \X$ and $\left[\G\right]\simeq\Y.$ Hence, the following definition makes sense:

\begin{dfn}\label{dfn:localhomeostacks}
A morphism $\varphi:\X \to \Y$ between \'etale stacks is a \textbf{local homeomorphism} if it arises from an internal functor of \'etale groupoids $\phi:\h \to \G,$ such that $$\phi_0:\h_0 \to \G_0$$ is a local homeomorphism.
\end{dfn}

\begin{dfn}
By an \textbf{\'etale cover} of a space $X$, we mean a surjective local homeomorphism $U \to X$. In particular, for any open cover $\left(U_\alpha\right)$ of $X$, the canonical projection $$\coprod\limits_{\alpha} U_\alpha \to X$$ is an \'etale cover.
\end{dfn}

\begin{dfn}\label{dfn:cech}
Let $\h$ be an $S$-groupoid. If $\mathcal{U}=U \to \h_0$ is an \'etale cover of $\h_0$, then one can define the \textbf{\v{C}ech groupoid} $\h_{\mathcal{U}}$. Its objects are $U$ and the arrows fit in the pullback diagram

$$\xymatrix{\left(\h_{\mathcal{U}}\right)_1 \ar[r] \ar[d]_-{\left(s,t\right)} & \h_1 \ar[d]^-{\left(s,t\right)} \\
U \times U \ar[r] & \h_0 \times \h_0,}$$
and the groupoid structure is induced from $\h$. There is a canonical map $\h_{\mathcal{U}} \to \h$ which is a Morita equivalence. Moreover,
\end{dfn}
\begin{equation}\label{eq:pizza}
\Hom\left(\left[\h\right],\left[\G\right]\right) \simeq \underset{\mathcal{U} \in Cov\left(\h_0\right)} \hc \Hom_{S\mbox{-}\Gpd} \left(\h_{\mathcal{U}},\G\right),
\end{equation}
where the weak 2-colimit above is taken over a suitable 2-category of \'etale covers. For details see \cite{Andre}.

%\begin{rmk}
%We could restrict to open covers, and a similar statement would be true. However, it will become convenient to work with \'etale covers later.
%\end{rmk}

Applying equation (\ref{eq:pizza}) to the case where $\left[\h\right]$ is a space $X$, by the Yoneda Lemma we have

\begin{equation*}
\left[\G\right] \left(X\right) \simeq \underset{\mathcal{U} \in Cov\left(X\right)} \hc \Hom_{S\mbox{-}\Gpd} \left(X_{\mathcal{U}},\G\right).
\end{equation*}
Our category $S$ of spaces comes equipped with a functor $U:S \to \T,$ such that the restriction $2$-functor $$U^*:\St\left(\T\right) \to \St\left(S\right)$$ has a left adjoint
$$U_!:\St\left(S\right) \to \St\left(\T\right)$$ called the \textbf{prolongation} $2$-functor, which is equivalent to (the Yoneda embedding composed with) $U$ on representables. Moreover, if $\left[\G\right]_{S}$ is an \'etale $S$-stack, $$\left[\G\right]_{S} \simeq \hc \left(\G_2 \rrrarrow \G_1 \rrarrow \G_0\right),$$ and hence
\begin{eqnarray*}
U_!\left(\left[\G\right]_{S}\right) &\simeq& \hc \left(U\G_2 \rrrarrow U\G_1 \rrarrow U\G_0\right)\\
&\simeq& \left[U\G\right]_{\T}.
\end{eqnarray*}
It follows that $U_!$ assigns an \'etale $S$-stack $\X$ its \textbf{underlying} topological stack.

\begin{dfn}
Let $\C$ be a $2$-category, and $C$ an object. The \textbf{slice $2$-category} $\C/C$ has as \textbf{objects} morphisms $\varphi: D \to C$ in $\C$. The \textbf{morphisms} are $2$-commutative triangles of the form
$$\xymatrix@R=0.6cm@C=0.6cm{D \ar[rrdd]_-{\varphi}
 \ar[rrrr]^{f} \ar@{} @<-5pt>  [rrrr]| (0.6) {}="a"
 							& &  & & E \ar[lldd]^-{\psi}\\
						&\ar@{} @<5pt>  [rd]| (0.4) {}="b"
\ar @{=>}^{\alpha}  "b";"a"&	& & \\
						& & C, & &}$$
with $\alpha$ invertible. A \textbf{$2$-morphism} between a pair of morphisms  $\left(f,\alpha\right)$ and $\left(g,\beta\right)$ going between $\varphi$ and $\psi$ is a $2$-morphism in $\C$ $$\omega: f \Rightarrow g$$ such that the following diagram commutes:
$$\xymatrix@R=0.6cm@C=0.6cm{\psi f \ar@{=>}[rr]^-{\psi\omega}& & \psi g\\ & \varphi \ar@{=>}[lu]^-{\alpha} \ar@{=>}[ru]_-{\beta}.}$$
\end{dfn}

We end by a standard fact which we will find useful later:

\begin{prop}\label{prop:stand2}
For any stack $\X$ on $S$, there is a canonical equivalence of $2$-categories $\St\left(S/\X\right)\simeq \St\left(S\right)/\X$.
\end{prop}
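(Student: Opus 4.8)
The plan is to exhibit both sides as the descent-objects (stacks) inside two equivalent $2$-categories of \emph{weak presheaves}, and then check that the comparison is compatible with the two Grothendieck topologies. Throughout I identify $S/\X$ with the Grothendieck construction $\int_S \X$: its objects are pairs $(T,x)$ with $T \in S$ and $x \in \X(T)_0$, and a morphism $(T,x) \to (T',x')$ is a pair $(f,\alpha)$ with $f \colon T \to T'$ continuous and $\alpha \colon x \xrightarrow{\sim} \X(f)(x')$ an isomorphism in $\X(T)$ (the same description used for $\int_{\xm^{et}}\Z$ in Section \ref{sec:examples}). I equip it with the topology induced from the open cover topology along the projection $\pi \colon \int_S \X \to S$, so that a family $\{(U_\alpha, x|_{U_\alpha}) \to (T,x)\}$ is covering precisely when $\{U_\alpha \to T\}$ is an open cover of $T$.

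First I would work at the level of weak presheaves, before imposing descent. Define $\Phi \colon \Gpd^{S^{op}}/\X \to \Gpd^{(\int_S\X)^{op}}$ by sending $p \colon \Y \to \X$ to
$$\Phi(p)(T,x) = \Hom_{\Gpd^{S^{op}}/\X}(x,p),$$ the groupoid of lifts of $x \colon T \to \X$ through $p$, which is contravariantly functorial in $(T,x)$ by pullback. In the other direction define $\Psi \colon \Gpd^{(\int_S\X)^{op}} \to \Gpd^{S^{op}}/\X$ by the total-space (dependent-sum) construction $\Psi(\Z) = \pi_!\Z = \Lan_\pi \Z$, together with its canonical map to $\pi_!(\ast) \simeq \X$ (the left Kan extension of the terminal presheaf along $\pi$ recovers $\X$). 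Concretely $\Psi(\Z)(T)$ is the groupoid of pairs $(x,\sigma)$ with $x \in \X(T)_0$ and $\sigma \in \Z(T,x)_0$, fibered over $\X(T)$ by $(x,\sigma) \mapsto x$. That $\Phi$ and $\Psi$ are mutually quasi-inverse is the $2$-categorical Grothendieck-construction (straightening/unstraightening) equivalence; constructing the coherence isomorphisms $\Psi\Phi \Rightarrow \mathrm{id}$ and $\Phi\Psi \Rightarrow \mathrm{id}$ is routine bookkeeping with the pseudofunctoriality of $\X$.

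The substantive step is to show that this equivalence carries stacks to stacks in both directions. For this I would use that covering families on $\int_S\X$ are exactly those projecting to covers of $S$, so that a descent datum for $\Psi(\Z) = \tilde\Z$ along a cover $\{U_\alpha \to T\}$ decomposes, fiberwise over $\X(T)$ and its restrictions $\X(U_\alpha)$, $\X(U_\alpha \times_T U_\beta)$, into a descent datum for $\Z$ along the induced cover of each object $(T,x)$ of $\int_S\X$; here one uses crucially that $\X$ is itself a stack, which guarantees that the base points $x|_{U_\alpha}$ glue compatibly, so that the two descent problems are genuinely equivalent. Thus $\tilde\Z$ satisfies descent over $S$ iff $\Z$ satisfies descent over $\int_S\X$, and dually for $\Phi$. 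Restricting the weak-presheaf equivalence to the full sub-$2$-categories satisfying descent then yields $\St(\int_S\X) \simeq \St(S)/\X$, where on the right one notes that $\St(S)/\X$ is the full sub-$2$-category of $\Gpd^{S^{op}}/\X$ on those $p \colon \Y \to \X$ with $\Y$ a stack.

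I expect the main obstacle to be exactly this descent matching: tracking the groupoid fibers and the $2$-categorical coherence data while verifying that the induced topology on $\int_S\X$ makes the two descent conditions correspond. As a consistency check (and an alternative abstract route) one may observe that the statement is the $1$-truncated case of the fact that a slice of a $2$-topos over an object is again a $2$-topos, computed as stacks on the category of elements; since $\St(S)$ is a $2$-topos this furnishes the equivalence formally, and the explicit $\Phi,\Psi$ above identify the comparison functor with the pullback/total-space adjoint pair, matching the identification of $\St(S)/\X \to \St(S)$ with $\pi_! = l_!$ used in the proof of Proposition \ref{prop:incri}.
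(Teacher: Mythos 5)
Your proposal is correct and follows essentially the same route as the paper: your $\Phi$ is exactly the paper's assignment $\left(\Y \to \X\right) \mapsto \tilde\Y\left(T \to \X\right):=\Hom_{\St\left(S\right)/\X}\left(T \to \X, \Y \to \X\right)$, and your $\Psi = \pi_!$ (the total-space construction) is the paper's ``view $\W$ as a fibered category over $S/\X \simeq \int\X$ and compose with the projection to $S$.'' The only difference is that you spell out the descent-matching step, which the paper explicitly leaves to the reader, and you correctly identify that $\X$ being a stack is what makes the two descent conditions correspond.
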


The construction is as follows:

Given $\Y \to \X$ in $\St\left(S\right)/\X$, consider the stack $$\tilde \Y(T \to \X): =\Hom_{\St\left(S\right)/\X}\left(T \to \X,\Y \to \X\right).$$ Given a stack $\W$ in $\St\left(S/\X\right)$, consider it as a fibered category $\int{\W} \to S/\X$. Then since $S/\X \simeq \int{\X}$ (as categories), the composition $\int{\W} \to \int{\X} \to S$ is a category fibered in groupoids presenting a stack $\tilde W$ over $S$, and since the diagram

$$\xymatrix{\int{\W} \ar[rd] \ar[d] & \\
\int{\X} \ar[r] & S}$$
commutes,  $\int{\W} \to \int{\X}$ corresponds to a map of stacks $\tilde \W \to \X$.

We leave the rest to the reader.

\subsection{Grothendieck topoi}\label{sec:topoi2}
A concise definition of a Grothendieck topos is as follows:

\begin{dfn}
A category $\E$ is a Grothendieck topos if it is a reflective subcategory of a presheaf category $Set^{\C^{op}}$ for some small category $\C$,

\begin{equation}\label{eq:topos2}
\xymatrix@C=1.5cm{\E \ar@<-0.5ex>[r]_{j_*} & Set^{\C^{op}} \ar@<-0.5ex>[l]_{j^*}},
\end{equation}
with $j^* \rt \mspace{2mu} j_*$, such that the left adjoint $j^*$ preserves finite limits. From here on in, topos will mean Grothendieck topos.
\end{dfn}

\begin{rmk}
It is standard that this definition is equivalent to saying that $\E$ is equivalent to $\Sh_J\left(\C\right)$ for some Grothendieck topology $J$ on $\C$, see for example \cite{Ieke}.
\end{rmk}

\begin{dfn}
A \textbf{geometric morphism} from a topos $\E$ to a topos $\F$ is a an adjoint pair

$$\xymatrix@C=1.5cm{\E \ar@<-0.5ex>[r]_{f_*} & \F \ar@<-0.5ex>[l]_{f^*}},$$
with $f^* \rt \mspace{1mu} f_*$, such that $f^*$ preserve finite limits. The functor $f_*$ is called the \textbf{direct image} functor, whereas the functor $f^*$ is called the \textbf{inverse image} functor.
\end{dfn}

In particular, this implies, somewhat circularly, that equation (\ref{eq:topos2}) is an example of a geometric morphism.

Topoi form a $2$-category. Their arrows are geometric morphisms. If $f$ and $g$ are geometric morphisms from $\E$ to $\F$, a $2$-cell $$\alpha: f \Rightarrow g$$ is given by a natural transformation $$\alpha: f^* \Rightarrow g^*.$$ In this paper, we will simply ignore all non-invertible $2$-cells to arrive at a $(2,1)$-category of topoi, $\Top$.

\subsection{\'Etal\'e realization}
Given any space $X,$ there is a canonically associated topos of sheaves, namely $\Sh\left(UX\right),$ where $$U:S \to \T$$ is the underlying space functor. This produces a $2$-functor $$S \to \Top.$$ By (weak) left-Kan extension, we obtain a $2$-adjoint pair $\Sh \rt \S$
$$\xymatrix@C=1.5cm{\Gpd^{S^{op}} \ar@<-0.5ex>[r]_{\Sh} & \Top \ar@<-0.5ex>[l]_{\S},}$$
where $\Gpd^{S^{op}}$ denotes the bicategory of weak presheaves in groupoids. In fact, the essential image of $\S$ lies entirely within the bicategory of stacks over $S$, $\Sts$, where $S$ is equipped with the open-cover Grothendieck topology \cite{bunge}. So, by restriction, we obtain an adjoint pair

\begin{equation}\label{eq:sheaves2}
\xymatrix@C=1.5cm{\St(S) \ar@<-0.5ex>[r]_{\Sh} & \Top \ar@<-0.5ex>[l]_{\S}.}
\end{equation}

\begin{dfn}\label{dfn:smallsheaves}

For $\X$ a stack over $S$, we define the topos of \textbf{small sheaves} over $\X$ to be the topos $\Sh(\X)$.

\end{dfn}

\begin{rmk}\label{rmk:nv}
Suppose that $\X \simeq \left[\G\right]$ for some $S$-groupoid $\G$. Then we may consider the nerve $N\left(\G\right)$ as a simplicial object in $S.$

$$\xymatrix{ \G_0& \G_1  \ar@<+.7ex>[l] \ar@<-.7ex>[l]& {\G_2 \cdots}  \ar@<0.9ex>[l] \ar@<0.0ex>[l] \ar@<-0.9ex>[l]}.$$
By composition with the Yoneda embedding, we obtain a simplicial stack $$y \circ N\left(\G\right): \Delta^{op} \to \St(S).$$ The weak colimit of this diagram is the stack $\left[\G\right]$. Since $\Sh$ is a left adjoint, it follows that $\Sh\left(\left[\G\right]\right)$ is the weak colimit of the simplicial topos

$$\xymatrix{ \Sh\left(\G_0\right)& \Sh\left(\G_1\right)  \ar@<+.7ex>[l] \ar@<-.7ex>[l]& {\Sh\left(\G_2\right) \cdots}  \ar@<0.9ex>[l] \ar@<0.0ex>[l] \ar@<-0.9ex>[l]}.$$
From \cite{cont}, it follows that $\Sh\left(\left[\G\right]\right) \simeq \B\G$, the classifying topos of $\G$.
\end{rmk}

\begin{dfn}\label{dfn:etendue}
A topos $\E$ is an \textbf{\'etendue} if there exists a well-supported object $E \in \E$ (i.e. $E \to 1$ is an effective epimorphism) such that the slice topos $\E/E$ is equivalent to $\Sh\left(X\right)$ for some topological space $X$.
\end{dfn}

\begin{thm}\label{thm:sga-etendue} (\cite{sga4}, Expos\'e $iv$, Exercice $9.8.2$)
A topos $\E$ is an \'etendue if and only if $\E \simeq \B\G$ for some \'etale topological groupoid $\G$ .
\end{thm}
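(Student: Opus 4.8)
The plan is to prove the two implications separately, in both cases exploiting the correspondence (Remark \ref{rmk:trunrep}) between representable local homeomorphisms of stacks and \'etale geometric morphisms of topoi, together with the fact that an \'etale geometric morphism over $\Sh\left(X\right)$ is classified by an object of $\Sh\left(X\right)$, realized concretely as the \'etal\'e space of the corresponding sheaf.

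For the implication $\E \simeq \B\G \Rightarrow \E$ is an \'etendue, suppose $\G$ is an \'etale topological groupoid, so that by Remark \ref{rmk:nv} we have $\E \simeq \Sh\left(\left[\G\right]\right)$. The canonical atlas $\G_0 \to \left[\G\right]$ is representable, and it is a local homeomorphism because $\G$ is \'etale; hence by Remark \ref{rmk:trunrep} it induces an \'etale geometric morphism $\Sh\left(\G_0\right) \to \E$. Any such morphism is, up to equivalence, of the form $\E/E \to \E$ for a unique object $E \in \E$, and $E \to 1$ is an effective epimorphism precisely because the atlas is an epimorphism of stacks. Since $\G_0$ is a sober topological space, the equivalence $\E/E \simeq \Sh\left(\G_0\right)$ exhibits $\E$ as an \'etendue in the sense of Definition \ref{dfn:etendue}.

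For the converse, let $\E$ be an \'etendue with well-supported object $E$ and an equivalence $\E/E \simeq \Sh\left(X\right)$, where we may take $X$ sober. The slice projection $p\colon \E/E \to \E$ is an \'etale geometric morphism, and since $E \to 1$ is an effective epimorphism, $p$ is of effective descent. I would then form the kernel-pair groupoid via the pullback of topoi
$$\xymatrix{P \ar[d] \ar[r] & \Sh\left(X\right) \ar[d]^-{p}\\
\Sh\left(X\right) \ar[r]^-{p} & \E,}$$
exactly as in the construction preceding Theorem \ref{thm:eqltoposhaf}. Each projection $P \to \Sh\left(X\right)$ is a pullback of the \'etale morphism $p$, hence itself \'etale; as an \'etale geometric morphism over $\Sh\left(X\right)$ it corresponds to an object of $\Sh\left(X\right)$, which I realize as the \'etal\'e space of a sheaf. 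This yields $P \simeq \Sh\left(\G_1\right)$ for a sober space $\G_1$ equipped with local homeomorphisms $s,t\colon \G_1 \to X$. The composition, unit, and inverse maps are supplied by the universal property of the pullback and the canonical $2$-cells carrying the descent data, assembling into an \'etale topological groupoid $\G$ with $\G_0 = X$. Finally, effective descent for $p$ identifies $\E$ with the weak colimit of the simplicial topos attached to the nerve of $\G$, which by Remark \ref{rmk:nv} is $\B\G$.

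The main obstacle is the converse direction, and specifically the verification that the kernel-pair topos $P$ is genuinely \emph{spatial}, i.e. of the form $\Sh\left(\G_1\right)$ for an honest sober topological space rather than merely a localic or abstract \'etendue, and that $s,t$ come from actual local homeomorphisms of spaces. This rests on two inputs: first, that a slice of $\Sh\left(X\right)$ by any object is again the sheaf topos of a space (its \'etal\'e space), which uses sobriety of $X$; and second, that effective epimorphisms in a topos are of effective descent, so that the reconstructed groupoid faithfully recovers $\E$. Once these are in hand, the remaining groupoid axioms and the comparison $\B\G \simeq \E$ are a routine bookkeeping exercise on descent data.
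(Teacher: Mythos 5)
Your argument is correct, and for the nontrivial direction it is essentially the construction the paper itself records in Remark \ref{rmk:etendskt}: form the kernel pair of the effective \'etale epimorphism $p:\Sh\left(X\right)\simeq\E/E\to\E$, identify it with sheaves on the \'etal\'e space of $p^*\left(E\right)$ over $X$, and recover $\E$ as the colimit of the resulting \v{C}ech nerve via Remark \ref{rmk:nv}. The paper otherwise defers to \cite{sga4} and \cite{Dorette} and omits the easy direction, which you supply correctly via the atlas $\G_0\to\left[\G\right]$ being a representable local homeomorphism, hence an \'etale geometric morphism $\E/E\to\E$ with $E$ well-supported.
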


\begin{rmk}\label{rmk:etendskt}
A proof of this theorem may be found in \cite{Dorette}, Theorem $9$. One direction of this proof, namely, how to construct from an \'etendue $\E$ an \'etale groupoid $\G,$ such that $\B\G\simeq \E,$ is used many times in this article, so we reiterate the idea:\\
Suppose that $E \in \E,$ is well supported and $\E/E \simeq \Sh\left(X\right).$ Set $\G_0:=X.$ The canonical geometric morphism $$\Sh\left(X\right)\simeq \E/E \to \E$$ is hence both \'etale and an effective epimorphism in the $(2,1)$-category of topoi. The latter means that $\E$ is the homotopy colimit of the truncated semi-simplicial \v{C}ech diagram
\begin{equation}\label{eq:cechnerve}
\Sh\left(X\right) \times_{\E} \Sh\left(X\right) \times_{\E} \Sh\left(X\right) \rrrarrow \Sh\left(X\right) \times_{\E} \Sh\left(X\right) \rrarrow \Sh\left(X\right).
\end{equation}
Note however that from the $2$-pullback diagram of topoi
$$\xymatrix{\Sh\left(X\right) \times_{\E} \Sh\left(X\right) \ar[d]_-{s} \ar[r]^-{t} & \E/E \ar[d]^-{p}\\
\Sh\left(X\right) \ar[r]^-{p} & \E,}$$ it follows that $\Sh\left(X\right) \times_{\E} \Sh\left(X\right)$ is equivalent to the topos of sheaves on the \'etal\'e space $\G_1$ of the sheaf $p^*\left(E\right)$ on $X.$ Moreover, the maps $s$ and $t$ become the source and target maps of an \'etale groupoid $\G_1 \rrarrow \G_0,$ such that the \v{C}ech diagram (\ref{eq:cechnerve}) is the functor $\Sh$ applied to the nerve of $\G.$ By Remark \ref{rmk:nv}, it follows that $\E \simeq \B\G.$
\end{rmk}

\begin{thm}\label{thm:etendue}\cite{Dorette}
$\Sh$ induces an equivalence between the bicategory of \'etale topological stacks and the bicategory of \'etendues.
\end{thm}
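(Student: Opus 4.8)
The plan is to exhibit $\Sh$ of (\ref{eq:sheaves2}) as an adjoint equivalence after restricting the adjunction $\Sh \dashv \S$ to the relevant full sub-bicategories, using the explicit groupoid models furnished by Remark \ref{rmk:nv} and Theorem \ref{thm:sga-etendue}. First I would check that $\Sh$ lands in \'etendues: if $\X$ is an \'etale topological stack, write $\X \simeq \left[\G\right]$ for an \'etale topological groupoid $\G$; by Remark \ref{rmk:nv} one has $\Sh(\X) \simeq \B\G$, which is an \'etendue by Theorem \ref{thm:sga-etendue}. Conversely, for essential surjectivity, given any \'etendue $\E$, Theorem \ref{thm:sga-etendue} produces an \'etale topological groupoid $\G$ with $\E \simeq \B\G$ (the construction of $\G$ out of $\E$ being exactly the one recalled in Remark \ref{rmk:etendskt}), and then $\Sh(\left[\G\right]) \simeq \B\G \simeq \E$. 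Thus $\Sh$ is essentially surjective onto \'etendues.

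The substance of the proof is full faithfulness, which I would deduce from the claim that the unit $\eta_\X : \X \to \S\Sh(\X)$ of the adjunction $\Sh \dashv \S$ is an equivalence whenever $\X$ is \'etale. Unwinding the right adjoint, $\S(\B\G)$ is the stack $Y \mapsto \Hom_{\Top}(\Sh(Y),\B\G)$, so the claim amounts to the statement that for an \'etale groupoid $\G$ and any space $Y$ the geometric morphisms $\Sh(Y) \to \B\G$ are naturally the same as principal $\G$-bundles over $Y$, that is $\Hom_{\Top}(\Sh(Y),\B\G) \simeq \left[\G\right](Y)$. Granting this, $\eta_\X$ is an equivalence on all \'etale stacks, so $\Sh$ restricted to $\Et(\T)$ is fully faithful; combined with the essential surjectivity above, this yields the asserted biequivalence.

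The main obstacle is precisely this identification $\Hom_{\Top}(\Sh(Y),\B\G) \simeq \left[\G\right](Y)$, and here the \'etale hypothesis is essential: for a general topological groupoid the classifying topos classifies only a weaker, generalized notion of $\G$-torsor rather than honest principal bundles, so one cannot expect the unit to be an equivalence in that generality. I would establish the correspondence groupoid-theoretically. Pulling back the \'etale effective epimorphism $\Sh(\G_0) \to \B\G$ (the sheaf-theoretic avatar of the atlas $\G_0 \to \left[\G\right]$) along a morphism $\Sh(Y) \to \B\G$ yields, by the \'etale descent recalled in Remark \ref{rmk:etendskt}, an \'etale cover of $Y$ together with $\G_1$-valued descent data, which is exactly a $\G$-cocycle on $Y$; running this construction in both directions and verifying naturality in $Y$ produces the equivalence.

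An efficient packaging, following \cite{Dorette}, is instead to present both bicategories as bicategories of fractions of \'etale topological groupoids at Morita equivalences: one verifies that $\G \mapsto \B\G$ carries Morita equivalences to equivalences of topoi (Remark \ref{rmk:nv} already makes $\Sh\left[\G\right]$ Morita-invariant), and then concludes that the induced functor on the two localizations is an equivalence by matching the universal properties of the localizations. Either route reduces the entire statement to the \'etale descent already in hand, so I expect the only genuinely delicate point to be the faithful bookkeeping of $2$-cells (continuous natural transformations versus invertible $2$-cells of geometric morphisms) in checking that the comparison is full and faithful on Hom-groupoids.
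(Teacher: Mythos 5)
The paper does not actually prove this statement: Theorem \ref{thm:etendue} is imported wholesale from \cite{Dorette}, so there is no internal argument to compare against line by line. Your sketch is correct in outline, and your primary route is genuinely different from the one in the cited source. You argue via the adjunction $\Sh \dashv \S$ of (\ref{eq:sheaves2}): essential surjectivity is immediate from Remark \ref{rmk:nv} together with Theorem \ref{thm:sga-etendue}, and full faithfulness reduces (correctly, by the standard adjunction triangle) to the unit $\eta_\Y:\Y \to \S\Sh(\Y)$ being an equivalence for $\Y$ \'etale, i.e.\ to the identification $\Hom_{\Top}\left(\Sh(Y),\B\G\right)\simeq\left[\G\right](Y)$ naturally in $Y$. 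That identification is Moerdijk's classification of geometric morphisms into the classifying topos of an \'etale groupoid by principal $\G$-bundles; it is a real theorem rather than a formality, and your descent sketch (pull back the \'etale effective epimorphism $\Sh(\G_0)\to\B\G$, extract an \'etale cover of $Y$ with $\G_1$-valued cocycle data) is the standard way to prove it, with your caveat about general continuous groupoids classifying only weak torsors being exactly the right one. By contrast, Pronk's proof in \cite{Dorette} is the second packaging you mention: both bicategories are exhibited as bicategories of fractions of \'etale topological groupoids at Morita equivalences, and the comparison is made by matching universal properties. The fractions route avoids invoking the classification of geometric morphisms into $\B\G$ but requires verifying the axioms for a calculus of fractions on both sides; your route buys a cleaner conceptual statement (the unit of $\Sh\dashv\S$ is an equivalence precisely on \'etale stacks) at the cost of importing Moerdijk's torsor classification. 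As you note, the only delicate bookkeeping in either approach is matching invertible $2$-cells of geometric morphisms with isomorphisms of principal bundles, which Moerdijk's theorem also supplies.
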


Here is a more concrete description of classifying topoi of groupoids:

\begin{dfn}
Given an $S$-groupoid $\G$, a (left) \textbf{$\G$-space} is a space $E$ equipped with a \textbf{moment map} $\mu: E \to \G_0$ and an \textbf{action map} $$\rho: \G_1 \times_{\G_0} E \to E,$$
where
$$\xymatrix{\G_1 \times_{\G_0} E \ar[r]  \ar[d] & E \ar^-{\mu}[d] \\
\G_1 \ar^-{s}[r] & \G_0\\}$$
is the fibered product, such that the following conditions hold:

\begin{itemize}
\item[i)] $\left(gh\right) \cdot e = g \cdot \left(h \cdot e\right)$ whenever $e$ is an element of $E$ and $g$ and $h$ elements of $\G_1$ with domains such that the composition makes sense,
\item[ii)] $1_{\mu\left(e\right)} \cdot e =e$ for all $e \in E,$ and
\item[iii)] $\mu\left(g \cdot e\right) = t \left(g\right)$ for all $g \in \G_1$ and $e \in E.$
\end{itemize}
A map of $\G$-spaces is simply an equivariant map, i.e., a map $$(E,\mu,\rho) \to (E',\mu',\rho')$$ is map $f: (E,\mu,) \to (E',\mu')$ in $S/\G_0$ such that

$$f(he)=hf(e)$$
whenever this equation makes sense.
\end{dfn}

\begin{dfn}
A $\G$-space $E$ is an $\G$-\textbf{equivariant sheaf} if the moment map $\mu$ is a local homeomorphism. The category of $\G$-equivariant sheaves and equivariant maps forms the \textbf{classifying topos} $\B\G$ of $\G$.
\end{dfn}

Associated to each atlas, there is also a canonical site:

\begin{dfn}\label{dfn:sit}
Given an \'etale stack $\X$ with an \'etale atlas $X \to \X$, let $\sit\left(\X,X\right)$ denote the following category: The objects of are open subsets of $X$ and the arrows are pairs $\left(f,\alpha\right)$, such that

$$\xymatrix@R=0.4cm@C=0.4cm{U \ar@{^{(}->}[rd]
 \ar@<+0.5ex>[rrrr]^{f} \ar@{} @<-5pt>  [rrrr]| (0.6) {}="a"
 							& &  & & V \ar@{^{(}->}[ld]\\
						&X \ar[rd] \ar@{} @<5pt>  [rd]| (0.4) {}="b"
\ar @{=>}^{\alpha}  "b";"a"&	& X \ar[ld]& \\
						& & \X & &}.$$
In other words, it is the full subcategory of $\St\left(S\right)/\X \simeq \St\left(S/\X\right)$ (Proposition \ref{prop:stand2}) spanned by objects of the form $U \hookrightarrow X \to \X$, with $U  \subseteq X$ open. It comes equipped with a canonical Grothendieck topology (a family $(\left(f_i,\alpha_i\right)_i$ is a covering family if and only if $\left(f_i\right)_i$ is) and
$$\Sh\left(\sit\left(\X,X\right)\right) \simeq \Sh\left(\X\right).$$ For $\h$ an \'etale $S$-groupoid, $\sit\left(\h\right)$ will denote the site $\sit\left(\left[\h\right],\h_0\right).$ $\sit\left(\left[\h\right],\h_0\right)$ can also be described as the category whose objects are the open subsets of $\h_0,$ and whose arrows $U \to V$ are sections $\sigma$ of the source-map $s: \h_1 \to \h_0$ over $U$ such that $t \circ \sigma:  U \to V$ as a map in $S$. Composition is by the formula $\tau \circ \sigma(x): =\tau\left(t\left(\sigma(x\right)\right).$ From this description, taking into account Definition \ref{dfn:admiss} and \cite{etalspme} Proposition \ref{prop:b2}, it follows that this site is canonically equivalent to $\sit\left(U_!\left[\h\right],U\h_0\right),$ the analogous site for the underlying topological stack. We refer the reader to \cite{etalspme}, Section \ref{sec:smallsite} for more details.
\end{dfn}

\begin{dfn}
By a \textbf{small stack} over an \'etale stack $\X\simeq\left[\h\right]$, we mean a stack $\Z$ over $\sit\left(\h\right)$. We denote the $2$-category of small stacks over $\X$ by $\St\left(\X\right)$.
\end{dfn}

\begin{rmk}\label{rmk:sites}
This definition does not depend on the choice of presenting groupoid since, if $\G$ is another groupoid such that $\left[\G\right]\simeq \X$, then $$\Sh\left(\sit\left(\G\right)\right) \simeq \B\G \simeq \B\h \simeq \Sh\left(\sit\left(\h\right)\right)$$ and hence $\St\left(\sit\left(\G\right)\right) \simeq \St\left(\sit\left(\h\right)\right)$ by the Comparison Lemma for stacks \cite{sga4}. A more intrinsic equivalent definition is that a small stack over $\X$ is a stack over the topos $\Sh\left(\X\right)$ in the sense of Giraud in \cite{Giraud}, that is a stack over $\Sh\left(\X\right)$ with respect to the canonical Grothendieck topology, which in this case is generated by jointly epimorphic families. Finally, there is another canonical large site of definition for sheaves over $\X$: the category $S^{et}/\X$ of local homeomorphisms $T \to \X$ with $T$ a space. (See the second remark after Corollary \ref{cor:repshvs} in \cite{etalspme}.)
\end{rmk}

\begin{thm}\label{thm:etalsp} (\cite{etalspme}, Corollary \ref{cor:real}.)
For any \'etale stack $\X$, there is an adjoint equivalence of $2$-categories
$$\xymatrix{\St\left(\X\right)  \ar@<-0.5ex>[r]_-{L}  & Et\left(\X\right)\ar@<-0.5ex>[l]_-{\Gamma}},$$
between small stacks over $\X$ and the $2$-category of \'etale stacks over $\X$ via a local homeomorphism.
\end{thm}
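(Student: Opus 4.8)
The plan is to prove this as the $2$-categorical, relative upgrade of the classical equivalence $S^{et}/X\simeq\Sh(X)$ between local homeomorphisms into a space (its espaces \'etal\'es) and sheaves on it. Fix an \'etale $S$-groupoid $\h$ with $\X\simeq[\h]$ and atlas $a:\h_0\to\X$, so that $\St(\X)=\St(\sit(\h))$ and $\Sh(\X)\simeq\B\h$ by Definitions \ref{dfn:sit} and \ref{dfn:smallsheaves}. First I would write down the two functors explicitly. The functor $\Gamma$ sends a local homeomorphism $f:\Z\to\X$ of \'etale stacks to the small stack on $\sit(\h)$ whose value on an object $U\hookrightarrow\h_0\to\X$ is the groupoid of sections of $f$ over $U$, i.e.\ of lifts of $U\to\X$ through $f$; this satisfies descent because lifts glue along covers. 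For the \'etal\'e realization $L$ I would present a given small stack $\Z\in\St(\X)$ by an internal groupoid $G_1\rightrightarrows G_0$ in the sheaf topos $\Sh(\X)\simeq\B\h$.

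The heart of the construction of $L$ is the sheaf-level case, which is exactly the representable special case recorded in \cite{etalspme}, Theorem \ref{thm:shhhhvs}: a sheaf on $\X$ corresponds to a representable (equivalently, by Remark \ref{rmk:trunrep}, truncated) local homeomorphism into $\X$. Applying this to each $G_i$ produces representable local homeomorphisms $E_i\to\X$, and the groupoid structure maps of $G_\bullet$, being morphisms of sheaves over $\X$, transport to an internal \'etale groupoid $E_1\rightrightarrows E_0$ in \'etale stacks over $\X$; I would set $L(\Z):=[E_\bullet]$, which carries a canonical local homeomorphism to $\X$. More invariantly, and this is the form I would actually use to control coherence, the slice topoi $\Sh(\X)/G_i\simeq\Sh(E_i)$ are \'etale over $\Sh(\X)$, they assemble into an internal groupoid of \'etendues, and the classifying topos of this groupoid (Remark \ref{rmk:etendskt}) is again an \'etendue carrying an \'etale geometric morphism to $\Sh(\X)$; Theorem \ref{thm:etendue} then identifies it with $\Sh$ of an \'etale topological stack, and lifting along $U:S\to\T$ via the admissibility axioms promotes it to an \'etale $S$-stack over $\X$. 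This reformulation makes the independence of $L$ from the chosen presentation transparent.

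With both functors in hand, I would establish the adjoint equivalence by verifying that the unit and counit are equivalences, leaning on locality over $\X$. For $\Gamma L\simeq\mathrm{id}$, note that $\Gamma$ applied to a representable local homeomorphism returns its sheaf of sections, and the sheaf-level adjoint equivalence of Theorem \ref{thm:shhhhvs} says this recovers the defining sheaf; applying this termwise to $G_\bullet$ and stackifying recovers $\Z$. For $L\Gamma\simeq\mathrm{id}$, given $f:\Z\to\X$ I would present $\Z$ by an \'etale groupoid and pull $f$ back along the atlas $a$ to a local homeomorphism $\Z\times_\X\h_0\to\h_0$ over the \emph{space} $\h_0$, where the classical espace-\'etal\'e equivalence applies verbatim, and then descend the identification along $a$ using that both $L$ and $\Gamma$ are compatible with the descent data furnished by $\sit(\h)$ (Remark \ref{rmk:sites}). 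Independence of every construction from the presenting groupoid follows from Morita invariance, exactly as $\Sh(\X)$ is presentation-independent.

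I expect the main obstacle to be the $2$-categorical coherence: one must check not merely that $L$ and $\Gamma$ are mutually inverse on objects, but that the comparison $2$-cells assemble into a genuine adjoint equivalence, and that the \'etal\'e realization of a groupoid object in $\Sh(\X)$ is well defined up to canonical equivalence independent of the Morita class of $G_\bullet$. The cleanest way to tame this is to run the whole argument at the level of topoi via the groupoid-of-\'etendues description above together with the \'etendue equivalence (Theorems \ref{thm:sga-etendue} and \ref{thm:etendue}), so that descent along the effective epimorphism $\Sh(\h_0)\to\Sh(\X)$ (Remark \ref{rmk:etendskt}) reduces every coherence check to the already-understood case of a space, after which transporting back along $U:S\to\T$ is routine.
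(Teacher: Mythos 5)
This theorem is not proved in the present paper at all: it is quoted verbatim from \cite{etalspme} (Corollary \ref{cor:real} there), and Appendix \ref{sec:prelim} only records the statement and the explicit description of $\Gamma$. So there is no in-paper proof to compare against; what your proposal should be measured against is the argument of \cite{etalspme}, and there it does reconstruct the intended strategy: establish the representable case (sheaves over $\X$ correspond to representable local homeomorphisms into $\X$, i.e.\ Theorem \ref{thm:shhhhvs}), present an arbitrary small stack by a groupoid object in $\Sh\left(\X\right)\simeq\B\h$, realize levelwise, and check the unit and counit by descent along the atlas $\h_0\to\X$, with Morita invariance handled at the level of \'etendues via Theorem \ref{thm:etendue} and Remark \ref{rmk:etendskt}. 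The one caveat worth flagging is that your argument takes Theorem \ref{thm:shhhhvs} as a black box, and that sheaf-level equivalence is where essentially all of the analytic content of the \'etal\'e realization lives (the admissibility axioms of Definition \ref{dfn:admiss} are needed precisely to lift the topological \'etal\'e space back into $S$); what you have written is really a correct proof that the stacky statement \emph{reduces} to the sheafy one. A minor simplification you could adopt from the way this paper itself uses $L$ (e.g.\ in the proof of Theorem \ref{thm:prol}): rather than carrying a chosen groupoid object $G_\bullet$ through the whole argument, characterize $L$ once and for all as the weak-colimit-preserving extension along the Yoneda embedding of $U\mapsto\left(U\hookrightarrow\h_0\to\X\right)$ on the site $\sit\left(\h\right)$; then presentation-independence and the coherence of the comparison $2$-cells follow from the universal property rather than from an explicit check on $G_\bullet$, and only the verification that $\Gamma$ is right adjoint and that the unit is an equivalence on representables remains.
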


Here $L$ is the \textbf{\'etal\'e realization} functor, and $\Gamma$ is the ``stack of sections'' functor. More explicitly:

If $\X\simeq \left[\h\right],$ $\Gamma(f: \Y \to \X)$ assigns an open subset $U$ of $\h_0$ the groupoid  whose objects are pairs $\left(\sigma,\alpha\right)$ which fit into a $2$-commutative diagram
$$\xymatrix@R=0.6cm{& & \Y \ar[dd]^-{f} & &\\
& & \ar@{} @<3pt>  [r]| (-0.1) {}="a"  &\\
U \ar@{^{(}->}[r] \ar@<+.7ex>[rruu]^-{\sigma} & \h_0\ar[r] \ar@{} @<+12pt>  [r]  | (-.1) {}="b" \ar @{=>}^{\alpha}  "b";"a"& \X,&}$$
%\ar@{}!<0ex,-2ex>;[uur]!<-3ex,1ex>
and whose morphisms $\left(\sigma,\alpha\right) \to \left(\sigma',\alpha'\right)$ are $2$-cells
$$ \xygraph{!{0;(2,0):(0,.5)::}
{U}="a" [r] { \Y}="b"
"a":@/^{1.5pc}/"b"^-{\sigma}|(.4){}="l"
"a":@/_{1.5pc}/"b"_-{\sigma'}
"l" [d(.3)]  [r(0.1)] :@{=>}^{\omega} [d(.5)]} $$
such that the following diagram commutes:
$$\xymatrix{j_{\h}\left(U\right) \ar@{=>}[r]^-{\alpha} \ar@{=>}[rd]_-{\alpha'} & f \circ \sigma \ar@{=>}[d]^-{f\omega}\\
& f\circ \sigma',}$$
where $j_{\h}\left(U\right)$ is the composite $U \hookrightarrow \h_0 \to \X.$
Moreover, this construction is functorial in $\X$:
\begin{thm}\label{thm:inv2}
Suppose $f: \Y \to \X$ is a morphism of \'etale stacks. Then following diagram $2$-commutes:

$$\xymatrix{\St\left(\X\right) \ar[r]^-{L} \ar[d]_{f^*} & \St\left(S\right)/\X \ar[d]^-{f^*}\\
 \St\left(\Y\right) \ar[r]^-{L} &  \St\left(S\right)/\Y.}$$
\end{thm}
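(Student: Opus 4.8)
The plan is to identify both composites $L\circ f^*$ and $f^*\circ L$ as weak colimit preserving functors $\St(\X)\to\St(S)/\Y$, and then to check that they agree on a generating family, namely the representable small stacks coming from the site of an atlas. Fix a presentation $\X\simeq[\h]$, so that $\St(\X)=\St(\sit(\h))$ and the objects of $\sit(\h)$ are the open subsets $U\subseteq\h_0$. Every small stack over $\X$ is a weak colimit of a diagram of representables $y(U)$, so a weak colimit preserving functor out of $\St(\X)$ is determined, up to canonical equivalence, by its restriction to these representables, exactly as in the identification of $k\circ\Theta$ with a left Kan extension in the proof of Theorem \ref{thm:prol}.

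First I would record that both composites preserve weak colimits. The functor $L$ is a left adjoint by the adjoint equivalence of Theorem \ref{thm:etalsp}. The functor $f^*:\St(\X)\to\St(\Y)$ is the inverse image part of the geometric morphism $\Sh(f):\Sh(\Y)\to\Sh(\X)$ on stacks over these topoi, hence again a left adjoint. Finally $f^*:\St(S)/\X\to\St(S)/\Y$ is base change along $f$ in the $2$-topos $\St(S)$, which admits a right adjoint $f_*$; equivalently, weak colimits in a $2$-topos are universal. Thus each of $L\circ f^*$ and $f^*\circ L$ is weak colimit preserving.

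Next I would compute both composites on a representable $y(U)$, with $U\in\sit(\h)$, naturally in $U$. Using the explicit description of $\Gamma$ in Theorem \ref{thm:etalsp}, one has $\Gamma(U\hookrightarrow\h_0\to\X)(V)\simeq\Hom_{\sit(\h)}(V,U)\simeq y(U)(V)$, so that $L(y(U))$ is the representable local homeomorphism $(U\hookrightarrow\h_0\to\X)\in\St(S)/\X$; applying $f^*$ gives the pullback $(\Y\times_\X U\to\Y)$. On the other hand, under the equivalence $\St(\X)\simeq\St(\Sh(\X))$ of Remark \ref{rmk:sites}, $y(U)$ corresponds to the stack represented by the object of $\Sh(\X)$ classified by $U\to\X$, and $\Sh(f)^*$ carries this object to $\Y\times_\X U$; hence $f^*(y(U))$ is the representable small stack over $\Y$ on $\Y\times_\X U\to\Y$, whose \'etal\'e realization is again $(\Y\times_\X U\to\Y)$. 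Both composites therefore send $y(U)$ to the pullback of $U\to\X$ along $f$, and the identifying equivalence is the one furnished by the universal property of the pullback square, which is manifestly natural in $U$.

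Combining the three steps, the pointwise equivalences on representables extend, by weak colimit preservation, to a natural equivalence $L\circ f^*\simeq f^*\circ L$, which is the asserted $2$-commutativity. I expect the genuine content to lie in the coherence of the last step: one must check that the canonical equivalences $L f^*(y(U))\simeq\Y\times_\X U\simeq f^* L(y(U))$ are compatible with the morphisms of $\sit(\h)$ (the sections of the source map) and with the associated $2$-cells, so that they descend along the weak colimit presentations. This reduces to the statement that $\Sh(f)^*$ commutes with the formation of \'etal\'e spaces on representables, which is part of the machinery of \cite{etalspme}; once this is in hand, the abstract ``colimit preserving plus agreement on representables'' principle closes the argument.
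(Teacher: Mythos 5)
The paper itself gives no proof of this theorem: it is quoted from \cite{etalspme}, and the line immediately following the statement refers the reader to Section \ref{sec:inverseimage} of that paper for the details. So there is no in-paper argument to compare against; what can be said is that your strategy --- both composites preserve weak colimits, hence it suffices to check agreement on the representables $y\left(U\right)$ for $U\in\sit\left(\h\right)$ --- is exactly the template this paper uses for its sibling statements, for instance the lemma asserting that $u^{\Z}$ intertwines the two \'etal\'e realization functors (whose entire proof reads ``each composite is manifestly weak colimit preserving and they both agree on representables'') and the identification of $k\circ\Theta$ with $\left(j_X\right)_!$ in Theorem \ref{thm:prol}. Your computation on representables is correct: $L\left(y\left(U\right)\right)$ is $U\hookrightarrow\h_0\to\X$, and both composites carry $y\left(U\right)$ to $\Y\times_{\X}U\to\Y$. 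Two points deserve more care than you give them. First, Theorem \ref{thm:etalsp} exhibits $L$ as a left adjoint into the bicategory of \'etale stacks over $\X$ via a local homeomorphism, not into $\St\left(S\right)/\X$; to conclude that the composite with the non-full inclusion into $\St\left(S\right)/\X$ still preserves weak colimits, you need that inclusion to admit a right adjoint, which is the relative version of Proposition \ref{prop:incri} and Corollary \ref{cor:42} and does follow from \cite{etalspme}, but should be invoked explicitly. Second, as you yourself concede, the naturality in $U$ of the equivalences $Lf^{*}\left(y\left(U\right)\right)\simeq\Y\times_{\X}U\simeq f^{*}L\left(y\left(U\right)\right)$, which is what allows the agreement on representables to descend along the weak colimit presentations, is deferred to the machinery of \cite{etalspme}; that is where the real content lives, so your argument is a correct and well-organized reduction rather than a self-contained proof. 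Given that the paper treats the statement as imported background, this is an acceptable level of resolution.
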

See \cite{etalspme}, Section \ref{sec:inverseimage} for further detail.

\subsection{Stalks}
\begin{dfn}
For $\Z$ a small stack over an \'etale stack $\X$, and $$x:* \to \X$$ a point of $\X,$ the \textbf{stalk} of $\Z$ at $x$ is the groupoid $x^*\left(\Z\right),$ where we have made the identification $\St\left(*\right)\simeq \Gpd.$ We denote this stalk by $\Z_x$.
\end{dfn}

As we have just seen, by Theorem \ref{thm:inv2} this stalk may be computed as the fiber of $$L\left(\Z\right) \to \X$$ over $x,$ i.e. the weak pullback $* \times_{\X} L\left(\Z\right),$ which is a constant stack with value $x^*\left(\Z\right)$. This stalk can also be computed analogously to stalks of sheaves:

\begin{lem}
Let $x \in X$ be a point of a space, and let $\Z$ be a small stack over $X$. Then the stalk at $x$ of $\Z$ can be computed by $$\Z_x\simeq  \underset{x \in U} \hc\Z\left(U\right),$$ where the weak colimit is taken over the open neighborhoods of $x$ regarded as a full subcategory of $\mathcal{O}\left(X\right).$
\end{lem}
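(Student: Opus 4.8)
The plan is to identify the stalk $\Z_x$ with the inverse image of $\Z$ along the continuous map $x : * \to X$, and then to compute that inverse image by exactly the Kan-extension formula that governs stalks of ordinary sheaves, the only extra input being that the $2$-categorical stackification does not disturb the value at the point. First I would recall that, under the identification $\St\left(*\right) \simeq \Gpd$, we have by definition $\Z_x = x^*\left(\Z\right)$, where $x^* : \St\left(X\right) \to \St\left(*\right)$ is the inverse image $2$-functor of the point $x$ (equivalently, by Theorem \ref{thm:inv2}, $\Z_x$ is the fiber $* \times_X L\left(\Z\right)$). By Definition \ref{dfn:sit} the site $\sit\left(X,X\right)$ associated to the trivial groupoid on $X$ is canonically $\mathcal{O}\left(X\right)$, so a small stack over $X$ is precisely a stack on $\mathcal{O}\left(X\right)$ with the open-cover topology. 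Just as in the sheaf-theoretic setting, the inverse image $x^*$ is then computed as the stackification of the weak left Kan extension of $\Z : \mathcal{O}\left(X\right)^{op} \to \Gpd$ along
$$x^{-1} : \mathcal{O}\left(X\right) \to \mathcal{O}\left(*\right), \qquad U \mapsto x^{-1}\left(U\right).$$

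Next I would evaluate $\Lan_{x^{-1}}\left(\Z\right)$ at the unique nonempty open $*$ of the one-point space. The category indexing the weak colimit consists of those open $U \subseteq X$ with $* \subseteq x^{-1}\left(U\right)$, that is, precisely the open neighborhoods of $x$ ordered by reverse inclusion; this poset is filtered, since the intersection of two neighborhoods of $x$ is again a neighborhood. Hence the value of the Kan extension at $*$ is exactly the weak colimit $\underset{x \in U}{\hc}\Z\left(U\right)$.

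Finally I would check that stackification leaves this value unchanged. On the one-point space the open-cover topology is trivial: the only covering sieve of $*$ is the maximal one, so the $2$-categorical plus construction recomputing the stackified value at $*$ ranges over a single trivial cover and returns the presheaf value, while stackification only corrects the value at $\emptyset$ (forcing it to be terminal), which plays no role in the stalk. Therefore $\Z_x \simeq \underset{x \in U}{\hc}\Z\left(U\right)$. The point requiring care — and the main obstacle — is purely the $2$-categorical bookkeeping: one must confirm that $x^*$ really is presented by a \emph{weak} left Kan extension followed by stackification \emph{into stacks of groupoids}, and that this $2$-stackification genuinely fixes the value at $*$. These are the evident pseudo-categorical analogues of the classical sheaf statements, and the substantive task is to verify that no higher coherence obstruction enters when passing from filtered colimits of sets to filtered weak colimits of groupoids.
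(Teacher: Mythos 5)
Your proof is correct, but it takes a different route from the paper's. You unwind the inverse image $x^*$ via its pointwise weak left Kan extension formula along $U \mapsto x^{-1}\left(U\right)$, identify the comma category at $*$ with the neighborhood filter of $x$, and then argue that $2$-stackification over the one-point space is harmless because the only covering sieve of $*$ is the maximal one. The paper instead never touches the Kan extension formula or stackification: it observes that the assignment $\Z \mapsto \underset{x \in U}{\hc}\,\Z\left(U\right)$ is weak colimit preserving, checks by hand that it agrees with $x^*$ on representables (an open $V \subseteq X$ has stalk the singleton or the empty set according to whether $x \in V$, using filteredness of the neighborhood poset to collapse the weak colimit to an ordinary one), and concludes by the uniqueness of colimit-preserving extensions from representables. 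Your approach is more explicit and self-contained once one grants the Kan-extension presentation of $x^*$ -- which is exactly the point you flag as needing verification, and which is the standard but not entirely free $2$-categorical input; the paper's argument sidesteps that entirely, trading it for the (equally standard) facts that $x^*$ is a left adjoint and that every stack is a weak colimit of representables. Both are complete; neither has a gap.
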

\begin{proof}
The $2$-functor
\begin{eqnarray*}
\St\left(X\right) &\to& \Gpd\\
\Z &\mapsto& \underset{x \in U}\hc  \Z\left(U\right),\\
\end{eqnarray*}
is clearly weak colimit preserving. If $\Z=V \subseteq X$ is a representable sheaf, i.e., an open subset of $X$, then $$\Z_x\simeq \underset{x \in U} \hc \Hom\left(U,V\right) \simeq \underset{x \in U} \varinjlim \Hom\left(U,V\right),$$ and the latter expression is equivalent to the singleton set if $x \in V$ and the empty set otherwise. This set is the same as the fiber of $V$ over $x,$ i.e. the stalk $V_x \cong x^*\left(V\right).$ So $$\Z \mapsto \underset{x \in U} \hc \Z\left(U\right)$$ is weak colimit preserving and agrees with $x^*$ on representables, hence is equivalent to $x^*$.
\end{proof}

\begin{cor}\label{cor:stalk}
Let $x:* \to \X$ be a point of an \'etale stack $\X\simeq \left[\h\right],$ with $\h$ an \'etale groupoid. Pick a point $\tilde x \in \h_0$ such that $x\cong p \circ \tilde x$ where $$p:\h_0 \to \X$$ is the atlas associated to $\h$. Let $\Z$ be a small stack over $X$. Then the stalk at $x$ of $\Z$ can be computed by $$\Z_x\simeq  \underset{\tilde x \in U} \hc\Z\left(U\right),$$ where the weak colimit is taken over the open neighborhoods of $\tilde x$ in $\h_0$ regarded as a full subcategory of $\mathcal{O}\left(\h_0\right).$
\end{cor}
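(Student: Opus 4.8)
The plan is to reduce this stalk over the \'etale stack $\X$ to an ordinary stalk over the \emph{space} $\h_0$, where the preceding Lemma applies verbatim, and then to recognize that restricting a small stack along the atlas does not change its value on open subsets of $\h_0$.

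First I would use the geometric description of stalks recorded just after the definition of stalk: by Theorem \ref{thm:inv2} the stalk $\Z_x$ is the fibre of $L\left(\Z\right)\to\X$ over $x$, i.e. the weak pullback $*\times_{\X}L\left(\Z\right)$. Since $x\cong p\circ\tilde x$, I would factor this pullback through the atlas,
$$*\times_{\X}L\left(\Z\right)\;\simeq\;*\times_{\h_0}\left(\h_0\times_{\X}L\left(\Z\right)\right),$$
where the map $*\to\h_0$ is $\tilde x$. By Theorem \ref{thm:inv2} again, $\h_0\times_{\X}L\left(\Z\right)\simeq p^{*}L\left(\Z\right)\simeq L\left(p^{*}\Z\right)$, so the right-hand side is the fibre of $L\left(p^{*}\Z\right)\to\h_0$ over $\tilde x$; that is, $\Z_x\simeq\left(p^{*}\Z\right)_{\tilde x}$, the stalk at $\tilde x$ of the small stack $p^{*}\Z$ over the space $\h_0$. (Conceptually this is just the pseudofunctoriality $x^{*}\simeq\tilde x^{*}\circ p^{*}$ of the inverse image.) Applying the preceding Lemma to $\h_0$, the point $\tilde x$, and $p^{*}\Z$ then gives
$$\Z_x\;\simeq\;\left(p^{*}\Z\right)_{\tilde x}\;\simeq\;\underset{\tilde x\in U}\hc\,\left(p^{*}\Z\right)\left(U\right),$$
the weak colimit taken over the open neighborhoods $U$ of $\tilde x$ in $\h_0$.

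It then remains to identify $\left(p^{*}\Z\right)\left(U\right)$ with $\Z\left(U\right)$ for each open $U\subseteq\h_0$, and this is the step requiring the most care, since it is where the two distinct sites must be compared. Writing $p^{*}\Z\simeq\Gamma\left(L\left(p^{*}\Z\right)\right)\simeq\Gamma\left(\h_0\times_{\X}L\left(\Z\right)\right)$ via Theorem \ref{thm:etalsp}, the groupoid $\left(p^{*}\Z\right)\left(U\right)$ is the groupoid of sections of $\h_0\times_{\X}L\left(\Z\right)\to\h_0$ over the inclusion $U\hookrightarrow\h_0$; by the pullback these are exactly the sections of $L\left(\Z\right)\to\X$ over the composite $U\hookrightarrow\h_0\stackrel{p}{\to}\X$, which by the description of $\Gamma$ on the site $\sit\left(\h\right)$ in Definition \ref{dfn:sit} is $\Gamma\left(L\left(\Z\right)\right)\left(U\right)\simeq\Z\left(U\right)$. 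Equivalently, $p^{*}$ is restriction along the identity-on-objects inclusion $\mathcal{O}\left(\h_0\right)\hookrightarrow\sit\left(\h\right)$ given by the atlas, so evaluation at an open $U\subseteq\h_0$ is unchanged. Substituting $\left(p^{*}\Z\right)\left(U\right)\simeq\Z\left(U\right)$ into the colimit above yields the asserted formula $\Z_x\simeq\underset{\tilde x\in U}\hc\Z\left(U\right)$. The only genuine subtlety is this site comparison; the reduction to $\h_0$ is formal.
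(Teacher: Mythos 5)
Your proposal is correct and follows essentially the same route as the paper: the paper's proof is exactly the factorization $x^*\simeq\tilde x^*\circ p^*$, followed by the preceding Lemma applied to $\left(p^*\Z\right)_{\tilde x}$, followed by the identification $p^*\left(\Z\right)\left(U\right)\simeq\Z\left(U\right)$ (which the paper asserts ``by definition'' of the site $\sit\left(\h\right)$, and which you justify in more detail via $\Gamma$ and Theorem \ref{thm:inv2}). Your derivation of $\Z_x\simeq\left(p^*\Z\right)_{\tilde x}$ through the weak pullback of $L\left(\Z\right)$ is just an unwinding of the same pseudofunctoriality identity, as you yourself note.
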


\begin{proof}
Since $x\cong p \circ \tilde x,$ it follows that $$x^* \simeq \tilde x^* \circ p^*.$$ By definition, for $U$ an open subset of $\h_0,$ $$p^*\left(\Z\right)\left(U\right)\simeq\Z\left(U\right).$$ Hence,

\begin{eqnarray*}
\Z_x&=& x^*\Z\\
 &\simeq& \tilde x^*\left(p^*\Z\right)\\
 &\simeq& \left(p^*\Z\right)_{\tilde x}\\
 &\simeq& \underset{\tilde x \in U} \hc \left(p^*\Z\right)\left(U\right)\\
 &\simeq& \underset{\tilde x \in U} \hc \Z\left(U\right).\\
\end{eqnarray*}
\end{proof}

\bibliographystyle{hplain}
\bibliography{meanet}

\end{document}